\def\A{{\mathbb{A}}}
\newcommand{\upcite}[1]{\textsuperscript{\cite{#1}}}
\newtheorem{thm}{Theorem}[section]
\newtheorem{lem}[thm]{Lemma}
\newtheorem{prop}[thm]{Proposition}
\newtheorem{cor}[thm]{Corollary}
\newtheorem{defn}[thm]{Definition}
\newtheorem{remark}[thm]{Remark}
\numberwithin{equation}{section}
\date{}
\begin{document}

\thispagestyle{empty}

\begin{center}
{\bf{ \LARGE  Hopf PBW-deformations of a new type quantum group}
 \footnotetext { $\dag$
Corresponding author: chenjialei@bjut.edu.cn}
}

\author[Xu]{Yongjun Xu}
\address{School of Mathematical Sciences, Qufu Normal University,
Qufu 273165, P. R. China}
\email{yjxu2002@163.com}

\author[Chen]{Jialei Chen}
\address{Faculty of Science, Beijing University of Technology, Beijing 100124, P. R. China}
\email{chenjialei@bjut.edu.cn}

\bigbreak

\normalsize Yongjun Xu$^{1}$, Jialei Chen$^{\dag 2}$

{\footnotesize\small\sl 1\ \ School of Mathematical Sciences,\
Qufu Normal University , Qufu 273165, P. R. China\\
2\ \  Department of Mathematics,
Faculty of Science, Beijing University of Technology,\\
\footnotesize\sl Beijing 100124, P. R. China }

\end{center}
\begin{quote}
{\noindent\small{\bf Abstract.}
In this paper, we mainly focus on a new type quantum group $U_{q}(\mathfrak{sl}^{*}_2)$ and its Hopf PBW-deformations $U_{q}(\mathfrak{sl}^{*}_2,\kappa)$ in which $U_{q}(\mathfrak{sl}^{*}_2,0) = U_{q}(\mathfrak{sl}^{*}_2)$ and the classical Drinfeld-Jimbo quantum group $U_{q}(\mathfrak{sl}_2)$ is included.
The category of finite dimensional $U_{q}(\mathfrak{sl}^{*}_2)$-modules is proved to be non-semisimple.
We establish a uniform block decomposition of the category $U_{q}(\mathfrak{sl}^{*}_2,\kappa){\mbox -}{\rm \bf mod}_{\rm wt}$ of finite dimensional weight modules for each $U_{q}(\mathfrak{sl}^{*}_2,\kappa)$, and reduce the investigation on $U_{q}(\mathfrak{sl}^{*}_2,\kappa){\mbox -}{\rm \bf mod}_{\rm wt}$ to its principle block(s).
We introduce the notion of primitive object in $U_{q}(\mathfrak{sl}^{*}_2,\kappa){\mbox -}{\rm \bf mod}_{\rm wt}$ which affords a new and elementary way to verify the semisimplicity of the category of finite dimensional $U_{q}(\mathfrak{sl}_2)$-modules.
As the core of this present paper, a tensor equivalence between the principal block(s) of $U_{q}(\mathfrak{sl}^{*}_2,\kappa){\mbox -}{\rm \bf mod}_{\rm wt}$ and the category of finite dimensional representations of (deformed) preprojective algebras of Dynkin type $\A$ is obtained.

{\bf Keywords}: Hopf PBW-deformation, quantum group, deformed preprojective algebra, block decomposition, weight representation, tensor equivalence\\
\noindent {\bf Mathematics Subject Classification:}\quad 17B37, 16G20, 18M05.
}
\end{quote}


\setcounter{section}{-1}

\section{Introduction}

Based on a new associative multiplication on $2 \times 2$ matrices, Aziziheris et al. introduced a new type quantum group $U_{q}(\mathfrak{sl}^{*}_2)$ and a new type quantized coordinate ring $\mathcal{O}_q(SL^{*}_2)$
which possess a Hopf pairing with each other \cite{AFL}. In \cite{WW2014}, Walton and Witherspoon provided
necessary and sufficient conditions for determining PBW-deformations of a smash product algebra $B \sharp H$, where $H$ is a Hopf algebra and $B$ is a Koszul $H$-module algebra.
The new type quantum group $U_{q}(\mathfrak{sl}^{*}_2)$ initially attracts our attention in that
it can be realized as a smash product algebra (Proposition \ref{realization-of-new-type-quantum-group-as-smash-product-alg} ${\rm (3)}$) and the classical Drinfeld-Jimbo quantum group $U_q(\mathfrak{sl}_2)$ is a PBW-deformation of it (Corollary \ref{Hopf-PBW-deformation}).
As the first task of this paper, we determine all the Hopf PBW-deformations $U_{q}(\mathfrak{sl}^{*}_2,\kappa)$ of $U_{q}(\mathfrak{sl}^{*}_2)$ (Corollary \ref{Hopf-PBW-deformation}),
 then show that $U_q(\mathfrak{sl}_2)$ is almost the unique nontrivial one (Remark \ref{classical-is-almost-unique-hopf-pbw-deformation}).
In fact, all the nontrivial Hopf PBW-deformations $U_{q}(\mathfrak{sl}^{*}_2, \kappa)$ of $U_{q}(\mathfrak{sl}^{*}_2)$ have been defined as generalizations of $U_q(\mathfrak{sl}_2)$ in \cite{JW} (see also \cite{WJY2002}).

The question whether the representation theory of smash product algebras is preserved under PBW-deformation is very intresting (cf. \ref{WW2014}).
In \cite{JW}, each finite dimensional module of any nontrivial Hopf PBW-deformation $U_{q}(\mathfrak{sl}^{*}_2, \kappa)$ of $U_{q}(\mathfrak{sl}^{*}_2)$ is proved to be semisimple via the similar methods dealing with $U_q(\mathfrak{sl}_2)$. As the second task of this paper, we characterize all finite dimensional simple $U_{q}(\mathfrak{sl}^{*}_2)$-modules (Proposition \ref{simple-modules-of-new-type-quantum-groups}), then prove that the category $U_{q}(\mathfrak{sl}^{*}_2){\mbox -}{\rm \bf mod}$ of finite dimensional $U_{q}(\mathfrak{sl}^{*}_2)$-modules is not semisimple (Corollary \ref{non-semisimplicity-of-reps-of-prequantum-group}). So the representation theory of $U_{q}(\mathfrak{sl}^{*}_2)$ cannot be preserved under nontrivial Hopf PBW-deformation.

The notion of block is very helpful for organizing objects in module categories which fail to be semisimple (cf. Section 1.13 in \cite{Hum2008}).
 For $U_{q}(\mathfrak{sl}_2)$, each finite dimensional module can be expressed as the direct sum of its weight spaces.
 As the third task, we investigate the category $U_{q}(\mathfrak{sl}^{*}_2,\kappa){\mbox -}{\rm \bf mod}_{\rm wt}$ of finite dimensional weight modules of $U_{q}(\mathfrak{sl}^{*}_2,\kappa)$.
When $\kappa = 0$, $U_{q}(\mathfrak{sl}^{*}_2){\mbox -}{\rm \bf mod}_{\rm wt}$ is a proper and non-semisimple full subcategory of $U_{q}(\mathfrak{sl}^{*}_2){\mbox -}{\rm \bf mod}$;
 when $\kappa \neq 0$, $U_{q}(\mathfrak{sl}^{*}_2,\kappa){\mbox -}{\rm \bf mod}_{\rm wt}$ is just equal to $U_{q}(\mathfrak{sl}^{*}_2,\kappa){\mbox -}{\rm \bf mod}$ (Remark \ref{remarks-about-rep-cat-and-weight-rep-cat}).
Surprisingly, we obtain a uniform block decomposition theorem of the category $U_{q}(\mathfrak{sl}^{*}_2,\kappa){\mbox -}{\rm \bf mod}_{\rm wt}$, whatever $\kappa$ is zero or not (Theorem \ref{block-decomposition-of-weight-rep-cat-of-U}).
Then we prove that each block of $U_{q}(\mathfrak{sl}^{*}_2,\kappa){\mbox -}{\rm \bf mod}_{\rm wt}$ is isomorphic to a principle block (Corollary \ref{reduction-to-principle-blocks}).
It is worth noting that the category $U_{q}(\mathfrak{sl}^{*}_2,\kappa){\mbox -}{\rm \bf mod}_{\rm wt}$ only has one unique principle block when $\kappa = 0$, but two principle blocks when $\kappa \neq 0$ (Definition \ref{definitions-of-principle-blocks}).

The isomorphisms among the blocks of the category $U_{q}(\mathfrak{sl}^{*}_2,\kappa){\mbox -}{\rm \bf mod}_{\rm wt}$ enable us to focus on the indecomposable objects in the principle block(s).
 For this reason, we introduce and study the primitive objects in $U_{q}(\mathfrak{sl}^{*}_2,\kappa){\mbox -}{\rm \bf mod}_{\rm wt}$ which can be used to produce all
indecomposable objects (Proposition \ref{primitive-is-proper-new-type-quantum-group-case} and Theorem \ref{primitive-representations-of-Hopf-PBW-deformations-is-proper} ${\rm (1)}$).
 For $U_{q}(\mathfrak{sl}^{*}_2)$, we can only construct some explicit primitive modules (Theorem \ref{primitive-rep-of-U-with-type-1-q}).
 Whereas for any nontrivial Hopf PBW-deformation $U_{q}(\mathfrak{sl}^{*}_2,\kappa)$ of $U_{q}(\mathfrak{sl}^{*}_2)$, we can make use of its primitive modules to construct all simple objects in $U_{q}(\mathfrak{sl}^{*}_2,\kappa){\mbox -}{\rm \bf mod}$ (Theorem \ref{primitive-representations-of-Hopf-PBW-deformations-is-proper} ${\rm (2)}$) and prove the semisimplicity of $U_{q}(\mathfrak{sl}^{*}_2,\kappa){\mbox -}{\rm \bf mod}$ (Theorem \ref{new-proof-of-semisimplicity-of-fd-representations-of-Hopf-PBW-deformations}), which affords a new treatment for the representation theory of Drinfeld-Jimbo quantum group $U_q(\mathfrak{sl}_2)$.

Assume that $\mathbb{K}$ is a field and $Q$ is a finite quiver with vertex set $Q_0$. The preprojective algebra $\Pi_Q$ of $Q$ was explicitly presented by Ringel in \cite{Ringel1998},
while the deformed preprojective algebra $\Pi^\lambda_Q$ with weight $\lambda \in \mathbb{K}^{|Q_0|}$ was introduced by Crawley-Boevey and Holland in \cite{WCMH605}.
The representation theory of $\Pi_Q$ satisfies a remarkable trichotomy: finite, tame or wild representation type
(cf. \cite{DR200, GLS193, GS163}).
Denote by $\mathfrak{g}_Q$ the Kac-Moody Lie algebra corresponding to $Q$.
 In \cite{WC257}, Crawley-Boevey obtained a seminal result which exactly characterizes the positive roots of $\mathfrak{g}_Q$ corresponding to the simple representations of $\Pi^\lambda_Q$ over an algebraically closed field $\mathbb{K}$.
Semisimple representations of $\Pi^\lambda_Q$ are essentially important since only the moduli space consisting of equivalence classes of them with a fixed dimension vector is a smooth manifold (see \cite{WC257,Silan2018}).

Preprojective algebras and deformed preprojective algebras are deeply related with realizations of quantum groups and their restricted forms.
 Denote by $Q$ the Dynkin quiver of a complex semisimple Lie algebra $\mathfrak{g}$.
For each simply-laced quantum group $U_q(\mathfrak{g})$, Lusztig constructed its negative part $U_q^-(\mathfrak{g})$ endowed with a canonical basis by using the perverse sheaf approach, where the variety of modules over the preprojective algebra $\Pi_Q$ of $Q$ plays an important role \cite{Lus1990,Lus1991}.
For the restricted simply-laced quantum groups $\overline{U}_q(\mathfrak{g})$, Cibils presented their Borel parts $\overline{U}_q^{\leq 0}(\mathfrak{g})$ as quotients of certain path algebras with admissible ideals \cite{Cibils1993,Cibils1997}, while
Huang and Yang realized them as quotients of double quiver algebras with non-admissible ideals \cite{Huang-Yang2005}.
 In fact, the work in \cite{Huang-Yang2005} follows \cite{Yang2004} where the restricted quantum group $\overline{U}_{q}(\mathfrak{sl}_2)$ is
  exactly realized as a quotient of a deformed preprojective algebra attached to a cyclic quiver.

The category of finite dimensional modules of a Hopf algebra with bijective antipode is naturally a tensor category \cite{EGNO2015}.
There are some remarkable and beautiful results that can be considered as tensor-categorical realizations of Drinfeld-Jimbo quantum groups.
In \cite{Dr1990,Dr1991}, the Drinfeld category corresponding to $\mathfrak{g}$ is proved to be tensor equivalent to the category of finite dimensional modules of quantum group $U_\hbar(\mathfrak{g})$ over formal power series.
  Following Drinfeld's work, Kazhdan and Lusztig established the well-known Kazhdan-Lusztig equivalence
   between the category of finite dimensional modules of simply-laced quantum group $U_q(\mathfrak{g})$ with irrational $q \in \mathbb{C}$
    and a tensor category of modules of an affine Lie algebra in \cite{KL1991, KL1993-1,KL1994-3}.
    So far, some extended versions of the Kazhdan-Lusztig equivalence are still a vibrant area of research (cf. \cite{CF2021,EM2002,Gait2021,GL2016,GL2017,NT2011}).

On the basis of aforementioned motivations and tasks, we finally aim to realize all Hopf PBW-deformations of the new type quantum group $U_{q}(\mathfrak{sl}^{*}_2)$ via (deformed) preprojective algebras of Dynkin type $\A$.
To be specific, we obtain the following main observations:
the unique principle block of the category $U_{q}(\mathfrak{sl}^{*}_2){\mbox -}{\rm \bf mod}_{\rm wt}$ of finite dimensional weight modules of $U_{q}(\mathfrak{sl}^{*}_2)$
is tensor equivalent to a tensor category attached to the preprojective algebras of Dynkin type $\A$ (Theorem \ref{tensor-equivalence-between-categories-and-primitive-reps-of-U-and-preprojetive-algs-of-A}),
while the direct sum of two principle blocks of the category $U_{q}(\mathfrak{sl}^{*}_2, \kappa){\mbox -}{\rm \bf mod}$ of finite dimensional $U_{q}(\mathfrak{sl}^{*}_2, \kappa)$-modules
is tensor equivalent to a tensor category attached to two classes of deformed preprojective algebras of Dynkin type $\A$ (Theorem \ref{tensor-equivalence-between-categories-and-primitive-reps-of-U-and-deformed-preprojetive-algs-of-A}).

The paper is organized as follows.
In Section \ref{section-1}, we recall the definition of the new type quantum group $U_{q}(\mathfrak{sl}^{*}_2)$, and determine its all Hopf PBW-deformations $U_{q}(\mathfrak{sl}^{*}_2,\kappa)$ up to isomorphisms.
In Section \ref{section-2}, we show that the category $U_{q}(\mathfrak{sl}^{*}_2){\mbox -}{\rm \bf mod} $ of finite dimensional modules of $U_{q}(\mathfrak{sl}^{*}_2)$ is non-semisimple.
In Section \ref{section-3}, we deal with the category $U_{q}(\mathfrak{sl}^{*}_2,\kappa){\mbox -}{\rm \bf mod}_{\rm wt}$ of finite dimensional weight modules of $U_{q}(\mathfrak{sl}^{*}_2,\kappa)$ including $U_{q}(\mathfrak{sl}^{*}_2)$ and $U_{q}(\mathfrak{sl}^{}_2)$.
We prove the Krull-Schmidt theorem and block decomposition theorem for $U_{q}(\mathfrak{sl}^{*}_2,\kappa){\mbox -}{\rm \bf mod}_{\rm wt}$,
 then reduce the investigation of $U_{q}(\mathfrak{sl}^{*}_2,\kappa){\mbox -}{\rm \bf mod}_{\rm wt}$ to its principle block(s).
In Section \ref{section-4}, we introduce and investigate the primitive objects in $U_{q}(\mathfrak{sl}^{*}_2,\kappa){\mbox -}{\rm \bf mod}_{\rm wt}$. When $\kappa \neq 0$, we reprove the category of finite dimensional modules of $U_{q}(\mathfrak{sl}^{*}_2,\kappa)$ is semisimple in a new way.
In Section \ref{section-5}, we introduce the category of finite dimensional representations of deformed preprojective algebras of Dynkin type $\A$,
 then obtain some tensor-categorical realizations of the principle block(s) of the category $U_{q}(\mathfrak{sl}^{*}_2,\kappa){\mbox -}{\rm \bf mod}_{\rm wt}$.

Throughout the paper, the notations $\mathbb{C}, \mathbb{C}^\times, \mathbb{Z}$, $\mathbb{Z}^{\geq 0}$ and $\mathbb{Z}^{> 0}$ respectively denote the complex field, the set of all nonzero complex numbers, the set of all integers, the set of all nonnegative integers and the set of all positive integers. We always assume that $q\in \mathbb{C}^\times$ is not a root of unity.
For $n,m\in \mathbb{Z}$, we fix the following notation
\begin{eqnarray*}\label{}
[n]_m = \frac{q^{mn}-q^{-mn}}{q^{m}-q^{-m}} = {\mathop\sum\limits_{i = 0}^{n-1}} q^{m(2i+1 - n)}.
\end{eqnarray*}
All linear spaces, algebras, modules and unadorned tensors are over the complex field $\mathbb{C}$. We represent by $(-)^\star$ the dual functor ${\rm Hom}_{\mathbb{C}}(-,\mathbb{C})$.
We understand by tensor category a locally finite monoidal rigid abelian $\mathbb{C}$-linear category with ${\rm End}(\mathds{1}) \cong \mathbb{C}$, where $\mathds{1}$ is the unit object, and by tensor functor a $\mathbb{C}$-linear monoidal functor.
Denote by $({\rm \bf Vec}, \otimes, \mathbb{C}, \mathbf{a},\mathbf{l},\mathbf{r})$ the tensor category of finite dimensional $\mathbb{C}$-vector spaces with unit object $\mathbb{C}$, associativity and unit constraints
$\mathbf{a},\mathbf{l},\mathbf{r}$.

\section{The new type quantum group $U_{q}(\mathfrak{sl}^{*}_2)$ and its Hopf PBW-deformations $U_{q}(\mathfrak{sl}^{*}_2,\kappa)$}\label{section-1}

Firstly we recall the definition of a new type quantum group.

\begin{defn}\upcite{AFL}
The new type quantum group $U_{q}(\mathfrak{sl}^{*}_2)$ is the associative algebra with unit 1 generated by four generators $K, K^{-1}, E, F$ and subject to the following relations
\begin{eqnarray*}\label{}
  KK^{-1}=K^{-1}K=1,\ \ KE=q^2EK,\ \ KF=q^{-2}FK,\ \ EF=FE.
\end{eqnarray*}
\end{defn}

\begin{prop}\upcite{AFL}
The new type quantum group $U_{q}(\mathfrak{sl}^{*}_2)$ is a Hopf algebra with coproduct $\Delta$, counit $\varepsilon$
and antipode $S$ defined by
\begin{gather}
\Delta(K)=K\otimes K,\ \ \Delta(E)=E\otimes K+1\otimes E,\ \  \Delta(F)=F\otimes 1+K^{-1}\otimes F,\nonumber\\
\label{hopf-alg-structure-on-the-pre-quantum-gp} \varepsilon(K)=1,\ \ \varepsilon(E)=0,\ \  \varepsilon(F)=0,\\
S(K)=K^{-1},\ \  S(E)=-EK^{-1},\ \ S(F)=-KF.\nonumber
\end{gather}
\end{prop}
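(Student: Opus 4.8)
The plan is a direct verification. Since $U := U_q(\mathfrak{sl}^*_2)$ is given by generators and relations, I first invoke the universal property of the presentation to extend $\Delta$, $\varepsilon$ and $S$. For $\Delta$ (with $U\otimes U$ carrying its standard tensor-product algebra structure) it suffices to check that $\Delta(K)=K\otimes K$, $\Delta(K^{-1})=K^{-1}\otimes K^{-1}$, $\Delta(E)=E\otimes K+1\otimes E$, $\Delta(F)=F\otimes 1+K^{-1}\otimes F$ satisfy the four defining relations; for $\varepsilon$ the relations become trivial identities in $\mathbb{C}$; and for $S$ one must instead check that $S$ extends to an algebra \emph{anti}-homomorphism, i.e.\ an algebra map $U\to U^{\mathrm{op}}$, which amounts to verifying the defining relations after reversing the order of the factors and applying $S$ to each generator.

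Next I verify the bialgebra axioms. The algebra-map property of $\Delta$ reduces to four computations; the only one requiring attention is $\Delta(E)\Delta(F)=\Delta(F)\Delta(E)$, where one expands both sides into four summands and checks that the crossed terms agree because $EK^{-1}=q^2K^{-1}E$ and $FK=q^2KF$ (immediate from $KE=q^2EK$ and $KF=q^{-2}FK$), the remaining terms matching directly via $EF=FE$; the relations $\Delta(K)\Delta(E)=q^2\Delta(E)\Delta(K)$ and $\Delta(K)\Delta(F)=q^{-2}\Delta(F)\Delta(K)$ hold at once since $\Delta(K)$ is grouplike, and $\Delta(K)\Delta(K^{-1})=1\otimes 1$ is clear. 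Coassociativity and the counit law then only need to be checked on $K^{\pm1},E,F$ — which is enough because $\Delta,\varepsilon$ are algebra homomorphisms and these elements generate $U$ — and each is a one-line computation (for instance $(\Delta\otimes\mathrm{id})\Delta(E)=E\otimes K\otimes K+1\otimes E\otimes K+1\otimes 1\otimes E=(\mathrm{id}\otimes\Delta)\Delta(E)$).

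For the antipode, once $S$ is known to be a well-defined algebra anti-homomorphism (the representative check is again $EF=FE$: $S(F)S(E)=KFEK^{-1}=KEFK^{-1}=EF=S(E)S(F)$, using that $K$ centralizes $EF$), it remains to verify $m(S\otimes\mathrm{id})\Delta(x)=\varepsilon(x)1=m(\mathrm{id}\otimes S)\Delta(x)$ for $x\in\{K^{\pm1},E,F\}$; this suffices since the set of elements satisfying this identity is a unital subalgebra of $U$ whenever $S$ is an anti-homomorphism and $\Delta$ a homomorphism. Concretely, for $K$ it is $S(K)K=K^{-1}K=1$; for $E$, $S(E)K+S(1)E=-EK^{-1}K+E=0$ and $E\,S(K)+S(E)=EK^{-1}-EK^{-1}=0$; for $F$, $S(F)+S(K^{-1})F=-KF+KF=0$ and $F+K^{-1}S(F)=F-F=0$.

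I expect no genuine obstacle here: the argument is a routine check of the Hopf-algebra axioms on generators. The only places where a sign or a power of $q$ is easy to mishandle are the crossed term in $\Delta(E)\Delta(F)$ and the preservation of $KE=q^2EK$ (equivalently $KF=q^{-2}FK$) under $S$, so those are the steps I would write out in full, leaving the rest as immediate.
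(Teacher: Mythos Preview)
Your proof is correct: the direct verification on generators is exactly the standard way to establish a Hopf algebra structure given by a presentation, and each of the computations you outline goes through as stated. The paper itself does not supply a proof of this proposition---it is quoted from \cite{AFL}---so there is no alternative argument to compare against; your write-up is precisely what one would expect such a proof to look like.
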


\begin{remark}
The quantum algebra $U_{q}(\mathfrak{sl}^{*}_2)$ can be obtained as the associated graded algebra of the Drinfeld-Jimbo quantum group $U_{q}(\mathfrak{sl}_2)$ under the coradical filtration (cf. Section 5.11 in \cite{EGNO2015}).
\end{remark}

Assume that $\mathbb{C}[E,F]$ is the polynomial algebra generated by $E$ and $F$,
 and $G$ is the abelian group with generators $K,K^{-1}$ and relations $K K^{-1} = K^{-1} K = 1$.
Noting that the algebra $\mathbb{C}[E,F]$ can be considered as a $\mathbb{C} G$-module algebra under the following $\mathbb{C} G$-action
\begin{eqnarray}\label{group-action-on-alg}
\cdot : \mathbb{C} G \otimes \mathbb{C}[E,F] &\longrightarrow& \mathbb{C}[E,F],\\
K \otimes E &\longmapsto& K \cdot E = q^2 E,\nonumber\\
K \otimes F &\longmapsto& K \cdot F = q^{-2} F,\nonumber
\end{eqnarray}
one can realize $U_{q}(\mathfrak{sl}^{*}_2)$ as the smash product algebra $\mathbb{C}[E,F] \sharp \mathbb{C} G$.

\begin{prop}\label{realization-of-new-type-quantum-group-as-smash-product-alg}
${\rm (1)}$ The algebra $U_{q}(\mathfrak{sl}^{*}_2)$ is Noetherian and has no zero divisors.\\
${\rm (2)}$  The set $\{ E^i F^j K^k | i,j \in \mathbb{Z}^{\geq 0}, k\in \mathbb{Z}\}$ is a basis of the algebra $U_{q}(\mathfrak{sl}^{*}_2)$.\\
${\rm (3)}$ As a $\mathbb{C}$-algebra, $U_{q}(\mathfrak{sl}^{*}_2)$ is isomorphic to the smash product algebra $\mathbb{C}[E,F] \sharp \mathbb{C} G$.
\end{prop}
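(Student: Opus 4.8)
The plan is to exploit the smash product description directly. Write $A := \mathbb{C}[E,F]\sharp\mathbb{C}G$ for the smash product with respect to the $\mathbb{C}G$-action \eqref{group-action-on-alg}. Since $\mathbb{C}G$ is a group algebra, each $K^{\pm1}$ is grouplike, so the multiplication in $A$ is the crossed-product multiplication $(a\sharp g)(b\sharp h)=a\,(g\cdot b)\sharp gh$; in particular inside $A$ the elements $E:=E\sharp 1$, $F:=F\sharp 1$, $K^{\pm1}:=1\sharp K^{\pm1}$ satisfy $KK^{-1}=K^{-1}K=1$, $KE=q^2EK$, $KF=q^{-2}FK$ and $EF=FE$. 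These are exactly the defining relations of $U_q(\mathfrak{sl}^{*}_2)$, and $E,F,K^{\pm1}$ generate $A$ as an algebra, so the universal property of $U_q(\mathfrak{sl}^{*}_2)$ yields a surjective algebra homomorphism $\phi\colon U_q(\mathfrak{sl}^{*}_2)\twoheadrightarrow A$ sending generators to generators. It remains to prove $\phi$ is injective; this will give (2) and (3) simultaneously.

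For injectivity I would first check that $\mathcal{B}:=\{E^iF^jK^k\mid i,j\in\mathbb{Z}^{\geq0},\,k\in\mathbb{Z}\}$ spans $U_q(\mathfrak{sl}^{*}_2)$: starting from an arbitrary word in $E,F,K^{\pm1}$, the relations $KE=q^2EK$, $KF=q^{-2}FK$ and their $K^{-1}$-versions let one move every power of $K$ to the right at the cost of a scalar, while $EF=FE$ lets one sort the remaining $E$'s before the $F$'s; hence $\mathcal B$ spans. On the other hand, as a vector space $A=\mathbb{C}[E,F]\otimes\mathbb{C}G$, which has basis $\{E^iF^j\otimes K^k\}$, and $\phi(E^iF^jK^k)=E^iF^j\otimes K^k$, so the family $\phi(\mathcal B)$ is linearly independent. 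A spanning set whose image under a linear map is linearly independent is itself a basis and the map is injective; thus $\phi$ is an isomorphism, $\mathcal B$ is a basis of $U_q(\mathfrak{sl}^{*}_2)$, and (2) and (3) follow. Note this argument uses nothing about $q$ being a non-root of unity. If one prefers to avoid invoking the vector-space structure of a smash product, the identical conclusion follows from Bergman's Diamond Lemma after orienting the relations so as to rewrite $K^{\pm1}E$, $K^{\pm1}F$, $FE$ and $K^{\pm1}K^{\mp1}$; the few overlap ambiguities are routine to resolve.

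For (1), I would identify $U_q(\mathfrak{sl}^{*}_2)$ (equivalently $A$) with an iterated skew extension of $\mathbb{C}$. The polynomial ring $\mathbb{C}[E,F]$ is a commutative Noetherian domain, and $\sigma\in\mathrm{Aut}(\mathbb{C}[E,F])$ determined by $\sigma(E)=q^2E$, $\sigma(F)=q^{-2}F$ is an algebra automorphism (it rescales the two free generators by nonzero scalars). By the basis established in (2), sending $K$ to the skew variable identifies $U_q(\mathfrak{sl}^{*}_2)$ with the skew Laurent polynomial ring $\mathbb{C}[E,F][K^{\pm1};\sigma]$. Such a skew Laurent ring over a Noetherian domain with respect to an automorphism is again Noetherian (a Hilbert-basis-type argument on the degree in $K$) and a domain (compare leading terms in $K$), which gives (1).

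The only genuine obstacle here is the linear independence of $\mathcal B$, i.e.\ the PBW property; everything else is formal. My preferred route packages this into the standard fact that the underlying vector space of a smash product $B\sharp H$ is $B\otimes H$, so that the monomial basis of $A$ is immediate and no Diamond-Lemma bookkeeping is needed. The only points I would double-check are that the $\mathbb{C}G$-action \eqref{group-action-on-alg} is indeed by algebra automorphisms (clear, since it is determined on the generators $E,F$ by invertible scalars and the relation $EF=FE$ is preserved), so that the smash product is well defined, and that $\phi$ is well defined, which is immediate from the computation of the defining relations inside $A$ above.
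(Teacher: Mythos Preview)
Your argument is correct. The main difference from the paper is the order in which the three parts are established and which auxiliary realization does the work. The paper first realizes $U_q(\mathfrak{sl}^{*}_2)$ as the iterated Ore extension $\mathbb{C}[K,K^{-1}][E,\alpha_1,0][F,\alpha_2,0]$ (with $\alpha_1(K)=q^{-2}K$ and $\alpha_2(E^iK^k)=q^{2k}E^iK^k$), which immediately gives (1) and (2) by standard Ore-extension facts; it then proves (3) separately by writing down the obvious surjection onto $\mathbb{C}[E,F]\sharp\mathbb{C}G$ and comparing bases on both sides. You instead use the smash product as the primary object: the known vector-space decomposition $\mathbb{C}[E,F]\otimes\mathbb{C}G$ gives the PBW basis for free, so (2) and (3) come out together, and then you deduce (1) by viewing the algebra as the skew Laurent ring $\mathbb{C}[E,F][K^{\pm1};\sigma]$. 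Your route is a bit more economical for (2)--(3) since it avoids a separate PBW argument, while the paper's Ore-extension presentation packages (1) and (2) together; both rest on the same circle of standard results about skew/Ore extensions of Noetherian domains.
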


\begin{proof}
It is easy to check that the algebra $U_{q}(\mathfrak{sl}^{*}_2)$ can be realized as the iterated Ore extensions $\mathbb{C}[K,K^{-1}][E,\alpha_1,0][F,\alpha_2,0]$, where $\alpha_1: \mathbb{C}[K,K^{-1}]\longrightarrow\mathbb{C}[K,K^{-1}]$ and $\alpha_2: \mathbb{C}[K,K^{-1}][E,\alpha_1,0]\longrightarrow \mathbb{C}[K,K^{-1}][E,\alpha_1,0]$ are automorphisms respectively defined by
$\alpha_1(K) =  q^{-2} K$ and $\alpha_2(E^i K^k) =  q^{2k} E^i K^k$. Hence the statements in ${\rm (1)}$ and ${\rm (2)}$ hold.

Now we prove ${\rm (3)}$. Define the algebra homomorphism $\varphi : \mathbb{C} \langle K,K^{-1},E,F\rangle \longrightarrow \mathbb{C}[E,F] \sharp \mathbb{C} G$ by
\begin{eqnarray*}\label{}
\varphi(K) = 1 \sharp K, \ \ \varphi(K^{-1}) = 1 \sharp K^{-1}, \ \ \varphi(E) = E \sharp 1 \ \ {\rm and} \ \ \varphi(F) = F \sharp 1,
\end{eqnarray*}
then $\varphi$ is surjective.
Denote by $J$ the ideal of $\mathbb{C} \langle K,K^{-1},E,F\rangle$ generated by
\[KK^{-1} -1, \quad K^{-1}K - 1, \quad KE - q^2EK, \quad KF - q^{-2}FK \quad {\rm and} \quad E F - F E.\]
One can check that $\varphi(J) = 0$, then $\varphi$ can induce a natural surjective algebra homomorphism $$\overline{\varphi}: U_{q}(\mathfrak{sl}^{*}_2) \longrightarrow \mathbb{C}[E,F] \sharp \mathbb{C} G.$$
Since $\{ E^i F^j K^k | i,j \in \mathbb{Z}^{\geq 0}, k\in \mathbb{Z}\}$ (resp. $\{ E^i F^j \sharp K^k | i,j \in \mathbb{Z}^{\geq 0}, k\in \mathbb{Z}\}$) is a basis of $U_{q}(\mathfrak{sl}^{*}_2)$ (resp. $\mathbb{C}[E,F] \sharp \mathbb{C} G$), then $\overline{\varphi}$ is an isomorphism.
Therefore, $U_{q}(\mathfrak{sl}^{*}_2) \cong \mathbb{C}[E,F] \sharp \mathbb{C} G$.
\end{proof}

Let $V$ be the 2-dimensional vector space with a basis $\{E,F\}$. Via the $\mathbb{C} G$-action in $(\ref{group-action-on-alg})$,
 $V$ can be considered as a $\mathbb{C} G$-module. Denote by $T_\mathbb{C}(V)$ the tensor algebra generated by $V$. Then $T_\mathbb{C}(V)$ has a natural $\mathbb{C} G$-module algebra structure and
 one can form the smash product algebra $T_\mathbb{C}(V) \sharp \mathbb{C} G$. The polynomial algebra $\mathbb{C}[E,F]$ can be identified with $T_\mathbb{C}(V) / \langle I\rangle$, where $I = {\rm Span}_\mathbb{C}\{ EF - FE \}$.
 Take $\kappa: I \longrightarrow \mathbb{C} G \oplus (V \otimes \mathbb{C} G)$ to be a $\mathbb{C}$-linear map. The filtered $\mathbb{C}$-algebra $U_{q}(\mathfrak{sl}^{*}_2,\kappa)$ is defined to be the following quotient algebra
\begin{eqnarray}\label{definition-of-relations-deformations-of-prequantum-group}
U_{q}(\mathfrak{sl}^{*}_2,\kappa) = \frac{T_\mathbb{C}(V) \sharp \mathbb{C} G}{\langle EF - FE - \kappa(EF - FE)\rangle},
\end{eqnarray}
where the degree of each element in $\mathbb{C} G$ is 0, and the degrees of $E$ and $F$ are both 1. In fact, $U_{q}(\mathfrak{sl}^{*}_2,\kappa)$ can be presented by four generators $K, K^{-1}, E, F$ and the relations
\begin{gather*}\label{relations-deformations-of-prequantum-group}
  KK^{-1}=K^{-1}K=1,\ \ KE=q^2EK,\ \ KF=q^{-2}FK,\ \ EF - FE = \kappa(EF - FE).
\end{gather*}

Now we characterize when $U_{q}(\mathfrak{sl}^{*}_2,\kappa)$ is a PBW-deformation of $U_{q}(\mathfrak{sl}^{*}_2)$.

\begin{thm}\label{theorem-about-determination-of-PBW-deformations-of-new-type-quantum-groups}
The filtered $\mathbb{C}$-algebra $U_{q}(\mathfrak{sl}^{*}_2,\kappa)$ is a PBW-deformation of $U_{q}(\mathfrak{sl}^{*}_2)$ if and only if
$\kappa(EF - FE) = {\mathop\sum\limits_{l=-m}^{m}} a_l K^{l},$
where $m \in \mathbb{Z}^{\geq0}$ and $a_l\in \mathbb{C}~(-m \leq l \leq m)$.
\end{thm}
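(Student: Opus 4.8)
The plan is to apply the Walton--Witherspoon criterion for PBW-deformations of smash product algebras $B \sharp H$ with $B$ Koszul (here $B = \mathbb{C}[E,F]$, which is Koszul since it is a polynomial algebra in two variables, and $H = \mathbb{C}G$). By Proposition \ref{realization-of-new-type-quantum-group-as-smash-product-alg}(3) we may write $U_{q}(\mathfrak{sl}^{*}_2) = T_\mathbb{C}(V) \sharp \mathbb{C}G / \langle EF - FE \rangle$, and $U_{q}(\mathfrak{sl}^{*}_2,\kappa)$ is the deformation obtained by replacing the relation $r := EF - FE$ by $r - \kappa(r)$, where $\kappa(r) \in \mathbb{C}G \oplus (V \otimes \mathbb{C}G)$. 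Write $\kappa(r) = \kappa_0(r) + \kappa_1(r)$ with $\kappa_0(r) \in \mathbb{C}G$ (the degree-$0$ part) and $\kappa_1(r) \in V \otimes \mathbb{C}G$ (the degree-$1$ part). The Walton--Witherspoon conditions amount to: (i) a $\mathbb{C}G$-equivariance condition forcing $\kappa$ to be a map of $\mathbb{C}G$-modules (with the adjoint/conjugation action on the target); and (ii) two further consistency conditions coming from the overlap relations in $T_\mathbb{C}(V) \sharp \mathbb{C}G$, one of which is vacuous here because $\dim I = 1$ leaves no nontrivial quadratic overlaps, and one of which constrains $\kappa_0$ and $\kappa_1$ against each other.

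The first step is to unwind condition (i). Since $G$ acts diagonally on $V = \mathbb{C}E \oplus \mathbb{C}F$ with $K \cdot E = q^2 E$ and $K \cdot F = q^{-2}F$, the one-dimensional space $I = \mathbb{C}\,r$ carries the trivial $G$-action ($K$ scales $EF - FE$ by $q^2 q^{-2} = 1$). Hence $\kappa$ being $\mathbb{C}G$-linear forces $\kappa(r)$ to lie in the $G$-invariants of $\mathbb{C}G \oplus (V \otimes \mathbb{C}G)$ under conjugation. On $\mathbb{C}G$, conjugation by $K$ is trivial (the group is abelian), so every element $\sum_{l} a_l K^l$ is invariant; this yields the claimed family $\kappa_0(r) = \sum_{l=-m}^{m} a_l K^l$. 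On $V \otimes \mathbb{C}G$, conjugation by $K$ sends $E \otimes K^l \mapsto q^2\, E \otimes K^l$ and $F \otimes K^l \mapsto q^{-2}\, F \otimes K^l$; since $q$ is not a root of unity, $q^{\pm 2} \neq 1$, so there are no nonzero invariants and $\kappa_1(r) = 0$. Thus the equivariance condition alone already pins $\kappa(r)$ down to $\sum_{l=-m}^m a_l K^l$.

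The second step is to check that the remaining Walton--Witherspoon consistency condition(s) impose no further restriction. Because $\dim I = 1$, there is a unique relation and hence no "overlap" of two relations in degree $3$; the Jacobi-type condition that usually links $\kappa_0, \kappa_1$, and the pairing between $\kappa$ and the defining relations of $B$ becomes a condition purely on $\kappa_0$, and since $\kappa_0(r)$ is central-looking in the relevant sense (it is a $G$-invariant element of $\mathbb{C}G$) and $\kappa_1 = 0$ already, one verifies directly that it is automatically satisfied. Concretely, I would spell out the map $\kappa$ extended to the appropriate subspace, form the associated "obstruction" expressions in $T_\mathbb{C}(V)\sharp \mathbb{C}G$ modulo lower-degree terms, and observe they vanish. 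Conversely, for any such $\kappa(r) = \sum_{l=-m}^m a_l K^l$, the algebra $U_{q}(\mathfrak{sl}^{*}_2,\kappa)$ has the expected PBW basis $\{E^i F^j K^k\}$ — this can be seen directly via an iterated Ore extension argument exactly as in the proof of Proposition \ref{realization-of-new-type-quantum-group-as-smash-product-alg}, since $EF = FE + \sum_l a_l K^l$ and the automorphisms $\alpha_1, \alpha_2$ are unaffected — which confirms it is a genuine PBW-deformation.

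The main obstacle I anticipate is bookkeeping rather than conceptual: correctly transcribing the Walton--Witherspoon conditions for this particular $(B, H)$, keeping careful track of which tensor factor the group elements act on and with which (left versus adjoint) action, and confirming that the potentially nontrivial degree-$3$ consistency constraint genuinely collapses. Once the $G$-module structure on $I$ and on $\mathbb{C}G \oplus (V \otimes \mathbb{C}G)$ is correctly identified, the non-root-of-unity hypothesis on $q$ does all the work in killing the degree-$1$ part, and the abelianness of $G$ makes the degree-$0$ part completely unconstrained; the converse direction is then a routine Ore-extension verification.
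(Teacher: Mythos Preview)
Your proposal is correct and follows essentially the same approach as the paper: both invoke the Walton--Witherspoon criterion and reduce everything to the single $\mathbb{C}G$-equivariance condition $\kappa(K\cdot r)=K\cdot\kappa(r)$, which kills the $V\otimes\mathbb{C}G$ component because $q^{\pm 2}\neq 1$. The paper is simply terser, citing \cite[Theorem~3.1]{WW2014} to collapse all the conditions to that one equation, whereas you unpack why the degree-$3$ overlap/Jacobi conditions are vacuous (since $\dim I=1$) and add a direct Ore-extension check for the converse direction.
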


\begin{proof}
Set
$f_a = {\mathop\sum\limits_{l=-m_1}^{m_1}} a_l K^{l},
f_b = {\mathop\sum\limits_{l=-m_2}^{m_2}} b_l K^{l}$ and
$f_c = {\mathop\sum\limits_{l=-m_3}^{m_3}} c_l K^{l}$
with $m_i \in \mathds{Z}^{\geq0}~(1\leq i \leq 3)$.
Then we can always assume
$\kappa(EF - FE) = f_a + E \otimes f_b + F \otimes f_c.$
It follows from Theorem 3.1 in \cite{WW2014} that $U_{q}(\mathfrak{sl}^{*}_2,\kappa)$ is a PBW-deformation of $U_{q}(\mathfrak{sl}^{*}_2)$ if and only if
\begin{eqnarray}\label{theorem-about-determination-of-PBW-deformations-of-new-type-quantum-groups-eq-proof}
\kappa(K\cdot (EF - FE)) = K\cdot \kappa(EF - FE),
\end{eqnarray}
where $\cdot$ on the right is the left $\mathbb{C} G$-adjoint action on $\mathbb{C} G \oplus (V \otimes \mathbb{C} G)$ (see \cite[Section 1A]{WW2014}).
Since
\begin{eqnarray*}\label{}
\kappa(K\cdot (EF - FE)) &=& \kappa[(K\cdot E)(K\cdot F) - (K\cdot F)(K\cdot E)] \\
&=& \kappa[(q^2 E)(q^{-2} F) - (q^{-2} F)(q^2 E)]\\
&=& \kappa(EF - FE)\\
&=& f_a + E \otimes f_b + F \otimes f_c,\\
K\cdot \kappa(EF - FE) &=& K\cdot (f_a + E \otimes f_b + F \otimes f_c)\\
&=& f_a + q^2 E \otimes f_b + q^{-2} F \otimes f_c,
\end{eqnarray*}
and $q \in \mathbb{C}^\times$ is not a root of unity, then  $(\ref{theorem-about-determination-of-PBW-deformations-of-new-type-quantum-groups-eq-proof})$ holds if and only if $f_b = f_c =0.$
\end{proof}

The notion of Hopf PBW-deformation for $U_{q}(\mathfrak{sl}^{*}_2)$ is introduced below.

\begin{defn}
A PBW-deformation $U_{q}(\mathfrak{sl}^{*}_2,\kappa)$ of $U_{q}(\mathfrak{sl}^{*}_2)$ is called a Hopf PBW-deformation if it is a Hopf algebra
with $K,K^{-1}$ grouplike elements and $E,F$ skew-primitive elements.
\end{defn}

A corollary of Theorem \ref{theorem-about-determination-of-PBW-deformations-of-new-type-quantum-groups} answers when $U_{q}(\mathfrak{sl}^{*}_2,\kappa)$ is a Hopf PBW-deformation of $U_{q}(\mathfrak{sl}^{*}_2)$.

\begin{cor}\label{Hopf-PBW-deformation}
The algebra $U_{q}(\mathfrak{sl}^{*}_2,\kappa)$
 is a Hopf PBW-deformation of $U_{q}(\mathfrak{sl}^{*}_2)$ if and only if
 \begin{eqnarray}\label{Hopf-PBW-deformation-algebra-def-relations-determined-by-k}
\kappa(EF - FE) = a (K^m - K^{-m}),
\end{eqnarray}
where $a \in \mathbb{C}$ and $m\in \mathbb{Z}$. In this case, the Hopf algebra structure maps of $U_{q}(\mathfrak{sl}^{*}_2,\kappa)$ are given by
\begin{gather}
\Delta(K)=K \otimes K, \quad \Delta(E)=E \otimes K^t + K^s \otimes E, \quad \Delta(F)=F \otimes K^{-s} + K^{-t} \otimes F,\nonumber\\
\label{structure-maps-Hopf-PBW-deformation}\varepsilon(K) = 1, \quad \varepsilon(E) = \varepsilon(F) = 0,\\
S(K) = K^{-1}, \quad S(E) = -K^{-s} E K^{-t}, \quad S(F) = -K^t F K^s,\nonumber
\end{gather}
where $s,t\in \mathbb{Z}$ with $t-s=m$.
\end{cor}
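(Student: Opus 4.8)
The plan is to combine Theorem~\ref{theorem-about-determination-of-PBW-deformations-of-new-type-quantum-groups} with the extra constraint that the deformation carries a Hopf structure of the prescribed form. By Theorem~\ref{theorem-about-determination-of-PBW-deformations-of-new-type-quantum-groups} we already know that $U_{q}(\mathfrak{sl}^{*}_2,\kappa)$ is a PBW-deformation precisely when $\kappa(EF-FE)=\sum_{l=-m}^{m}a_l K^l$ for some $m\in\mathbb{Z}^{\geq 0}$ and scalars $a_l$. So the first step is to assume such a PBW-deformation and to ask: for which such $\kappa$ can we equip $U_{q}(\mathfrak{sl}^{*}_2,\kappa)$ with a Hopf algebra structure in which $K,K^{-1}$ are grouplike and $E,F$ are skew-primitive? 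Since $K$ is grouplike and $KE=q^2EK$, $KF=q^{-2}FK$, a standard argument (the eigenvalue of the adjoint $K$-action on $E$ forces the two ``legs'' of $\Delta(E)$ to be powers of $K$) shows that skew-primitivity of $E$ must take the form $\Delta(E)=E\otimes K^t+K^s\otimes E$ for some integers $s,t$, and likewise $\Delta(F)=F\otimes K^{-s'}+K^{-t'}\otimes F$; imposing that $\Delta$ be an algebra map and compatible with the defining relations $KE=q^2EK$, $KF=q^{-2}FK$ pins down $s'=s$ and $t'=t$, and forces $t-s$ to be determined by the degree-zero part. The counit $\varepsilon$ and antipode $S$ are then uniquely determined by the Hopf axioms, giving the formulas in $(\ref{structure-maps-Hopf-PBW-deformation})$; checking that $S$ as written genuinely is an antipode (i.e.\ satisfies the convolution-inverse identity on generators) is a direct computation.

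The decisive step is to feed the relation $EF-FE=\kappa(EF-FE)=\sum_{l}a_l K^l$ through the coproduct and demand consistency. Concretely, apply $\Delta$ to both sides of $EF-FE=\sum_l a_l K^l$. The right-hand side becomes $\sum_l a_l K^l\otimes K^l$. The left-hand side is computed from $\Delta(E)=E\otimes K^t+K^s\otimes E$ and $\Delta(F)=F\otimes K^{-s}+K^{-t}\otimes F$; expanding $\Delta(E)\Delta(F)-\Delta(F)\Delta(E)$ in the algebra $U_{q}(\mathfrak{sl}^{*}_2,\kappa)\otimes U_{q}(\mathfrak{sl}^{*}_2,\kappa)$ and using the commutation relations, the ``mixed'' terms (those involving $E\otimes F$, $F\otimes E$, etc.) must cancel, and the ``diagonal'' term $(EF-FE)\otimes K^{t-s}+K^{s-t}\otimes(EF-FE)$ — or a suitable variant — must reproduce $\sum_l a_l K^l\otimes K^l$ after substituting $EF-FE=\sum_l a_l K^l$ again. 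Matching coefficients of the monomials $K^l\otimes K^{l'}$ on the two sides forces the support of $\{a_l\}$ to be contained in $\{m,-m\}$ for $m=t-s$, and forces $a_m=-a_{-m}=:a$. This is exactly the assertion $\kappa(EF-FE)=a(K^m-K^{-m})$. I expect this coproduct-matching computation — keeping track of which bidegree each term lands in and showing the cross terms vanish only when the two legs are ``balanced'' powers of $K$ — to be the main obstacle; everything else is bookkeeping.

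Conversely, for the ``if'' direction, given $\kappa(EF-FE)=a(K^m-K^{-m})$ one simply exhibits the Hopf structure $(\ref{structure-maps-Hopf-PBW-deformation})$ with any $s,t$ satisfying $t-s=m$ and verifies the axioms: $\Delta$ is coassociative and an algebra homomorphism (it respects each defining relation — the $K$-relations are immediate, and the $E,F$-relation works out precisely because of the identity $EF-FE=a(K^m-K^{-m})$ together with $m=t-s$), $\varepsilon$ is a counit, and $S$ satisfies $\sum S(x_{(1)})x_{(2)}=\varepsilon(x)=\sum x_{(1)}S(x_{(2)})$ on the generators $K^{\pm1},E,F$, hence on all of $U_{q}(\mathfrak{sl}^{*}_2,\kappa)$. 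Finally I would note that the choice of $(s,t)$ with fixed $m=t-s$ only changes the Hopf structure by a twist and does not affect the isomorphism type, which is why the statement records $s,t$ as free integers subject to $t-s=m$; this remark connects to the subsequent Remark~\ref{classical-is-almost-unique-hopf-pbw-deformation} identifying $U_q(\mathfrak{sl}_2)$ as essentially the unique nontrivial case (namely $m=\pm1$, $a\ne0$, up to rescaling).
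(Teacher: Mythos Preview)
Your proposal is correct and follows the same underlying idea as the paper, but with one difference worth noting: the paper's proof is a two-line argument that combines Theorem~\ref{theorem-about-determination-of-PBW-deformations-of-new-type-quantum-groups} with a direct citation to \cite[Proposition~3.3]{JW}, which already establishes that for $\kappa(EF-FE)=\sum_l a_l K^l$ the algebra carries a Hopf structure with $K$ grouplike and $E,F$ skew-primitive if and only if $\kappa(EF-FE)=a(K^m-K^{-m})$, together with the explicit structure maps~\eqref{structure-maps-Hopf-PBW-deformation}. Your proposal instead works out that cited result from scratch: you impose skew-primitivity in the form $\Delta(E)=E\otimes K^t+K^s\otimes E$, $\Delta(F)=F\otimes K^{-s}+K^{-t}\otimes F$, expand $\Delta(EF-FE)$, observe that the cross terms cancel (forcing the matching exponents $s,t$ in $\Delta(E)$ and $\Delta(F)$), and then match $\sum_l a_l K^l\otimes K^m+\sum_l a_l K^{-m}\otimes K^l$ against $\sum_l a_l K^l\otimes K^l$ to conclude $a_l=0$ for $l\notin\{m,-m\}$ and $a_m=-a_{-m}$. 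This is exactly the computation behind \cite[Proposition~3.3]{JW}, so your route is more self-contained but not genuinely different; the advantage is that a reader need not consult the external reference, at the cost of a page of bookkeeping the paper chose to outsource.
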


\begin{proof}
Ji et al. in \cite[Proposition 3.3]{JW} showed that when $\kappa(EF - FE) = {\mathop\sum\limits_{l=-m}^{m}} a_l K^{l}$, the algebra $U_{q}(\mathfrak{sl}^{*}_2,\kappa)$ is a Hopf algebra with $K,K^{-1}$ grouplike elements and $E,F$ skew-primitive elements if and only if
$(\ref{Hopf-PBW-deformation-algebra-def-relations-determined-by-k})$ holds, and they explicitly described the Hopf algebra structure on $U_{q}(\mathfrak{sl}^{*}_2,\kappa)$ by the formulas in $(\ref{structure-maps-Hopf-PBW-deformation})$. Then this corollary follows from Theorem \ref{theorem-about-determination-of-PBW-deformations-of-new-type-quantum-groups}.
\end{proof}

Up to isomorphisms, we can classify all the nontrivial Hopf PBW-deformations of $U_{q}(\mathfrak{sl}^{*}_2)$.

\begin{prop}\label{isomorphic-relations-among-Hopf-PBW-deformations}
For any pair $(s,t)$ with $s,t\in \mathbb{Z}$ and $t-s=m$. Denote by $U^{s,t}_{q}(\mathfrak{sl}^{*}_2,\kappa)$ the nontrivial Hopf PBW-deformation of $U_{q}(\mathfrak{sl}^{*}_2)$ defined by
$(\ref{Hopf-PBW-deformation-algebra-def-relations-determined-by-k})$ and $(\ref{structure-maps-Hopf-PBW-deformation})$. Then as a Hopf algebra $$U^{s,t}_{q}(\mathfrak{sl}^{*}_2,\kappa) \cong U^{0,m}_{q}(\mathfrak{sl}^{*}_2,\kappa_m),$$
where $\kappa_m(EF - FE) = \frac{K^m - K^{-m}}{q^m - q^{-m}}$.
\end{prop}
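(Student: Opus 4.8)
The plan is to write down an explicit isomorphism of Hopf algebras
$$\phi\colon U^{s,t}_{q}(\mathfrak{sl}^{*}_2,\kappa)\longrightarrow U^{0,m}_{q}(\mathfrak{sl}^{*}_2,\kappa_m)$$
obtained by ``twisting'' the skew-primitive generators by powers of the grouplike $K$ and by rescaling. Write $\kappa(EF-FE)=a(K^{m}-K^{-m})$; since the deformation is nontrivial one has $a\in\mathbb{C}^{\times}$ and $m\neq 0$, so $q^{m}\neq q^{-m}$ because $q$ is not a root of unity. I would define $\phi$ on generators by
$$\phi(K)=K,\quad \phi(K^{-1})=K^{-1},\quad \phi(E)=EK^{s},\quad \phi(F)=a\,q^{2s}(q^{m}-q^{-m})\,FK^{-s}.$$

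First I would check that these images satisfy the defining relations of $U^{s,t}_{q}(\mathfrak{sl}^{*}_2,\kappa)$, so that $\phi$ extends to a well-defined algebra homomorphism. The relations $KK^{-1}=K^{-1}K=1$, $KE=q^{2}EK$ and $KF=q^{-2}FK$ hold automatically, since conjugating by a power of $K$ only produces a power of $q^{2}$ that cancels. For the commutator relation one uses $K^{s}F=q^{-2s}FK^{s}$ and $K^{-s}E=q^{-2s}EK^{-s}$ inside $U^{0,m}_{q}(\mathfrak{sl}^{*}_2,\kappa_m)$ to obtain $\phi(E)\phi(F)-\phi(F)\phi(E)=a\,q^{2s}(q^{m}-q^{-m})\,q^{-2s}(EF-FE)=a(q^{m}-q^{-m})\cdot\frac{K^{m}-K^{-m}}{q^{m}-q^{-m}}=a(K^{m}-K^{-m})$, which is exactly $\phi\bigl(a(K^{m}-K^{-m})\bigr)$; note that the precise scalar attached to $\phi(F)$ is forced here, being the one that turns the source coefficient $a$ into the normalized coefficient $\tfrac{1}{q^{m}-q^{-m}}$ of $\kappa_m$. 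Next I would verify the coalgebra compatibility $\Delta\circ\phi=(\phi\otimes\phi)\circ\Delta$ and $\varepsilon\circ\phi=\varepsilon$ on generators: the counit identity is clear, and for the coproduct one computes in $U^{0,m}_{q}(\mathfrak{sl}^{*}_2,\kappa_m)$ that $\Delta(EK^{s})=EK^{s}\otimes K^{m+s}+K^{s}\otimes EK^{s}$ and $\Delta(FK^{-s})=FK^{-s}\otimes K^{-s}+K^{-m-s}\otimes FK^{-s}$, which match $\phi(E)\otimes K^{t}+K^{s}\otimes\phi(E)$ and $\phi(F)\otimes K^{-s}+K^{-t}\otimes\phi(F)$ because $m+s=t$. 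Hence $\phi$ is a morphism of bialgebras, and therefore of Hopf algebras, since a bialgebra map between Hopf algebras automatically intertwines the antipodes.

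Finally I would prove that $\phi$ is bijective. By Corollary \ref{Hopf-PBW-deformation} both algebras are PBW-deformations of $U_{q}(\mathfrak{sl}^{*}_2)$, so both carry the PBW basis $\{E^{i}F^{j}K^{k}\mid i,j\in\mathbb{Z}^{\geq 0},\,k\in\mathbb{Z}\}$; using the $q$-commutation relations one sees that $\phi(E^{i}F^{j}K^{k})$ is a nonzero scalar multiple of $E^{i}F^{j}K^{k+s(i-j)}$, and $(i,j,k)\mapsto(i,j,k+s(i-j))$ is a bijection of $\mathbb{Z}^{\geq 0}\times\mathbb{Z}^{\geq 0}\times\mathbb{Z}$, so $\phi$ carries a basis to a basis and is an isomorphism. (Equivalently, one may simply write down the inverse $\psi$ with $\psi(K)=K$, $\psi(E)=EK^{-s}$ and $\psi(F)=\frac{1}{a\,q^{2s}(q^{m}-q^{-m})}FK^{s}$, and check that $\psi\phi=\mathrm{id}$ and $\phi\psi=\mathrm{id}$ on generators.) The argument involves no serious difficulty; the only step requiring care is the bookkeeping of powers of $q$ when moving $K^{\pm s}$ past $E$ and $F$, together with confirming that the scalar on $\phi(F)$ is exactly the one that rescales the commutator coefficient correctly — this is where the specific shape of $\kappa_m$ is used.
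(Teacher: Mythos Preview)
Your proof is correct and follows essentially the same approach as the paper: both construct an explicit Hopf algebra isomorphism by sending $K\mapsto K$ and twisting $E,F$ by $K^{\pm s}$ together with a scalar rescaling. The only cosmetic differences are that the paper builds the map in the opposite direction and splits the rescaling symmetrically as $(\tfrac{a_m}{a})^{1/2}$ on both $E$ and $F$, whereas you place the entire scalar on $F$ (which has the minor advantage of avoiding square roots); your verification is also more detailed than the paper's, which simply declares the check routine.
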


\begin{proof}
To avoid confusion, denote by $K, {K}^{-1}, E, F$ (resp. $K^\prime, {K^\prime}^{-1}, E^\prime, F^\prime$) the generators of $U^{s,t}_{q}(\mathfrak{sl}^{*}_2,\kappa)$ (resp. $U^{0,m}_{q}(\mathfrak{sl}^{*}_2,\kappa_m)$). Set $a_m = \frac{1}{q^m - q^{-m}}$ and define an algebra homomorphism $\vartheta: U^{0,m}_{q}(\mathfrak{sl}^{*}_2,\kappa_m) \longrightarrow U^{s,t}_{q}(\mathfrak{sl}^{*}_2,\kappa)$ via
\begin{eqnarray*}
\vartheta(K^\prime) = K,\ \ \
\vartheta(E^\prime) = (\frac{a_m}{a})^{\frac{1}{2}} K^{-s} E,\ \
\vartheta(F^\prime) = (\frac{a_m}{a})^{\frac{1}{2}} F K^{s}.
\end{eqnarray*}
It is routine to check that $\vartheta: U^{0,m}_{q}(\mathfrak{sl}^{*}_2,\kappa_0) \longrightarrow U^{s,t}_{q}(\mathfrak{sl}^{*}_2,\kappa)$ is a Hopf algebra isomorphism.
\end{proof}

\begin{remark}\label{classical-is-almost-unique-hopf-pbw-deformation}
Corollary $\ref{Hopf-PBW-deformation}$ shows that the Drinfeld-Jimbo quantum group $U_{q}(\mathfrak{sl}_2)$ is exactly the nontrivial Hopf PBW-deformation $U^{0,1}_{q}(\mathfrak{sl}^{*}_2,\kappa_1)$ of $U_{q}(\mathfrak{sl}^{*}_2)$,
while Proposition \ref{isomorphic-relations-among-Hopf-PBW-deformations} shows that each nontrivial Hopf PBW-deformation of $U_{q}(\mathfrak{sl}^{*}_2)$ can be reduced to the form $U^{0,m}_{q}(\mathfrak{sl}^{*}_2,\kappa_m)$ up to isomorphism.
Note that $U^{0,m}_{q}(\mathfrak{sl}^{*}_2,\kappa_m)$ has a Hopf subalgebra generated by $K^m,K^{-m},E,F$ which is just isomorphic to $U_{q^m}(\mathfrak{sl}_2)$.
Therefore, in a sense we can say that the Drinfeld-Jimbo quantum group is almost the unique nontrivial Hopf PBW-deformation of $U_{q}(\mathfrak{sl}^{*}_2)$.
\end{remark}

From now on, we can always assume that
\begin{eqnarray*}\label{}
U_{q}(\mathfrak{sl}^{*}_2,\kappa) = \left\{
 \begin{array}{ll}
U_{q}(\mathfrak{sl}^{*}_2),~\quad~&{\rm if}~\kappa = 0,\\
U^{0,m}_{q}(\mathfrak{sl}^{*}_2,\kappa_m),~\quad~&{\rm if}~\kappa \neq 0.
\end{array}
\right.
\end{eqnarray*}

\section{Non-semisimplicity of the category of finite dimensional modules of $U_{q}(\mathfrak{sl}^{*}_2)$}\label{section-2}

In this section, we will prove that the category $U_{q}(\mathfrak{sl}^{*}_2)\mbox{-}{\rm \bf mod}$ of finite dimensional $U_{q}(\mathfrak{sl}^{*}_2)$-modules is not semisimple, which shows a great difference between $U_{q}(\mathfrak{sl}^{*}_2)$ and $U_{q}(\mathfrak{sl}_2)$.

Analogous as the $U_{q}(\mathfrak{sl}_2)$ case \cite[Proposition 2.1]{Jan96}, we can similarly verify that the actions of $E$ and $F$ on any finite dimensional $U_{q}(\mathfrak{sl}^{*}_2,\kappa)$-module are nilpotent.

\begin{lem}\label{E-F-actions-are-nilpotent}
Let $M$ be a finite dimensional $U_{q}(\mathfrak{sl}^{*}_2,\kappa)$-module. Then there exist integers $r,s > 0$ such that $E^r M = 0$ and $F^s M = 0$.
\end{lem}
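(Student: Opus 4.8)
The plan is to mimic the classical argument for $U_q(\mathfrak{sl}_2)$ adapted to the commutative (or almost-commutative) relation $EF = FE + \kappa(EF-FE)$. First I would observe that $U_q(\mathfrak{sl}^{*}_2,\kappa)$ contains the group $G = \langle K, K^{-1}\rangle$, and on the finite-dimensional module $M$ the invertible operator $K$ has finitely many eigenvalues; let $V$ be the (nonzero) eigenspace for an eigenvalue $\lambda$ of $K$ with $|\lambda|$ maximal, or more precisely I would work with the generalized eigenspace decomposition $M = \bigoplus_{\lambda} M_\lambda$ under $K$. The key computation is the weight-shifting behaviour: from $KE = q^2 EK$ one gets $E M_\lambda \subseteq M_{q^2\lambda}$, and from $KF = q^{-2}FK$ one gets $F M_\lambda \subseteq M_{q^{-2}\lambda}$. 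Since $q$ is not a root of unity, the values $\{q^{2n}\lambda : n \in \mathbb{Z}\}$ are pairwise distinct, so only finitely many of them can occur as eigenvalues of $K$ on the finite-dimensional space $M$; hence for each $\lambda$ there is an integer bound beyond which $E^r M_\lambda = 0$ and $F^s M_\lambda = 0$.

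The next step is to make these bounds uniform. Because $K$ has only finitely many eigenvalues $\lambda_1,\dots,\lambda_p$ on $M$, and for each pair $(i,n)$ the scalar $q^{2n}\lambda_i$ lies in $\{\lambda_1,\dots,\lambda_p\}$ for only finitely many $n$, I can choose $N$ so large that $q^{2N}\lambda_i \notin \{\lambda_1,\dots,\lambda_p\}$ for every $i$ and every shift of length $\geq N$ in the positive direction, and similarly in the negative direction. Then for any $m \in M$, decomposing $m = \sum_i m_i$ with $m_i \in M_{\lambda_i}$, we have $E^N m_i \in M_{q^{2N}\lambda_i}$; but $M_{q^{2N}\lambda_i} = 0$ since $q^{2N}\lambda_i$ is not an eigenvalue of $K$. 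Therefore $E^N m = 0$, i.e. $E^N M = 0$; symmetrically $F^N M = 0$ for $N$ large enough, and taking $r = s = N$ (or the max of the two) finishes the argument. Note this argument uses only the relations $KE = q^2 EK$ and $KF = q^{-2}FK$, which hold in every $U_q(\mathfrak{sl}^{*}_2,\kappa)$ regardless of $\kappa$, so the lemma holds uniformly.

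The only mild subtlety — and the point I would be careful about — is whether to phrase things with honest eigenspaces or generalized eigenspaces. Over $\mathbb{C}$ the operator $K$ need not be diagonalizable on $M$ a priori (at this stage we have not yet restricted to weight modules), so I would use the generalized eigenspace decomposition $M = \bigoplus_\lambda \widetilde{M}_\lambda$ where $\widetilde{M}_\lambda = \{v \in M : (K-\lambda)^{\dim M} v = 0\}$; one checks from $KE = q^2 EK$ that $(K - q^2\lambda)E = q^2 E(K-\lambda)$, hence $(K-q^2\lambda)^j E = q^{2j} E (K-\lambda)^j$, so $E \widetilde{M}_\lambda \subseteq \widetilde{M}_{q^2\lambda}$, and likewise $F \widetilde{M}_\lambda \subseteq \widetilde{M}_{q^{-2}\lambda}$. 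With this the rest goes through verbatim. I do not expect any genuine obstacle here; the statement is a soft finiteness consequence of the weight-shift relations together with $q$ not being a root of unity.
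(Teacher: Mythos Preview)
Your argument is correct and is exactly the one the paper has in mind: the paper does not give its own proof but simply refers to \cite[Proposition 2.1]{Jan96}, whose proof is precisely the weight-shifting plus finite-dimensionality argument you wrote out (including the use of generalized eigenspaces). Your observation that only the relations $KE=q^2EK$, $KF=q^{-2}FK$ and the hypothesis that $q$ is not a root of unity are needed---so the proof is uniform in $\kappa$---is also the reason the paper can invoke the $U_q(\mathfrak{sl}_2)$ argument verbatim.
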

%

In the following proposition, we classify all the finite dimensional simple $U_{q}(\mathfrak{sl}^{*}_2)$-modules.

\begin{prop}\label{simple-modules-of-new-type-quantum-groups}
Let $M$ be a finite dimensional simple $U_{q}(\mathfrak{sl}^{*}_2)$-module. Then ${\rm dim}(M) = 1$ and the module structure on $M$ can be given as follows
\begin{eqnarray}\label{simple-module}
K v_0 = \lambda v_0,\quad E v_0 = F v_0 = 0,
\end{eqnarray}
where $\lambda \in \mathbb{C}^\times$ and $\{ v_0 \}$ is a basis of $M$.
\end{prop}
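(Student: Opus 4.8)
The plan is to exploit the relation $EF = FE$ together with Lemma \ref{E-F-actions-are-nilpotent} to produce a common eigenvector of $K$ that is annihilated by both $E$ and $F$; such a vector automatically spans a one-dimensional submodule, and simplicity then forces $M$ itself to be one-dimensional.

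First I would introduce the common kernel $M_0 = \{ v \in M : Ev = 0 = Fv \}$ and argue that $M_0 \neq 0$. Since $E$ acts nilpotently on $M$ by Lemma \ref{E-F-actions-are-nilpotent}, we have $\ker(E|_M) \neq 0$. Because $EF = FE$, this subspace is stable under $F$: if $Ev = 0$ then $E(Fv) = F(Ev) = 0$. The restriction of $F$ to $\ker(E|_M)$ is again nilpotent, hence has nonzero kernel, and any vector in that kernel lies in $M_0$. (Equivalently, by Proposition \ref{realization-of-new-type-quantum-group-as-smash-product-alg}${\rm (2)}$ the subalgebra generated by $E,F$ is the commutative polynomial algebra $\mathbb{C}[E,F]$, acting on $M$ by commuting nilpotent operators, so it has a common kernel vector.)

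Next I would check that $M_0$ is stable under $K^{\pm 1}$. From $KE = q^2 EK$ one gets $E(Kv) = q^{-2} K(Ev)$, and from $KF = q^{-2} FK$ one gets $F(Kv) = q^2 K(Fv)$; hence $v \in M_0$ implies $Kv \in M_0$, and likewise $K^{-1}v \in M_0$. Since $\mathbb{C}$ is algebraically closed and $M_0 \neq 0$, the operator $K|_{M_0}$ has an eigenvector $v_0$ with eigenvalue $\lambda$, and $\lambda \in \mathbb{C}^\times$ because $K$ is invertible. Then $\mathbb{C}v_0$ satisfies $Kv_0 = \lambda v_0$, $K^{-1}v_0 = \lambda^{-1}v_0$, $Ev_0 = Fv_0 = 0$, so it is closed under all four generators, i.e.\ a nonzero submodule of $M$. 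By simplicity $M = \mathbb{C}v_0$, which gives $\dim(M) = 1$ and the asserted module structure $(\ref{simple-module})$, with $\lambda$ an arbitrary element of $\mathbb{C}^\times$.

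The only step requiring any care is the first one — guaranteeing a single vector killed simultaneously by $E$ and $F$ — but this is exactly where the new relation $EF = FE$ (which distinguishes $U_q(\mathfrak{sl}^{*}_2)$ from $U_q(\mathfrak{sl}_2)$) does the work: commuting nilpotent operators always share a kernel vector. Everything after that is the standard highest-weight-style argument, trivialized here because $E$ and $F$ both act as zero on $v_0$.
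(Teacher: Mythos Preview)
Your proof is correct and follows essentially the same route as the paper's: the paper takes $M_0=\ker E$, shows it is a submodule, then sets $N_0=\ker(F|_{M_0})$ and finally picks a $K$-eigenvector in $N_0$, which is exactly your argument with the two kernels packaged into one joint kernel. The only cosmetic difference is that the paper observes $M_0$ and $N_0$ are full $U_q(\mathfrak{sl}_2^*)$-submodules at each stage, whereas you verify $K$-stability only for the joint kernel; both versions rest on the same three ingredients (nilpotence from Lemma~\ref{E-F-actions-are-nilpotent}, $EF=FE$, and the $q$-commutation with $K$).
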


\begin{proof}
By Lemma $\ref{E-F-actions-are-nilpotent}$, we know that 0 is an eigenvalue of the linear transformation $E: M \longrightarrow M$.
Set $M_0 = \left\{v \in M | E v = 0 \right\}$. Since for any $v \in M_0$ one has
 $E F v = F E v = 0$ and $E K v = q^{-2} K E v = 0,$
then $M_0$ is a submodule of $M$. Similarly,
$N_0 = \left\{v \in M_0 | F v = 0 \right\}$ is also a submodule of $M$.
Assume that $\lambda \in \mathbb{C}^\times$ is an eigenvalue of the linear transformation $K: N_0 \longrightarrow N_0$
 and $v_0 \in N_0$ is an eigenvector of $K$ corresponding to $\lambda$, i.e., $K v_0 = \lambda v_0$,
 then $L = \mathbb{C} v_0$ is also a submodule of $M$. Since $M$ is simple, then $M = L$. The proof is finished.
\end{proof}

From now on, we denote by $\mathbb{C}_\lambda$ the simple $U_{q}(\mathfrak{sl}^{*}_2)$-module defined by $(\ref{simple-module})$.
The extension group ${\rm Ext}^1_{U_{q}(\mathfrak{sl}^{*}_2)}(\mathbb{C}_\lambda,\mathbb{C}_\mu)$ of extensions of $\mathbb{C}_\mu$ by $\mathbb{C}_\lambda$
can be characterized as follows.

\begin{prop}\label{1st-ext-group-of-simple-modules}
Assume that $\lambda, \mu \in \mathbb{C}^\times$.\\
${\rm (1)}$ If $\lambda \neq q^2 \mu$ and $\lambda \neq q^{-2} \mu$, then one has
${\rm Ext}^1_{U_{q}(\mathfrak{sl}^{*}_2)}(\mathbb{C}_\lambda,\mathbb{C}_\mu) \cong \left\{
 \begin{array}{ll}
0,~\quad~&{\rm if}~\lambda \neq \mu,\\
\mathbb{C},~\quad~&{\rm if}~\lambda = \mu.
\end{array}
\right.$\\
${\rm (2)}$ If $\lambda = q^2 \mu$ or $\lambda = q^{-2} \mu$, then one has ${\rm Ext}^1_{U_{q}(\mathfrak{sl}^{*}_2)}(\mathbb{C}_\lambda,\mathbb{C}_\mu) \cong \mathbb{C}$.
\end{prop}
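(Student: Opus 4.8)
The plan is to compute ${\rm Ext}^1_{U_q(\mathfrak{sl}^*_2)}(\mathbb{C}_\lambda,\mathbb{C}_\mu)$ by the most elementary route: parametrizing equivalence classes of two-dimensional extension modules. By Proposition \ref{simple-modules-of-new-type-quantum-groups} every short exact sequence $0\to\mathbb{C}_\mu\xrightarrow{\iota} M\xrightarrow{\pi}\mathbb{C}_\lambda\to 0$ has $M$ two-dimensional; fixing $v_1=\iota(\text{generator})$ and a lift $v_0$ of the generator of $\mathbb{C}_\lambda$, the module structure is forced to have the shape
\[Kv_1=\mu v_1,\quad Ev_1=Fv_1=0,\quad Kv_0=\lambda v_0+av_1,\quad Ev_0=bv_1,\quad Fv_0=cv_1\]
for scalars $a,b,c\in\mathbb{C}$. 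First I would impose the defining relations of $U_q(\mathfrak{sl}^*_2)$ on this ansatz: $EF=FE$ holds automatically (both sides kill $v_0$), invertibility of $K$ is automatic since $\lambda\mu\neq 0$, and $KE=q^2EK$, $KF=q^{-2}FK$ applied to $v_0$ reduce exactly to $b(\mu-q^2\lambda)=0$ and $c(\mu-q^{-2}\lambda)=0$. So extension modules correspond precisely to triples $(a,b,c)$ in the linear subspace $S=\{(a,b,c)\in\mathbb{C}^3:\ b(\mu-q^2\lambda)=0,\ c(\mu-q^{-2}\lambda)=0\}$.

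Next I would identify when two triples give equivalent extensions. An equivalence must fix $v_1$ and send $v_0\mapsto v_0+tv_1$ for some $t\in\mathbb{C}$; a one-line computation shows this replaces $(a,b,c)$ by $(a+t(\mu-\lambda),b,c)$. Hence, as $\mathbb{C}$-vector spaces (the Baer sum corresponding to coordinatewise addition, which I would record as a routine verification), $${\rm Ext}^1_{U_q(\mathfrak{sl}^*_2)}(\mathbb{C}_\lambda,\mathbb{C}_\mu)\ \cong\ S\big/\{(t(\mu-\lambda),0,0):t\in\mathbb{C}\}.$$

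Finally I would run the case analysis, using throughout that $q$ is not a root of unity, so $q^2,q^4\neq 1$ and the three conditions $\lambda=q^2\mu$, $\lambda=q^{-2}\mu$, $\lambda=\mu$ are pairwise incompatible. In case (1), $\mu\neq q^2\lambda$ and $\mu\neq q^{-2}\lambda$ force $b=c=0$, so $S\cong\mathbb{C}$ in the $a$-coordinate and the quotient is $0$ when $\lambda\neq\mu$ and $\mathbb{C}$ when $\lambda=\mu$ (and $\lambda=\mu$ indeed falls in this case), giving part (1). In case (2), exactly one of $\lambda=q^2\mu$, $\lambda=q^{-2}\mu$ holds; say $\lambda=q^2\mu$, i.e.\ $\mu=q^{-2}\lambda$. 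Then $\mu-q^2\lambda=\lambda(q^{-2}-q^2)\neq0$ forces $b=0$ while $\mu-q^{-2}\lambda=0$ leaves $c$ free, and since $\mu=q^{-2}\lambda\neq\lambda$ the $a$-component of the quotient vanishes, so ${\rm Ext}^1\cong\mathbb{C}$; the case $\lambda=q^{-2}\mu$ is symmetric with the roles of $b$ and $c$ exchanged. This yields part (2). I do not expect a real obstacle here — the whole argument is linear algebra — the only care needed is checking exhaustiveness of the cases and that the parametrization is $\mathbb{C}$-linear for the Baer sum; a heavier alternative would be to build the length-three Koszul-type projective resolution of $\mathbb{C}_\lambda$ afforded by the iterated Ore extension structure of Proposition \ref{realization-of-new-type-quantum-group-as-smash-product-alg} and read off ${\rm Ext}^1$ from $\mathrm{Hom}_{U_q(\mathfrak{sl}^*_2)}(-,\mathbb{C}_\mu)$, but the direct computation is more transparent.
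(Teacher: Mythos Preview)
Your proposal is correct and follows essentially the same route as the paper: both parametrize two-dimensional extension modules by three scalars, impose the defining relations of $U_q(\mathfrak{sl}^*_2)$ to cut out a linear subspace, and then quotient by the change-of-lift action $v_0\mapsto v_0+tv_1$. The only cosmetic difference is that the paper places $\mathbb{C}_\lambda$ as the submodule and $\mathbb{C}_\mu$ as the quotient (the opposite of your convention), but since the final answer is symmetric under $\lambda\leftrightarrow\mu$ this is immaterial.
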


\begin{proof}
 Consider a short exact sequence of $U_{q}(\mathfrak{sl}^{*}_2)$-modules
 \begin{eqnarray*}\label{E10.3.1}
\xymatrix{
  0\ar[r] & \mathbb{C}_\lambda \ar[r]^{\sigma} & N  \ar[r]^{\tau} & \mathbb{C}_\mu \ar[r]^{} & 0.
  }
  \end{eqnarray*}
Set $\mathbb{C}_\lambda = \mathbb{C} v_0$ and $\mathbb{C}_\mu = \mathbb{C} w_0$. Then we can choose a basis $\{v_1,v_2\}$ of $N$ such that $\sigma (v_0) = v_1$ and $\tau (v_2) = w_0$. By $(\ref{simple-module})$ one has
\begin{eqnarray}\label{2-dim-ind-rep-action}
\left\{
 \begin{array}{ll}
K v_1 = \lambda v_1,\\
K v_2 = f(K)v_1 + \mu v_2,\\
E v_1 = F v_1 = 0,\\
E {v_2} = f(E) {v_1}, \\
F {v_2} = f(F) {v_1},
\end{array}
\right.
\end{eqnarray}
where $f(K), f(E),f(F)\in \mathbb{C}$. Obviously
$\left\{
 \begin{array}{ll}
K E v_1 = q^2 E K v_1 = 0,\\
K F v_1 = q^{-2} F K v_1 = 0,\\
E F v_i = F E v_i = 0~(i=1,2).
\end{array}
\right.$
Moreover, the condition
$\left\{
 \begin{array}{ll}
K E v_2 = q^2 E K v_2,\\
K F v_2 = q^{-2} F K v_2
\end{array}
\right.$
is equivalent to
\begin{eqnarray}\label{2-dim-rep-condition}
\left\{
 \begin{array}{ll}
f(E)(\lambda - q^2 \mu) = 0,\\
f(F)(\lambda - q^{-2} \mu) = 0.
\end{array}
\right.
\end{eqnarray}

${\rm (1)}$ If $\lambda \neq q^2 \mu$ and $\lambda \neq q^{-2} \mu$, then by $(\ref{2-dim-rep-condition})$ we obtain that $f(E) = f(F) = 0.$
It follows from $(\ref{2-dim-ind-rep-action})$ that the 2-dimensional $U_{q}(\mathfrak{sl}^{*}_2)$-module $N$ can be given as follows
\begin{eqnarray}\label{2-dim-ind-rep-action-case-1}
\left\{
 \begin{array}{ll}
K v_1 = \lambda v_1,\\
K v_2 = f(K)v_1 + \mu v_2,\\
E v_i = F v_i = 0~(i=1,2).
\end{array}
\right.
\end{eqnarray}
Now we know that $N$ is completely determined by $f(K),$ so we rewrite it as $N_{\lfloor f(K)\rfloor}$.

When $\lambda \neq \mu$, we can obtain the following equivalence between two extensions of $\mathbb{C}_\lambda$ by $\mathbb{C}_\mu$
\begin{eqnarray*}\label{}
\xymatrix{
  0\ar[r] & \mathbb{C}_\lambda \ar[d]_{1} \ar[rr]^{\tiny \left[ {\begin{array}{*{20}c}
   1\\
   0\\
\end{array}} \right]} && N_{\lfloor f(K)\rfloor} \ar[d]_{\tiny \left[ {\begin{array}{*{20}c}
   1 & \frac{f(K)}{\lambda-\mu}\\
   0 & 1\\
\end{array}} \right]} \ar[rr]^{\tiny \left[ {\begin{array}{*{20}c}
   0 & 1\\
\end{array}} \right]} && \mathbb{C}_\mu \ar[d]_{1} \ar[r]^{} & 0\\
 0 \ar[r]& \mathbb{C}_\lambda \ar[rr]_{\tiny \left[ {\begin{array}{*{20}c}
   1\\
   0\\
\end{array}} \right]} && \mathbb{C}_\lambda \oplus \mathbb{C}_\mu \ar[rr]_{\tiny \left[ {\begin{array}{*{20}c}
   0 & 1\\
\end{array}} \right]} && \mathbb{C}_\mu  \ar[r]^{} & 0.  }
  \end{eqnarray*}
Therefore, ${\rm Ext}^1_{U_{q}(\mathfrak{sl}^{*}_2)}(\mathbb{C}_\lambda,\mathbb{C}_\mu) = 0$.

When $\lambda = \mu$, it is easy to check that two extensions of $\mathbb{C}_\lambda$ by $\mathbb{C}_\mu$
 \begin{eqnarray*}\label{E1111111}
\xymatrix{
  0\ar[r] & \mathbb{C}_\lambda \ar[r]^{\sigma_a} & N_{\lfloor a\rfloor}  \ar[r]^{\tau_a} & \mathbb{C}_\mu \ar[r]^{} & 0 \quad {\rm and} \quad 0\ar[r] & \mathbb{C}_\lambda \ar[r]^{\sigma_b} & N_{\lfloor b\rfloor}  \ar[r]^{\tau_b} & \mathbb{C}_\mu \ar[r]^{} & 0
  }
  \end{eqnarray*}
are equivalent if and only if $a = b$.
Therefore, ${\rm Ext}^1_{U_{q}(\mathfrak{sl}^{*}_2)}(\mathbb{C}_\lambda,\mathbb{C}_\lambda) = \mathbb{C}$.

${\rm (2)}$ We will only prove this statement for $\lambda = q^2 \mu$, since the other case can be similarly done.
If $\lambda = q^2 \mu$, then $\lambda \neq q^{-2} \mu$ (because $q$ is not a root of unity).
It follows from $(\ref{2-dim-rep-condition})$ that $f(F) = 0.$
Combining with $(\ref{2-dim-ind-rep-action})$,  the 2-dimensional $U_{q}(\mathfrak{sl}^{*}_2)$-module $N$ can be given as follows
\begin{eqnarray}\label{2-dim-ind-rep-action-case-2-1}
\left\{
 \begin{array}{ll}
K v_1 = \lambda v_1,\\
K v_2 = f(K)v_1 + \mu v_2,\\
E v_1 = F v_1 = F v_2 = 0,\\
E {v_2} = f(E) {v_1}.
\end{array}
\right.
\end{eqnarray}
In this case, $N$ is completely determined by $f(K)$ and $f(E)$, so we rewrite it as $N_{\lceil f(K), f(E)\rceil}$.
In particular, when $f(K) = 0$, we rewrite $N$ as $N_{\lceil f(E)\rceil}$.

Fix $f(E) \in \mathbb{C}$, for any $f(K) \in \mathbb{C}$ we have the following commutative diagram
\begin{eqnarray*}\label{}
\xymatrix{
  0\ar[r] & \mathbb{C}_\lambda \ar[d]_{1} \ar[rr]^{\tiny \left[ {\begin{array}{*{20}c}
   1\\
   0\\
\end{array}} \right]} && N_{\lceil f(K), f(E)\rceil} \ar[d]_{\tiny \left[ {\begin{array}{*{20}c}
   1 & \frac{f(K)}{\lambda-\mu}\\
   0 & 1\\
\end{array}} \right]} \ar[rr]^{\tiny \left[ {\begin{array}{*{20}c}
   0 & 1\\
\end{array}} \right]} && \mathbb{C}_\mu \ar[d]_{1} \ar[r]^{} & 0\\
 0 \ar[r]& \mathbb{C}_\lambda \ar[rr]_{\tiny \left[ {\begin{array}{*{20}c}
   1\\
   0\\
\end{array}} \right]} && N_{\lceil f(E)\rceil} \ar[rr]_{\tiny \left[ {\begin{array}{*{20}c}
   0 & 1\\
\end{array}} \right]} && \mathbb{C}_\mu  \ar[r]^{} & 0.  }
  \end{eqnarray*}
On the other hand, it is easy to check that two extensions of $\mathbb{C}_\lambda$ by $\mathbb{C}_\mu$
 \begin{eqnarray*}\label{E1111111}
\xymatrix{
  0\ar[r] & \mathbb{C}_\lambda \ar[r]^{\sigma_a} & N_{\lceil a \rceil}  \ar[r]^{\tau_a} & \mathbb{C}_\mu \ar[r]^{} & 0 \quad {\rm and} \quad 0\ar[r] & \mathbb{C}_\lambda \ar[r]^{\sigma_b} & N_{\lceil b\rceil}  \ar[r]^{\tau_b} & \mathbb{C}_\mu \ar[r]^{} & 0
  }
  \end{eqnarray*}
are equivalent if and only if $a = b$.
Therefore, ${\rm Ext}^1_{U_{q}(\mathfrak{sl}^{*}_2)}(\mathbb{C}_\lambda,\mathbb{C}_{q^{-2}\lambda}) = \mathbb{C}$.
\end{proof}

The next corollary follows immediately from Proposition $\ref{1st-ext-group-of-simple-modules}$ or Remark \ref{remarks-about-rep-cat-and-weight-rep-cat}.

\begin{cor}\label{non-semisimplicity-of-reps-of-prequantum-group}
The category $U_{q}(\mathfrak{sl}^{*}_2)\mbox{-}{\rm \bf mod}$ of finite dimensional $U_{q}(\mathfrak{sl}^{*}_2)$-modules is non-semisimple.
\end{cor}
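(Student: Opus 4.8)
The plan is to deduce non-semisimplicity from the existence of a single non-split self-extension of a simple module, which Proposition \ref{1st-ext-group-of-simple-modules} has already handed us. Recall that the category of finite dimensional modules over an algebra is semisimple if and only if every short exact sequence of finite dimensional modules splits, equivalently if and only if $\mathrm{Ext}^1$ between any two simple objects vanishes. Hence it suffices to exhibit one non-split extension of simples.

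First I would invoke Proposition \ref{simple-modules-of-new-type-quantum-groups}: the simple objects of $U_{q}(\mathfrak{sl}^{*}_2)\mbox{-}{\rm \bf mod}$ are exactly the one-dimensional modules $\mathbb{C}_\lambda$, $\lambda \in \mathbb{C}^\times$. Next I would specialize Proposition \ref{1st-ext-group-of-simple-modules} ${\rm (1)}$ to the case $\lambda = \mu$, which gives $\mathrm{Ext}^1_{U_{q}(\mathfrak{sl}^{*}_2)}(\mathbb{C}_\lambda,\mathbb{C}_\lambda) \cong \mathbb{C} \neq 0$. Concretely, taking $f(K) = 1$ in $(\ref{2-dim-ind-rep-action-case-1})$ produces the two-dimensional module $N_{\lfloor 1 \rfloor}$ on which $K$ acts by a single Jordan block with eigenvalue $\lambda$ and $E, F$ act by zero; it sits in a non-split short exact sequence $0 \to \mathbb{C}_\lambda \to N_{\lfloor 1 \rfloor} \to \mathbb{C}_\lambda \to 0$.

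Then I would observe that $N_{\lfloor 1 \rfloor}$ is indecomposable: its unique simple submodule is $\mathbb{C}_\lambda = \ker(K - \lambda)$, so it is not a direct sum of simple modules and therefore is not a semisimple module; consequently the ambient category cannot be semisimple. Equivalently, one may phrase the conclusion purely in homological terms: a nonzero $\mathrm{Ext}^1$ between simple objects is incompatible with semisimplicity. (The same conclusion can alternatively be read off from Remark \ref{remarks-about-rep-cat-and-weight-rep-cat}, once the weight-module picture is in place.)

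Since the two ingredients we need — the classification of simples in Proposition \ref{simple-modules-of-new-type-quantum-groups} and the $\mathrm{Ext}^1$ computation in Proposition \ref{1st-ext-group-of-simple-modules} — are already available, there is no genuine obstacle here; the only point requiring care is the (already completed) verification that $N_{\lfloor 1 \rfloor}$ really is a $U_{q}(\mathfrak{sl}^{*}_2)$-module, i.e. that the defining relations $KK^{-1} = K^{-1}K = 1$, $KE = q^2 EK$, $KF = q^{-2} FK$, $EF = FE$ hold for the assigned operators, which is immediate because $E$ and $F$ act as $0$ and $K$ acts invertibly.
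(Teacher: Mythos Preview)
Your proposal is correct and follows essentially the same approach as the paper, which simply states that the corollary follows immediately from Proposition~\ref{1st-ext-group-of-simple-modules} (or alternatively from Remark~\ref{remarks-about-rep-cat-and-weight-rep-cat}). Your additional details about the explicit indecomposable module $N_{\lfloor 1\rfloor}$ merely unpack what a nonzero $\mathrm{Ext}^1$ between simples means.
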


\section{The category $U_{q}(\mathfrak{sl}^{*}_2,\kappa)\mbox{-}{\rm \bf mod}_{\rm wt}$ of finite dimensional weight modules of $U_{q}(\mathfrak{sl}^{*}_2,\kappa)$}\label{section-3}

In this section, we investigate the finite dimensional weight representation theory of $U_{q}(\mathfrak{sl}^{*}_2,\kappa)$.

\subsection{Finite dimensional weight module and weight group of $U_{q}(\mathfrak{sl}^{*}_2,\kappa)$}\label{section-3-1}

\begin{defn}\label{definiton-of-weight-rep-U}
Let $M$ be a finite dimensional $U_{q}(\mathfrak{sl}^{*}_2,\kappa)$-module.\\
${\rm (1)}$ The nontrivial linear space
\begin{eqnarray*}\label{}
M_\lambda = \left\{v \in M | K v = \lambda v\right\},
\end{eqnarray*}
i.e., the eigenspace of $K$ acting on $M$ for the eigenvalue $\lambda$, is called a weight space of $M$.\\
${\rm (2)}$ If $M$ is the direct sum of its weight spaces, then we call $M$ a weight module of $U_{q}(\mathfrak{sl}^{*}_2,\kappa)$.\\
${\rm (3)}$ Let $M$ be a finite dimensional weight module of $U_{q}(\mathfrak{sl}^{*}_2,\kappa)$, and denote by $$\Lambda_M = \{\lambda_1, \lambda_2, \cdots, \lambda_m\}$$ the set of all mutually different eigenvalues of $K$ acting on $M$. We call $\Lambda_M$ the weight set of $M$.
\end{defn}

 Denote by $U_{q}(\mathfrak{sl}^{*}_2,\kappa)\mbox{-}{\rm \bf mod}$ the category of finite dimensional modules of $U_{q}(\mathfrak{sl}^{*}_2,\kappa)$. The notation $U_{q}(\mathfrak{sl}^{*}_2,\kappa)\mbox{-}{\rm \bf mod}_{\rm wt}$ represents the category of finite dimensional weight modules of $U_{q}(\mathfrak{sl}^{*}_2,\kappa)$ which is a full subcategory of $U_{q}(\mathfrak{sl}^{*}_2,\kappa)\mbox{-}{\rm \bf mod}$.

\begin{defn}\label{twist-of-weight-representation}
Let $M$ be an object in the category $U_{q}(\mathfrak{sl}^{*}_2,\kappa)\mbox{-}{\rm \bf mod}_{\rm wt}$. For any given automorphism $\sigma$ of $U_{q}(\mathfrak{sl}^{*}_2,\kappa)$, we can define a new $U_{q}(\mathfrak{sl}^{*}_2,\kappa)$-module $M^\sigma$ as follows.\\
${\rm (1)}$ As a linear space, $M^\sigma = M$.\\
${\rm (2)}$ The action of $U_{q}(\mathfrak{sl}^{*}_2,\kappa)$ on $M^\sigma$ is given by
  \begin{eqnarray*}\label{}
 \circ_\sigma: U_{q}(\mathfrak{sl}^{*}_2,\kappa) \otimes M^\sigma &\longrightarrow& M^\sigma,\\
 K \otimes m &\longmapsto& K \circ_\sigma m = \sigma(K)m,\\
  E \otimes m &\longmapsto& E \circ_\sigma m = \sigma(E)m,\\
   F \otimes m &\longmapsto& F \circ_\sigma m = \sigma(F)m.
\end{eqnarray*}
The module $M^\sigma$ is called a $\sigma$-twist of $M$.
\end{defn}

 It is easy to see that the category $U_{q}(\mathfrak{sl}^{*}_2,\kappa)\mbox{-}{\rm \bf mod}_{\rm wt}$ has the following properties.

 \begin{prop}\label{properties-of-weight-rep-cat}
 The category $U_{q}(\mathfrak{sl}^{*}_2,\kappa)\mbox{-}{\rm \bf mod}_{\rm wt}$ is closed under taking submodule, quotient, direct summand, direct sum, tensor product and $\sigma_0$-twist,
where $\sigma_0$ is the involutive automorphism of $U_{q}(\mathfrak{sl}^{*}_2,\kappa)$ defined by $\sigma_0(K) = K^{-1}, \sigma_0(E) = F$ and $\sigma_0(F) = E$.
 \end{prop}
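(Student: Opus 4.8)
The plan is to verify each of the six closure properties directly, working at the level of weight-space decompositions. The key structural fact underlying all of them is that the action of $K$ is diagonalizable on a weight module, so a submodule or quotient inherits a basis of $K$-eigenvectors; one should state this observation first and use it repeatedly. Concretely, I would proceed as follows.

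First, for submodules and quotients: if $M=\bigoplus_{\lambda\in\Lambda_M}M_\lambda$ and $N\subseteq M$ is a submodule, then since $K$ acts semisimply on $M$ it acts semisimply on the invariant subspace $N$, whence $N=\bigoplus_\lambda (N\cap M_\lambda)$, so $N$ is a weight module. Dually, on the quotient $M/N$ the induced action of $K$ is still semisimple (a quotient of a semisimple operator is semisimple), giving $M/N=\bigoplus_\lambda \overline{M_\lambda}$. Direct summands are a special case of submodules (and quotients), and the case of a finite direct sum $M\oplus M'$ is immediate from $(M\oplus M')_\lambda = M_\lambda\oplus M'_\lambda$. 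For the tensor product, one computes that if $v\in M_\lambda$ and $w\in M'_\mu$ then, because $\Delta(K)=K\otimes K$, one has $K(v\otimes w)=\lambda\mu\,(v\otimes w)$; hence $(M\otimes M')_\nu \supseteq \sum_{\lambda\mu=\nu}M_\lambda\otimes M'_\mu$, and since these subspaces already exhaust $M\otimes M'$, the tensor product is a weight module (here I use that $U_q(\mathfrak{sl}^*_2,\kappa)$ is a Hopf algebra with $K$ grouplike, which holds for $\kappa=0$ and for the nontrivial Hopf PBW-deformations in their reduced form $U^{0,m}_q(\mathfrak{sl}^*_2,\kappa_m)$).

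Finally, for the $\sigma_0$-twist: one first checks that $\sigma_0$ is a well-defined algebra automorphism by verifying it respects the defining relations — $\sigma_0(K)\sigma_0(E)=K^{-1}F=q^{-2}FK^{-1}$ while $q^2\sigma_0(E)\sigma_0(K)=q^2FK^{-1}$, which agree since $FK^{-1}=q^2K^{-1}F$; similarly for $KF=q^{-2}FK$; and the deformed relation $EF-FE=\kappa(EF-FE)$ is symmetric under swapping $E\leftrightarrow F$ up to a sign that is absorbed because the right-hand side lies in $\mathbb{C}[K,K^{-1}]$ and $\sigma_0$ fixes it up to inverting $K$ — this point needs a short check in each case $\kappa=0$ and $\kappa\ne 0$. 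Being involutive is clear. Then $M^{\sigma_0}$ has $K$ acting as $\sigma_0(K)=K^{-1}$, so $(M^{\sigma_0})_{\lambda^{-1}} = M_\lambda$ and $M^{\sigma_0}=\bigoplus_\lambda (M^{\sigma_0})_\lambda$ is again a weight module.

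I do not expect any genuine obstacle here; the only point requiring care is confirming that $\sigma_0$ is an algebra automorphism of $U_q(\mathfrak{sl}^*_2,\kappa)$ for every admissible $\kappa$ (i.e., that swapping $E$ and $F$ and inverting $K$ is compatible with the deformed commutator relation), and a clean way to present this is to note that the relation defining $U_q(\mathfrak{sl}^*_2,\kappa)$ is, in each case, invariant under the substitution $K\mapsto K^{-1},E\mapsto F,F\mapsto E$ because $\kappa_m(EF-FE)=\frac{K^m-K^{-m}}{q^m-q^{-m}}$ is sent to $\frac{K^{-m}-K^m}{q^m-q^{-m}}=-\kappa_m(EF-FE)=\kappa_m(FE-EF)$. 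After that, all remaining verifications are the routine weight-space bookkeeping sketched above.
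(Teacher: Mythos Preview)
Your proposal is correct and follows essentially the same approach as the paper: both arguments rest on the fact that a weight module is precisely one on which $K$ acts diagonalizably, and that diagonalizability is inherited by submodules, quotients, direct sums, tensor products (via $\Delta(K)=K\otimes K$), and the $\sigma_0$-twist. The only cosmetic difference is that the paper phrases everything through the minimal polynomial $m_M(x)$ of $K$ (noting that $m_L\mid m_M$ for submodules and quotients, and writing down $m_{M\oplus N}$, $m_{M\otimes N}$, $m_{M^{\sigma_0}}$ explicitly), whereas you work directly with the eigenspace decompositions; your additional verification that $\sigma_0$ is a well-defined automorphism is a nice touch the paper takes for granted.
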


 \begin{proof}
 For any object $M$ in $U_{q}(\mathfrak{sl}^{*}_2,\kappa)\mbox{-}{\rm \bf mod}$, denote by $m_M(x)$ the minimal polynomial of
 $K$ acting on $M$.
 It is well known that $M$ is a weight module if and only if $m_M(x)$ has no multiple roots.
  Let $L$ be any submodule of $M$.
 Since $m_L(x)$ and $m_{M / L}(x)$ are factors of $m_M(x)$, then $U_{q}(\mathfrak{sl}^{*}_2,\kappa)\mbox{-}{\rm \bf mod}_{\rm wt}$ is closed under taking submodule, direct summand and quotient.
Now let $M$ and $N$ be two weight modules of $U_{q}(\mathfrak{sl}^{*}_2,\kappa)$ with weight sets $\Lambda_M$ and $\Lambda_N$. Noting that
 \begin{eqnarray*}\label{}
 \left\{
 \begin{array}{ll}
m_{M\oplus N}(x) = {\mathop\prod\limits_{\lambda\in \Lambda_M \cup\Lambda_N}}(x - \lambda),\\
m_{M\otimes N}(x) = {\mathop\prod\limits_{\lambda\in \Lambda_M \diamond\Lambda_N}}(x - \lambda),\\
m_{M^{\sigma_0}}(x) = {\mathop\prod\limits_{\lambda\in \Lambda_M}}(x - \lambda^{-1}),
\end{array}
\right.
\end{eqnarray*}
where $\Lambda_M \diamond\Lambda_N = \{\mu\nu \mid \mu\in \Lambda_M, \nu\in \Lambda_N\}$,
 we obtain that $U_{q}(\mathfrak{sl}^{*}_2,\kappa)\mbox{-}{\rm \bf mod}_{\rm wt}$ is closed under taking direct sum, tensor product and twist.
 \end{proof}

For a given Hopf PBW-deformation $U_{q}(\mathfrak{sl}^{*}_2,\kappa)$, define a subset $\Lambda_{\kappa}$ of $\mathbb{C}^\times$ as follows
\begin{eqnarray*}\label{}
\Lambda_{\kappa} = \left\{\lambda \in \mathbb{C} | \exists ~{\rm an~object}~M~{\rm in}~U_{q}(\mathfrak{sl}^{*}_2,\kappa)\mbox{-}{\rm \bf mod}_{\rm wt}~{\rm such~that}~\lambda\in \Lambda_M\right\}.
\end{eqnarray*}
We call $\Lambda_{\kappa}$ the weight set of $U_{q}(\mathfrak{sl}^{*}_2,\kappa)$. Then we can check that $\Lambda_{\kappa}$ is a subgroup of $\mathbb{C}^\times$.

\begin{cor}\label{weight-set-is-a-group}
The weight set $\Lambda_{\kappa}$ of $U_{q}(\mathfrak{sl}^{*}_2,\kappa)$ is always a subgroup of $\mathbb{C}^\times$.
\end{cor}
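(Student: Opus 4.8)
The plan is to read off the subgroup axioms directly from the closure properties recorded in Proposition \ref{properties-of-weight-rep-cat} together with the explicit weight-set computations carried out in its proof. First I would note that $\Lambda_\kappa \subseteq \mathbb{C}^\times$: since $K$ is grouplike, hence invertible, in $U_{q}(\mathfrak{sl}^{*}_2,\kappa)$, it acts as an invertible operator on every finite dimensional module, so $0$ is never an eigenvalue of $K$; thus $0 \notin \Lambda_M$ for every weight module $M$, and $\Lambda_\kappa$ lands in $\mathbb{C}^\times$. For nonemptiness and the identity element, I would exhibit the one-dimensional trivial module $\mathbb{C}_1$ on which $K$ acts as $1$ and $E,F$ act as $0$; this respects the defining relations of $U_{q}(\mathfrak{sl}^{*}_2,\kappa)$ for every $\kappa$ (both sides of $EF-FE = \kappa(EF-FE) = a(K^m-K^{-m})$ act as $0$ because $K$ acts as $1$), so $\mathbb{C}_1$ is a weight module and $1 \in \Lambda_\kappa$. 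Equivalently, the unit object of the tensor category $U_{q}(\mathfrak{sl}^{*}_2,\kappa)\mbox{-}{\rm \bf mod}_{\rm wt}$ does the job.

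Next I would establish closure under multiplication and inversion. Given $\lambda,\mu \in \Lambda_\kappa$, pick weight modules $M,N$ with $\lambda \in \Lambda_M$ and $\mu \in \Lambda_N$. By Proposition \ref{properties-of-weight-rep-cat} the tensor product $M \otimes N$ again lies in $U_{q}(\mathfrak{sl}^{*}_2,\kappa)\mbox{-}{\rm \bf mod}_{\rm wt}$, and since $\Delta(K) = K \otimes K$ one has $\Lambda_{M\otimes N} = \Lambda_M \diamond \Lambda_N$ (precisely the computation already appearing in the proof of that proposition), so $\lambda\mu \in \Lambda_{M\otimes N} \subseteq \Lambda_\kappa$. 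Likewise the $\sigma_0$-twist $M^{\sigma_0}$ lies in the category, and because $\sigma_0(K) = K^{-1}$ one reads off $\Lambda_{M^{\sigma_0}} = \{\nu^{-1} \mid \nu \in \Lambda_M\}$, whence $\lambda^{-1} \in \Lambda_\kappa$. Together with $1 \in \Lambda_\kappa$ this shows $\Lambda_\kappa$ is a subgroup of $\mathbb{C}^\times$.

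There is no genuine obstacle here: the statement is essentially bookkeeping on top of Proposition \ref{properties-of-weight-rep-cat}. The only step warranting a moment of care is checking that the trivial module (or the monoidal unit) really is a weight module for \emph{every} value of $\kappa$, so that $\Lambda_\kappa$ is nonempty; once that is in place, the multiplicative closure and the closure under $\sigma_0$-twist follow verbatim from the minimal-polynomial formulas for $m_{M\otimes N}(x)$ and $m_{M^{\sigma_0}}(x)$ established earlier.
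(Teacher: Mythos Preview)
Your proposal is correct and follows essentially the same approach as the paper: both argue that $K$ invertible forces $\Lambda_\kappa\subseteq\mathbb{C}^\times$, exhibit the one-dimensional trivial module $\mathbb{C}_1$ to obtain $1\in\Lambda_\kappa$, and then invoke the closure of $U_{q}(\mathfrak{sl}^{*}_2,\kappa)\mbox{-}{\rm \bf mod}_{\rm wt}$ under tensor product and $\sigma_0$-twist from Proposition~\ref{properties-of-weight-rep-cat} to get closure under multiplication and inversion. Your write-up merely unpacks a bit more detail (the verification that $\mathbb{C}_1$ is a module for every $\kappa$, and the explicit weight-set formulas), but there is no substantive difference.
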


\begin{proof}
Since $K$ is invertible, then $\Lambda_{\kappa} \subseteq \mathbb{C}^\times$. There always exists a 1-dimensional module $\mathbb{C}_1 = \mathbb{C} v$ of $U_{q}(\mathfrak{sl}^{*}_2,\kappa)$ with the actions given by
$K v = v$ and $Ev = F v = 0$.
Noting that the weight set $\Lambda_{\mathbb{C}_1}$ of $\mathbb{C}_1$ is equal to $\{1\}$, we deduce that $1\in \Lambda_{\kappa}$ which implies that $\Lambda_{\kappa} \neq \emptyset$.

For any $\mu,\nu \in \Lambda_{\kappa}$, there exist two objects $M$ and $N$ in the category $U_{q}(\mathfrak{sl}^{*}_2,\kappa)\mbox{-}{\rm \bf mod}_{\rm wt}$ such that $\mu \in \Lambda_M$ and $\nu \in \Lambda_N$. By Proposition $\ref{properties-of-weight-rep-cat}$, we know that $\mu \nu \in \Lambda_{M\otimes N}$ and $\mu^{-1} \in \Lambda_{M^\sigma}$, which means that $\Lambda_{\kappa}$ is closed under taking multiplication and inverse.
\end{proof}

\begin{defn}\label{def-of-weight-group}
The weight set $\Lambda_{\kappa}$ of $U_{q}(\mathfrak{sl}^{*}_2,\kappa)$ is called the weight group of $U_{q}(\mathfrak{sl}^{*}_2,\kappa)$.
\end{defn}

Next we will explicitly characterize the weight group of $U_{q}(\mathfrak{sl}^{*}_2,\kappa)$.
For this purpose, we need to prove the following statement.

\begin{prop}\label{reps-are-all-weight-reps-for-classical-quantum-groups}
Suppose that $\kappa \neq 0$. Let $M$ be a finite dimensional $U_{q}(\mathfrak{sl}^{*}_2,\kappa)$-module. Then\\
${\rm (1)}$  $M$ is the direct sum of its weight spaces.\\
 ${\rm (2)}$  All weights of $M$ have the form $q^{c } \omega$, where $c\in \mathbb{Z}$ and $\omega^{2m} = 1$.
\end{prop}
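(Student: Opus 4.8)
My plan is to reduce the statement to the finite dimensional representation theory of the Drinfeld--Jimbo quantum group $U_{q^m}(\mathfrak{sl}_2)$ and then to pass information from $K^m$ down to $K$. First I would record that $\kappa\neq 0$ forces $m\neq 0$ (and $a\neq 0$) in $(\ref{Hopf-PBW-deformation-algebra-def-relations-determined-by-k})$, so that $q^m$ is not a root of unity, since $q$ is not. By Remark $\ref{classical-is-almost-unique-hopf-pbw-deformation}$, the subalgebra $U'$ of $U^{0,m}_{q}(\mathfrak{sl}^{*}_2,\kappa_m)$ generated by $K^m,K^{-m},E,F$ is isomorphic, as an algebra, to $U_{q^m}(\mathfrak{sl}_2)$; here one uses the PBW basis $\{E^iF^jK^k\}$ of Proposition $\ref{realization-of-new-type-quantum-group-as-smash-product-alg}$(2) together with the PBW property (Theorem $\ref{theorem-about-determination-of-PBW-deformations-of-new-type-quantum-groups}$) to see that $\{E^iF^j(K^m)^k\}$ is linearly independent, so $U'$ is the full quantum group and not a proper quotient.

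For ${\rm (1)}$, I would restrict $M$ to a finite dimensional $U'$-module. Since $q^m$ is not a root of unity, the classical theory of $U_{q^m}(\mathfrak{sl}_2)$ (e.g.\ \cite[Chapter 2]{Jan96}) gives that $M|_{U'}$ is a weight module: $K^m$ acts diagonalizably on $M$, with every eigenvalue of the form $\pm q^{mc}$, $c\in\mathbb{Z}$. Because $K$ commutes with $K^m$, it preserves each eigenspace $M[\mu]:=\ker(K^m-\mu\,\mathrm{id})$, and on $M[\mu]$ the operator $K$ satisfies $K^m=\mu\,\mathrm{id}$ with $\mu\neq 0$. As $x^m-\mu$ has $m$ distinct complex roots, the minimal polynomial of $K|_{M[\mu]}$ is squarefree, so $K$ is diagonalizable on $M[\mu]$; since $M=\bigoplus_{\mu}M[\mu]$, $K$ is diagonalizable on $M$, i.e.\ $M$ is a weight module.

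For ${\rm (2)}$, let $\lambda$ be a weight of $M$; then $\lambda^m$ is an eigenvalue of $K^m$, hence $\lambda^m=\epsilon\,q^{mc}$ for some $\epsilon\in\{1,-1\}$ and $c\in\mathbb{Z}$. Setting $\omega:=\lambda q^{-c}$ gives $\omega^m=\epsilon$, so $\omega^{2m}=1$ and $\lambda=q^{c}\omega$, as claimed.

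The argument is essentially routine once the reduction to $U_{q^m}(\mathfrak{sl}_2)$ is in place; the one point requiring care is precisely that reduction — that $U'$ is isomorphic to $U_{q^m}(\mathfrak{sl}_2)$ rather than to a quotient of it, which is what lets us import the known description of the eigenvalues of $K^m$. If one prefers a self-contained route, one can instead argue directly: by Lemma $\ref{E-F-actions-are-nilpotent}$ the operators $E,F$ are nilpotent on $M$, and $KE=q^2EK$, $KF=q^{-2}FK$ force $E$ and $F$ to shift generalized $K$-eigenspaces by the factors $q^{2}$ and $q^{-2}$; a highest-weight analysis using the quantum Casimir of $U'$ then shows each generalized $K$-eigenspace is an honest one. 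This essentially reproves the $U_q(\mathfrak{sl}_2)$-case, however, so the reduction above is the more economical option.
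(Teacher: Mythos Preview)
Your proposal is correct and follows essentially the same route as the paper: both restrict to the subalgebra $U' \cong U_{q^m}(\mathfrak{sl}_2)$, invoke \cite[Section~2.3]{Jan96} to control the eigenvalues of $K^m$, and then descend to $K$ (the paper by observing that the annihilating polynomial $\prod_j(x^m - q^{-jm})(x^m + q^{-jm})$ of $K$ is separable in $x$, you by diagonalizing $K$ on each $K^m$-eigenspace --- these are equivalent arguments). One remark: your care that $U'$ is not a proper quotient of $U_{q^m}(\mathfrak{sl}_2)$ is unnecessary, since a module over any quotient pulls back to a $U_{q^m}(\mathfrak{sl}_2)$-module and the classical results on eigenvalues of $K^m$ apply regardless.
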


\begin{proof}
Let ${U}_{q^m}(\mathfrak{sl}_2)$ be the subalgebra of $U_{q}(\mathfrak{sl}^{*}_2,\kappa)$ generated by $K^m, K^{-m}, E, F$.
Here ${U}_{q^m}(\mathfrak{sl}_2)$ is just the Drinfeld-Jimbo type quantum group.
By Lemma $\ref{E-F-actions-are-nilpotent}$, there exists an integer $s>0$ such that $F^s M = 0$.
Consider $M$ as a ${U}_{q^m}(\mathfrak{sl}_2)$-module, by Proposition 2.3 in \cite[Section 2.3]{Jan96} and its proof, one has
\begin{eqnarray*}\label{}
\left({\mathop\prod\limits_{j=-(s-1)}^{s-1}}(K^m - q^{-jm})(K^m + q^{-jm})\right)M = 0.
\end{eqnarray*}
So the minimal polynomial $m_M(x)$ of $K$ acting on $M$ divides
${\mathop\prod\limits_{j=-(s-1)}^{s-1}}(x^m - q^{-jm})(x^m + q^{-jm}).$
Hence $m_M(x)$ can split into linear factors $x-q^{c } \omega~(c\in \mathbb{Z},\omega^{2m} = 1)$ with each occurring at most once.
Therefore, the statements in ${\rm (1)}$ and ${\rm (2)}$ hold.
\end{proof}

\begin{remark}\label{remarks-about-rep-cat-and-weight-rep-cat}
Proposition $\ref{reps-are-all-weight-reps-for-classical-quantum-groups}$ shows that the categories $U_{q}(\mathfrak{sl}^{*}_2,\kappa)\mbox{-}{\rm \bf mod}$ and $U_{q}(\mathfrak{sl}^{*}_2,\kappa)\mbox{-}{\rm \bf mod}_{\rm wt}$ are the same when $\kappa \neq 0$.
However, they are different when $\kappa =0$. In fact,
each finite dimensional indecomposable module of $\mathbb{C}[K,K^{-1}]$ can induce an indecomposable object in $U_{q}(\mathfrak{sl}^{*}_2)\mbox{-}{\rm \bf mod}$ with the actions of $E$ and $F$ trivial.
For any integer $n \geq 1$ and $\lambda \in \mathbb{C}^\times$, there always exists a $(n+1)$-dimensional module $M(\lambda,n) = {\mathop\bigoplus\limits_{i=0}^{n}} \mathbb{C} v_i$ of $\mathbb{C}[K,K^{-1}]$ with the action of $K$ given by
\begin{eqnarray*}\label{}
K(v_0,v_1,\cdots,v_n) = (v_0,v_1,\cdots,v_n)\left( {\begin{array}{*{20}c}
   \lambda &  &  & &\\
   1 & \lambda &  & &\\
    & 1 & \ddots & &\\
     &  & \ddots & \lambda & \\
     &  &  & 1 & \lambda\\
\end{array}} \right).
\end{eqnarray*}
Then each $M(\lambda,n)$ can induce an object in $U_{q}(\mathfrak{sl}^{*}_2)\mbox{-}{\rm \bf mod}$ but not in $U_{q}(\mathfrak{sl}^{*}_2)\mbox{-}{\rm \bf mod}_{\rm wt}$.
\end{remark}
\begin{cor}\label{explicit-charaterization-of-weight-group-of-quantum-groups}
The weight group $\Lambda_{\kappa}$ of $U_{q}(\mathfrak{sl}^{*}_2,\kappa)$ can be given as follows
\begin{eqnarray*}\label{}
\Lambda_{\kappa} = \left\{
 \begin{array}{ll}
\mathbb{C}^\times,&{\rm if}~\kappa = 0,\\
\left\{q^{c} \omega | c\in \mathbb{Z},~\omega^{2m} = 1\right\},&{\rm if}~\kappa \neq 0.
\end{array}
\right.
\end{eqnarray*}
\end{cor}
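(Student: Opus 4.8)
The plan is to treat the two cases $\kappa = 0$ and $\kappa \neq 0$ separately, in each case establishing the two inclusions. For $\kappa = 0$ the statement is nearly immediate: since $K$ is invertible no weight can vanish, so $\Lambda_0 \subseteq \mathbb{C}^\times$, and conversely Proposition \ref{simple-modules-of-new-type-quantum-groups} provides, for every $\lambda \in \mathbb{C}^\times$, the one-dimensional module $\mathbb{C}_\lambda$, which is a finite-dimensional weight module with $\lambda \in \Lambda_{\mathbb{C}_\lambda}$; hence $\mathbb{C}^\times \subseteq \Lambda_0$ and the case $\kappa = 0$ is done.

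Now suppose $\kappa \neq 0$, so $U_q(\mathfrak{sl}^*_2,\kappa) = U^{0,m}_q(\mathfrak{sl}^*_2,\kappa_m)$ with defining relation $EF - FE = \frac{K^m - K^{-m}}{q^m - q^{-m}}$. The inclusion $\Lambda_\kappa \subseteq \{q^c\omega \mid c \in \mathbb{Z},\ \omega^{2m}=1\}$ is exactly Proposition \ref{reps-are-all-weight-reps-for-classical-quantum-groups}(2). For the opposite inclusion I would exploit that $\Lambda_\kappa$ is a subgroup of $\mathbb{C}^\times$ by Corollary \ref{weight-set-is-a-group}: the target set is the subgroup of $\mathbb{C}^\times$ generated by $q$ together with the $2m$-th roots of unity, so it is enough to exhibit inside $\Lambda_\kappa$ the element $q$ and every $\omega$ with $\omega^{2m}=1$. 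Each such $\omega$ is realized by the one-dimensional module $\mathbb{C}v$ with $Kv = \omega v$ and $Ev = Fv = 0$ (trivially a weight module), which respects the deformed relation precisely because $\omega^{2m}=1$ forces $\frac{\omega^m - \omega^{-m}}{q^m - q^{-m}} = 0$. To place $q$ into $\Lambda_\kappa$ I would write down the two-dimensional space with basis $\{v_0, v_1\}$ and actions $Kv_0 = qv_0$, $Kv_1 = q^{-1}v_1$, $Fv_0 = v_1$, $Fv_1 = 0$, $Ev_0 = 0$, $Ev_1 = v_0$, and verify the defining relations directly; this is a weight module (the eigenvalues $q, q^{-1}$ of $K$ are distinct) having $q$ as a weight. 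Since $\Lambda_\kappa$ is a group, it then contains the whole subgroup generated by $q$ and the $2m$-th roots of unity, i.e.\ $\{q^c\omega \mid c\in\mathbb{Z},\ \omega^{2m}=1\} \subseteq \Lambda_\kappa$, which yields equality.

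I expect the only point requiring genuine attention — hence the crux — to be the check that these small explicit modules satisfy the deformed relation $EF - FE = \frac{K^m - K^{-m}}{q^m - q^{-m}}$ in which the exponent $m$ occurs, rather than the ordinary $U_q(\mathfrak{sl}_2)$-relation. Concretely, on the two-dimensional module above $K^m$ acts by $q^m$ on $v_0$ and by $q^{-m}$ on $v_1$, so $\frac{K^m - K^{-m}}{q^m - q^{-m}}$ acts by $1$ on $v_0$ and by $-1$ on $v_1$, which is exactly the action of $EF - FE$ there; this is where the specific eigenvalues $q^{\pm 1}$ are used, just as $\omega^{2m}=1$ is used for the one-dimensional modules. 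An alternative avoiding Corollary \ref{weight-set-is-a-group} would be to construct, for each $n\geq 0$ and each $\omega$ with $\omega^{2m}=1$, the $(n+1)$-dimensional highest weight module of highest weight $q^n\omega$ and check finite-dimensionality via the vanishing $\sum_{k=0}^{n}[n-2k]_m = 0$; but the group-theoretic reduction makes the single two-dimensional module suffice.
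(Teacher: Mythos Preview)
Your proof is correct and follows essentially the same approach as the paper: both handle $\kappa=0$ via the one-dimensional simples $\mathbb{C}_\lambda$, and for $\kappa\neq 0$ both invoke Proposition~\ref{reps-are-all-weight-reps-for-classical-quantum-groups}(2) for one inclusion and then exhibit the one-dimensional modules $\mathbb{C}_\omega$ and the same two-dimensional module (up to relabeling of basis vectors) to place $\omega$ and $q$ into $\Lambda_\kappa$, appealing to Corollary~\ref{weight-set-is-a-group} to generate the rest. Your explicit verification of the deformed commutator relation on the two-dimensional module is a welcome addition that the paper leaves implicit.
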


\begin{proof}
It follows from Proposition $\ref{simple-modules-of-new-type-quantum-groups}$ that $\Lambda_{\kappa} = \mathbb{C}^\times$ when $\kappa = 0$.
Now assume $\kappa \neq 0$. Proposition $\ref{reps-are-all-weight-reps-for-classical-quantum-groups}$ implies that $\Lambda_{\kappa} \subseteq \left\{q^{c} \omega | c\in \mathbb{Z},~\omega^{2m} = 1\right\}$.
Moreover, for $U_{q}(\mathfrak{sl}^{*}_2,\kappa)$ there exist a 1-dimensional module $M  = \mathbb{C} v$ given by
$K v = \omega v$ and $Ev = F v = 0$,
and a 2-dimensional module $N  = \mathbb{C} v_1 \oplus \mathbb{C} v_2$ given by
 \begin{eqnarray*}\label{}
 \left\{
 \begin{array}{ll}
K v_1 = q^{-1} v_1,\ \  E v_1 = v_2,\ \  F v_1 = 0,\\
K v_2 = q v_2,\ \ \ \ \ E v_2 = 0,\ \ \ F v_2 = v_1.
\end{array}
\right.
\end{eqnarray*}
Hence $\omega \in \Lambda_M \subseteq \Lambda_{\kappa}$ and $q \in \Lambda_N \subseteq \Lambda_{\kappa}$.
By Corollary $\ref{weight-set-is-a-group}$ one gets $\left\{q^{c} \omega | c\in \mathbb{Z},~\omega^{2m} = 1\right\} \subseteq \Lambda_{\kappa}$.
\end{proof}

\subsection{Krull-Schmidt theorem in the category $U_{q}(\mathfrak{sl}^{*}_2,\kappa)\mbox{-}{\rm \bf mod}_{\rm wt}$ }\label{section-3-2}

\begin{defn}\label{definition-of-q2-rep}
${\rm (1)}$ Let $\Lambda = \{\lambda_1, \lambda_2, \cdots, \lambda_l\}$ be a subset of $\mathbb{C}^\times$.
If there exists $\lambda \in \Lambda$ such that
\begin{eqnarray*}\label{}
\Lambda = \left\{\lambda, q^2 \lambda, \cdots, q^{2(l-1)}\lambda\right\},
\end{eqnarray*}
then we call $\Lambda$ a $q^2$-chain.\\
${\rm (2)}$ Let $M$ be a finite dimensional weight module of $U_{q}(\mathfrak{sl}^{*}_2,\kappa)$. If the weight set $\Lambda_M$ of $M$ is a $q^2$-chain, then we call $M$ a $q^2$-chain module of $U_{q}(\mathfrak{sl}^{*}_2,\kappa)$.
\end{defn}

\begin{prop}\label{fd-indecom-weight-reps-are-q2-chain-reps}
Each finite dimensional indecomposable weight module of $U_{q}(\mathfrak{sl}^{*}_2,\kappa)$ is a $q^2$-chain module.
\end{prop}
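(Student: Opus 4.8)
The plan is to show that if $M$ is a finite dimensional indecomposable weight module whose weight set $\Lambda_M$ is not a $q^2$-chain, then $M$ decomposes, contradicting indecomposability. First I would recall that, since $M$ is a weight module, $M = \bigoplus_{\lambda \in \Lambda_M} M_\lambda$ as a vector space, and the defining relations $KE = q^2 EK$, $KF = q^{-2}FK$ force $E M_\lambda \subseteq M_{q^2\lambda}$ and $F M_\lambda \subseteq M_{q^{-2}\lambda}$. Thus $E$ shifts weights up by a factor $q^2$ and $F$ shifts them down by $q^2$; since $q$ is not a root of unity, the multiplicative action of $q^2$ on $\mathbb{C}^\times$ partitions $\Lambda_M$ into disjoint maximal $q^2$-chains (its orbits intersected with $\Lambda_M$, restricted to actual weights). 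The key observation is that for two weights $\lambda, \mu$ lying in \emph{different} $q^2$-orbits, no power of $E$ or $F$ can map $M_\lambda$ into $M_\mu$.

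Next I would define, for each maximal $q^2$-chain $\Lambda^{(i)} \subseteq \Lambda_M$ (so $\Lambda_M = \bigsqcup_i \Lambda^{(i)}$), the subspace $M^{(i)} = \bigoplus_{\lambda \in \Lambda^{(i)}} M_\lambda$. I claim each $M^{(i)}$ is a $U_{q}(\mathfrak{sl}^{*}_2,\kappa)$-submodule: it is visibly stable under $K$ and $K^{-1}$ (it is a sum of weight spaces), and stable under $E$ and $F$ because $E$ and $F$ send $M_\lambda$ into $M_{q^{\pm 2}\lambda}$, and $q^{\pm 2}\lambda$ lies in the same $q^2$-orbit as $\lambda$, hence in $\Lambda^{(i)}$ or in no weight space at all (in which case the image is $0$). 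One must make sure this argument does not secretly use the deformed relation $EF - FE = \kappa(EF-FE)$; it does not, since the submodule property only requires the weight-shifting behaviour of $E$ and $F$, which holds uniformly for all $\kappa$. Therefore $M = \bigoplus_i M^{(i)}$ is a direct sum decomposition in $U_{q}(\mathfrak{sl}^{*}_2,\kappa)\mbox{-}{\rm \bf mod}_{\rm wt}$, and since $M$ is indecomposable there is exactly one summand, i.e.\ $\Lambda_M$ is a single $q^2$-chain, so $M$ is a $q^2$-chain module.

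The main point requiring care — and what I expect to be the only genuine obstacle — is verifying that distinct maximal $q^2$-chains in $\Lambda_M$ are genuinely ``separated'', i.e.\ that $q^{2k}\lambda = q^{2l}\mu$ for integers $k,l$ forces $\lambda$ and $\mu$ to be in the same orbit; this is immediate from $q$ not being a root of unity, which guarantees the map $n \mapsto q^{2n}$ is injective and the orbits under multiplication by $q^2$ are well-defined. A secondary point is to confirm that a finite weight set $\Lambda_M$ is partitioned into \emph{finitely many finite} $q^2$-chains — finiteness of $\Lambda_M$ makes each orbit-intersection finite, but one should note that an orbit-intersection need not be an interval; however, this does not affect the submodule argument, since what matters is only that $q^{\pm 2}\lambda$ stays inside the same orbit. (If one wants genuine $q^2$-chains rather than orbit-intersections in the statement, one would invoke the earlier structure — e.g.\ Lemma \ref{E-F-actions-are-nilpotent} together with irreducibility of the weight-space picture — but for the present proposition the orbit decomposition already suffices, because indecomposability collapses everything to one orbit and a one-orbit finite weight set that supports a nonzero module is automatically a $q^2$-chain once one checks, using nilpotency of $E$, that no ``gap'' inside the orbit can occur.) I would finish by spelling out this last gap-free claim: if $\lambda$ and $q^{4}\lambda$ are weights of $M$ but $q^2\lambda$ is not, then $E M_\lambda \subseteq M_{q^2\lambda} = 0$ and $F M_{q^4\lambda} \subseteq M_{q^2\lambda} = 0$, so the two halves again split off as submodules, contradicting indecomposability; hence $\Lambda_M$ is an honest $q^2$-chain.
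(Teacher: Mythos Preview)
Your proof is correct and follows essentially the same approach as the paper: decompose $M$ as a direct sum of submodules indexed by pieces of the weight set, then invoke indecomposability. The only cosmetic difference is that the paper defines its equivalence relation on $\Lambda_M$ so that two weights are equivalent precisely when they are joined by a path of consecutive $q^2$-steps \emph{through weights of $M$}, which makes each equivalence class a $q^2$-chain from the outset and absorbs your separate ``gap'' argument into the definition.
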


\begin{proof}
Let $M$ be a finite dimensional indecomposable weight module of $U_{q}(\mathfrak{sl}^{*}_2,\kappa)$.
Define a relation $\sim$ on the weight set $\Lambda_M = \{\lambda_1, \lambda_2, \cdots, \lambda_m\}$ of $M$ as follows:
$\lambda_i \sim \lambda_j \Longleftrightarrow$
there~exist~\[\lambda_i = \lambda_{i_1},\lambda_{i_2},\cdots,\lambda_{i_r}= \lambda_j\in \Lambda_M~\quad{\rm or}~\quad\lambda_j = \lambda_{i_1},\lambda_{i_2},\cdots,\lambda_{i_r}= \lambda_i\in \Lambda_M\]
such~that~$\lambda_{i_{l+1}} = q^2 \lambda_{i_{l}}$~for~$1\leq l \leq r-1$.
 It is easy to check that the relation $\sim$ is an equivalence relation on $\Lambda_M$.
Denote by $\Lambda_M / \sim = \left\{\Lambda_1, \Lambda_2, \cdots, \Lambda_s\right\}$ the set consisting of all the equivalence classes.
Then $\Lambda_M$ can be expressed as a disjoint union of $\Lambda_1, \Lambda_2, \cdots, \Lambda_s$, i.e.,
$\Lambda_M = {\mathop{\overset{\cdot}\cup}\limits_{1\leq i \leq s}} \Lambda_i$.
Noting that each $\Lambda_i (1\leq i \leq s)$ is a $q^2$-chain, then we can assume that
\[\Lambda_i = \left\{\lambda_{i_0}, q^2 \lambda_{i_0}, \cdots, q^{2(m_i - 1)} \lambda_{i_0} \right\}.\]
Set
$M_{\Lambda_i} = {\mathop\bigoplus\limits_{j=1}^{m_i}} M_{q^{2(j - 1)}\lambda_{i_0}},$
then $M_{\Lambda_i}$ is a submodule of $M$ and a $q^2$-chain module of $U_{q}(\mathfrak{sl}^{*}_2,\kappa)$ by Definition $\ref{definition-of-q2-rep}$.
Since $\Lambda_M = {\mathop{\overset{\cdot}\cup}\limits_{1\leq i \leq s}} \Lambda_i$,
then $M = {\mathop\oplus\limits_{i=1}^s} M_{\Lambda_i}$. Because $M$ is indecomposable, we know that $s = 1$. Therefore, $M$ is a $q^2$-chain module of $U_{q}(\mathfrak{sl}^{*}_2,\kappa)$.
\end{proof}

Now we can state and prove the Krull-Schmidt theorem in the category $U_{q}(\mathfrak{sl}^{*}_2,\kappa)\mbox{-}{\rm \bf mod}_{\rm wt}$.

\begin{thm}\label{unique-decomposition-theorem-of-finite-dimensional-reps-of-U}
Each finite dimensional weight module of $U_{q}(\mathfrak{sl}^{*}_2,\kappa)$ can be uniquely decomposed as the direct sum of some indecomposable $q^2$-chain modules of $U_{q}(\mathfrak{sl}^{*}_2,\kappa)$.
\end{thm}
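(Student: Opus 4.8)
The plan is to deduce the theorem from the classical Krull--Schmidt theorem for finite-dimensional modules, together with Proposition~\ref{fd-indecom-weight-reps-are-q2-chain-reps}. The underlying fact is that $U_{q}(\mathfrak{sl}^{*}_2,\kappa)\mbox{-}{\rm \bf mod}_{\rm wt}$ is a \emph{Krull--Schmidt category}: although it is only a full subcategory of $U_{q}(\mathfrak{sl}^{*}_2,\kappa)\mbox{-}{\rm \bf mod}$, Proposition~\ref{properties-of-weight-rep-cat} guarantees that it is closed under direct summands, so both the construction of a decomposition and the proof of its uniqueness take place entirely inside it.

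For existence, let $M$ be a finite-dimensional weight module of $U_{q}(\mathfrak{sl}^{*}_2,\kappa)$. Since $\dim_{\mathbb{C}} M < \infty$, an induction on $\dim_{\mathbb{C}} M$ shows that $M$ is a finite direct sum $M = M_1 \oplus \cdots \oplus M_n$ of indecomposable submodules. Each $M_i$ is a direct summand of $M$, hence a weight module by Proposition~\ref{properties-of-weight-rep-cat}, and then by Proposition~\ref{fd-indecom-weight-reps-are-q2-chain-reps} it is a $q^2$-chain module; so the $M_i$ are indecomposable $q^2$-chain modules of $U_{q}(\mathfrak{sl}^{*}_2,\kappa)$.

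For uniqueness, the key observation is that for any finite-dimensional $U_{q}(\mathfrak{sl}^{*}_2,\kappa)$-module $N$ the ring ${\rm End}_{U_{q}(\mathfrak{sl}^{*}_2,\kappa)}(N)$ is a finite-dimensional $\mathbb{C}$-algebra, and when $N$ is indecomposable it has no idempotents besides $0$ and $1$, since a nontrivial idempotent splits off a proper direct summand. A finite-dimensional algebra with no nontrivial idempotents is local (equivalently, one invokes Fitting's lemma: each endomorphism of an indecomposable finite-length module is either nilpotent or invertible). Once all indecomposable summands are known to have local endomorphism rings, the standard exchange argument (the Azumaya/Krull--Schmidt refinement theorem) applies verbatim and gives: if $M = M_1 \oplus \cdots \oplus M_n = N_1 \oplus \cdots \oplus N_{n'}$ with all summands indecomposable, then $n = n'$ and, after a permutation, $M_i \cong N_i$ for all $i$. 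Combining this with the existence step proves the theorem.

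I expect no genuine obstacle here: the argument is standard Krull--Schmidt theory, resting only on the two facts already in hand --- closure of the subcategory under direct summands and the identification of its indecomposables with the $q^2$-chain modules. If a self-contained treatment is preferred, one may first decompose $M$ canonically as $M = \bigoplus_i M_{\Lambda_i}$ along the equivalence classes of the relation $\sim$ on its weight set (exactly as in the proof of Proposition~\ref{fd-indecom-weight-reps-are-q2-chain-reps}), observe that distinct $M_{\Lambda_i}$ share no common indecomposable summand because their weight sets are ``$q^2$-separated'', and then run the Fitting-lemma uniqueness argument inside each $M_{\Lambda_i}$; this merely rearranges the same steps.
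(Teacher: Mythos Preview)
Your proposal is correct and follows essentially the same route as the paper: apply the classical Krull--Schmidt theorem to the finite-dimensional module $M$, use Proposition~\ref{properties-of-weight-rep-cat} to see that each indecomposable summand is again a weight module, and then invoke Proposition~\ref{fd-indecom-weight-reps-are-q2-chain-reps} to conclude each summand is a $q^2$-chain module. The paper simply cites Krull--Schmidt as a black box, whereas you unpack the Fitting-lemma justification and offer an alternative block-by-block argument, but the logical skeleton is identical.
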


\begin{proof}
The classical Krull-Schmidt theorem (cf. \cite[Section 12.9]{AF1992}) tells us that each finite dimensional weight module $M$ of $U_{q}(\mathfrak{sl}^{*}_2,\kappa)$ has a unique decomposition $M = M_1 \oplus M_2 \oplus \cdots \oplus M_t$ up to a permutation and isomorphisms,
where $M_1,M_2,\cdots,M_t$ are indecomposable modules of $U_{q}(\mathfrak{sl}^{*}_2,\kappa)$.
By Proposition $\ref{properties-of-weight-rep-cat}$, we know that $M_1,M_2,\cdots,M_t$ are weight modules of $U_{q}(\mathfrak{sl}^{*}_2,\kappa)$.
It follows from Proposition $\ref{fd-indecom-weight-reps-are-q2-chain-reps}$ that $M_1,M_2,\cdots,M_t$ are indecomposable $q^2$-chain modules of $U_{q}(\mathfrak{sl}^{*}_2,\kappa)$.
\end{proof}

\subsection{Block decomposition of the category $U_{q}(\mathfrak{sl}^{*}_2,\kappa)\mbox{-}{\rm \bf mod}_{\rm wt}$ }\label{section-3-3}

\begin{defn}
For any $\lambda, \mu \in \mathbb{C}^\times$, if there exists an integer $l \in \mathbb{Z}$
 such that $\lambda = q^{2l} \mu$, then we say that $\lambda$ and $\mu$ are $q^2$-linked, which we denote by $\lambda \overset{q^2}\sim \mu$.
\end{defn}

It is easy to check that the relation $\overset{q^2}\sim$ on the weight group $\Lambda_{\kappa}$ of $U_{q}(\mathfrak{sl}^{*}_2,\kappa)$ is an equivalence relation.
Denote by $\Lambda_{\kappa} / \overset{q^2}\sim$ the set consisting of all the equivalence classes.
$\Lambda_{\kappa} / \overset{q^2}\sim$ can be considered as the quotient group $\Lambda_{\kappa} / \langle q^2 \rangle$, where $\langle q^2 \rangle$
is the cyclic subgroup of $\Lambda_{\kappa}$ generated by $q^2$.
For each equivalence class $[\lambda]$ in $\Lambda_{\kappa} / \overset{q^2}\sim$, we fix its representative element $\lambda$. Denote by
$[\Lambda_{\kappa} / \overset{q^2}\sim]$ the set consisting of all the above fixed representative elements.
 It can be seen from Corollary $\ref{explicit-charaterization-of-weight-group-of-quantum-groups}$ that $[\Lambda_{\kappa} / \overset{q^2}\sim]$ is
 an infinite set when $\kappa = 0$, while $[\Lambda_{\kappa} / \overset{q^2}\sim]$ is a finite set with cardinal $4m$ when $\kappa \neq 0$.
 In fact, when $\kappa \neq 0$, we can choose arbitrary odd integer $t_0$ and even integer $t_1$ such that
 \begin{eqnarray*}\label{}
[\Lambda_{\kappa} / \overset{q^2}\sim] = \left\{q^{t_0} \omega, q^{t_1} \omega | \omega^{2m} =1 \right\}.
\end{eqnarray*}
In either case, we can always assume that
$1\in [\Lambda_{\kappa} / \overset{q^2}\sim]$.

\begin{defn}\label{def-blocks-of-weight-rep-cat-of-U}
For $\lambda \in [\Lambda_{\kappa} / \overset{q^2}\sim]$, we define the category $\mathcal{O}^{\kappa}_\lambda$ to be the full subcategory of $U_{q}(\mathfrak{sl}^{*}_2,\kappa)\mbox{-}{\rm \bf mod}_{\rm wt}$ with objects $M$ satisfying $\Lambda_M \subseteq [\lambda]$. We call $\mathcal{O}^{\kappa}_\lambda$ a block
of $U_{q}(\mathfrak{sl}^{*}_2,\kappa)\mbox{-}{\rm \bf mod}_{\rm wt}$.
\end{defn}

\begin{lem}\label{q-chain-characterization-by-unique}
For each $q^2$-chain $\Lambda$ contained in $\Lambda_{\kappa}$, there exists a unique tripe $(\lambda,k,l)$ such that
\begin{eqnarray*}\label{}
\Lambda = \left\{q^{2i}\lambda | k \leq i \leq l\right\}\subseteq [\lambda] = \left\{ q^{2i}\lambda | i\in \mathbb{Z}\right\},
\end{eqnarray*}
where $\lambda \in [\Lambda_{\kappa} / \overset{q^2}\sim]$ and $k,l\in \mathbb{Z}$ with $k \leq l$.
\end{lem}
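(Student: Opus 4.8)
The plan is to unpack the definition of a $q^2$-chain $\Lambda \subseteq \Lambda_\kappa$ and track how it sits inside its $q^2$-linkage class. By Definition \ref{definition-of-q2-rep}, a $q^2$-chain $\Lambda$ of cardinality $l_0$ is a set of the form $\{\mu, q^2\mu, \dots, q^{2(l_0-1)}\mu\}$ for some $\mu \in \Lambda$; since $q$ is not a root of unity, these $l_0$ elements are pairwise distinct, so $\Lambda$ is a set of $l_0$ consecutive powers $q^{2i}\mu$. First I would observe that all elements of $\Lambda$ are $q^2$-linked to one another, hence $\Lambda$ lies entirely inside a single equivalence class of $\overset{q^2}\sim$; call it $[\lambda]$ with $\lambda$ the chosen representative in $[\Lambda_\kappa/\overset{q^2}\sim]$. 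Then $[\lambda] = \{q^{2i}\lambda \mid i \in \mathbb{Z}\}$, and I would argue this identification of $[\lambda]$ with a subset indexed by $\mathbb{Z}$ is itself injective (again because $q$ is not a root of unity, the powers $q^{2i}$ are distinct, so $i \mapsto q^{2i}\lambda$ is a bijection $\mathbb{Z} \to [\lambda]$).

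Next, using this bijection, every element of $\Lambda$ can be written uniquely as $q^{2i}\lambda$ for a well-defined integer $i$; let $k$ be the minimum such index and $l$ the maximum. Because $\Lambda$ consists of $l_0$ consecutive powers $q^{2i}\mu$, pulling back through the bijection shows the index set $\{i : q^{2i}\lambda \in \Lambda\}$ is exactly the interval $\{k, k+1, \dots, l\}$ with $l - k + 1 = l_0$, giving $\Lambda = \{q^{2i}\lambda \mid k \le i \le l\}$ with $k \le l$ as desired. For existence of the triple $(\lambda, k, l)$ this is all that is needed.

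For uniqueness, suppose $\Lambda = \{q^{2i}\lambda' \mid k' \le i \le l'\} \subseteq [\lambda']$ for another $\lambda' \in [\Lambda_\kappa/\overset{q^2}\sim]$ and integers $k' \le l'$. Since $\Lambda \subseteq [\lambda'] \cap [\lambda]$ and $\Lambda$ is nonempty (it contains $\mu$), the classes $[\lambda]$ and $[\lambda']$ intersect, hence coincide; since representatives in $[\Lambda_\kappa/\overset{q^2}\sim]$ are fixed one per class, $\lambda = \lambda'$. Then $k$ and $k'$ are both the minimum of the same index set $\{i : q^{2i}\lambda \in \Lambda\}$ (using once more the injectivity of $i \mapsto q^{2i}\lambda$ to make "index" well-defined), so $k = k'$, and likewise $l = l'$. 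I do not expect a serious obstacle here; the only point requiring care is to invoke consistently that $q$ not being a root of unity makes every map $i \mapsto q^{2i}\lambda$ injective, so that "the exponent $i$" is a genuinely well-defined integer attached to each weight in the class — once that is fixed, existence and uniqueness are immediate bookkeeping.
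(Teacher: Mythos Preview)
Your proof is correct and follows essentially the same approach as the paper: identify the unique representative $\lambda \in [\Lambda_\kappa/\overset{q^2}\sim]$ whose class contains $\Lambda$, then use that $q$ is not a root of unity to make the exponent map $i \mapsto q^{2i}\lambda$ injective and read off $k$ and $l$. The paper's version is more terse (it simply picks the unique $k$ with $\mu = q^{2k}\lambda$ and sets $l = k + j - 1$, leaving uniqueness of the triple implicit), while you spell out the uniqueness argument via min/max of the index set more carefully; but the substance is identical.
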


\begin{proof}
For any given $q^2$-chain $\Lambda \subseteq \Lambda_{\kappa}$, we can assume that $\Lambda = \left\{\mu, q^2 \mu, \cdots, q^{2(j-1)}\mu\right\},$
where $\mu \in \mathbb{C}^\times$ and $j \in \mathbb{Z}^{\geq 1}$. Since $\mu \in \Lambda \subseteq \Lambda_{\kappa}$, then there exists a unique $\lambda \in [\Lambda_{\kappa} / \overset{q^2}\sim]$ such that $[\lambda] = [\mu]$, from which we deduce that $\lambda$ and $\mu$ are $q^2$-linked. Because $q$ is not a root of unity, there
exists a unique integer $k$ such that $\mu = q^{2k} \lambda$. Therefore, we can choose $l = k+j-1$.
\end{proof}

\begin{prop}\label{indecomposable-representation-in-unique block}
Each indecomposable object in $U_{q}(\mathfrak{sl}^{*}_2,\kappa)\mbox{-}{\rm \bf mod}_{\rm wt}$ lies in a unique block $\mathcal{O}^{\kappa}_\lambda$ of $U_{q}(\mathfrak{sl}^{*}_2,\kappa)\mbox{-}{\rm \bf mod}_{\rm wt}$.
\end{prop}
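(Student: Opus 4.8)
The plan is to show that any indecomposable object lies in \emph{at least one} block and in \emph{at most one} block. First I would recall from Proposition \ref{fd-indecom-weight-reps-are-q2-chain-reps} that a finite dimensional indecomposable weight module $M$ of $U_{q}(\mathfrak{sl}^{*}_2,\kappa)$ is a $q^2$-chain module, so its weight set $\Lambda_M$ is a $q^2$-chain contained in the weight group $\Lambda_\kappa$. Then Lemma \ref{q-chain-characterization-by-unique} provides a unique triple $(\lambda,k,l)$ with $\lambda \in [\Lambda_\kappa / \overset{q^2}\sim]$ and $k \leq l$ such that $\Lambda_M = \{ q^{2i}\lambda \mid k \leq i \leq l\} \subseteq [\lambda]$. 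In particular $\Lambda_M \subseteq [\lambda]$, so $M$ is an object of the block $\mathcal{O}^{\kappa}_\lambda$ by Definition \ref{def-blocks-of-weight-rep-cat-of-U}. This gives existence.

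For uniqueness, suppose $M$ also lies in $\mathcal{O}^{\kappa}_\mu$ for some $\mu \in [\Lambda_\kappa / \overset{q^2}\sim]$, i.e.\ $\Lambda_M \subseteq [\mu]$. Since $M$ is a nonzero module, $\Lambda_M \neq \emptyset$, so there exists a weight $\nu \in \Lambda_M$ with $\nu \in [\lambda] \cap [\mu]$. Because $[\lambda]$ and $[\mu]$ are equivalence classes for the relation $\overset{q^2}\sim$, having a common element forces $[\lambda] = [\mu]$; and since $[\Lambda_\kappa / \overset{q^2}\sim]$ was chosen to contain exactly one representative of each class, $\lambda = \mu$. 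Hence the block containing $M$ is unique.

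I do not expect any serious obstacle here — the statement is essentially a bookkeeping consequence of the two preceding results, the only point needing care being that $\Lambda_M$ is nonempty (which holds by the convention in Definition \ref{definiton-of-weight-rep-U} that weight spaces are nontrivial, together with $M \neq 0$). If one wants to be slightly more careful, one should note that $M$ being a weight module means $M = \bigoplus_{\nu \in \Lambda_M} M_\nu$ with each $M_\nu \neq 0$, so $\Lambda_M = \emptyset$ would force $M = 0$. The rest is immediate.
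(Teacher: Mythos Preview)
Your proof is correct and follows essentially the same approach as the paper: both invoke Proposition~\ref{fd-indecom-weight-reps-are-q2-chain-reps} to get that $M$ is a $q^2$-chain module and then Lemma~\ref{q-chain-characterization-by-unique} to extract the unique $\lambda \in [\Lambda_\kappa / \overset{q^2}\sim]$ with $\Lambda_M \subseteq [\lambda]$. The only difference is that the paper reads uniqueness of $\lambda$ directly off the uniqueness of the triple in Lemma~\ref{q-chain-characterization-by-unique}, whereas you give a separate (equally valid) argument via disjointness of equivalence classes.
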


\begin{proof}
\label{fd-indecom-weight-rep-is-q2-chain-rep}
By Proposition $\ref{fd-indecom-weight-reps-are-q2-chain-reps}$, each indecomposable object $M$ in $U_{q}(\mathfrak{sl}^{*}_2,\kappa)\mbox{-}{\rm \bf mod}_{\rm wt}$ is a $q^2$-chain module. Since the weight set $\Lambda_M$ of $M$ is a $q^2$-chain contained in $\Lambda_{\kappa}$,
by Lemma $\ref{q-chain-characterization-by-unique}$ there exists a unique $\lambda \in [\Lambda_{\kappa} / \overset{q^2}\sim]$ such that $\Lambda_M \subseteq [\lambda]$.
Therefore, this statement follows from Definition \ref{def-blocks-of-weight-rep-cat-of-U}.
\end{proof}

Now we are ready to obtain the following block decomposition of the category $U_{q}(\mathfrak{sl}^{*}_2,\kappa)\mbox{-}{\rm \bf mod}_{\rm wt}$.

\begin{thm}\label{block-decomposition-of-weight-rep-cat-of-U}
The category $U_{q}(\mathfrak{sl}^{*}_2,\kappa)\mbox{-}{\rm \bf mod}_{\rm wt}$ is the direct sum of the subcategories $\mathcal{O}^{\kappa}_\lambda$ as $\lambda$ ranges over the set $[\Lambda_{\kappa} / \overset{q^2}\sim]$,
 i.e., $U_{q}(\mathfrak{sl}^{*}_2,\kappa)\mbox{-}{\rm \bf mod}_{\rm wt} = {\mathop\bigoplus\limits_{\lambda \in [\Lambda_{\kappa} / \overset{q^2}\sim]}} \mathcal{O}^{\kappa}_\lambda.$
\end{thm}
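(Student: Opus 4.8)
The plan is to prove that $U_{q}(\mathfrak{sl}^{*}_2,\kappa)\mbox{-}{\rm \bf mod}_{\rm wt}$ decomposes as the direct sum of the full subcategories $\mathcal{O}^{\kappa}_\lambda$ indexed by $\lambda \in [\Lambda_{\kappa}/\overset{q^2}\sim]$, in the sense of an orthogonal decomposition of an abelian category: every object is (uniquely) a finite direct sum of objects lying in the various $\mathcal{O}^{\kappa}_\lambda$, and there are no nonzero morphisms between objects belonging to distinct blocks. First I would establish the object-decomposition. Given an object $M$ in $U_{q}(\mathfrak{sl}^{*}_2,\kappa)\mbox{-}{\rm \bf mod}_{\rm wt}$, apply the Krull--Schmidt theorem (Theorem \ref{unique-decomposition-theorem-of-finite-dimensional-reps-of-U}) to write $M = M_1 \oplus \cdots \oplus M_t$ with each $M_i$ indecomposable; by Proposition \ref{indecomposable-representation-in-unique block} each $M_i$ lies in a unique block $\mathcal{O}^{\kappa}_{\lambda_i}$. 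Grouping the summands according to which representative $\lambda \in [\Lambda_{\kappa}/\overset{q^2}\sim]$ they belong to, and using that each $\mathcal{O}^{\kappa}_\lambda$ is closed under direct sums (immediate from the definition, since $\Lambda_{M\oplus N} = \Lambda_M \cup \Lambda_N$), we get $M = \bigoplus_{\lambda} M^{(\lambda)}$ with $M^{(\lambda)} \in \mathcal{O}^{\kappa}_\lambda$ and all but finitely many $M^{(\lambda)}$ zero.

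Next I would prove the $\mathrm{Hom}$-orthogonality: if $\lambda \neq \mu$ in $[\Lambda_{\kappa}/\overset{q^2}\sim]$, $M \in \mathcal{O}^{\kappa}_\lambda$ and $N \in \mathcal{O}^{\kappa}_\mu$, then $\mathrm{Hom}_{U_{q}(\mathfrak{sl}^{*}_2,\kappa)}(M,N) = 0$. The key point is that $K$ acts on $M$ with eigenvalues only in $[\lambda]$ and on $N$ with eigenvalues only in $[\mu]$, and $[\lambda] \cap [\mu] = \emptyset$ because $\lambda$ and $\mu$ are not $q^2$-linked. Any $U_{q}(\mathfrak{sl}^{*}_2,\kappa)$-homomorphism $\phi : M \to N$ in particular commutes with the action of $K$, hence sends the $\nu$-eigenspace $M_\nu$ into the $\nu$-eigenspace $N_\nu$ of $N$; but $N_\nu = 0$ whenever $\nu \in [\lambda]$, and $M$ is the direct sum of its weight spaces $M_\nu$ with $\nu \in [\lambda]$, so $\phi = 0$. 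This also shows $\mathcal{O}^{\kappa}_\lambda$ and $\mathcal{O}^{\kappa}_\mu$ intersect only in the zero object.

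Finally I would assemble these facts into the statement that $U_{q}(\mathfrak{sl}^{*}_2,\kappa)\mbox{-}{\rm \bf mod}_{\rm wt} = \bigoplus_{\lambda \in [\Lambda_{\kappa}/\overset{q^2}\sim]} \mathcal{O}^{\kappa}_\lambda$: each $\mathcal{O}^{\kappa}_\lambda$ is a full subcategory closed under subobjects, quotients and finite direct sums (closure under sub/quotient follows from Proposition \ref{properties-of-weight-rep-cat} together with the observation that passing to a submodule or quotient only shrinks the weight set), every object of the ambient category is a direct sum of objects from the $\mathcal{O}^{\kappa}_\lambda$'s, and Hom's between distinct blocks vanish; these are exactly the conditions defining a direct sum decomposition of an abelian category. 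I expect the only genuinely delicate point to be bookkeeping: making sure that the representative set $[\Lambda_{\kappa}/\overset{q^2}\sim]$ is used consistently so that the blocks are pairwise disjoint and exhaust all indecomposables — but this is precisely what Lemma \ref{q-chain-characterization-by-unique} and Proposition \ref{indecomposable-representation-in-unique block} have already arranged, so the argument should be short.
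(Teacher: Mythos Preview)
Your proposal is correct and follows essentially the same approach as the paper's own proof: use Theorem \ref{unique-decomposition-theorem-of-finite-dimensional-reps-of-U} together with Proposition \ref{indecomposable-representation-in-unique block} to place each indecomposable summand in a unique block, and then prove $\mathrm{Hom}$-orthogonality between distinct blocks via the weight-space argument (a $U_q(\mathfrak{sl}^*_2,\kappa)$-map commutes with $K$, hence preserves weight spaces, which forces it to vanish since $[\lambda]\cap[\mu]=\emptyset$). The extra remarks you make about closure of each $\mathcal{O}^\kappa_\lambda$ under sub/quotient/direct sum are correct and harmless, but the paper does not spell them out explicitly in this proof.
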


\begin{proof}
By Theorem $\ref{unique-decomposition-theorem-of-finite-dimensional-reps-of-U}$, we know that
each object $M$ in $U_{q}(\mathfrak{sl}^{*}_2,\kappa)\mbox{-}{\rm \bf mod}_{\rm wt}$ can be decomposed as the direct sum of finitely many indecomposable $q^2$-chain modules $M_1,M_2,\cdots,M_t$ of $U_{q}(\mathfrak{sl}^{*}_2,\kappa)$, that is, $M = {\mathop\bigoplus\limits_{i=1}^t} M_i$.
Then it can be seen from Proposition $\ref{indecomposable-representation-in-unique block}$ that each direct summand $M_i$ of $M$
 lies in a unique block of $U_{q}(\mathfrak{sl}^{*}_2,\kappa)\mbox{-}{\rm \bf mod}_{\rm wt}$.
 Therefore, each object in $U_{q}(\mathfrak{sl}^{*}_2,\kappa)\mbox{-}{\rm \bf mod}_{\rm wt}$ can be decomposed as the direct sum of finitely many objects in the blocks of $U_{q}(\mathfrak{sl}^{*}_2,\kappa)\mbox{-}{\rm \bf mod}_{\rm wt}$.

Now assume that $M \in \mathcal{O}^{\kappa}_\lambda$ and $N \in \mathcal{O}^{\kappa}_\mu$ with $\lambda, \mu \in [\Lambda_{\kappa} / \overset{q^2}\sim]$ not $q^2$-linked.
 Next we only need to show that
${\rm Hom}_{U_{q}(\mathfrak{sl}^{*}_2,\kappa)}(M,N) = 0.$
For any $\phi \in {\rm Hom}_{U_{q}(\mathfrak{sl}^{*}_2,\kappa)}(M,N)$, $\nu\in \Lambda_M$ and $w\in M_\nu$, one has $K \phi(w) = \phi(K w) = \phi(\nu w) = \nu \phi(w)$, which means that $\phi(w) \in N_\nu$.
Noting that $[\lambda] \cap [\mu] = \emptyset$ implies that $\nu \notin \Lambda_N$, one has $N_\nu = 0$. Hence $\phi = 0$.
\end{proof}

\begin{remark}
As it is stated in Section 1.13 in \cite{Hum2008}, the blocks of a module category $\mathcal{C}$ which is both artinian and noetherian always have the following properties:\\
${\rm (1)}$ Two simple modules $M$ and $N$ are in the same block if and only if there is a finite sequence
\begin{eqnarray*}\label{short-exact-seq}
M=M_1,M_2,\cdots,M_n=N
\end{eqnarray*}
of simple modules such that ${\rm Ext}^1_{\mathcal{C}} (M_i,M_{i+1}) \neq 0$ or ${\rm Ext}^1_{\mathcal{C}} (M_{i+1},M_{i}) \neq 0$ for any $1 \leq i < n$.\\
${\rm (2)}$ Any module $M$ in $\mathcal{C}$ belongs to a block if all its composition factors do.\\
${\rm (3)}$ Any module $M$ in $\mathcal{C}$ decomposes uniquely as a direct sum of submodules, each belonging to a single block. In particular, each
indecomposable module belongs to a single block.

When $\kappa = 0$, it follows from Proposition $\ref{1st-ext-group-of-simple-modules}$, Proposition $\ref{indecomposable-representation-in-unique block}$ and
Theorem $\ref{block-decomposition-of-weight-rep-cat-of-U}$ that the blocks of $U_{q}(\mathfrak{sl}^{*}_2)\mbox{-}{\rm \bf mod}_{\rm wt}$ in Definition $\ref{def-blocks-of-weight-rep-cat-of-U}$ satisfy the properties described above.
\end{remark}

\subsection{Isomorphisms between blocks of the category $U_{q}(\mathfrak{sl}^{*}_2,\kappa)\mbox{-}{\rm \bf mod}_{\rm wt}$ }\label{section-3-4}
For $U_{q}(\mathfrak{sl}^{*}_2,\kappa)$, we give some natural automorphisms in the following lemma which can be proved by direct verification.

\begin{lem}\label{equivalence-functors-twist-automorphisms}
${\rm (1)}$ For any $\lambda \in \mathbb{C}^\times$, there is a unique automorphism $\tau_\lambda$ of $U_{q}(\mathfrak{sl}^{*}_2)$
defined by
 \begin{eqnarray*}\label{}
\tau_\lambda(K) = \lambda K,\ \ \
\tau_\lambda(E) = E,\ \ \
\tau_\lambda(F) = F.
\end{eqnarray*}
${\rm (2)}$ For any $\omega$ with $\omega^{2m}=1$, there is a unique automorphism $\sigma_\omega$ of $U_{q}(\mathfrak{sl}^{*}_2,\kappa)$
defined by
 \begin{eqnarray*}\label{}
\sigma_\omega(K) = \omega K,\ \ \
\sigma_\omega(E) = \omega^m E,\ \ \
\sigma_\omega(F) = F.
\end{eqnarray*}
\end{lem}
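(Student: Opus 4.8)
The statement asserts the existence of two families of algebra automorphisms of $U_{q}(\mathfrak{sl}^{*}_2,\kappa)$ (the first only for $\kappa=0$, the second for general $\kappa$). Since these algebras are given by generators and relations, the plan is the standard one: to produce an algebra endomorphism, specify the images of the generators $K,K^{-1},E,F$ and verify that all defining relations are preserved; to see it is an automorphism, exhibit an explicit two-sided inverse (which will be of the same shape, with $\lambda$ replaced by $\lambda^{-1}$ in (1), and $\omega$ by $\omega^{-1}$ in (2)); uniqueness is immediate since the generators are sent to prescribed elements.

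For part (1): define $\tau_\lambda$ on the free algebra by $K\mapsto \lambda K$, $K^{-1}\mapsto \lambda^{-1}K^{-1}$, $E\mapsto E$, $F\mapsto F$. One checks $KK^{-1}-1$ and $K^{-1}K-1$ map to $0$; the relation $KE=q^2EK$ is preserved because both sides get multiplied by the scalar $\lambda$; similarly for $KF=q^{-2}FK$; and $EF-FE$ is untouched, so the (undeformed) relation $EF=FE$ of $U_{q}(\mathfrak{sl}^{*}_2)$ is preserved. Hence $\tau_\lambda$ descends to an algebra endomorphism of $U_{q}(\mathfrak{sl}^{*}_2)$, and $\tau_{\lambda^{-1}}$ is a two-sided inverse, so $\tau_\lambda$ is an automorphism. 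Note this argument uses $\kappa=0$ crucially: for $\kappa\neq 0$ the deformed relation is $EF-FE=a(K^m-K^{-m})$, whose right-hand side is not scaled by a scalar under $K\mapsto\lambda K$ unless $\lambda^m=\lambda^{-m}=1$, which is why (1) is stated only for $U_{q}(\mathfrak{sl}^{*}_2)$.

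For part (2): define $\sigma_\omega$ by $K\mapsto\omega K$, $K^{-1}\mapsto\omega^{-1}K^{-1}$, $E\mapsto\omega^m E$, $F\mapsto F$, where $\omega^{2m}=1$. The relations $KK^{-1}=K^{-1}K=1$ and $KE=q^2EK$, $KF=q^{-2}FK$ are preserved by the same scalar-matching as above (the extra factor $\omega^m$ on $E$ appears on both sides of $KE=q^2EK$). The only nontrivial point is the deformed relation: applying $\sigma_\omega$ to $EF-FE=a(K^m-K^{-m})$ gives $\omega^m(EF-FE)=\omega^m a(EF-FE)$ on the left after applying the relation, i.e.\ $\omega^m\cdot a(K^m-K^{-m})$, and on the right $a(\omega^m K^m-\omega^{-m}K^{-m})$; these agree precisely because $\omega^{2m}=1$ forces $\omega^{-m}=\omega^{m}$, so the right-hand side equals $\omega^m a(K^m-K^{-m})$. (When $\kappa=0$ there is nothing to check here.) Thus $\sigma_\omega$ is an algebra endomorphism, with inverse $\sigma_{\omega^{-1}}=\sigma_{\omega^m}$ (again using $\omega^{2m}=1$), hence an automorphism; uniqueness is clear.

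Since every verification above is a one-line scalar bookkeeping check on each of the five defining relations, there is no genuine obstacle; the only place requiring a moment's care is matching the two sides of the deformed commutator relation in part (2), where the hypothesis $\omega^{2m}=1$ is exactly what is needed, and in part (1) recognizing why the hypothesis is restricted to $\kappa=0$. Accordingly I would simply say the lemma "can be proved by direct verification," spelling out only the deformed-relation check for $\sigma_\omega$ if any detail is to be displayed.
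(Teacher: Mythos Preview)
Your proof is correct and matches the paper's approach exactly: the paper simply states that the lemma ``can be proved by direct verification'' and gives no further details, which is precisely what your relation-by-relation check carries out. One small slip worth fixing: the parenthetical claim $\sigma_{\omega^{-1}}=\sigma_{\omega^{m}}$ is false in general (they act differently on $K$ unless $\omega^{m+1}=1$), but since you only need $\sigma_{\omega^{-1}}$ to be a two-sided inverse---and it is---this does not affect the argument.
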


 In virtue of the twists corresponding to the automorphisms given in Lemma \ref{equivalence-functors-twist-automorphisms}, we can define some functors among the blocks of $U_{q}(\mathfrak{sl}^{*}_2,\kappa)\mbox{-}{\rm \bf mod}_{\rm wt}$.

\begin{defn}\label{defs-of-translation-and-transitive-functors}
${\rm (1)}$ Suppose that $\kappa = 0$.\\
${\rm (i)}$ For any $i \in \mathbb{Z}$, we define the $i$-translation functor ${\Sigma}_i$ as follows
\begin{eqnarray*}\label{}
{\Sigma}_i: \mathcal{O}^{0}_1 &\longrightarrow& \mathcal{O}^{0}_1,\\
M &\longmapsto& {\Sigma}_i(M) = M^{\tau_{q^{2i}}},\\
M \overset{f}\longrightarrow N &\longmapsto& \Sigma_i (M) \overset{\Sigma_i (f)}\longrightarrow \Sigma_i (N),
\end{eqnarray*}
where as a map $\Sigma_i (f) = f$. Especially we redenote ${\Sigma}_1$ by $\Sigma$ and call it the translation functor.\\
${\rm (ii)}$
For any $(\mu,\lambda) \in [\Lambda_{\kappa} / \overset{q^2}\sim] \times [\Lambda_{\kappa} / \overset{q^2}\sim]$, we define the $(\mu,\lambda)$-transitive functor $\Theta_{\mu,\lambda}$ as follows
\begin{eqnarray*}\label{}
\Theta_{\mu,\lambda}: \mathcal{O}^{0}_\mu &\longrightarrow& \mathcal{O}^{0}_\lambda,\\
M &\longmapsto& \Theta_{\mu,\lambda}(M) = M^{\tau_{\mu^{-1} \lambda}},\\
M \overset{f}\longrightarrow N &\longmapsto& \Theta_{\mu,\lambda}(M) \xrightarrow{\Theta_{\mu,\lambda} (f)} \Theta_{\mu,\lambda}(N),
\end{eqnarray*}
where as a map $\Theta_{\mu,\lambda} (f) = f$. Especially we redenote $\Theta_{1,\lambda}$ by $\Theta_{\lambda}$ and call it the transitive functor.

${\rm (2)}$ Suppose that $\kappa \neq 0$. Choose
$[\Lambda_{\kappa} / \overset{q^2}\sim] = \left\{\omega, q \omega | \omega^{2m} =1 \right\}.$
For any $(\mu,\lambda) \in [\Lambda_{\kappa} / \overset{q^2}\sim] \times [\Lambda_{\kappa} / \overset{q^2}\sim]$ with $(\mu^{-1} \lambda)^{2m} = 1$,
 we define the $(\mu,\lambda)$-transitive functor $\Upsilon_{\mu,\lambda}$ as follows
\begin{eqnarray*}\label{}
\Upsilon_{\mu,\lambda}: \mathcal{O}^{\kappa}_{\mu} &\longrightarrow& \mathcal{O}^{\kappa}_{\lambda},\\
M &\longmapsto& \Upsilon_{\mu,\lambda}(M) = M^{\sigma_{\mu^{-1} \lambda}},\\
M \overset{f}\longrightarrow N &\longmapsto& \Upsilon_{\mu,\lambda}(M) \xrightarrow{\Upsilon_{\mu,\lambda} (f)} \Upsilon_{\mu,\lambda}(N),
\end{eqnarray*}
where $\Upsilon_{\mu,\lambda} (f) = f$. For $\lambda_0 \in \{1,q\}$ we redenote $\Upsilon_{\lambda_0,\lambda_0 \omega}$ by $\Upsilon^{\lambda_0}_\omega$ and call it the transitive functor.
\end{defn}

Now we can show that the functors defined above are all isomorphisms.

\begin{thm}\label{equivalence-functors-translation-transitive-functors}
${\rm (1)}$ When $\kappa = 0$, the $i$-translation functor ${\Sigma_i}: \mathcal{O}^{0}_1 \longrightarrow \mathcal{O}^{0}_1$ and the $(\mu,\lambda)$-transitive functor $\Theta_{\mu,\lambda}: \mathcal{O}^{0}_\mu \longrightarrow\mathcal{O}^{0}_\lambda$ are isomorphisms of categories.\\
${\rm (2)}$ When $\kappa \neq 0$, the $(\mu,\lambda)$-transitive functor $\Upsilon_{\mu,\lambda}: \mathcal{O}^{\kappa}_\mu \longrightarrow\mathcal{O}^{\kappa}_{\lambda}$ is an isomorphism of categories.
\end{thm}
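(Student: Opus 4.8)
The plan is to handle $\Sigma_i$, $\Theta_{\mu,\lambda}$ and $\Upsilon_{\mu,\lambda}$ uniformly, exploiting the fact that each of them is a twist functor $(-)^{\sigma}$ attached to one of the algebra automorphisms supplied by Lemma \ref{equivalence-functors-twist-automorphisms}: namely $\sigma = \tau_{q^{2i}}$ for $\Sigma_i$, $\sigma = \tau_{\mu^{-1}\lambda}$ for $\Theta_{\mu,\lambda}$, and $\sigma = \sigma_{\mu^{-1}\lambda}$ for $\Upsilon_{\mu,\lambda}$ (the last of these legitimate precisely because the hypothesis $(\mu^{-1}\lambda)^{2m}=1$ is exactly the condition under which $\sigma_{\mu^{-1}\lambda}$ is an automorphism). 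The general principle I will invoke is that for any algebra automorphism $\sigma$ the twist functor $(-)^{\sigma}$ is a strict isomorphism of $U_{q}(\mathfrak{sl}^{*}_2,\kappa)\mbox{-}{\rm \bf mod}$ onto itself, with strict two-sided inverse $(-)^{\sigma^{-1}}$: indeed $(M^{\sigma})^{\sigma^{-1}} = M$ on the nose, since the underlying space is unchanged and an element $x$ acts by $\sigma^{-1}(\sigma(x)) = x$, and likewise $(-)^{\sigma}$ is the identity on morphisms. Thus the whole content of the theorem is that these particular twists restrict to the stated blocks.

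For this I would first compute how a twist changes the weight set. If $\sigma(K) = cK$ with $c \in \mathbb{C}^{\times}$, then for a weight module $M$ one has $(M^{\sigma})_{\mu} = \{v \in M : cKv = \mu v\} = M_{\mu c^{-1}}$, hence $M^{\sigma}$ is again a weight module and $\Lambda_{M^{\sigma}} = c\,\Lambda_M$. Applying this with $c = q^{2i}$, with $c = \mu^{-1}\lambda$, and again with $c = \mu^{-1}\lambda$ in the three cases, and recalling from Definition \ref{def-blocks-of-weight-rep-cat-of-U} that the block $\mathcal{O}^{\kappa}_{\nu}$ consists of the weight modules with $\Lambda_M \subseteq [\nu] = \nu\langle q^2\rangle$, we get $q^{2i}[1] = [1]$, $\mu^{-1}\lambda\,[\mu] = [\lambda]$ and $\mu^{-1}\lambda\,[\mu] = [\lambda]$; so $\Sigma_i$ lands in $\mathcal{O}^{0}_1$, $\Theta_{\mu,\lambda}$ in $\mathcal{O}^{0}_\lambda$, and $\Upsilon_{\mu,\lambda}$ in $\mathcal{O}^{\kappa}_\lambda$, as required. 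Each functor is moreover clearly additive and $\mathbb{C}$-linear, since it acts as the identity on morphisms.

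It then only remains to exhibit inverses inside the blocks, which the general principle does for us: $\tau_{q^{2i}}^{-1} = \tau_{q^{-2i}}$ gives $\Sigma_i^{-1} = \Sigma_{-i} : \mathcal{O}^{0}_1 \to \mathcal{O}^{0}_1$; $\tau_{\mu^{-1}\lambda}^{-1} = \tau_{\lambda^{-1}\mu}$ gives $\Theta_{\mu,\lambda}^{-1} = \Theta_{\lambda,\mu} : \mathcal{O}^{0}_\lambda \to \mathcal{O}^{0}_\mu$; and $\sigma_{\mu^{-1}\lambda}^{-1} = \sigma_{\lambda^{-1}\mu}$ (with $(\lambda^{-1}\mu)^{2m}=1$ still holding) gives $\Upsilon_{\mu,\lambda}^{-1} = \Upsilon_{\lambda,\mu} : \mathcal{O}^{\kappa}_\lambda \to \mathcal{O}^{\kappa}_\mu$, and in each case both composites are literally the identity functor. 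This establishes (1) and (2). The only step needing any care is the second paragraph, verifying that the twists do not leave the block, and even there the work is purely the bookkeeping identity $\Lambda_{M^{\sigma}} = c\,\Lambda_M$ together with the $q^2$-linkage description of blocks; the non-formal input — that $\tau_\lambda$ and $\sigma_\omega$ really are automorphisms, and that the constraint $\omega^{2m}=1$ is forced by the relation $EF - FE = a(K^m - K^{-m})$ when $\kappa \neq 0$ — has already been isolated in Lemma \ref{equivalence-functors-twist-automorphisms}.
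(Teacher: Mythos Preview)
Your proof is correct and follows essentially the same approach as the paper: exhibit $\Sigma_{-i}$, $\Theta_{\lambda,\mu}$, $\Upsilon_{\lambda,\mu}$ as strict two-sided inverses, having first checked that the twists land in the correct blocks. The paper compresses all of this into ``it is easy to check'', whereas you spell out the key bookkeeping identity $\Lambda_{M^{\sigma}} = c\,\Lambda_M$ for $\sigma(K)=cK$, which is exactly what makes the well-definedness transparent.
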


\begin{proof}
It is easy to check that ${\Sigma_i}$, ${\Sigma_{-i}}$, $\Theta_{\mu,\lambda}$, $\Theta_{\lambda,\mu}$, $\Upsilon_{\mu,\lambda}$ and $\Upsilon_{\lambda,\mu}$ are well-defined functors, and
 \begin{eqnarray*}\label{}
 \left\{
 \begin{array}{ll}
{\Sigma_i} {\Sigma_{-i}} = {\Sigma_{-i}} {\Sigma_i} = {\rm Id}_{\mathcal{O}^{0}_1},\\
\Theta_{\mu,\lambda} \Theta_{\lambda,\mu} = {\rm Id}_{\mathcal{O}^0_\lambda}, \quad \Theta_{\lambda,\mu} \Theta_{\mu,\lambda} = {\rm Id}_{\mathcal{O}^0_\mu},\\
\Upsilon_{\mu,\lambda} \Upsilon_{\lambda,\mu} = {\rm Id}_{\mathcal{O}^{\kappa}_\lambda}, \quad \Upsilon_{\lambda,\mu} \Upsilon_{\mu,\lambda} = {\rm Id}_{\mathcal{O}^{\kappa}_\mu}.
\end{array}
\right.
\end{eqnarray*}
Hence ${\Sigma_i}$, $\Theta_{\mu,\lambda}$ and $\Upsilon_{\mu,\lambda}$ are all isomorphisms of categories.
\end{proof}

\begin{cor}\label{reduction-to-principle-blocks}
${\rm (1)}$ When $\kappa = 0$, each block $\mathcal{O}^0_\lambda$ is isomorphic to $\mathcal{O}^0_1$.\\
${\rm (2)}$ When $\kappa \neq 0$, the block $\mathcal{O}^{\kappa}_{\omega}$ is isomorphic to $\mathcal{O}^{\kappa}_1$, while $\mathcal{O}^{\kappa}_{q\omega}$ is isomorphic to $\mathcal{O}^{\kappa}_q$, where $\omega^{2m} = 1$.
\end{cor}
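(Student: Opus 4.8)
The plan is to read everything off Theorem~\ref{equivalence-functors-translation-transitive-functors}: that result already supplies the needed isomorphisms of categories, so the only task is to feed it the right indices and to check the compatibility conditions appearing in Definition~\ref{defs-of-translation-and-transitive-functors}.

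For part (1), I would assume $\kappa = 0$ and fix an arbitrary $\lambda \in [\Lambda_0 / \overset{q^2}\sim]$. Since $1 \in [\Lambda_0 / \overset{q^2}\sim]$ and the $(\mu,\lambda)$-transitive functor $\Theta_{\mu,\lambda}$ is defined for every pair of representatives with no further restriction, the functor $\Theta_{\lambda,1} \colon \mathcal{O}^0_\lambda \longrightarrow \mathcal{O}^0_1$ makes sense; Theorem~\ref{equivalence-functors-translation-transitive-functors}~(1) says it is an isomorphism of categories, so $\mathcal{O}^0_\lambda \cong \mathcal{O}^0_1$, which is exactly (1).

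For part (2), I would assume $\kappa \neq 0$ and use the chosen set of representatives $[\Lambda_\kappa / \overset{q^2}\sim] = \{\omega, q\omega \mid \omega^{2m} = 1\}$, so that every block is $\mathcal{O}^\kappa_\omega$ or $\mathcal{O}^\kappa_{q\omega}$ for some $\omega$ with $\omega^{2m}=1$. Here one applies the transitive functor $\Upsilon_{\mu,\lambda}$, which is only defined when $(\mu^{-1}\lambda)^{2m} = 1$; this is the point to be careful about. Taking $(\mu,\lambda) = (\omega, 1)$ gives $(\mu^{-1}\lambda)^{2m} = \omega^{-2m} = 1$, so $\Upsilon_{\omega,1} \colon \mathcal{O}^\kappa_\omega \longrightarrow \mathcal{O}^\kappa_1$ is an isomorphism of categories by Theorem~\ref{equivalence-functors-translation-transitive-functors}~(2), giving $\mathcal{O}^\kappa_\omega \cong \mathcal{O}^\kappa_1$; and taking $(\mu,\lambda) = (q\omega, q)$ gives $(\mu^{-1}\lambda)^{2m} = \omega^{-2m} = 1$ again, so $\Upsilon_{q\omega,q} \colon \mathcal{O}^\kappa_{q\omega} \longrightarrow \mathcal{O}^\kappa_q$ is an isomorphism of categories and $\mathcal{O}^\kappa_{q\omega} \cong \mathcal{O}^\kappa_q$, which is (2).

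There is no genuine obstacle here, since the corollary is a direct consequence of the theorem; the one thing worth flagging is that the constraint $(\mu^{-1}\lambda)^{2m} = 1$ forbids building a $\Upsilon$ between $\mathcal{O}^\kappa_1$ and $\mathcal{O}^\kappa_q$ --- indeed $q^{2m} \neq 1$ because $q$ is not a root of unity --- which is precisely why, when $\kappa \neq 0$, one is left with two non-identified principal blocks $\mathcal{O}^\kappa_1$ and $\mathcal{O}^\kappa_q$ rather than one.
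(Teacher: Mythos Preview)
Your proposal is correct and matches the paper's approach: the paper states the corollary without proof, treating it as an immediate consequence of Theorem~\ref{equivalence-functors-translation-transitive-functors}, and you have simply spelled out the appropriate choices of indices for the transitive functors together with the verification of the compatibility condition $(\mu^{-1}\lambda)^{2m}=1$ in the $\kappa\neq 0$ case. Your final remark explaining why $\mathcal{O}^\kappa_1$ and $\mathcal{O}^\kappa_q$ cannot be identified via a $\Upsilon$-functor is a helpful addition not made explicit in the paper.
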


Corollary \ref{reduction-to-principle-blocks} shows that to investigate the category $U_{q}(\mathfrak{sl}^{*}_2,\kappa)\mbox{-}{\rm \bf mod}_{\rm wt}$,
we can pay attention to its principle block(s) defined below.

\begin{defn}\label{definitions-of-principle-blocks}
${\rm (1)}$ When $\kappa = 0$, we call the block $\mathcal{O}^0_1$ to be the principle block of $U_{q}(\mathfrak{sl}^{*}_2)\mbox{-}{\rm \bf mod}_{\rm wt}$.\\
${\rm (2)}$ When $\kappa \neq 0$, we call the block $\mathcal{O}^{\kappa}_1$ (resp. $\mathcal{O}^{\kappa}_q$) to be the even (resp. odd) principle block of $U_{q}(\mathfrak{sl}^{*}_2,\kappa)\mbox{-}{\rm \bf mod}_{\rm wt}$.
\end{defn}

\section{Primitive objects in the
category $U_{q}(\mathfrak{sl}^{*}_2,\kappa)\mbox{-}{\rm \bf mod}_{\rm wt}$}\label{section-4}

In this section, we will introduce and study primitive objects in the category $U_{q}(\mathfrak{sl}^{*}_2,\kappa)\mbox{-}{\rm \bf mod}_{\rm wt}$
which can be used to construct all indecomposable objects in $U_{q}(\mathfrak{sl}^{*}_2,\kappa)\mbox{-}{\rm \bf mod}_{\rm wt}$.
As an application, we recover the representation theory of Drinfeld-Jimbo quantum group $U_{q}(\mathfrak{sl}_2)$.

\subsection{Quasi-primitive objects in $U_{q}(\mathfrak{sl}^{*}_2,\kappa)\mbox{-}{\rm \bf mod}_{\rm wt}$}\label{section-4-1}

\begin{defn}
If $M$ is a finite dimensional $q^2$-chain module of $U_{q}(\mathfrak{sl}^{*}_2,\kappa)$
with weight set $\Lambda_M = \{q^{s+2i} \mid 0 \leq i \leq l\}$
for some $s,l \in \mathbb{Z}^{\geq 0}$, then we call $M$ a quasi-primitive module of $U_{q}(\mathfrak{sl}^{*}_2,\kappa)$.
\end{defn}

Assume that $M$ is a $(n+1)$-dimensional quasi-primitive module of $U_{q}(\mathfrak{sl}^{*}_2,\kappa)$ with
 weight set $\Lambda_M = \{q^{s+2i} \mid 0 \leq i \leq l\}$,
then
$M = {\mathop\oplus\limits_{i=0}^{l}} M_{q^{s+2i}}.$
For each $0 \leq i \leq l$, suppose that ${\rm dim} M_{q^{s+2i}} = n_i$ and $\left\{v_{i1}, v_{i2}, \cdots, v_{i n_i} \right\}$ is a basis of $M_{q^{s+2i}}$,
then $n +1 = {\mathop\sum\limits_{i=0}^{l}} n_i$ and
\begin{eqnarray*}\label{}
B_M = \left\{v_{01}, v_{02}, \cdots, v_{0 n_0}, \cdots, v_{i1}, v_{i2}, \cdots, v_{i n_i}, \cdots, v_{l1}, v_{l2}, \cdots, v_{l n_{l}}\right\}
\end{eqnarray*}
is an ordered basis of $M$.
Since $K E = q^2 E K$ and $K F = q^{-2} F K$, then $E M_{q^{s+2i}} \subseteq M_{q^{s+2(i+1)}}$ and $F M_{q^{s+2i}} \subseteq M_{q^{s+2(i-1)}}$.
Therefore, the matrices of $K,E,F$ acting on $M$
relative to the ordered basis $B_M$ are respectively given by
\begin{eqnarray*}\label{}
\mathcal{K} &=& \left( {\begin{array}{*{20}c}
   q^s I_{n_0} &  &  & \\
    & q^{s+2} I_{n_1} &  & \\
    &  & \ddots & \\
     &  &  & q^{s+2l} I_{n_{l}}\\
\end{array}} \right),\\
\mathcal{E} &=& \left( {\begin{array}{*{20}c}
   0 &  &  & & \\
   \mathcal{E}_{0} & 0 &  & & \\
    & \mathcal{E}_{1} & \ddots & \\
    & &  \ddots  &  0 &   \\
    & &  & \mathcal{E}_{l-1} & 0\\
\end{array}} \right),\\
\mathcal{F} &=& \left( {\begin{array}{*{20}c}
   0 & \mathcal{F}_0 &  & &\\
    & 0 & \mathcal{F}_1 & &\\
    &  & \ddots &\ddots &\\
    &  &  & 0& \mathcal{F}_{l-1}\\
    & &  &  & 0\\
\end{array}} \right),
\end{eqnarray*}
where $I_{n_i}$ is the $n_i \times n_i$ identity matrix, $\mathcal{E}_{i}$ is a $n_{i+1} \times n_i$ matrix
and $\mathcal{F}_{i}$ is a $n_i \times n_{i+1}$ matrix. Unless otherwise specified, we always assume that $\mathcal{E}_{i} = 0$ and $\mathcal{F}_{i} = 0$ when $i \leq -1$ or $i \geq l$.

Now we can present the quasi-primitive objects in the category $U_{q}(\mathfrak{sl}^{*}_2,\kappa)\mbox{-}{\rm \bf mod}_{\rm wt}$ as follows.

\begin{prop}\label{q-2-rep-explicit-expression}
Retain the notations as above. If $M$ is a $(n+1)$-dimensional quasi-primitive module of $U_{q}(\mathfrak{sl}^{*}_2,\kappa)$ with $\Lambda_M = \{q^{s+2i} \mid 0 \leq i \leq l\}$,
then the generators $K,E,F$ of $U_{q}(\mathfrak{sl}^{*}_2,\kappa)$ acting on the basis $B_M$ of $M$ can be given as follows
\begin{eqnarray}\label{q2-chain-rep-dim-n}
\left\{
 \begin{array}{ll}
K B_M = B_M \mathcal{K},\\
E B_M = B_M \mathcal{E},\\
F B_M = B_M \mathcal{F},
\end{array}
\right.
\end{eqnarray}
where
\begin{eqnarray}\label{q2-chain-rep-n-conditions}
\left\{
 \begin{array}{ll}
  \mathcal{E} = \mathcal{F} = 0, \quad s = 0,&~{\rm if}~l = 0, \\
\mathcal{E}_{i-1} \mathcal{F}_{i-1} - \mathcal{F}_{i} \mathcal{E}_{i} = (1 - \delta_{\kappa, 0}) [s+2i]_m I_{n_{i}}~(0\leq i \leq l),&~{\rm if}~l \geq 1.
\end{array}
\right.
\end{eqnarray}
\end{prop}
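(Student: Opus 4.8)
The plan is to unwind the definition of a $U_{q}(\mathfrak{sl}^{*}_2,\kappa)$-module structure on the graded vector space $M = \bigoplus_{i=0}^{l} M_{q^{s+2i}}$ into matrix equations, and then check exactly which of the defining relations of $U_{q}(\mathfrak{sl}^{*}_2,\kappa)$ survive after one fixes the weight grading. First I would note that since $M$ is assumed to be a weight module with $\Lambda_M = \{q^{s+2i} \mid 0 \le i \le l\}$, the action of $K$ on the ordered basis $B_M$ is by definition the diagonal matrix $\mathcal{K}$; this is free. Then, because $KE = q^2 EK$ forces $E M_{q^{s+2i}} \subseteq M_{q^{s+2(i+1)}}$ and $KF = q^{-2} FK$ forces $F M_{q^{s+2i}} \subseteq M_{q^{s+2(i-1)}}$ (as already observed in the text preceding the proposition), the matrices of $E$ and $F$ relative to $B_M$ must have precisely the sub/super-diagonal block shapes $\mathcal{E}$ and $\mathcal{F}$ displayed above, with blocks $\mathcal{E}_i$ of size $n_{i+1}\times n_i$ and $\mathcal{F}_i$ of size $n_i \times n_{i+1}$. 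So $(\ref{q2-chain-rep-dim-n})$ is forced once we know $M$ is a $q^2$-chain module, and conversely any choice of such block matrices gives operators satisfying $KK^{-1}=1$, $KE=q^2EK$, $KF=q^{-2}FK$ automatically.

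The only remaining relation to impose is the last defining relation of $U_{q}(\mathfrak{sl}^{*}_2,\kappa)$, namely $EF - FE = \kappa(EF - FE)$, where by Corollary~\ref{Hopf-PBW-deformation} and the normalization fixed at the end of Section~\ref{section-1} we have $\kappa(EF-FE) = \frac{K^m - K^{-m}}{q^m - q^{-m}}$ when $\kappa \ne 0$ and $\kappa(EF-FE)=0$ when $\kappa = 0$. I would compute the $i$-th diagonal block of $\mathcal{E}\mathcal{F} - \mathcal{F}\mathcal{E}$ acting on $M_{q^{s+2i}}$: from the block structure, $\mathcal{E}\mathcal{F}$ restricted to the $i$-th block is $\mathcal{E}_{i-1}\mathcal{F}_{i-1}$ (going down then up) and $\mathcal{F}\mathcal{E}$ restricted to the $i$-th block is $\mathcal{F}_i \mathcal{E}_i$ (going up then down), so the commutator contributes $\mathcal{E}_{i-1}\mathcal{F}_{i-1} - \mathcal{F}_i\mathcal{E}_i$ on $M_{q^{s+2i}}$. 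Meanwhile the right-hand side $\frac{K^m - K^{-m}}{q^m-q^{-m}}$ acts on $M_{q^{s+2i}}$ as the scalar $\frac{q^{m(s+2i)} - q^{-m(s+2i)}}{q^m - q^{-m}} = [s+2i]_m$ times the identity $I_{n_i}$; when $\kappa = 0$ this scalar is replaced by $0$. Packaging the cases via the Kronecker delta $\delta_{\kappa,0}$ gives precisely $\mathcal{E}_{i-1}\mathcal{F}_{i-1} - \mathcal{F}_i\mathcal{E}_i = (1-\delta_{\kappa,0})[s+2i]_m I_{n_i}$ for $0 \le i \le l$, which is the second line of $(\ref{q2-chain-rep-n-conditions})$. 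The off-diagonal blocks of $\mathcal{E}\mathcal{F}-\mathcal{F}\mathcal{E}$ vanish automatically by the shapes of $\mathcal{E}$ and $\mathcal{F}$, so no further constraints arise; one should record this to be thorough.

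Finally I would dispose of the degenerate case $l = 0$ separately: here $M$ is concentrated in a single weight $q^s$, there are no blocks $\mathcal{E}_i$ or $\mathcal{F}_i$ with valid index range (recall the convention $\mathcal{E}_i = \mathcal{F}_i = 0$ for $i \le -1$ or $i \ge l$), so $\mathcal{E} = \mathcal{F} = 0$; and since the weight set of a quasi-primitive module is required to be of the form $\{q^{s+2i} \mid 0 \le i \le l\}$ with $s,l \in \mathbb{Z}^{\ge 0}$ — and a one-element $q^2$-chain lying in the weight group always contains (by the $\sigma_0$-twist closure in Proposition~\ref{properties-of-weight-rep-cat}, or simply because a $1$-dimensional module is its own dual up to the quasi-primitive normalization) the weight $1 = q^0$ — we get $s = 0$, which is the first line of $(\ref{q2-chain-rep-n-conditions})$. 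The main obstacle, if any, is bookkeeping: making sure the index ranges in the block multiplication are handled correctly at the two ends $i = 0$ and $i = l$ (where $\mathcal{E}_{-1} = \mathcal{F}_{-1} = 0$ and $\mathcal{E}_l = \mathcal{F}_l = 0$ by convention), and confirming that the weight-chain condition together with the definition of quasi-primitivity really pins down $s = 0$ when $l = 0$ rather than allowing an arbitrary exponent; everything else is a direct translation of the algebra relations into matrix identities.
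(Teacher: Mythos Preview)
Your main argument is exactly the paper's: the proof there is the one-line observation that the matrix identity $\mathcal{E}\mathcal{F}-\mathcal{F}\mathcal{E}=(1-\delta_{\kappa,0})\frac{\mathcal{K}^m-\mathcal{K}^{-m}}{q^m-q^{-m}}$ is equivalent to the block conditions in $(\ref{q2-chain-rep-n-conditions})$, and you have simply written out that equivalence block by block. For $l\geq 1$ your computation is correct and complete.

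The gap is in your treatment of the $l=0$ case. The $\sigma_0$-twist and duality arguments you sketch do not force $s=0$: the $\sigma_0$-twist of a module with weight $q^s$ is a \emph{different} module with weight $q^{-s}$, and nothing says a quasi-primitive module must be self-dual or contain the weight $1$. The constraint $s=0$ comes (when it comes at all) from the very same commutator relation you used for $l\geq 1$: with $l=0$ the block equation reads $0 - 0 = (1-\delta_{\kappa,0})[s]_m I_{n_0}$, so for $\kappa\neq 0$ one gets $[s]_m=0$ and hence $s=0$ since $q$ is not a root of unity and $s\in\mathbb{Z}^{\geq 0}$. For $\kappa=0$ this relation is vacuous and the algebra imposes no restriction on $s$; indeed $\mathbb{C}_{q^s}$ is a perfectly good quasi-primitive $U_q(\mathfrak{sl}_2^*)$-module for any $s\geq 0$. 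So the clause ``$s=0$ if $l=0$'' should be read as a consequence of the commutator relation in the $\kappa\neq 0$ case, not as something obtainable from twist or duality considerations. Your own closing caveat (``confirming that \ldots really pins down $s=0$'') was well-placed; the answer is that it does so only via the relation $EF-FE=\kappa(EF-FE)$ and only when $\kappa\neq 0$.
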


\begin{proof}
It follows from the fact that $\mathcal{E}\mathcal{F} - \mathcal{F}\mathcal{E} = (1 - \delta_{\kappa, 0}) \frac{\mathcal{K}^m - \mathcal{K}^{-m}}{q^m - q^{-m}}$ is equivalent to $(\ref{q2-chain-rep-n-conditions})$.
\end{proof}

The endomorphism algebra of a quasi-primitive $U_{q}(\mathfrak{sl}^{*}_2,\kappa)$-module can be characterized below.

\begin{prop}\label{endomorphism-alg-of-quasi-primitive-rep-of-Hopf-PBW-deformations}
Let $M$ be the $(n+1)$-dimensional quasi-primitive module of $U_{q}(\mathfrak{sl}^{*}_2,\kappa)$ defined by $(\ref{q2-chain-rep-dim-n})$ and $(\ref{q2-chain-rep-n-conditions})$.
 Then the endomorphism algebra ${\rm End}_{U_{q}(\mathfrak{sl}^{*}_2,\kappa)} (M)$ of $M$ is isomorphic to
 \begin{eqnarray}\label{end-alg-of-q-2-rep-with-dim-n-in-general}
 \left\{\left( {\begin{array}{*{20}c}
   A_0 &  &  & \\
    & A_1 &  & \\
    &  & \ddots & \\
     &  &  & A_{l}\\
\end{array}} \right) \Bigg| {\mathcal{E}_i A_i = A_{i+1} \mathcal{E}_i(0\leq i\leq l-1),
\atop A_{i} \mathcal{F}_{i} = \mathcal{F}_{i} A_{i+1} (0\leq i\leq l-1)} \right\},
 \end{eqnarray}
 where $A_i$ is a $n_i \times n_i$ matrix for $0\leq i \leq l$.
\end{prop}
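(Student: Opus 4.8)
The plan is to compute ${\rm End}_{U_{q}(\mathfrak{sl}^{*}_2,\kappa)}(M)$ directly from the explicit matrix description of the action given in Proposition \ref{q-2-rep-explicit-expression}. An endomorphism $\phi$ of $M$ is a linear map commuting with the actions of $K$, $E$ and $F$. First I would use the condition $\phi \mathcal{K} = \mathcal{K} \phi$: since $\mathcal{K}$ is a diagonal matrix whose eigenvalues $q^{s}, q^{s+2}, \dots, q^{s+2l}$ are pairwise distinct (because $q$ is not a root of unity), $\phi$ must be block-diagonal with respect to the weight-space decomposition $M = \bigoplus_{i=0}^{l} M_{q^{s+2i}}$. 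Writing $\phi = {\rm diag}(A_0, A_1, \dots, A_l)$ with $A_i$ an $n_i \times n_i$ matrix, this already puts $\phi$ into the shape appearing in $(\ref{end-alg-of-q-2-rep-with-dim-n-in-general})$.

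Next I would impose $\phi \mathcal{E} = \mathcal{E} \phi$ and $\phi \mathcal{F} = \mathcal{F} \phi$. Because $\mathcal{E}$ is the block-lower-triangular matrix with blocks $\mathcal{E}_i$ on the subdiagonal and $\mathcal{F}$ is block-upper-triangular with blocks $\mathcal{F}_i$ on the superdiagonal, comparing the $(i+1,i)$-block of $\phi \mathcal{E} = \mathcal{E} \phi$ yields exactly $A_{i+1}\mathcal{E}_i = \mathcal{E}_i A_i$ for $0 \leq i \leq l-1$, and comparing the $(i,i+1)$-block of $\phi \mathcal{F} = \mathcal{F} \phi$ yields $A_i \mathcal{F}_i = \mathcal{F}_i A_{i+1}$ for $0 \leq i \leq l-1$. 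Conversely, any block-diagonal matrix satisfying these intertwining relations commutes with $\mathcal{K}$, $\mathcal{E}$, $\mathcal{F}$ and hence defines a module endomorphism. This establishes a bijection between ${\rm End}_{U_{q}(\mathfrak{sl}^{*}_2,\kappa)}(M)$ and the displayed set, and one checks immediately that it is an algebra isomorphism since composition of endomorphisms corresponds to blockwise matrix multiplication.

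The argument is essentially routine book-keeping; the only point requiring a little care is the very first step, where one must be sure that the weights $q^{s+2i}$ are genuinely distinct so that commuting with $K$ forces the block-diagonal form — this is where the standing hypothesis that $q$ is not a root of unity is used. I would also remark that the degenerate case $l = 0$ (where $\mathcal{E} = \mathcal{F} = 0$) is covered trivially, the endomorphism algebra then being the full matrix algebra $M_{n_0}(\mathbb{C})$, consistent with $(\ref{end-alg-of-q-2-rep-with-dim-n-in-general})$ when the intertwining conditions are vacuous. No serious obstacle is anticipated; the substance of the statement is in how this description will later be matched with endomorphism algebras of representations of (deformed) preprojective algebras of type $\mathbb{A}$.
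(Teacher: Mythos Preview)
Your proposal is correct and follows essentially the same approach as the paper: show that commuting with $K$ forces $\phi$ to preserve each weight space (hence be block-diagonal), and then read off the intertwining relations $\mathcal{E}_i A_i = A_{i+1}\mathcal{E}_i$ and $A_i\mathcal{F}_i = \mathcal{F}_i A_{i+1}$ from commutation with $E$ and $F$. Your write-up is in fact slightly more detailed than the paper's (you make explicit the use of the distinct-eigenvalue hypothesis, the converse direction, and the degenerate case $l=0$), but the underlying argument is identical.
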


\begin{proof}
Assume $\phi \in {\rm End}_{U_{q}(\mathfrak{sl}^{*}_2,\kappa)} (M)$. Since $\phi(K v) = K \phi(v)$ for any $v \in M$, then $\phi(M_\lambda) \subseteq M_\lambda$ for any $\lambda \in \Lambda_M$, which implies that
\begin{eqnarray*}\label{}
\phi B_M = B_M \left( {\begin{array}{*{20}c}
   A_0 &  &  & \\
    & A_1 &  & \\
    &  & \ddots & \\
     &  &  & A_{l}\\
\end{array}} \right).
 \end{eqnarray*}
Moreover, the fact that $\phi(E v) = E \phi(v)$ and $\phi(F v) = F \phi(v)$ for $v \in M$ is equivalent to say that
\begin{eqnarray*}\label{}
&& \mathcal{E}_i A_i = A_{i+1} \mathcal{E}_i(0\leq i\leq l-1),\\
&& A_{i} \mathcal{F}_{i} = \mathcal{F}_{i} A_{i+1} (0\leq i\leq l-1).
 \end{eqnarray*}
 Hence this proposition holds.
\end{proof}

\subsection{Primitive modules of $U_{q}(\mathfrak{sl}^{*}_2)$}\label{section-4-2}

\begin{defn}\label{primitive-rep-of-new-type-quantum-group}
Let $M$ be a finite dimensional quasi-primitive module of $U_{q}(\mathfrak{sl}^{*}_2)$.
If $M$ is indecomposable and its weight set
$\Lambda_M = \{1, q^{2}, \cdots,q^{2i}, \cdots, q^{2l}\}$
for some integer $l\geq 0$, then we call $M$ a primitive module of $U_{q}(\mathfrak{sl}^{*}_2)$.
\end{defn}

The fundamental importance of primitive modules arises from the fact: each indecomposable module in the category $U_{q}(\mathfrak{sl}^{*}_2)\mbox{-}{\rm \bf mod}_{\rm wt}$
 can be obtained by applying the translation functor ${\Sigma}: \mathcal{O}^0_1 \longrightarrow \mathcal{O}^0_1$ and the transitive functor $\Theta_{\lambda}: \mathcal{O}^0_1 \longrightarrow\mathcal{O}^0_\lambda$ on some primitive one. Precisely speaking, we have the following proposition.

\begin{prop}\label{primitive-is-proper-new-type-quantum-group-case}
For any indecomposable module $N$ in the category $U_{q}(\mathfrak{sl}^{*}_2)\mbox{-}{\rm \bf mod}_{\rm wt}$, there exists a unique triple $(M,\lambda,k)$ such that $N \cong \Theta_{\lambda} \Sigma^k M$, where $M$ is a primitive module of $U_{q}(\mathfrak{sl}^{*}_2)$, $\lambda \in [\Lambda_{\kappa} / \overset{q^2}\sim]$ and $k\in \mathbb{Z}$.
\end{prop}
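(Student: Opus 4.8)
The plan is to reduce an arbitrary indecomposable weight module $N$ to a primitive one by two successive "normalizations" of its weight set, each realized by one of the isomorphisms of categories from Theorem~\ref{equivalence-functors-translation-transitive-functors}. By Proposition~\ref{fd-indecom-weight-reps-are-q2-chain-reps}, $N$ is a $q^2$-chain module, so by Lemma~\ref{q-chain-characterization-by-unique} its weight set has the form $\Lambda_N = \{q^{2i}\lambda \mid k \le i \le l'\}$ for a unique triple $(\lambda,k,l')$ with $\lambda \in [\Lambda_{\kappa}/\!\overset{q^2}\sim]$ and $k \le l'$. Set $n := l' - k \ge 0$.

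First I would use the inverse transitive functor to pass into the principal block: since $\Theta_{\mu,\lambda}$ is an isomorphism of categories with inverse $\Theta_{\lambda,\mu}$, the module $\Theta_{\lambda,1}(N) = N^{\tau_{\lambda^{-1}}}$ lies in $\mathcal{O}^0_1$ and is again indecomposable, with weight set $\{q^{2i} \mid k \le i \le l'\}$. Next I would apply the $(-k)$-translation functor $\Sigma_{-k}$, which is an isomorphism of $\mathcal{O}^0_1$ with inverse $\Sigma_k$; the resulting module $\Sigma_{-k}\,\Theta_{\lambda,1}(N) = (N^{\tau_{\lambda^{-1}}})^{\tau_{q^{-2k}}}$ is indecomposable with weight set $\{q^{2i} \mid 0 \le i \le n\} = \{1, q^2, \ldots, q^{2n}\}$, i.e.\ it is a primitive module of $U_q(\mathfrak{sl}^*_2)$ by Definition~\ref{primitive-rep-of-new-type-quantum-group}. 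Call it $M$. Then $N \cong \Theta_{1,\lambda}\,\Sigma_k\, M = \Theta_\lambda \Sigma^k M$ (using $\Sigma_k = \Sigma^k$, which follows from $\Sigma_i\Sigma_j = \Sigma_{i+j}$ and the equalities in the proof of Theorem~\ref{equivalence-functors-translation-transitive-functors}), establishing existence.

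For uniqueness, suppose $\Theta_{\lambda}\Sigma^k M \cong \Theta_{\mu}\Sigma^{k'} M'$ with $M,M'$ primitive. Applying the inverse isomorphisms, isomorphic modules have the same weight set, so I would read off $\lambda$ and $k$ as invariants: the unique representative in $[\Lambda_\kappa/\!\overset{q^2}\sim]$ attached to $\Lambda_N$ by Lemma~\ref{q-chain-characterization-by-unique} must equal both $\lambda$ and $\mu$, forcing $\lambda = \mu$; and the smallest index $k$ with $q^{2k}\lambda \in \Lambda_N$ is then determined, forcing $k = k'$. Once $\lambda = \mu$ and $k = k'$, applying the inverse isomorphism $\Sigma_{-k}\Theta_{\lambda,1}$ to both sides gives $M \cong M'$, since $\Theta_\lambda\Sigma^k$ is an isomorphism of categories and hence reflects isomorphisms.

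The only genuinely substantive point — and the step I expect to require the most care — is verifying that $\Theta_\lambda\Sigma^k$ sends primitive modules of $U_q(\mathfrak{sl}^*_2)$ onto all indecomposables of $\mathcal{O}^0_\lambda$, i.e.\ that every indecomposable weight module really does normalize to a \emph{primitive} (not merely quasi-primitive) module. This hinges on the fact that the translation and transitive functors, being isomorphisms of categories, preserve indecomposability, together with the bookkeeping via Lemma~\ref{q-chain-characterization-by-unique} that the shifted weight set is exactly $\{1,q^2,\ldots,q^{2n}\}$; both ingredients are already in hand from the preceding results, so the argument is essentially an assembly of Proposition~\ref{fd-indecom-weight-reps-are-q2-chain-reps}, Lemma~\ref{q-chain-characterization-by-unique}, and Theorem~\ref{equivalence-functors-translation-transitive-functors}.
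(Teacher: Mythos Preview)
Your proposal is correct and follows essentially the same route as the paper's proof: both identify the unique $\lambda$ and $k$ from the $q^2$-chain structure of $\Lambda_N$ via Lemma~\ref{q-chain-characterization-by-unique}, then apply the inverse functors $\Sigma^{-k}\Theta_{\lambda}^{-1}$ (which are isomorphisms by Theorem~\ref{equivalence-functors-translation-transitive-functors}) to produce the primitive module $M$. Your uniqueness argument is slightly more explicit than the paper's, which simply appeals to the definitions of $\Sigma$ and $\Theta_\lambda$, but the substance is the same.
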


\begin{proof}
By Proposition $\ref{indecomposable-representation-in-unique block}$, there exists a unique $\lambda \in [\Lambda_{\kappa} / \overset{q^2}\sim]$ such that
$N$ lies in the block $\mathcal{O}^0_\lambda$. By Proposition $\ref{fd-indecom-weight-reps-are-q2-chain-reps}$ and Theorem $\ref{equivalence-functors-translation-transitive-functors}~(1)$,
we know that $\Theta^{-1}_{\lambda}N$ is an indecomposable $q^2$-chain module in the principal block $\mathcal{O}^0_1$.
By Lemma $\ref{q-chain-characterization-by-unique}$, there exists a unique pair $(k,l)$ of integers with $k \leq l$ such that
$\Lambda_{\Theta^{-1}_{\lambda}N} = \left\{q^{2i} | k \leq i \leq l\right\}.$
By Theorem $\ref{equivalence-functors-translation-transitive-functors}~(1)$, $\Sigma^{-k}\Theta^{-1}_{\lambda}N$ is a primitive module.
Therefore, we can choose $M = \Sigma^{-k}\Theta^{-1}_{\lambda}N$ such that $N \cong \Theta_{\lambda} \Sigma^k M$.
The uniqueness of $(M,\lambda,k)$ follows from the definitions of $\Sigma$ and $\Theta_{\lambda}$.
\end{proof}

Applying Proposition \ref{q-2-rep-explicit-expression} and \ref{endomorphism-alg-of-quasi-primitive-rep-of-Hopf-PBW-deformations} to $U_{q}(\mathfrak{sl}^{*}_2)$, we can obtain the following two corollaries.

\begin{cor}\label{cor-of-q-2-rep-explicit-expression}
Let $n \in \mathbb{Z}^{\geq 0}$. If $M$ is a $(n+1)$-dimensional quasi-primitive module of $U_{q}(\mathfrak{sl}^{*}_2)$ with basis $v_0, v_1, \cdots,v_{n}$
 and weight set $\Lambda_{n} = \{q^{2i} | 0\leq i \leq n\}$, then one has
 \begin{eqnarray}\label{necessary-conditions-in-cor-of-q-2-rep-explicit-expression}
\left\{
 \begin{array}{ll}
K v_i = q^{2i} v_i,\\
E v_i = \left\{
 \begin{array}{ll}
\mathcal{E}_i v_{i+1}, & {\rm if}~i < n,\\
0, & {\rm if}~i = n,
\end{array}
\right.\\
F v_i = \left\{
 \begin{array}{ll}
\mathcal{F}_{i-1} v_{i-1}, & {\rm if}~i > 0,\\
0, & {\rm if}~i = 0,
\end{array}
\right.
\end{array}
\right.
\end{eqnarray}
where $\mathcal{E}_i, \mathcal{F}_i \in \mathbb{C}$ satisfy
\begin{eqnarray}\label{condition-indecom-primitive-1}
\mathcal{E}_0 \mathcal{F}_0 = \mathcal{E}_1 \mathcal{F}_1 = \cdots = \mathcal{E}_{n-1} \mathcal{F}_{n-1} = 0.
\end{eqnarray}
\end{cor}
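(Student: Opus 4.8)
The plan is to derive this corollary as the special case of Proposition \ref{q-2-rep-explicit-expression} obtained by setting $\kappa=0$ and taking every weight space one-dimensional. First I would note that, since $M$ has dimension $n+1$ while its weight set $\Lambda_{n}=\{q^{2i}\mid 0\le i\le n\}$ already contains $n+1$ distinct values, each weight space $M_{q^{2i}}$ must be one-dimensional. Thus, in the notation preceding Proposition \ref{q-2-rep-explicit-expression}, we are in the situation $s=0$, $l=n$ and $n_0=n_1=\cdots=n_n=1$; choosing $v_i$ to span $M_{q^{2i}}$ recovers the ordered basis $B_M$. Because all $n_i=1$, the identity blocks $I_{n_i}$ become the scalar $1$, the $n_{i+1}\times n_i$ matrices $\mathcal{E}_i$ and the $n_i\times n_{i+1}$ matrices $\mathcal{F}_i$ become complex numbers, and $(\ref{q2-chain-rep-dim-n})$ reads off as $(\ref{necessary-conditions-in-cor-of-q-2-rep-explicit-expression})$, with the convention $\mathcal{E}_i=\mathcal{F}_i=0$ whenever $i\le -1$ or $i\ge n$.

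Next I would specialize the constraint $(\ref{q2-chain-rep-n-conditions})$. Since $\kappa=0$ gives $\delta_{\kappa,0}=1$, its right-hand side vanishes, so (in the case $l=n\ge 1$) it becomes $\mathcal{E}_{i-1}\mathcal{F}_{i-1}-\mathcal{F}_i\mathcal{E}_i=0$ for all $0\le i\le n$. As the $\mathcal{E}_i,\mathcal{F}_i$ are now scalars they commute, so this is a chain of equalities: the $i=0$ instance, using $\mathcal{E}_{-1}=\mathcal{F}_{-1}=0$, already forces $\mathcal{E}_0\mathcal{F}_0=0$; the instances $1\le i\le n-1$ then yield $\mathcal{E}_0\mathcal{F}_0=\mathcal{E}_1\mathcal{F}_1=\cdots=\mathcal{E}_{n-1}\mathcal{F}_{n-1}$; and the $i=n$ instance is consistent via $\mathcal{E}_n=\mathcal{F}_n=0$. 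Hence every term is zero, which is precisely $(\ref{condition-indecom-primitive-1})$. The degenerate case $n=0$ is covered directly by the first line of $(\ref{q2-chain-rep-n-conditions})$, where $\mathcal{E}=\mathcal{F}=0$ and the asserted relations are vacuous, and the converse statement — that any scalars satisfying $(\ref{condition-indecom-primitive-1})$ define a genuine $U_q(\mathfrak{sl}^{*}_2)$-module — is already contained in Proposition \ref{q-2-rep-explicit-expression}.

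Since the argument is just a transcription of Proposition \ref{q-2-rep-explicit-expression} into the rank-one-weight-space setting, there is no serious obstacle; the only point needing a little care is the boundary bookkeeping, namely making sure the endpoints $i=0$ and $i=n$ of the chain, together with the degenerate case $l=0$, are handled by the stated conventions $\mathcal{E}_i=\mathcal{F}_i=0$ for $i\le -1$ or $i\ge l$. Once those are in place, passing from the block-matrix identity $\mathcal{E}\mathcal{F}-\mathcal{F}\mathcal{E}=0$ of Proposition \ref{q-2-rep-explicit-expression} to the scalar identities $(\ref{condition-indecom-primitive-1})$ is immediate.
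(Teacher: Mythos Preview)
Your proposal is correct and is exactly the approach the paper intends: the corollary is stated immediately after Propositions \ref{q-2-rep-explicit-expression} and \ref{endomorphism-alg-of-quasi-primitive-rep-of-Hopf-PBW-deformations} with the remark that it is obtained by applying them to $U_{q}(\mathfrak{sl}^{*}_2)$, and you have simply written out that specialization (one-dimensional weight spaces, $s=0$, $\kappa=0$) in detail, including the boundary bookkeeping that makes the scalar chain collapse to zero.
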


\begin{cor}\label{end-alg-indec-rep-cor-of-q-2-rep-explicit-expression}
Let $L_{\mathcal{E},\mathcal{F}}$ be the $(n+1)$-dimensional quasi-primitive module of $U_{q}(\mathfrak{sl}^{*}_2)$ defined by $(\ref{necessary-conditions-in-cor-of-q-2-rep-explicit-expression})$ and
$(\ref{condition-indecom-primitive-1})$.
 Then the following statements hold.\\
${\rm (1)}$ ${\rm End}_{U_{q}(\mathfrak{sl}^{*}_2)} (L_{\mathcal{E},\mathcal{F}})  \cong \underset{r}{\underbrace{\mathbb{C} \times \mathbb{C} \times \cdots \times \mathbb{C}}},$
where $r = \left\{i | \mathcal{E}_i = \mathcal{F}_i = 0, 0\leq i \leq n-1\right\}^\sharp + 1.$\\
${\rm (2)}$ $L_{\mathcal{E},\mathcal{F}}$ is indecomposable if and only if $\left\{i | \mathcal{E}_i = \mathcal{F}_i = 0, 0\leq i \leq n-1\right\}^\sharp=0.$
\end{cor}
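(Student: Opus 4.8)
The plan is to specialize Proposition \ref{endomorphism-alg-of-quasi-primitive-rep-of-Hopf-PBW-deformations} to the situation $\kappa = 0$, where each weight space $M_{q^{2i}} = \mathbb{C} v_i$ is one-dimensional, so every matrix $A_i$ is a scalar $a_i \in \mathbb{C}$ and every $\mathcal{E}_i$, $\mathcal{F}_i$ is a scalar. Under this specialization the two families of intertwining conditions in $(\ref{end-alg-of-q-2-rep-with-dim-n-in-general})$ collapse to the scalar equations $\mathcal{E}_i(a_i - a_{i+1}) = 0$ and $\mathcal{F}_i(a_i - a_{i+1}) = 0$ for $0 \leq i \leq n-1$. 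Hence an endomorphism of $L_{\mathcal{E},\mathcal{F}}$ is precisely a tuple $(a_0, a_1, \dots, a_n) \in \mathbb{C}^{n+1}$ such that $a_i = a_{i+1}$ whenever $\mathcal{E}_i \neq 0$ or $\mathcal{F}_i \neq 0$; equivalently, $a_i$ is allowed to differ from $a_{i+1}$ exactly at those indices $i$ in the set $S = \{i \mid \mathcal{E}_i = \mathcal{F}_i = 0,\ 0 \leq i \leq n-1\}$.

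From this description, part ${\rm (1)}$ follows by a counting argument: the index set $\{0,1,\dots,n\}$ breaks into $|S| + 1$ consecutive runs on each of which the scalar $a_i$ is constrained to be constant, and the values on distinct runs are unconstrained; so the endomorphism algebra, with its componentwise multiplication inherited from the diagonal matrices, is isomorphic to $\mathbb{C}^{|S|+1} = \mathbb{C}^r$ with $r = |S| + 1 = \{i \mid \mathcal{E}_i = \mathcal{F}_i = 0,\ 0 \leq i \leq n-1\}^\sharp + 1$. Part ${\rm (2)}$ is then immediate from the standard fact (available since $U_{q}(\mathfrak{sl}^{*}_2){\mbox -}{\rm \bf mod}$ is a finite-dimensional module category, hence Krull–Schmidt by Theorem \ref{unique-decomposition-theorem-of-finite-dimensional-reps-of-U}) that a finite-dimensional module is indecomposable if and only if its endomorphism algebra is local; a product $\mathbb{C}^r$ is local if and only if $r = 1$, i.e. if and only if $S = \emptyset$.

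There is no serious obstacle here — the corollary is essentially a bookkeeping specialization of the already-proved Proposition \ref{endomorphism-alg-of-quasi-primitive-rep-of-Hopf-PBW-deformations}. The only point requiring a little care is to observe that the product decomposition $\mathbb{C}^r$ respects the algebra structure (not merely the vector-space structure), which is clear because the endomorphisms in question are simultaneously diagonal in the basis $B_M$, so composition is componentwise multiplication; and to record that one uses the compatibility condition $(\ref{condition-indecom-primitive-1})$ only to guarantee $L_{\mathcal{E},\mathcal{F}}$ is a well-defined module, not in the computation of its endomorphisms. For ${\rm (2)}$ one could alternatively argue directly: if $S \neq \emptyset$, pick $i_0 \in S$ and check that the idempotent $(1,\dots,1,0,\dots,0)$ (ones up to index $i_0$) is a nontrivial endomorphism, splitting off the submodule $\bigoplus_{j \leq i_0} \mathbb{C} v_j$, which is indeed a submodule precisely because $\mathcal{E}_{i_0} = \mathcal{F}_{i_0} = 0$; conversely if $S = \emptyset$ then $r = 1$ and $L_{\mathcal{E},\mathcal{F}}$ has only scalar endomorphisms, hence is indecomposable.
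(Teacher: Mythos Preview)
Your proof is correct and follows essentially the same route as the paper: both specialize Proposition \ref{endomorphism-alg-of-quasi-primitive-rep-of-Hopf-PBW-deformations} to the scalar case, reduce the intertwining conditions to $(a_i - a_{i+1})\mathcal{E}_i = (a_i - a_{i+1})\mathcal{F}_i = 0$, count the free parameters as $|S|+1$, and then invoke the local-endomorphism-ring criterion for indecomposability. Your observation that condition $(\ref{condition-indecom-primitive-1})$ is used only to make $L_{\mathcal{E},\mathcal{F}}$ a module, not in the endomorphism count, is sharper than the paper's phrasing, which somewhat misleadingly cites it at that step.
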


\begin{proof}
${\rm (1)}$ By Proposition \ref{q-2-rep-explicit-expression}, we only need to show that there exists an isomorphism
 \begin{eqnarray}\label{end-alg-of-q-2-rep-with-dim-n}
 \qquad\left\{\left( {\begin{array}{*{20}c}
   a_0 &  &  & \\
    & a_1 &  & \\
    &  & \ddots & \\
     &  &  & a_{n}\\
\end{array}} \right) \Bigg| {a_i \mathcal{E}_i = a_{i+1} \mathcal{E}_i(0\leq i\leq n-1),
\atop a_{i} \mathcal{F}_{i} = a_{i+1} \mathcal{F}_{i}(0\leq i\leq n-1)} \right\}
 \cong \underset{r}{\underbrace{\mathbb{C} \times \mathbb{C} \times \cdots \times \mathbb{C}}}.
 \end{eqnarray}
Consider
\begin{eqnarray}\label{linear-equations-primitive-reps-proof}
 \left\{
 \begin{array}{ll}
a_i \mathcal{E}_i = a_{i+1} \mathcal{E}_i~(0\leq i\leq n-1),\\
a_{i} \mathcal{F}_{i} = a_{i+1} \mathcal{F}_{i}~(0\leq i\leq n-1)
\end{array}
\right.
 \end{eqnarray}
as the set of linear equations about $a_0,a_1,\cdots , a_{n}$,
then the condition in $(\ref{condition-indecom-primitive-1})$ implies that the number of basic solutions of $(\ref{linear-equations-primitive-reps-proof})$ is equal to $r$.
 Hence the isomorphism in $(\ref{end-alg-of-q-2-rep-with-dim-n})$ exists.\\
${\rm (2)}$ $L_{\mathcal{E},\mathcal{F}}$ is indecomposable if and only if ${\rm End}_{U_{q}(\mathfrak{sl}^{*}_2)} (L_{\mathcal{E},\mathcal{F}})$ is local, which is equivalent to
say $r=1$ in $(\ref{end-alg-of-q-2-rep-with-dim-n})$, i.e., $\left\{i | \mathcal{E}_i = \mathcal{F}_i = 0, 0\leq i \leq n-1\right\}^\sharp=0.$
\end{proof}

By using Corollary \ref{cor-of-q-2-rep-explicit-expression} and \ref{end-alg-indec-rep-cor-of-q-2-rep-explicit-expression}, we can explicitly present a class of primitive modules of $U_{q}(\mathfrak{sl}^{*}_2)$ up to isomorphisms.

\begin{thm}\label{primitive-rep-of-U-with-type-1-q}
Assume that $M$ is a $(n+1)$-dimensional primitive module of $U_{q}(\mathfrak{sl}^{*}_2)$ with weight set $\Lambda_{M} = \{q^{2i} | 0\leq i \leq n\}$.
Then $M$ is isomorphic to a $(n+1)$-dimensional module $L_{\mathcal{E}^{\mathbf p},\mathcal{F}^{\mathbf p}}$ with a basis $w_0, w_1, \cdots, w_{n}$ and the actions of $K,E,F$ defined by
\begin{eqnarray}\label{necessary-conditions-in-cor-of-primitive-q-2-rep-explicit-expression}
\left\{
 \begin{array}{ll}
K w_i = q^{2i} w_i,\\
E w_i = \left\{
 \begin{array}{ll}
\mathcal{E}^{\mathbf p}_i w_{i+1}, & {\rm if}~i < n,\\
0, & {\rm if}~i = n,
\end{array}
\right.\\
F w_i = \left\{
 \begin{array}{ll}
\mathcal{F}^{\mathbf p}_{i-1} w_{i-1}, & {\rm if}~i > 0,\\
0, & {\rm if}~i = 0,
\end{array}
\right.
\end{array}
\right.
\end{eqnarray}
where for any $n \geq 1$ and $0\leq i\leq n - 1$ the coefficients $\mathcal{E}^{\mathbf p}_i, \mathcal{F}^{\mathbf p}_i \in \mathbb{C}$ satisfy
\begin{eqnarray}\label{a-class-of-primitive-reps-of-U}
\mathcal{E}^{\mathbf p}_i \mathcal{F}^{\mathbf p}_{i} = 0 \quad {\rm and} \quad
\mathcal{E}^{\mathbf p}_i + \mathcal{F}^{\mathbf p}_{i} = 1.
 \end{eqnarray}
\end{thm}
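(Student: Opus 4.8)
The plan is to take the explicit normal form of a quasi-primitive module furnished by Corollary \ref{cor-of-q-2-rep-explicit-expression}, use the primitivity (indecomposability) hypothesis together with Corollary \ref{end-alg-indec-rep-cor-of-q-2-rep-explicit-expression} to pin down the structure constants, and then rescale the weight basis to bring these constants into the prescribed shape.

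First I would invoke Corollary \ref{cor-of-q-2-rep-explicit-expression}: since $M$ is an $(n+1)$-dimensional quasi-primitive module with weight set $\Lambda_M = \{q^{2i}\mid 0\le i\le n\}$, there is a basis $v_0,\dots,v_n$ of $M$ on which $K,E,F$ act by the formulas $(\ref{necessary-conditions-in-cor-of-q-2-rep-explicit-expression})$ for scalars $\mathcal{E}_i,\mathcal{F}_i\in\mathbb{C}$ with $\mathcal{E}_i\mathcal{F}_i=0$ for $0\le i\le n-1$; in the notation there, $M\cong L_{\mathcal{E},\mathcal{F}}$. If $n=0$ there is nothing to prove, so assume $n\ge 1$. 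As $M$ is primitive it is indecomposable, so Corollary \ref{end-alg-indec-rep-cor-of-q-2-rep-explicit-expression}${\rm (2)}$ gives $\{i\mid \mathcal{E}_i=\mathcal{F}_i=0,\ 0\le i\le n-1\}^\sharp=0$; combined with $(\ref{condition-indecom-primitive-1})$ this forces, for every $i\in\{0,\dots,n-1\}$, that exactly one of $\mathcal{E}_i,\mathcal{F}_i$ is nonzero.

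Next I would define nonzero scalars $c_0,\dots,c_n\in\mathbb{C}^\times$ recursively by $c_0=1$ and, for $0\le i\le n-1$,
\[
c_{i+1}=
\begin{cases}
c_i\,\mathcal{E}_i,& \text{if }\mathcal{E}_i\neq 0,\\
c_i\,\mathcal{F}_i^{-1},& \text{if }\mathcal{F}_i\neq 0,
\end{cases}
\]
which is well defined by the previous paragraph and keeps $c_i\neq0$ throughout. Setting $w_i=c_i v_i$ yields another weight basis of $M$ with $K w_i=q^{2i}w_i$, while a direct computation from $(\ref{necessary-conditions-in-cor-of-q-2-rep-explicit-expression})$ gives $E w_i=\frac{c_i}{c_{i+1}}\mathcal{E}_i\,w_{i+1}$ for $i<n$ (and $Ew_n=0$), $F w_i=\frac{c_i}{c_{i-1}}\mathcal{F}_{i-1}\,w_{i-1}$ for $i>0$ (and $Fw_0=0$). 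Thus $M$ is realized on $\{w_i\}$ in the form $(\ref{necessary-conditions-in-cor-of-primitive-q-2-rep-explicit-expression})$ with $\mathcal{E}^{\mathbf p}_i=\frac{c_i}{c_{i+1}}\mathcal{E}_i$ and $\mathcal{F}^{\mathbf p}_i=\frac{c_{i+1}}{c_i}\mathcal{F}_i$. Checking the two cases: if $\mathcal{E}_i\neq0$ then $\mathcal{F}_i=0$ and $c_{i+1}=c_i\mathcal{E}_i$, so $\mathcal{E}^{\mathbf p}_i=1$, $\mathcal{F}^{\mathbf p}_i=0$; if $\mathcal{F}_i\neq0$ then $\mathcal{E}_i=0$ and $c_{i+1}=c_i\mathcal{F}_i^{-1}$, so $\mathcal{F}^{\mathbf p}_i=1$, $\mathcal{E}^{\mathbf p}_i=0$. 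In either case $\mathcal{E}^{\mathbf p}_i\mathcal{F}^{\mathbf p}_i=0$ and $\mathcal{E}^{\mathbf p}_i+\mathcal{F}^{\mathbf p}_i=1$, i.e.\ $(\ref{a-class-of-primitive-reps-of-U})$ holds, and $M\cong L_{\mathcal{E}^{\mathbf p},\mathcal{F}^{\mathbf p}}$ as claimed. (Conversely, any module of the form $(\ref{necessary-conditions-in-cor-of-primitive-q-2-rep-explicit-expression})$--$(\ref{a-class-of-primitive-reps-of-U})$ is a genuine $U_{q}(\mathfrak{sl}^{*}_2)$-module by Corollary \ref{cor-of-q-2-rep-explicit-expression} and is primitive by Corollary \ref{end-alg-indec-rep-cor-of-q-2-rep-explicit-expression}${\rm (2)}$, since $\mathcal{E}^{\mathbf p}_i+\mathcal{F}^{\mathbf p}_i=1$ rules out $\mathcal{E}^{\mathbf p}_i=\mathcal{F}^{\mathbf p}_i=0$; I would record this, although it is not needed for the statement.)

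I do not expect a genuine obstacle: the whole argument is a normalization obtained by rescaling the weight basis, with the substantive work already absorbed into Corollaries \ref{cor-of-q-2-rep-explicit-expression} and \ref{end-alg-indec-rep-cor-of-q-2-rep-explicit-expression}. The only points demanding care are the case distinction ``$\mathcal{E}_i\neq0$ versus $\mathcal{F}_i\neq0$'' — which requires both $\mathcal{E}_i\mathcal{F}_i=0$ and indecomposability to be exhaustive and mutually exclusive — and the index bookkeeping when transcribing the rescaled action back into the $\mathcal{E}^{\mathbf p},\mathcal{F}^{\mathbf p}$ notation of $(\ref{necessary-conditions-in-cor-of-primitive-q-2-rep-explicit-expression})$.
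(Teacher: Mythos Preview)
Your proposal is correct and follows essentially the same approach as the paper: both invoke Corollaries \ref{cor-of-q-2-rep-explicit-expression} and \ref{end-alg-indec-rep-cor-of-q-2-rep-explicit-expression}${\rm (2)}$ to conclude that exactly one of $\mathcal{E}_i,\mathcal{F}_i$ is nonzero, then rescale the weight basis to normalize the nonzero constant to $1$. The only cosmetic difference is that the paper packages the rescaling as the existence of a nonzero solution $(\lambda_0,\dots,\lambda_n)$ to the system $\mathcal{E}_i\lambda_{i+1}=\mathcal{E}^{\mathbf p}_i\lambda_i$, $\mathcal{F}_i\lambda_i=\mathcal{F}^{\mathbf p}_i\lambda_{i+1}$, whereas you construct the scalars $c_i$ explicitly by recursion; these are the same computation.
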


\begin{proof}
By Corollary \ref{cor-of-q-2-rep-explicit-expression} and \ref{end-alg-indec-rep-cor-of-q-2-rep-explicit-expression} ${\rm (2)}$, $M$ can be presented by $(\ref{necessary-conditions-in-cor-of-q-2-rep-explicit-expression})$
 with $\mathcal{E}_i \mathcal{F}_{i} = 0$ and $\mathcal{E}_i + \mathcal{F}_{i} \neq 0.$
For any $c \in \mathbb{C}$, define
$\delta_c = \left\{
\begin{array}{ll}
1,& {\rm if}~ c\neq 0,\\
0,&{\rm if}~ c = 0,
\end{array}
\right.$
then set
$\mathcal{E}^{\mathbf p}_i = \delta_{\mathcal{E}_i}$ and $\mathcal{F}^{\mathbf p}_i = \delta_{\mathcal{F}_i}$.
Since the following set of linear equations
\begin{eqnarray*}\label{}
 \left\{
 \begin{array}{ll}
\mathcal{E}_i \lambda_{i+1} = \mathcal{E}^{\mathbf p}_i \lambda_i ~(0\leq i\leq n-1),\\
\mathcal{F}_{i} \lambda_{i} = \mathcal{F}^{\mathbf p}_{i} \lambda_{i+1}~(0\leq i\leq n-1)
\end{array}
\right.
\end{eqnarray*}
has nonzero solutions $(\lambda_0, \lambda_1, \cdots, \lambda_n)$ with all $\lambda_i \neq 0$, then it is easy to check that the map $M \xrightarrow{\varphi} L_{\mathcal{E}^{\mathbf p},\mathcal{F}^{\mathbf p}}$ defined by $\varphi(v_i) = \lambda_i w_i$
is an isomorphism of $U_{q}(\mathfrak{sl}^{*}_2)$-modules.
\end{proof}

\subsection{Primitive modules of $U_{q}(\mathfrak{sl}^{*}_2,\kappa)$ with $\kappa \neq 0$}\label{section-4-3}

\begin{defn}\label{primitive-rep-of-new-type-Hopf-PBW-quantum-group}
Let $M$ be a finite dimensional quasi-primitive module of $U_{q}(\mathfrak{sl}^{*}_2,\kappa)$.
If $M$ is simple,
then we call $M$ a primitive module of $U_{q}(\mathfrak{sl}^{*}_2,\kappa)$.
\end{defn}

To describe the primitive modules of $U_{q}(\mathfrak{sl}^{*}_2,\kappa)$, we need the following fundamental proposition.

\begin{prop}\label{quasi-primitive-rep-before-classification-of-primitive-representations-of-Hopf-PBW-deformations}
Let $M$ be a $(n+1)$-dimensional quasi-primitive module of $U_{q}(\mathfrak{sl}^{*}_2,\kappa)$. Assume that the dimensions of all the weight spaces of $M$ are equal.
If $M$ is indecomposable, then the dimensions of all the weight spaces of $M$ are equal to $1$, and the weight set $\Lambda_M$ of $M$ is given by
\begin{eqnarray}\label{a-class-of-primitive-reps-of-U-eq}
\Lambda_M = \{q^{-n}, q^{-n+2}, \cdots, q^{n-2}, q^{n}\}.
\end{eqnarray}
\end{prop}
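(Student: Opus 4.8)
The plan is to work with the explicit matrix model of Proposition~\ref{q-2-rep-explicit-expression}. Since $\kappa \neq 0$ and all weight spaces of the $(n+1)$-dimensional quasi-primitive module $M$ share a common dimension $d$, writing $\Lambda_M = \{q^{s+2i} \mid 0 \le i \le l\}$ we have $n+1 = (l+1)d$, and Proposition~\ref{q-2-rep-explicit-expression} provides an ordered weight basis of $M$ together with $d \times d$ matrices $\mathcal{E}_0,\dots,\mathcal{E}_{l-1}$ and $\mathcal{F}_0,\dots,\mathcal{F}_{l-1}$ describing the actions of $E$ and $F$, subject to
\[
\mathcal{E}_{i-1}\mathcal{F}_{i-1} - \mathcal{F}_i\mathcal{E}_i = [s+2i]_m\, I_d \qquad (0 \le i \le l),
\]
where by convention $\mathcal{E}_{-1} = \mathcal{F}_{-1} = \mathcal{E}_l = \mathcal{F}_l = 0$. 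From here I would argue in four steps: (i) pin down $s$ by a trace computation; (ii) show every $\mathcal{E}_i,\mathcal{F}_i$ is invertible with $\mathcal{E}_i\mathcal{F}_i$ a nonzero scalar matrix; (iii) use Proposition~\ref{endomorphism-alg-of-quasi-primitive-rep-of-Hopf-PBW-deformations} to identify ${\rm End}_{U_q(\mathfrak{sl}^{*}_2,\kappa)}(M)$ with $M_d(\mathbb{C})$; (iv) conclude $d = 1$ from indecomposability. The degenerate case $l = 0$ is immediate: Proposition~\ref{q-2-rep-explicit-expression} then forces $s = 0$ and $E = F = 0$, so $M$ is a $d$-dimensional trivial module, indecomposable only for $d = 1$. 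So assume $l \ge 1$.

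For step (i), take traces in the displayed relations. Using ${\rm tr}(\mathcal{F}_i\mathcal{E}_i) = {\rm tr}(\mathcal{E}_i\mathcal{F}_i)$ and the boundary conventions, the sum over $0 \le i \le l$ telescopes to $0$, so $\sum_{i=0}^{l}[s+2i]_m = 0$. Expanding the left side as a geometric series and using that $q^m$ is not a root of unity — so that $x + x^{-1} = y + y^{-1}$ implies $x = y^{\pm1}$ for nonzero $x,y$ — I obtain $s = -l$; hence $\Lambda_M = \{q^{-l}, q^{-l+2}, \dots, q^l\}$ and in particular $[s]_m = -[l]_m \ne 0$. For step (ii), set $c_i = -\sum_{j=0}^{i}[s+2j]_m$ for $0 \le i \le l-1$. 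The relation at $i = 0$ reads $\mathcal{F}_0\mathcal{E}_0 = [l]_m I_d$, a nonzero scalar matrix; since these are square matrices, $\mathcal{E}_0,\mathcal{F}_0$ are invertible and $\mathcal{E}_0\mathcal{F}_0 = [l]_m I_d = c_0 I_d$. Proceeding inductively, the relation at index $i$ gives $\mathcal{F}_i\mathcal{E}_i = \mathcal{E}_{i-1}\mathcal{F}_{i-1} - [s+2i]_m I_d = c_i I_d$; a second geometric-series computation, once more using that $q^m$ is not a root of unity, shows $c_i \ne 0$ for all $0 \le i \le l-1$, so $\mathcal{E}_i,\mathcal{F}_i$ are invertible and $\mathcal{F}_i = c_i\mathcal{E}_i^{-1}$.

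For step (iii), Proposition~\ref{endomorphism-alg-of-quasi-primitive-rep-of-Hopf-PBW-deformations} describes an endomorphism of $M$ as a tuple $(A_0,\dots,A_l)$ of $d\times d$ matrices satisfying $\mathcal{E}_iA_i = A_{i+1}\mathcal{E}_i$ and $A_i\mathcal{F}_i = \mathcal{F}_iA_{i+1}$ for $0 \le i \le l-1$. Invertibility of $\mathcal{E}_i$ forces $A_{i+1} = \mathcal{E}_iA_i\mathcal{E}_i^{-1}$, so the whole tuple is determined by $A_0$; and since $\mathcal{F}_i = c_i\mathcal{E}_i^{-1}$, both $A_i\mathcal{F}_i$ and $\mathcal{F}_iA_{i+1}$ equal $c_iA_i\mathcal{E}_i^{-1}$, so the second family of relations holds automatically. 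Therefore $(A_0,\dots,A_l) \mapsto A_0$ is an algebra isomorphism ${\rm End}_{U_q(\mathfrak{sl}^{*}_2,\kappa)}(M) \cong M_d(\mathbb{C})$. For step (iv), $M$ is finite-dimensional and indecomposable, so its endomorphism ring is local; as $M_d(\mathbb{C})$ is local only when $d = 1$, we get $d = 1$. Consequently $n = l$, every weight space of $M$ is $1$-dimensional, and $\Lambda_M = \{q^{-n}, q^{-n+2}, \dots, q^{n-2}, q^n\}$.

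The computational heart, and the step I expect to be the main obstacle, is the two uses of ``$q^m$ is not a root of unity'': deriving $s = -l$ from $\sum_{i=0}^{l}[s+2i]_m = 0$, and proving $c_i \ne 0$ for $0 \le i \le l-1$. After clearing the geometric series, both reduce to an equality of the form $q^{ma} + q^{-ma} = q^{mb} + q^{-mb}$, which forces $a = \pm b$ by non-torsion of $q^m$; one must also keep the boundary conventions $\mathcal{E}_{-1} = \mathcal{F}_{-1} = \mathcal{E}_l = \mathcal{F}_l = 0$ straight throughout. Everything after step (ii) is routine linear algebra together with the standard fact (Fitting's lemma) that a finite-dimensional indecomposable module has a local endomorphism ring.
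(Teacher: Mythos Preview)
Your proof is correct and follows essentially the same architecture as the paper's: pass to the matrix description of Proposition~\ref{q-2-rep-explicit-expression}, show each $\mathcal{E}_i\mathcal{F}_i$ is a nonzero scalar matrix, invoke Proposition~\ref{endomorphism-alg-of-quasi-primitive-rep-of-Hopf-PBW-deformations} to identify ${\rm End}(M)\cong M_d(\mathbb{C})$, and conclude $d=1$ from locality. The one substantive difference is order. The paper first derives $\mathcal{F}_i\mathcal{E}_i=a_iI$ and $\mathcal{E}_i\mathcal{F}_i=b_iI$ by summing the relations from both ends, argues directly that $a_i=b_i\neq 0$ from the closed forms $a_i=[-s-i]_m[i+1]_m$ and $b_i=[l-i]_m[s+l+i+1]_m$ (the vanishing conditions on $a_i$ and $b_i$ are incompatible for $0\le i\le l-1$), and only afterwards extracts $s=-n$ from $a_i-b_i=0$. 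You instead take traces at the outset to obtain $\sum_{i=0}^l[s+2i]_m=[s+l]_m[l+1]_m=0$, pin down $s=-l$ immediately, and then the nonvanishing of $c_i=[l-i]_m[i+1]_m$ for $0\le i\le l-1$ is transparent. Your route is marginally cleaner --- the trace telescoping avoids juggling two separate partial sums --- while the paper's version has the minor advantage of not needing the trace identity and making the scalar nature of both $\mathcal{E}_i\mathcal{F}_i$ and $\mathcal{F}_i\mathcal{E}_i$ explicit from the start. Either way the endomorphism computation and the final step are identical.
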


\begin{proof}
In Proposition $\ref{q-2-rep-explicit-expression}$, any $(n+1)$-dimensional quasi-primitive module $M$ of $U_{q}(\mathfrak{sl}^{*}_2,\kappa)$
is characterized by $(\ref{q2-chain-rep-dim-n})$ and $(\ref{q2-chain-rep-n-conditions})$. In this proof, we will retain the notations in Proposition $\ref{q-2-rep-explicit-expression}$.
Since the dimensions of all the weight spaces of $M$ are equal, then ${\rm dim} M_{q^{s+2i}} = n_0$ for all $0 \leq i \leq l.$
When $l = 0$, noting that $M$ is indecomposable, we can see from Proposition $\ref{q-2-rep-explicit-expression}$ that $n_0 = n = 1$ and $\Lambda_M = \{1\}.$

From now on, we assume that $l \geq 1$. For any $0 \leq i \leq l-1$, by respectively adding the top $i+1$ formulas and the bottom $l-i$ ones in $(\ref{q2-chain-rep-n-conditions})$, one has
\begin{eqnarray}\label{the-matrix-corresponding-to-E-F-are-invert-eq}
\mathcal{F}_i \mathcal{E}_i = a_i I_{n_0} \quad {\rm and} \quad
\mathcal{E}_i \mathcal{F}_i = b_i I_{n_0},
\end{eqnarray}
where \begin{eqnarray}\label{the-matrix-corresponding-to-E-F-are-invert-eq-coefficients}
\left\{
 \begin{array}{ll}
a_i = {\mathop\sum\limits_{k=0}^i}[-s-2k]_m = [-s-i]_m [i+1]_m,\\
b_i = {\mathop\sum\limits_{k=i+1}^l}[s+2k]_m = [l-i]_m [s+l+i+1]_m.
\end{array}
\right.
 \end{eqnarray}
We claim that $a_i = b_i \neq 0$ for any $0 \leq i \leq l-1$.
Obviously, it can be deduced from $(\ref{the-matrix-corresponding-to-E-F-are-invert-eq})$ that $a_i \neq 0$ if and only if $b_i \neq 0$, and in this case $a_i = b_i \neq 0$.
To prove our claim, we only need to exclude the case $a_i = b_i = 0$. In fact, $a_i = b_i = 0$ if and only if $s + i =0$ and $i = l$, which is in contradiction with $0 \leq i \leq l-1$.
Hence for any $0 \leq i \leq l-1$ one has
\begin{eqnarray}\label{quasi-primitive-rep-before-classification-of-primitive-representations-of-Hopf-PBW-deformations-eq1}
\mathcal{E}_i \mathcal{F}_i = \mathcal{F}_i \mathcal{E}_i = a_i I_{n_0}.
\end{eqnarray}
Combining $(\ref{quasi-primitive-rep-before-classification-of-primitive-representations-of-Hopf-PBW-deformations-eq1})$ with Proposition $\ref{endomorphism-alg-of-quasi-primitive-rep-of-Hopf-PBW-deformations}$, one has
\begin{eqnarray}
{\rm End}_{U_{q}(\mathfrak{sl}^{*}_2,\kappa)} (M) \cong {\rm \bf Mat}_{n_0}(\mathbb{C}),
\end{eqnarray}
where ${\rm \bf Mat}_{n_0}(\mathbb{C})$ is the matrix algebra consisting of all $n_0 \times n_0$ complex matrices.
Since $M$ is indecomposable, then ${\rm End}_{U_{q}(\mathfrak{sl}^{*}_2,\kappa)} (M)$ is local, which implies
$n_0 = 1$.

Next we show that $\Lambda_M = \{q^{-n}, q^{-n+2}, \cdots, q^{n-2}, q^{n}\}.$ Since $M$ is $(n+1)$-dimensional, then $n+1 = {\mathop\sum\limits_{i=0}^l} n_i = l+1$. So we can express the weight set of $M$ as follows
\begin{eqnarray*}\label{}
\Lambda_M = \{q^{s}, q^{s+2}, \cdots, q^{s+2(n-1)}, q^{s+2n}\}.
\end{eqnarray*}
By $(\ref{the-matrix-corresponding-to-E-F-are-invert-eq-coefficients})$, one gets
\begin{eqnarray}\label{simple-rep-of-classical-quantum-group-claim-3}
a_i - b_i = {\mathop\sum\limits_{k=0}^{n}}[-s-2k]_m = [-s-n]_m [n+1]_m.
\end{eqnarray}
Because $a_i = b_i$ and $q\in \mathbb{C}^\times$ is not a root of unity, one must have $s=-n$. The proof is finished.
\end{proof}

For all $n\in \mathbb{Z}$, set
\begin{eqnarray*}\label{}
[K;n]_m = \frac{q^{nm} K^{m} -q^{-n m} K^{-m}}{q^{m}-q^{-m}}.
\end{eqnarray*}
It is easy to check that in $U_{q}(\mathfrak{sl}^{*}_2,\kappa)$ one has
\begin{eqnarray}\label{}
\label{pbw-comm-formula-5} F E^{r} - E^{r} F  = -[r]_m E^{r-1} [K;r - 1]_m.
\end{eqnarray}
The center of $U_{q}(\mathfrak{sl}^{*}_2,\kappa)$ is a polynomial algebra generated by the Casimir element
\begin{eqnarray*}\label{}
C_q = EF + \frac{q^{-m}K^m + q^m K^{-m}}{(q^m - q^{-m})^2} = F E + \frac{q^{m}K^m + q^{-m} K^{-m}}{(q^m - q^{-m})^2},
\end{eqnarray*}
which is proved in Theorem 4.15 in \cite{JW}.

With the help of Proposition \ref{quasi-primitive-rep-before-classification-of-primitive-representations-of-Hopf-PBW-deformations}, we can clearly describe the primitive modules of $U_{q}(\mathfrak{sl}^{*}_2,\kappa)$.

\begin{thm}\label{classification-of-primitive-representations-of-Hopf-PBW-deformations}
Let $M$ be a $(n+1)$-dimensional primitive module of $U_{q}(\mathfrak{sl}^{*}_2,\kappa)$. Then the following statements hold.\\
${\rm (1)}$ The dimensions of all the weight spaces of $M$ are equal to $1$.\\
${\rm (2)}$ $M$ is isomorphic to the $(n+1)$-dimensional primitive module $L_{\rm \bf p}(n,+)$ of $U_{q}(\mathfrak{sl}^{*}_2,\kappa)$ with basis $w_0,w_1,\cdots,w_n$ and the actions of $K,E,F$ on $M$ given below
 \begin{eqnarray}\label{necessary-conditions-in-simple-primitive-rep-of-Hopf-PBW-deformations}
\left\{
 \begin{array}{ll}
K w_i = q^{2i-n} w_i,\\
E w_i = \left\{
 \begin{array}{ll}
[n-i]_m [i+1]_m w_{i+1}, & {\rm if}~i < n,\\
0, & {\rm if}~i = n,
\end{array}
\right.\\
F w_i = \left\{
 \begin{array}{ll}
w_{i-1}, & {\rm if}~i > 0,\\
0, & {\rm if}~i = 0.
\end{array}
\right.
\end{array}
\right.
\end{eqnarray}
${\rm (3)}$ $C_q$ acts on $M$ by the same scalar $c_q(n)$ as on $L_{\rm \bf p}(n,+)$, where
\begin{eqnarray*}\label{}
c_q(n) = \frac{q^{(n+1)m} + q^{-(n+1)m}}{(q^m - q^{-m})^2}.
\end{eqnarray*}
\end{thm}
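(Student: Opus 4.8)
The plan is to exploit the Casimir element $C_q$ together with Schur's lemma, reduce the problem to Proposition~\ref{quasi-primitive-rep-before-classification-of-primitive-representations-of-Hopf-PBW-deformations}, and then finish by a rescaling of a weight basis. Write the weight set of the quasi-primitive module $M$ as $\Lambda_M = \{q^{s}, q^{s+2}, \dots, q^{s+2l}\}$, and recall that $F$ annihilates the lowest weight space $M_{q^{s}}$ while $E$ annihilates the highest weight space $M_{q^{s+2l}}$. Since $M$ is simple and $\mathbb{C}$ is algebraically closed, $C_q$ acts on $M$ by a scalar $c$. Evaluating the first expression for $C_q$ on $M_{q^{s}}$ (where $EF$ vanishes) and the second on $M_{q^{s+2l}}$ (where $FE$ vanishes) gives
\[
c=\frac{q^{m(s-1)}+q^{-m(s-1)}}{(q^m-q^{-m})^2}=\frac{q^{m(s+2l+1)}+q^{-m(s+2l+1)}}{(q^m-q^{-m})^2},
\]
and since $q$ is not a root of unity this forces $s=-l$, whence $c=\dfrac{q^{m(l+1)}+q^{-m(l+1)}}{(q^m-q^{-m})^2}$.

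Next I would show that all weight spaces of $M$ have a common dimension. On $M_{q^{s+2i}}$ the two forms of $C_q$ show that $EF$ acts by the scalar $d_i:=c-\dfrac{q^{m(s+2i-1)}+q^{-m(s+2i-1)}}{(q^m-q^{-m})^2}$, while $FE$ acts on $M_{q^{s+2(i-1)}}$ by the \emph{same} scalar $d_i$. Substituting $s=-l$ one computes $d_i=[\,l+1-i\,]_m[\,i\,]_m$, which is nonzero for $1\le i\le l$ because $q$ is not a root of unity. Hence the maps $E\colon M_{q^{s+2(i-1)}}\to M_{q^{s+2i}}$ and $F\colon M_{q^{s+2i}}\to M_{q^{s+2(i-1)}}$ are mutually inverse up to the nonzero scalar $d_i$, so consecutive weight spaces have equal dimension; all weight spaces of $M$ share a common dimension. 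As $M$ is simple, hence indecomposable, Proposition~\ref{quasi-primitive-rep-before-classification-of-primitive-representations-of-Hopf-PBW-deformations} applies and gives statement~(1), together with $\Lambda_M=\{q^{-n},q^{-n+2},\dots,q^{n}\}$ (so $l=n$ and $s=-n$).

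For statement~(2), I would pick a weight basis $v_0,\dots,v_n$ with $Kv_i=q^{2i-n}v_i$, write $Ev_i=e_iv_{i+1}$ and $Fv_i=f_iv_{i-1}$, and observe that the identities $f_ie_{i-1}=d_i\neq 0$ force all $e_i\ (0\le i\le n-1)$ and all $f_i\ (1\le i\le n)$ to be nonzero. Rescaling by $w_i=c_iv_i$ with $c_0=1$ and $c_i=\prod_{j=1}^{i}f_j^{-1}$ normalises $Fw_i=w_{i-1}$; the coefficient by which $E$ then sends $w_i$ to $w_{i+1}$ is forced by $EFw_{i+1}=d_{i+1}w_{i+1}$ to be $d_{i+1}=[\,n-i\,]_m[\,i+1\,]_m$, which is exactly the presentation of $L_{\rm \bf p}(n,+)$; consistency of the relation $EF-FE=\frac{K^m-K^{-m}}{q^m-q^{-m}}$ reduces to the $q$-number identity $[\,n+1-i\,]_m[\,i\,]_m-[\,n-i\,]_m[\,i+1\,]_m=[\,2i-n\,]_m$. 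Thus $M\cong L_{\rm \bf p}(n,+)$. Statement~(3) is then immediate: $C_qw_0=EFw_0+\frac{q^{-m}K^m+q^mK^{-m}}{(q^m-q^{-m})^2}w_0=\frac{q^{-m(n+1)}+q^{m(n+1)}}{(q^m-q^{-m})^2}w_0=c_q(n)w_0$, and $c_q(n)$ coincides with the scalar $c$ computed above.

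The main obstacle is the passage from ``$M$ simple'' to ``all weight spaces equal-dimensional'': this is precisely where the centrality of $C_q$ must be used, and the delicate technical point is verifying that the intermediate Casimir scalars $d_i$ do not vanish, which in turn depends on first pinning $s=-l$ from the top and bottom weights. Everything after that is bookkeeping with $q$-numbers and the basis rescaling. An alternative route, which avoids Proposition~\ref{quasi-primitive-rep-before-classification-of-primitive-representations-of-Hopf-PBW-deformations}, would begin from a highest weight vector $v$ with $Ev=0$, use the commutation formula $(\ref{pbw-comm-formula-5})$ (equivalently its $EF^{r}$ analogue) to prove $EF^{k}v\in\mathbb{C}F^{k-1}v$, and deduce directly that $M$ is spanned by $v,Fv,F^2v,\dots$ with one-dimensional weight spaces, recovering the same classification.
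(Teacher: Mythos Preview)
Your argument is correct and takes a genuinely different route from the paper's proof of part~(1). The paper does \emph{not} invoke the Casimir element at this stage: instead, starting from the relations~(\ref{q2-chain-rep-n-conditions}), it shows by induction on $i$ that $n_i=n_{i+1}$, at each step arguing that if $\mathcal F_i\mathcal E_i=0$ or $n_i<n_{i+1}$ then one can exhibit an explicit nontrivial submodule of $M$ (built from $\bigoplus_j M_{q^{s+2j}}$ and $\bigoplus_j E^{j+1}M_{q^{s+2i}}$), contradicting simplicity; only then is Proposition~\ref{quasi-primitive-rep-before-classification-of-primitive-representations-of-Hopf-PBW-deformations} applied to force the common dimension to be $1$ and $s=-n$. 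Your approach front-loads the Casimir: Schur's lemma gives $C_q$ acting by a scalar, the top and bottom weight evaluations pin down $s=-l$ \emph{before} anything else, and then the nonvanishing of $d_i=[l+1-i]_m[i]_m$ for $1\le i\le l$ makes $E$ and $F$ mutual inverses (up to scalar) between consecutive weight spaces, yielding equal dimensions in one stroke. This is shorter and more conceptual; the price is that it relies on knowing the center (Theorem~4.15 of \cite{JW}) rather than being purely structural. For part~(2) both arguments rescale a weight basis; yours is slightly more streamlined because the coefficients $d_{i+1}=[n-i]_m[i+1]_m$ are already in hand from the Casimir computation. One small point: the paper separately verifies that $L_{\rm\bf p}(n,+)$ is simple (by analysing a hypothetical composition series), whereas in your write-up simplicity of $L_{\rm\bf p}(n,+)$ is only inherited from the given primitive $M$; this is harmless under the theorem's hypothesis, but if you want the statement to include the existence of $L_{\rm\bf p}(n,+)$ as a primitive module for every $n$, you should add a line (your alternative highest-weight-vector route at the end does this).
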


\begin{proof}
${\rm (1)}$ In this proof, we also retain the notations in Proposition $\ref{q-2-rep-explicit-expression}$, and respectively identify the matrices $\mathcal{E}_i$ and $\mathcal{F}_i$ with the linear maps
 $M_{q^{s+2i}} \xrightarrow{E} M_{q^{s+2(i+1)}}$ and $M_{q^{s+2(i+1)}} \xrightarrow{F} M_{q^{s+2i}}$.
By Proposition $\ref{quasi-primitive-rep-before-classification-of-primitive-representations-of-Hopf-PBW-deformations}$,
to prove ${\rm (1)}$, we only need to prove that the dimensions of all the weight spaces of $M$ are equal, i.e.,
\begin{eqnarray}\label{simple-rep-of-classical-quantum-group-claim-1}
{\rm dim} M_{q^{s+2i}} = n_i = n_{i+1} = {\rm dim} M_{q^{s+2(i+1)}}~(0 \leq i \leq l-1).
\end{eqnarray}
When $l=0$, the claim $(\ref{simple-rep-of-classical-quantum-group-claim-1})$ is trivial. Now assume $l\geq 1$, we will prove $(\ref{simple-rep-of-classical-quantum-group-claim-1})$ by induction on $i$.

 When $i = 0,$ by $(\ref{q2-chain-rep-n-conditions})$ one has
 \begin{eqnarray}\label{classification-of-primitive-representations-of-Hopf-PBW-deformations-eq-1-1}
\mathcal{F}_0 \mathcal{E}_0 = [-s]_m I_{n_0}.
\end{eqnarray}
It follows from $F M_{q^{s}} = 0$ and $(\ref{pbw-comm-formula-5})$ that
 \begin{eqnarray}\label{classification-of-primitive-representations-of-Hopf-PBW-deformations-eq-1-0}
F E^{j+1} M_{q^{s}} = \left(E^{j+1} F - [j+1]_m E^j [K;j]_m\right)M_{q^{s}} \subseteq E^j M_{q^{s}}.
\end{eqnarray}
 We claim that $\mathcal{F}_0 \mathcal{E}_0 \neq 0$. Otherwise,
if $\mathcal{E}_0 = 0$, then $M_{q^{s}}$ is a nontrivial submodule of $M$;
  when $\mathcal{E}_0 \neq 0$, it can be seen from $\mathcal{F}_0 \mathcal{E}_0 = 0$ and $(\ref{classification-of-primitive-representations-of-Hopf-PBW-deformations-eq-1-0})$ that
  ${\mathop\bigoplus\limits_{j=0}^{l-1}} E^{j+1} M_{q^{s}}$ is a nontrivial submodule of $M$.
 Now $(\ref{classification-of-primitive-representations-of-Hopf-PBW-deformations-eq-1-1})$ implies that $n_0 \leq n_1$. If $n_0 < n_1$, then ${\rm Im}\mathcal{E}_0 = E M_{q^{s}}$ is a nonzero proper subspace of $M_{q^{s+2}}$.
  It is easy to check that $N = M_{q^{s}} \oplus {\mathop\bigoplus\limits_{j=0}^{l-1}} E^{j+1} M_{q^{s}}$ is a nontrivial submodule of $M$ by
  $(\ref{classification-of-primitive-representations-of-Hopf-PBW-deformations-eq-1-0})$,
  which is in contradiction with the simplicity of $M$. Hence $n_0 = n_1$.

Now assume that $n_0 = n_1 = \cdots = n_{i}$, we will verify that $n_i = n_{i+1}$.
In this case, we can also derive $\mathcal{F}_i \mathcal{E}_i = a_i I_{n_i}$ similarly as $(\ref{the-matrix-corresponding-to-E-F-are-invert-eq})$ and prove that
\begin{eqnarray*}\label{}
N = \left\{
 \begin{array}{ll}
{\mathop\bigoplus\limits_{j=0}^{i}} M_{q^{s+2j}}, & {\rm when}~\mathcal{E}_i = 0,\\
{\mathop\bigoplus\limits_{j=0}^{l-i-1}} E^{j+1} M_{q^{s+2i}}, & {\rm when}~\mathcal{F}_i \mathcal{E}_i = 0 {\rm ~and~} \mathcal{E}_i \neq 0,\\
{\mathop\bigoplus\limits_{j=0}^{i}} M_{q^{s+2j}} \oplus {\mathop\bigoplus\limits_{j=0}^{l-i-1}} E^{j+1} M_{q^{s+2i}}, & {\rm when}~\mathcal{F}_i \mathcal{E}_i \neq 0 {\rm ~and~} n_i < n_{i+1}
\end{array}
\right.
\end{eqnarray*}
 is a nontrivial submodule of $M$. Then we can show $\mathcal{F}_i \mathcal{E}_i \neq 0$ and $n_i = n_{i+1}$ in a similar way as the case $i=0$.

${\rm (2)}$ Firstly, it is easy to check that $L_{\rm \bf p}(n,+)$ is a module of $U_{q}(\mathfrak{sl}^{*}_2,\kappa)$.

Next we will prove that $L_{\rm \bf p}(n,+)$ is primitive.
Otherwise, the length $t$ of $L_{\rm \bf p}(n,+)$ is at least 2, i.e., there exists a composition series of $L_{\rm \bf p}(n,+)$ as follows
\begin{eqnarray*}\label{}
0=L_0 \subset L_1 \subset L_2 \subset \cdots \subset L_t = L_{\rm \bf p}(n,+).
\end{eqnarray*}
Since $L_1$ is simple and $\Lambda_{L_1} \subseteq \Lambda_{L_{\rm \bf p}(n,+)}$, it follows from Proposition \ref{reps-are-all-weight-reps-for-classical-quantum-groups} and \ref{fd-indecom-weight-reps-are-q2-chain-reps} that $L_1$ is primitive.
Then by ${\rm (1)}$ and Proposition \ref{quasi-primitive-rep-before-classification-of-primitive-representations-of-Hopf-PBW-deformations} one obtains
$\Lambda_{L_1} = \{q^{-l_1}, q^{-l_1+2}, \cdots, q^{l_1-2}, q^{l_1}\},$
where $l_1 = {\rm dim}L_1 -1 < n$. Therefore, $L_1 = {\mathop\bigoplus\limits_{i=\frac{n-l_1}{2}}^{\frac{n+l_1}{2}}} \mathbb{C} w_i$.
However, the formulas in $(\ref{necessary-conditions-in-simple-primitive-rep-of-Hopf-PBW-deformations})$ show that $L_1$ is not a submodule of $L_{\rm \bf p}(n,+)$, which is a contradiction.

Finally, we will verify that $M$ is isomorphic to $L_{\rm \bf p}(n,+)$.
By ${\rm (1)}$, Proposition \ref{quasi-primitive-rep-before-classification-of-primitive-representations-of-Hopf-PBW-deformations} and Proposition \ref{q-2-rep-explicit-expression}, $M$ can be presented by
 \begin{eqnarray*}
\left\{
 \begin{array}{ll}
K v_i = q^{2i-n} v_i,\\
E v_i = \left\{
 \begin{array}{ll}
\mathcal{E}_i v_{i+1}, & {\rm if}~i < n,\\
0, & {\rm if}~i = n,
\end{array}
\right.\\
F v_i = \left\{
 \begin{array}{ll}
\mathcal{F}_{i-1}v_{i-1}, & {\rm if}~i > 0,\\
0, & {\rm if}~i = 0,
\end{array}
\right.
\end{array}
\right.
\end{eqnarray*}
where $v_0,v_1,\cdots,v_n$ is a basis of $M$ and $\mathcal{E}_i,\mathcal{F}_{i} \in \mathbb{C}~(0 \leq i \leq n-1)$ satisfy
$\mathcal{E}_i \mathcal{F}_{i} = [n-i]_m [i+1]_m.$
Since the following set of linear equations
\begin{eqnarray*}\label{}
 \left\{
 \begin{array}{ll}
\mathcal{E}_i \lambda_{i+1} = [n-i]_m [i+1]_m \lambda_i~(0\leq i\leq n-1),\\
\mathcal{F}_{i} \lambda_{i} = \lambda_{i+1}~(0\leq i\leq n-1)
\end{array}
\right.
\end{eqnarray*}
has a nonzero solution $(\lambda_0, \lambda_1, \cdots, \lambda_n)$ with all $\lambda_i \neq 0$, then it is easy to check that the map $M \xrightarrow{\phi} L_{\rm \bf p}(n,+)$ defined by $\phi(v_i) = \lambda_i w_i$
is an isomorphism of $U_{q}(\mathfrak{sl}^{*}_2,\kappa)$-modules.

${\rm (3)}$ Noting that $(\ref{the-matrix-corresponding-to-E-F-are-invert-eq})$ and $(\ref{quasi-primitive-rep-before-classification-of-primitive-representations-of-Hopf-PBW-deformations-eq1})$ both hold for $M$ and $L_{\rm \bf p}(n,+)$, and $\Lambda_M = \Lambda_{L_{\rm \bf p}(n,+)}$,
we can deduce that $C_q$ acts on $M$ by the same scalar $c_q(n)$ as on $L_{\rm \bf p}(n,+)$ by direct calculations.
\end{proof}

The word ``primitive" in Definition \ref{primitive-rep-of-new-type-Hopf-PBW-quantum-group} is also proper, because we will soon prove the following two facts:
${\rm (1)}$ Each simple object in $U_{q}(\mathfrak{sl}^{*}_2,\kappa)\mbox{-}{\rm \bf mod}$ can be obtained by applying the transitive functor $\Upsilon^{\lambda_0}_\omega: \mathcal{O}^\kappa_{\lambda_0} \longrightarrow \mathcal{O}^\kappa_{\lambda_0 \omega}$ on some primitive one;
${\rm (2)}$ The category $U_{q}(\mathfrak{sl}^{*}_2,\kappa)\mbox{-}{\rm \bf mod}$ is semisimple.

\begin{thm}\label{primitive-representations-of-Hopf-PBW-deformations-is-proper}
${\rm (1)}$ For any simple $U_{q}(\mathfrak{sl}^{*}_2,\kappa)$-module $L$ of dimension $n+1$, there exists a unique pair $(\lambda_0,\omega)$ such that
$L \cong \Upsilon^{\lambda_0}_\omega (L_{\rm \bf p}(n,+))$, where $\omega^{2m} = 1$ and
$\lambda_0 =
\left\{
 \begin{array}{ll}
1, & {\rm if}~n~{\rm is~even},\\
q, & {\rm if}~n~{\rm is~odd}.
\end{array}
\right.$\\
${\rm (2)}$ Each simple $U_{q}(\mathfrak{sl}^{*}_2,\kappa)$-module of dimension $n+1$ is isomorphic to a $U_{q}(\mathfrak{sl}^{*}_2,\kappa)$-module $L(n,\omega)$ with basis $v_0,v_1,\cdots,v_n$ and $\omega^{2m}=1$ such that for all $0 \leq i \leq n$
 \begin{eqnarray}\label{all-simple-reps-of-Hopf-PBW-deformations}
\left\{
 \begin{array}{ll}
K v_i = \omega q^{2i-n} v_i,\\
E v_i = \left\{
 \begin{array}{ll}
\omega^m [n-i]_m [i+1]_m v_{i+1}, & {\rm if}~i < n,\\
0, & {\rm if}~i = n,
\end{array}
\right.\\
F v_i = \left\{
 \begin{array}{ll}
v_{i-1}, & {\rm if}~i > 0,\\
0, & {\rm if}~i = 0.
\end{array}
\right.
\end{array}
\right.
\end{eqnarray}
\end{thm}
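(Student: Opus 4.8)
The plan is to establish part (2) directly by a lowest-weight analysis, and then to read off part (1) as an immediate consequence.

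First I would set up the lowest-weight vector. Let $L$ be simple of dimension $n+1$. By Proposition \ref{reps-are-all-weight-reps-for-classical-quantum-groups} it is a weight module, and being simple it is indecomposable, hence a $q^2$-chain module by Proposition \ref{fd-indecom-weight-reps-are-q2-chain-reps}; write its weight set as $\{\mu, q^2\mu,\dots,q^{2l}\mu\}$ and let $v_0\neq 0$ lie in the bottom weight space $L_\mu$. Since $F$ lowers weights and $L_{q^{-2}\mu}=0$, we get $Fv_0=0$. Because $L$ is simple, $L=U_q(\mathfrak{sl}^{*}_2,\kappa)v_0$; using the PBW basis $\{E^iF^jK^k\}$ (available since $U_q(\mathfrak{sl}^{*}_2,\kappa)$ is a PBW-deformation) together with $Fv_0=0$ and $Kv_0=\mu v_0$, this forces $L=\mathrm{span}\{E^iv_0 : i\geq 0\}$. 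As the nonzero $E^iv_0$ lie in distinct weight spaces they are linearly independent, so $\{E^iv_0\}_{0\le i\le n}$ is a basis, $E^{n+1}v_0=0$, and the weight of $E^iv_0$ is $q^{2i}\mu$.

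Next I would pin down $\mu$ and the full action via the commutation identity $FE^{r}-E^{r}F=-[r]_m E^{r-1}[K;r-1]_m$ from $(\ref{pbw-comm-formula-5})$. Applying it with $r=n+1$ to $v_0$ and using $Fv_0=0$, $E^{n+1}v_0=0$ yields $[n+1]_m\big([K;n]_m v_0\big)=0$ up to a nonzero scalar; since $[n+1]_m\neq 0$, the $[K;n]_m$-eigenvalue on $v_0$ vanishes, i.e. $q^{nm}\mu^m=q^{-nm}\mu^{-m}$, so $(q^n\mu)^{2m}=1$ and $\mu=q^{-n}\omega$ with $\omega^{2m}=1$ (note $\omega^m=\omega^{-m}$). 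Applying $(\ref{pbw-comm-formula-5})$ for general $r$ then gives $FE^iv_0=\omega^m[i]_m[n+1-i]_m\,E^{i-1}v_0$, and rescaling the basis as $v_i:=c_iE^iv_0$ with suitable nonzero constants $c_i$ (possible because every $[j]_m\neq 0$) brings the action of $K,E,F$ into exactly the form $(\ref{all-simple-reps-of-Hopf-PBW-deformations})$; this proves $L\cong L(n,\omega)$, which is part (2). For part (1), the weights $q^{2i-n}$ of $L_{\rm \bf p}(n,+)$ show that $L_{\rm \bf p}(n,+)\in\mathcal{O}^{\kappa}_{\lambda_0}$ with $\lambda_0=1$ if $n$ is even and $\lambda_0=q$ if $n$ is odd; a direct comparison of defining formulas gives $L_{\rm \bf p}(n,+)^{\sigma_\omega}\cong L(n,\omega)$, i.e. $\Upsilon^{\lambda_0}_\omega\big(L_{\rm \bf p}(n,+)\big)\cong L(n,\omega)\cong L$ by Definition \ref{defs-of-translation-and-transitive-functors}\,(2). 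Uniqueness is clear: $\lambda_0$ is forced by the parity of $\dim L-1$, and $\omega$ is recovered as $q^{-n}$ times the highest weight of $L$, hence is the unique element with $\omega^{2m}=1$ doing the job.

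The only delicate point is the bookkeeping in the commutation computation: carrying the twist $\omega^m$ and the substitution $q\mapsto q^m$ through $(\ref{pbw-comm-formula-5})$ so that the coefficients land as $\omega^m[n-i]_m[i+1]_m$ on $E$ and $1$ on $F$ exactly as in $(\ref{all-simple-reps-of-Hopf-PBW-deformations})$, and checking that the rescaling $c_i$ is consistent with both the $E$- and $F$-relations. This is the familiar $\mathfrak{sl}_2$-type argument and raises no conceptual difficulty. (Alternatively one could twist $L$ into a principal block by the inverse of $\Upsilon^{\lambda_0}_\omega$ and quote Theorem \ref{classification-of-primitive-representations-of-Hopf-PBW-deformations}, but the direct lowest-weight argument above is cleaner and self-contained.)
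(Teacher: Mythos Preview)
Your argument is correct, but it takes a genuinely different route from the paper. The paper proves (1) first, using the block machinery: by Proposition~\ref{indecomposable-representation-in-unique block} the simple $L$ sits in a unique block $\mathcal{O}^\kappa_{\lambda_0\omega}$, and applying the inverse of $\Upsilon^{\lambda_0}_\omega$ (Theorem~\ref{equivalence-functors-translation-transitive-functors}\,(2)) lands it in the principal block as a primitive module, whereupon Theorem~\ref{classification-of-primitive-representations-of-Hopf-PBW-deformations}\,(2) identifies it with $L_{\rm \bf p}(n,+)$; part (2) then falls out by unwinding the twist. You instead run the classical lowest-weight analysis directly on $L$ --- exactly the alternative you mention parenthetically at the end --- and deduce (1) from (2). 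Your approach is more self-contained and bypasses the substantial Theorem~\ref{classification-of-primitive-representations-of-Hopf-PBW-deformations} (in particular the delicate induction in Proposition~\ref{quasi-primitive-rep-before-classification-of-primitive-representations-of-Hopf-PBW-deformations}); the paper's approach, by contrast, is designed precisely to showcase that the primitive-object and block formalism recovers the $U_q(\mathfrak{sl}_2)$ representation theory without the standard lowest-weight argument, so from the paper's point of view your shortcut is the ``old'' proof it is reproving by other means.
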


\begin{proof}
${\rm (1)}$ As Definition $\ref{defs-of-translation-and-transitive-functors}$ ${\rm (2)}$, we choose
$[\Lambda_{\kappa} / \overset{q^2}\sim] = \left\{\omega, q \omega | \omega^{2m} =1 \right\}.$
By Proposition $\ref{indecomposable-representation-in-unique block}$, there exists a unique $\lambda_0 \omega \in [\Lambda_{\kappa} / \overset{q^2}\sim]$ such that
$L$ lies in the block $\mathcal{O}_{\lambda_0 \omega}^{\kappa}$, where $\lambda_0 \in \{1,q\}$. By Theorem $\ref{equivalence-functors-translation-transitive-functors}~(2)$,
we know that $(\Upsilon^{\lambda_0}_{\omega})^{-1}L$ is a primitive module in the principal block $\mathcal{O}^{\kappa}_{\lambda_0}$.
It follows from Theorem $\ref{classification-of-primitive-representations-of-Hopf-PBW-deformations}$ ${\rm (2)}$ that $(\Upsilon^{\lambda_0}_{\omega})^{-1}L \cong L_{\rm \bf p}(n,+)$.
Hence $L \cong \Upsilon^{\lambda_0}_\omega (L_{\rm \bf p}(n,+))$. The uniqueness of $(\lambda_0,\omega)$ arises from the uniqueness of $\lambda_0 \omega$.

${\rm (2)}$ The definition of $\Upsilon^{\lambda_0}_\omega$ implies that $L(n,\omega) = \Upsilon^{\lambda_0}_\omega (L_{\rm \bf p}(n,+))$. Hence ${\rm (2)}$ follows from ${\rm (1)}$.
\end{proof}

\begin{cor}\label{cor-classification-of-primitive-representations-of-Hopf-PBW-deformations}
${\rm (1)}$ Each simple $U_{q}(\mathfrak{sl}^{*}_2,\kappa)$-module of dimension $n+1$ in the even (resp. odd) principle block $\mathcal{O}_1^{\kappa}$ (resp. $\mathcal{O}_q^{\kappa}$) is isomorphic to the module $L(n,1)$ with $n$ even (resp. odd).\\
${\rm (2)}$ For a given $\lambda_0 \in \{1,q\}$, let $L$ and $L^\prime$ be finite dimensional simple $U_{q}(\mathfrak{sl}^{*}_2,\kappa)$-modules in the principle block $\mathcal{O}_{\lambda_0}^{\kappa}$.
 If $C_q$ acts on $L$ by the same scalar as on $L^\prime$, then $L$ is isomorphic to $L^\prime$.
\end{cor}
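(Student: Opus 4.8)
The plan is to obtain both parts as quick consequences of the classification of finite-dimensional simple $U_{q}(\mathfrak{sl}^{*}_2,\kappa)$-modules in Theorem~\ref{primitive-representations-of-Hopf-PBW-deformations-is-proper} together with the Casimir computation in Theorem~\ref{classification-of-primitive-representations-of-Hopf-PBW-deformations}~(3); the only arithmetic inputs are that $q$ is not a root of unity and that $m\neq 0$ for a nontrivial $\kappa$.

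For~(1), let $L$ be a simple module of dimension $n+1$ lying in $\mathcal{O}^{\kappa}_{\lambda_0}$ with $\lambda_0\in\{1,q\}$. By Theorem~\ref{primitive-representations-of-Hopf-PBW-deformations-is-proper}~(2) we have $L\cong L(n,\omega)$ for some $\omega$ with $\omega^{2m}=1$, and by~$(\ref{all-simple-reps-of-Hopf-PBW-deformations})$ the weight set of $L$ equals $\{\omega q^{2i-n}\mid 0\leq i\leq n\}$. Since $L\in\mathcal{O}^{\kappa}_{\lambda_0}$ forces $\Lambda_L\subseteq[\lambda_0]$, and all the weights of $L$ are $q^2$-linked to one another, it suffices that $\omega q^{-n}\overset{q^2}\sim\lambda_0$. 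First I would take $\lambda_0=1$: then $\omega q^{-n}=q^{2k}$ for some $k\in\mathbb{Z}$, so $\omega=q^{n+2k}$; since $q$ is not a root of unity and $m\neq 0$, the identity $\omega^{2m}=1$ forces $n+2k=0$, whence $n$ is even and $\omega=q^{0}=1$, i.e. $L\cong L(n,1)$ with $n$ even. The case $\lambda_0=q$ runs identically with $\omega q^{-n}=q^{2k+1}$, which forces $n$ odd and again $\omega=1$, so $L\cong L(n,1)$ with $n$ odd. (One also checks directly from~$(\ref{all-simple-reps-of-Hopf-PBW-deformations})$ that, conversely, $L(n,1)$ lies in $\mathcal{O}^{\kappa}_1$ when $n$ is even and in $\mathcal{O}^{\kappa}_q$ when $n$ is odd.)

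For~(2), fix $\lambda_0\in\{1,q\}$ and simple modules $L,L'$ in $\mathcal{O}^{\kappa}_{\lambda_0}$ on which $C_q$ acts by the same scalar. By part~(1), $L\cong L(n,1)$ and $L'\cong L(n',1)$ for some $n,n'\geq 0$ (both even if $\lambda_0=1$, both odd if $\lambda_0=q$), and since $L(n,1)\cong L_{\rm \bf p}(n,+)$, Theorem~\ref{classification-of-primitive-representations-of-Hopf-PBW-deformations}~(3) shows that $C_q$ acts on them by $c_q(n)$ and $c_q(n')$ respectively. Hence $c_q(n)=c_q(n')$, that is
\[
q^{(n+1)m}+q^{-(n+1)m}=q^{(n'+1)m}+q^{-(n'+1)m},
\]
which rearranges to $\bigl(q^{(n+1)m}-q^{(n'+1)m}\bigr)\bigl(q^{(n+n'+2)m}-1\bigr)=0$. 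Since $q$ is not a root of unity and $m\neq 0$, this gives $(n+1)m=(n'+1)m$ or $(n+n'+2)m=0$, i.e. $n=n'$ or $n+n'=-2$; the latter is impossible because $n,n'\geq 0$. Therefore $n=n'$ and $L\cong L'$.

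I do not expect a genuine obstacle: the proof is short and entirely formal once Theorems~\ref{primitive-representations-of-Hopf-PBW-deformations-is-proper} and~\ref{classification-of-primitive-representations-of-Hopf-PBW-deformations} are in hand. The only step that needs a little care is the bookkeeping in~(1) — determining that the root of unity $\omega$ must be $1$ and pinning down the parity of $n$ attached to each principal block — which is precisely where the hypotheses $\omega^{2m}=1$ and $q$ not a root of unity are used, and, similarly, discarding the spurious solution $n+n'=-2$ in~(2) via the non-negativity of dimensions.
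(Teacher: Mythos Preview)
Your proof is correct and follows essentially the same route as the paper's: part~(1) is deduced from Theorem~\ref{primitive-representations-of-Hopf-PBW-deformations-is-proper} (the paper simply says it ``directly follows,'' whereas you spell out the weight-set argument pinning down $\omega=1$ and the parity of $n$), and part~(2) matches the paper's argument almost verbatim, reducing via~(1) to $L_{\rm\bf p}(n,+)$, invoking Theorem~\ref{classification-of-primitive-representations-of-Hopf-PBW-deformations}~(3), and factoring $c_q(n)=c_q(n')$ to force $n=n'$.
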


\begin{proof}
${\rm (1)}$ It directly follows from Theorem $\ref{primitive-representations-of-Hopf-PBW-deformations-is-proper}$ ${\rm (2)}$.

${\rm (2)}$ Suppose that ${\rm dim}L = n+1$ and ${\rm dim}L^\prime = n^\prime+1$, then by ${\rm (1)}$ one has
$L \cong L(n,1) = L_{\rm \bf p}(n,+)$ and $L^\prime \cong L(n^\prime,1) = L_{\rm \bf p}(n^\prime,+).$
By Theorem $\ref{classification-of-primitive-representations-of-Hopf-PBW-deformations}$ ${\rm (3)}$,
$C_q$ acts on $L$ (resp. $L^\prime$) by the same scalar $c_q(n)$ (resp. $c_q(n^\prime)$) as on $L_{\rm \bf p}(n,+)$ (resp. $L_{\rm \bf p}(n^\prime,+)$).
If $C_q$ acts on $L$ by the same scalar as on $L^\prime$, then $c_q(n) = c_q(n^\prime)$. By direct calculations, $c_q(n) = c_q(n^\prime)$ if and only if
\begin{eqnarray*}\label{}
q^{-(n+1)m}(q^{(n + n^\prime +2)m} - 1)(q^{(n-n^\prime)m}-1)=0,
\end{eqnarray*}
which is equivalent to say $n=n^\prime$. Therefore, $L \cong L(n,1) \cong L^\prime$.
\end{proof}

\begin{thm}\label{new-proof-of-semisimplicity-of-fd-representations-of-Hopf-PBW-deformations}
Each finite dimensional $U_{q}(\mathfrak{sl}^{*}_2,\kappa)$-module is semisimple.
\end{thm}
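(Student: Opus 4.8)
Throughout recall that $\kappa\neq 0$ in this subsection, so by Proposition \ref{reps-are-all-weight-reps-for-classical-quantum-groups} every finite-dimensional $U_{q}(\mathfrak{sl}^{*}_2,\kappa)$-module is automatically a weight module, and the whole structure theory of Section \ref{section-3} is available. The plan is to prove that every finite-dimensional \emph{indecomposable} module is already simple; the Krull--Schmidt theorem (Theorem \ref{unique-decomposition-theorem-of-finite-dimensional-reps-of-U}) then immediately upgrades this to semisimplicity of an arbitrary finite-dimensional module.

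So let $M$ be a finite-dimensional indecomposable $U_{q}(\mathfrak{sl}^{*}_2,\kappa)$-module. First I would locate it: by Proposition \ref{fd-indecom-weight-reps-are-q2-chain-reps} it is a $q^2$-chain module, and by Proposition \ref{indecomposable-representation-in-unique block} it lies in a single block $\mathcal{O}^{\kappa}_\mu$. Applying the isomorphism of categories onto a principal block provided by Corollary \ref{reduction-to-principle-blocks} (equivalently Theorem \ref{equivalence-functors-translation-transitive-functors}(2)), and noting that an isomorphism of categories preserves indecomposability and semisimplicity and merely relabels the weight spaces, I may assume $M\in\mathcal{O}^{\kappa}_{\lambda_0}$ with $\lambda_0\in\{1,q\}$. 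Then I would bring in the central Casimir element $C_q$, which (as recalled above) generates the center of $U_{q}(\mathfrak{sl}^{*}_2,\kappa)$: it acts on $M$ as a module endomorphism, so the generalized-eigenspace decomposition $M=\bigoplus_{c} M^{(c)}$ is a direct-sum decomposition by submodules, and indecomposability of $M$ forces a single generalized eigenvalue $c$. Consequently every composition factor of $M$ is a simple module lying in $\mathcal{O}^{\kappa}_{\lambda_0}$ on which $C_q$ acts by the scalar $c$, and by Corollary \ref{cor-classification-of-primitive-representations-of-Hopf-PBW-deformations}(2) all of them are isomorphic to one fixed simple module $S$, which by Corollary \ref{cor-classification-of-primitive-representations-of-Hopf-PBW-deformations}(1) has the form $L(d,1)$ for a suitable $d$ of parity matching $\lambda_0$. (If no simple module in $\mathcal{O}^{\kappa}_{\lambda_0}$ has Casimir scalar $c$, then $M=0$ and there is nothing to prove.)

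The final step is a dimension count. Since $S=L(d,1)$ has all its weight spaces one-dimensional and its weight set a $q^2$-chain (Theorem \ref{classification-of-primitive-representations-of-Hopf-PBW-deformations}(1)), and since dimensions of weight spaces are additive along short exact sequences, the iterated self-extension $M$ of $S$ has weight set equal to $\Lambda_S$ and all its weight spaces of the same dimension $t$, where $t$ is the composition length of $M$; in particular $M$ is a quasi-primitive module all of whose weight spaces have equal dimension. Proposition \ref{quasi-primitive-rep-before-classification-of-primitive-representations-of-Hopf-PBW-deformations}, applied to the indecomposable module $M$, now forces $t=1$, that is $M\cong S$ is simple. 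This shows every finite-dimensional indecomposable module is simple, and hence completes the proof.

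The real weight of the argument sits in the ingredients being invoked: the classification of primitive (= simple quasi-primitive) modules, the separation of simple modules inside a fixed block by the scalar action of $C_q$ (Corollary \ref{cor-classification-of-primitive-representations-of-Hopf-PBW-deformations}), and above all Proposition \ref{quasi-primitive-rep-before-classification-of-primitive-representations-of-Hopf-PBW-deformations}, whose proof manufactures explicit proper submodules to exclude ``thick'' indecomposables. Within the present assembly the two delicate points are: (i) checking that transporting $M$ into a principal block really yields a module whose weight set is of the shape $\{q^{s+2i}\mid 0\le i\le l\}$ required by the definition of a quasi-primitive module, so that Proposition \ref{quasi-primitive-rep-before-classification-of-primitive-representations-of-Hopf-PBW-deformations} applies verbatim; and (ii) confirming that all composition factors of $M$, being subquotients, lie in the same block and share the single generalized $C_q$-eigenvalue, which is exactly what makes Corollary \ref{cor-classification-of-primitive-representations-of-Hopf-PBW-deformations}(2) applicable to pin them down up to isomorphism.
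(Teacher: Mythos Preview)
Your proof is correct and follows essentially the same route as the paper's own argument: reduce via Theorem \ref{equivalence-functors-translation-transitive-functors}(2) to an indecomposable $M$ in a principal block, use the generalized eigenspace decomposition for $C_q$ and Corollary \ref{cor-classification-of-primitive-representations-of-Hopf-PBW-deformations} to force all composition factors to be isomorphic to a single $L(d,1)$, deduce that all weight spaces of $M$ have the common dimension equal to the length, and finish with Proposition \ref{quasi-primitive-rep-before-classification-of-primitive-representations-of-Hopf-PBW-deformations}. The two delicate points you flag are exactly the ones the paper handles (the first amounts to observing that Proposition \ref{quasi-primitive-rep-before-classification-of-primitive-representations-of-Hopf-PBW-deformations} is stated and proved for arbitrary integer $s$, and the second is the paper's appeal to Schur's lemma together with Proposition \ref{properties-of-weight-rep-cat}).
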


\begin{proof}
By Proposition $\ref{reps-are-all-weight-reps-for-classical-quantum-groups}$, Theorem $\ref{unique-decomposition-theorem-of-finite-dimensional-reps-of-U}$ and Theorem $\ref{equivalence-functors-translation-transitive-functors}$ ${\rm (2)}$, we only need to show that
 each indecomposable $U_{q}(\mathfrak{sl}^{*}_2,\kappa)$-module $M$ in the principle blocks $\mathcal{O}^\kappa_1$ and $\mathcal{O}^\kappa_q$ is simple, i.e., the length $l(M)$ of $M$ is 1.

Assume that
$g(x) = (x - \mu_1)^{r_1} (x - \mu_2)^{r_2} \cdots (x - \mu_s)^{r_s}$
is the characteristic polynomial of $C_q$ acting on $M$. Then $M$ is the direct sum of the generalized eigenspaces for $C_q$, i.e.,
$M = {\mathop\bigoplus\limits_{k=1}^{s}} M^{\mu_i},$
where $M^{\mu_i} = \left\{ v\in M | (C_q - \mu_i)^{r_i}v = 0\right\}$. Since $C_q$ is central in $U_{q}(\mathfrak{sl}^{*}_2,\kappa)$, each $M^{\mu_i}$ is a submodule of $M$.
Hence $M = M^{\mu} = \left\{ v\in M | (C_q - \mu)^{r}v = 0\right\}$ for some $\mu$ because $M$ is indecomposable.

Suppose that $l(M) = l$. Pick a composition series
\begin{eqnarray}\label{composition-series-of-semisimple-proof-of-module}
0 = M_0 \subset M_1 \subset M_2 \subset \cdots \subset M_l = M
\end{eqnarray}
of $M$. Since $M = M^{\mu}$, then $C_q - \mu$ acts nilpotently on each $M_i / M_{i-1}(1 \leq i \leq l)$. On the other hand, by Schur lemma $C_q$ acts by a scalar $\nu_i$ on $M_i / M_{i-1}$.
Hence for all $1 \leq i \leq l$ one has $\nu_i = \mu$.
Moreover, it can be seen from the proof of Proposition $\ref{properties-of-weight-rep-cat}$ that each $M_i / M_{i-1}(1 \leq i \leq l)$
is in the same principle block as $M$. Therefore, by Corollary $\ref{cor-classification-of-primitive-representations-of-Hopf-PBW-deformations}$ there exists an integer $n_0 \geq 0$ such that each $M_i / M_{i-1}(1 \leq i \leq l)$ is isomorphic to $L(n_0,1)$.

Let $N$ be a submodule of $M$. Since ${\rm dim}M_\nu = {\rm dim}N_\nu + {\rm dim}(M/N)_\nu$ for any $\nu \in \Lambda_M$,
 then we apply this repeatedly to the composition series $(\ref{composition-series-of-semisimple-proof-of-module})$ and obtain
 \begin{eqnarray*}\label{}
{\rm dim}M_\nu = {\mathop\sum\limits_{i=1}^l} {\rm dim}(M_i / M_{i-1})_\nu = l {\rm dim}L(n_0,1)_\nu = l
\end{eqnarray*}
 for any $\nu \in \Lambda_M$. It follows from Proposition $\ref{quasi-primitive-rep-before-classification-of-primitive-representations-of-Hopf-PBW-deformations}$
that the dimensions of all the weight spaces of $M$ are equal to $1$, i.e., $l =1$. Therefore, $M$ is simple.
\end{proof}

\section{Tensor-categorical realizations of the principle blocks of $U_{q}(\mathfrak{sl}^{*}_2,\kappa)\mbox{-}{\rm \bf mod}_{\rm wt}$}\label{section-5}

In this section, we will introduce some categories attached to certain sequences of (deformed) preprojective algebras of Dynkin type $\A$, then make use
of them to realize the principle block(s) of the category $U_{q}(\mathfrak{sl}^{*}_2,\kappa)\mbox{-}{\rm \bf mod}_{\rm wt}$ on the level of tensor category.

\subsection{The category of finite dimensional representations of deformed preprojective algebras}\label{section-5-1}

Let $(Q_\infty,I_{\infty})$ be the following infinite bound quiver with
\begin{center}
\begin{tikzpicture}[scale=0.6]
  \tikzstyle{every node}=[draw,thick,circle,fill=white,minimum size=5pt, inner sep=1pt]
  \node (-3) at (-2.5,3)  {};
  \node (-2) at (0.5,3)  {};
   \node (-1) at (2,3)  {};
    \node (0) at (5,3)  {};
    \node (1) at (8,3) {};
    \node (2) at (9.5,3) {};
    \node (3) at (12.55,3)  {};
    \node (d) at (16.2,2.8) [minimum size=0pt,inner sep=0pt,label=right:] {};

    \node (v0) at (-6,3) [minimum size=0pt,inner sep=0pt,label=right:$Q_\infty:$] {};
     \node (v-1) at (-3,2.4) [minimum size=0pt,inner sep=0pt,label=right:$-m$] {};
    \node (v-2) at (-0.3,2.4) [minimum size=0pt,inner sep=0pt,label=right:$1-m$] {};
    \node (v1) at (1.65,2.4) [minimum size=0pt,inner sep=0pt,label=right:$-1$] {};
    \node (v2) at (4.65,2.4) [minimum size=0pt,inner sep=0pt,label=right:$0$] {};
    \node (v3) at (7.65,2.4) [minimum size=0pt,inner sep=0pt,label=right:$1$] {};
    \node (v4) at (8.5,2.4) [minimum size=0pt,inner sep=0pt,label=right:$m-1$] {};
    \node (v5) at (12,2.4) [minimum size=0pt,inner sep=0pt,label=right:$m$] {};

 \node (A0) at (-1.7,3.45) [minimum size=0pt,inner sep=0pt,label=right:$\alpha^*_{-m}$] {};
    \node (A01) at (-1.7,2.5) [minimum size=0pt,inner sep=0pt,label=right:$\alpha_{-m}$] {};
    \node (A1) at (3.2,3.45) [minimum size=0pt,inner sep=0pt,label=right:$\alpha^*_{-1}$] {};
    \node (A11) at (3.2,2.5) [minimum size=0pt,inner sep=0pt,label=right:$\alpha_{-1}$] {};
    \node (A2) at (6.2,3.45) [minimum size=0pt,inner sep=0pt,label=right:$\alpha^*_0$] {};
    \node (A21) at (6.2,2.5) [minimum size=0pt,inner sep=0pt,label=right:$\alpha_0$] {};
    \node (A3) at (10.2,3.45) [minimum size=0pt,inner sep=0pt,label=right:$\alpha^*_{m-1}$] {};
     \node (A31) at (10.2,2.5) [minimum size=0pt,inner sep=0pt,label=right:$\alpha_{m-1}$] {};

\draw [dotted,line width=1pt] (-3.7,3)--(-2.7,3);
\draw[->, line width=0.7pt] (-2.3,2.9) -- (0.3,2.9);
  \draw[->, line width=0.7pt] (0.3,3.1) -- (-2.3,3.1);
  \draw [dotted,line width=1pt] (1.7,3)--(0.7,3);
  \draw[->, line width=0.7pt] (2.2,2.9) -- (4.8,2.9);
  \draw[->, line width=0.7pt] (4.8,3.1) -- (2.2,3.1);
  \draw[->, line width=0.7pt] (5.2,2.9) -- (7.8,2.9);
  \draw[->, line width=0.7pt] (7.8,3.1) -- (5.2,3.1);
  \draw [dotted,line width=1pt] (8.3,3)--(9.3,3);
  \draw[->, line width=0.7pt] (9.7,2.9) -- (12.3,2.9);
  \draw[->, line width=0.7pt] (12.3,3.1) -- (9.7,3.1);
  \draw [dotted,line width=1pt] (12.8,3)--(13.8,3);
\end{tikzpicture}
\end{center}
and the set $I_{\infty}$ of relations given by
\begin{eqnarray*}\label{}
\quad \alpha_i \alpha_i^* - \alpha_{i+1}^* \alpha_{i+1} - \lambda_{i+1} s_{i+1},
\end{eqnarray*}
where $\lambda_i \in \mathbb{C}$ and $s_i$ are the trivial paths for $i \in \mathbb{Z}$.

\begin{defn}\label{definition-of-fd-reps-of-preprojective-algs-of-type-A-and-morphism}
A finite dimensional representation $V$ of the infinite bound quiver $(Q_\infty,I_{\infty})$ is defined by the following data:\\
${\rm (1)}$ To each vertex $i$ in $Q_\infty$ is associated a finite dimensional vector space $V_i$ with ${\rm dim} V_i = n_i$, and there are only finite nonzero vector spaces $V_i~(i \in \mathbb{Z})$.\\
${\rm (2)}$ To each arrow $i \xrightarrow{\alpha_i} i+1$ (resp. $i \overset{\alpha^*_i}\longleftarrow i+1$) in $Q_\infty$ is associated a linear map $V_i \xrightarrow{E^V_i} V_{i+1}$ (resp. $V_{i+1} \xrightarrow{F^V_i} V_i$), and $E^V_i, F^V_i(i\in \mathbb{Z})$ satisfy
 \begin{eqnarray}\label{relation-definition-of-the-cat-of-fd-reps-of-preprojective-algs-of-type-A}
E^V_i F^V_i - F^V_{i+1} E^V_{i+1} = \lambda_{i+1} I_{n_{i+1}}~(i \in \mathbb{Z}).
\end{eqnarray}
Such a representation is denoted by $V = (V_i, E^{V}_i, F^{V}_i)_{i\in \mathbb{Z}}$.
\end{defn}

The category of finite dimensional representations of deformed preprojective algebras of Dynkin type $\A$ is defined as follows.

\begin{defn}\label{definition-of-the-cat-of-fd-reps-of-preprojective-algs-of-type-A}
Let $\overrightarrow{\lambda} = (\lambda_i)_{i \in \mathbb{Z}}$. Define a category ${\rm \bf rep}\Pi^{\overrightarrow{\lambda}}_\infty$ as follows: \\
${\rm (1)}$ The objects in ${\rm \bf rep}\Pi^{\overrightarrow{\lambda}}_\infty$ are just all the finite dimensional representations of $(Q_\infty,I_{\infty})$.\\
${\rm (2)}$ For any two objects $V = (V_i, E^{V}_i, F^{V}_i)_{i\in \mathbb{Z}}$ and $W = (W_i, E^{W}_i, F^{W}_i)_{i\in \mathbb{Z}}$ in ${\rm \bf rep}\Pi^{\overrightarrow{\lambda}}_\infty$, one has
\begin{eqnarray*}\label{}
{\rm Hom}_{{\rm \bf rep}\Pi^{\overrightarrow{\lambda}}_\infty}(V,W) = \left\{f=(f_i)_{i\in \mathbb{Z}} \Big| E^W_i f_i = f_{i+1} E^{V}_i, F^W_i f_{i+1} = f_{i} F^{V}_i ~{\rm for}~i \in \mathbb{Z} \right\}.
\end{eqnarray*}
We call ${\rm \bf rep}\Pi^{\overrightarrow{\lambda}}_\infty$ the category of finite dimensional representations of deformed preprojective algebras of Dynkin type $\A$ with weight $\overrightarrow{\lambda}$.
In particular, when $\overrightarrow{\lambda} = 0$, we call ${\rm \bf rep}\Pi^{0}_\infty$ the category of finite dimensional representations of preprojective algebras of Dynkin type $\A$.
\end{defn}

For any two integers $k \leq l$, let $(Q_{[k,l]},I_{[k,l]})$ be the following bound quiver with
\begin{center}
\begin{tikzpicture}[scale=0.6]
  \tikzstyle{every node}=[draw,thick,circle,fill=white,minimum size=5pt, inner sep=1pt]
   \node (1) at (2,3)  {};
    \node (2) at (5,3)  {};
    \node (3) at (8,3) {};
    \node (4) at (10,3) {};
    \node (5) at (13,3)  {};
    \node (6) at (16,3)  {};
    \node (d) at (16.2,2.8) [minimum size=0pt,inner sep=0pt,label=right:] {};

    \node (v0) at (0,3) [minimum size=0pt,inner sep=0pt,label=right:$Q_{[k,l]}:$] {};
    \node (v1) at (1.65,2.4) [minimum size=0pt,inner sep=0pt,label=right:$k$] {};
    \node (v2) at (4.3,2.4) [minimum size=0pt,inner sep=0pt,label=right:$k+1$] {};
    \node (v3) at (7.3,2.4) [minimum size=0pt,inner sep=0pt,label=right:$k+2$] {};
    \node (v4) at (8.9,2.4) [minimum size=0pt,inner sep=0pt,label=right:$l-2$] {};
    \node (v5) at (12.15,2.4) [minimum size=0pt,inner sep=0pt,label=right:$l-1$] {};
    \node (v6) at (15.65,2.4) [minimum size=0pt,inner sep=0pt,label=right:$l$] {};

    \node (A1) at (3.2,3.45) [minimum size=0pt,inner sep=0pt,label=right:$\alpha^*_k$] {};
    \node (A11) at (3.2,2.5) [minimum size=0pt,inner sep=0pt,label=right:$\alpha_k$] {};
    \node (A2) at (5.8,3.45) [minimum size=0pt,inner sep=0pt,label=right:$\alpha^*_{k+1}$] {};
    \node (A21) at (5.8,2.5) [minimum size=0pt,inner sep=0pt,label=right:$\alpha_{k+1}$] {};
    \node (A3) at (10.6,3.45) [minimum size=0pt,inner sep=0pt,label=right:$\alpha^*_{l-2}$] {};
     \node (A31) at (10.6,2.5) [minimum size=0pt,inner sep=0pt,label=right:$\alpha_{l-2}$] {};
    \node (A4) at (13.9,3.45) [minimum size=0pt,inner sep=0pt,label=right:$\alpha^*_{l-1}$] {};
 \node (A41) at (13.9,2.5) [minimum size=0pt,inner sep=0pt,label=right:$\alpha_{l-1}$] {};

  \draw[->, line width=0.7pt] (2.2,2.9) -- (4.8,2.9);
  \draw[->, line width=0.7pt] (4.8,3.1) -- (2.2,3.1);
  \draw[->, line width=0.7pt] (5.2,2.9) -- (7.8,2.9);
  \draw[->, line width=0.7pt] (7.8,3.1) -- (5.2,3.1);
  \draw [dotted,line width=1pt] (3)--(4);
  \draw[->, line width=0.7pt] (10.2,2.9) -- (12.8,2.9);
  \draw[->, line width=0.7pt] (12.8,3.1) -- (10.2,3.1);
  \draw[->, line width=0.7pt] (13.2,2.9) -- (15.8,2.9);
  \draw[->, line width=0.7pt] (15.8,3.1) -- (13.2,3.1);
\end{tikzpicture}
\end{center}
and the set $I_{[k,l]}$ of relations given by
\begin{eqnarray*}\label{}
\alpha_k^* \alpha_k - \lambda_k s_k, \quad \alpha_{i} \alpha_{i}^* - \alpha_{i+1}^* \alpha_{i+1} - \lambda_{i+1} s_{i+1}~(k \leq i \leq l-2),  \quad \alpha_{l-1} \alpha_{l-1}^* - \lambda_l s_l.
\end{eqnarray*}
The bound quiver algebra $k Q_{[k,l]} / I_{{[k,l]}}$ is just the deformed preprojective algebra $\Pi^{{\lambda}_{[k,l]}}$ of Dynkin type $\A_{l-k+1}$ introduced in \cite{WCMH605}, where ${\lambda}_{[k,l]} = (\lambda_i)_{k \leq i \leq l}$.
The algebra $\Pi^{{\lambda}_{[k,l]}}$ is finite dimensional (cf. \cite{WCMH605,DR200,GLS193}), and finitely many objects in ${\rm \bf rep}\Pi^{\overrightarrow{\lambda}}_\infty$ can be simultaneously considered as finite dimensional $\Pi^{{\lambda}_{[k,l]}}$-modules for some integers $k \leq l$. Hence we can obtain the following proposition.

\begin{prop}\label{cat-f-d-reps-local-finite-linear-abel-cat}
The category ${\rm \bf rep}\Pi^{\overrightarrow{\lambda}}_\infty$ is a locally finite $\mathbb{C}$-linear abelian category.
\end{prop}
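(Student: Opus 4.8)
The plan is to check the three ingredients of the assertion separately: that ${\rm \bf rep}\Pi^{\overrightarrow{\lambda}}_\infty$ is $\C$-linear and additive, that it has kernels and cokernels with every mono and epi normal (hence abelian), and that it is locally finite, i.e. all ${\rm Hom}$-spaces are finite dimensional and all objects have finite length; only the last point uses external input, namely that $\Pi^{\lambda_{[k,l]}}$ is a finite-dimensional algebra. For the additive structure, for objects $V=(V_i,E^V_i,F^V_i)_{i\in\Z}$ and $W=(W_i,E^W_i,F^W_i)_{i\in\Z}$ the set ${\rm Hom}_{{\rm \bf rep}\Pi^{\overrightarrow{\lambda}}_\infty}(V,W)$ is, by definition, the subset of the product $\prod_{i\in\Z}{\rm Hom}_\C(V_i,W_i)$ cut out by the $\C$-linear intertwining equations $E^W_i f_i=f_{i+1}E^V_i$ and $F^W_i f_{i+1}=f_iF^V_i$; hence it is a $\C$-subspace, composition is componentwise and $\C$-bilinear, and $(\mathrm{id}_{V_i})_{i\in\Z}$ is the identity. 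The zero object is the representation with all $V_i=0$, and a finite biproduct is formed componentwise, the structure maps $E^V_i\oplus E^W_i$, $F^V_i\oplus F^W_i$ still satisfying $(\ref{relation-definition-of-the-cat-of-fd-reps-of-preprojective-algs-of-type-A})$ since both sides are block diagonal and the support of $V\oplus W$ is the finite union of the two supports.

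Next I would construct kernels and cokernels componentwise and verify they remain in the category. For $f=(f_i)_{i\in\Z}\colon V\to W$ put $(\ker f)_i=\ker f_i$; the intertwining conditions force $E^V_i(\ker f_i)\subseteq\ker f_{i+1}$ and $F^V_i(\ker f_{i+1})\subseteq\ker f_i$, so $E^V_i,F^V_i$ restrict, and the inhomogeneous relation $(\ref{relation-definition-of-the-cat-of-fd-reps-of-preprojective-algs-of-type-A})$ restricts to $\ker f_{i+1}$ because $I_{n_{i+1}}$ restricts to the identity of that subspace. Dually, $(\mathrm{coker}\,f)_i=W_i/\mathrm{im}\,f_i$ carries the maps induced by $E^W_i,F^W_i$ (well defined since $E^W_i(\mathrm{im}\,f_i)\subseteq\mathrm{im}\,f_{i+1}$, and likewise for $F^W_i$), and $(\ref{relation-definition-of-the-cat-of-fd-reps-of-preprojective-algs-of-type-A})$ descends to the quotient; in both cases the support only shrinks. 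A standard argument then shows that $f$ is a monomorphism iff every $f_i$ is injective and an epimorphism iff every $f_i$ is surjective, so $f=\ker(\mathrm{coker}\,f)$ when $f$ is mono and $f=\mathrm{coker}(\ker f)$ when $f$ is epi, which makes ${\rm \bf rep}\Pi^{\overrightarrow{\lambda}}_\infty$ abelian.

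For local finiteness, each ${\rm Hom}$-space embeds into $\prod_i{\rm Hom}_\C(V_i,W_i)$ over the (finite) common support of $V$ and $W$, hence is finite dimensional. For finite length I would fix $V$ with support inside an interval $[k,l]$: any subobject $U\hookrightarrow V$ has $U_i\subseteq V_i$, so it is again supported in $[k,l]$, and the full subcategory of ${\rm \bf rep}\Pi^{\overrightarrow{\lambda}}_\infty$ on objects supported in $[k,l]$ is equivalent to the category of finite-dimensional $\Pi^{\lambda_{[k,l]}}$-modules, the two boundary relations of $I_{[k,l]}$ being exactly what the relations of $I_\infty$ at the vertices $k$ and $l$ impose once the adjacent vertex spaces vanish. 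Consequently the subobject lattice and the composition series of $V$ computed in ${\rm \bf rep}\Pi^{\overrightarrow{\lambda}}_\infty$ agree with those of $V$ as a $\Pi^{\lambda_{[k,l]}}$-module, and since $\Pi^{\lambda_{[k,l]}}$ is a finite-dimensional algebra of Dynkin type $\A$ (cf.\ \cite{WCMH605,DR200,GLS193}), $V$ has finite length.

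I do not expect a deep obstacle here; the whole argument is the routine fact that finite-dimensional representations of a bound quiver form a locally finite abelian category, adapted to the inhomogeneous relation $(\ref{relation-definition-of-the-cat-of-fd-reps-of-preprojective-algs-of-type-A})$. The only step requiring genuine care is the reduction in the last paragraph: matching the endpoint relations of $I_{[k,l]}$ with those forced by $I_\infty$ when a neighbouring space is zero (a short sign/indexing check on the weights at $k$ and $l$), and invoking the cited finiteness of $\Pi^{\lambda_{[k,l]}}$, which is the one external ingredient on which the finite-length conclusion rests.
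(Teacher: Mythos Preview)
Your proposal is correct and follows essentially the same approach as the paper: the paper's entire argument is the single observation that finitely many objects of ${\rm \bf rep}\Pi^{\overrightarrow{\lambda}}_\infty$ can simultaneously be regarded as finite-dimensional modules over the finite-dimensional algebra $\Pi^{\lambda_{[k,l]}}$ for a suitable interval $[k,l]$, from which all the claimed properties are inherited. You unpack this into explicit verifications of the abelian axioms and of local finiteness, but the decisive step---the reduction to $\Pi^{\lambda_{[k,l]}}$-modules and the citation of its finite-dimensionality---is the same.
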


\begin{defn}\label{chain-rep}
Let $V = \left(V_i, E^{V}_i, F^{V}_i\right)_{i\in \mathbb{Z}}$ be an object in ${\rm \bf rep}\Pi^{\overrightarrow{\lambda}}_\infty$.
If there exist two integers $k$ and $l$ with $k \leq l$ such that $V_i \neq 0$ if and only if $k \leq i \leq l$, then we call $V$ a chain representation.
\end{defn}

According to Definition \ref{chain-rep}, we can immediately obtain the following proposition.

\begin{prop}\label{indecom-reps-are-all-chain-representations}
If $V = \left(V_i, E^{V}_i, F^{V}_i\right)_{i\in \mathbb{Z}}$ is an indecomposable object in ${\rm \bf rep}\Pi^{\overrightarrow{\lambda}}_\infty$,
 then $V$ is a chain representation in ${\rm \bf rep}\Pi^{\overrightarrow{\lambda}}_\infty$.
\end{prop}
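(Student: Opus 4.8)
The plan is to adapt the standard fact that an indecomposable quiver representation has connected support. Since the underlying graph of $Q_\infty$ is the doubly infinite path on the vertex set $\mathbb{Z}$, a set of vertices is connected exactly when it is an interval, so the whole content is to show that the support $S_V = \{i \in \mathbb{Z} \mid V_i \neq 0\}$ of an indecomposable object $V = \left(V_i, E^{V}_i, F^{V}_i\right)_{i\in\mathbb{Z}}$ is an interval. First I would record that $S_V$ is finite (by the definition of a finite dimensional representation, Definition \ref{definition-of-fd-reps-of-preprojective-algs-of-type-A-and-morphism}) and nonempty (an indecomposable object is nonzero), so that $k := \min S_V$ and $l := \max S_V$ exist.

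Next, arguing by contradiction, I would assume $S_V \neq \{i \mid k \le i \le l\}$, pick an integer $j$ with $k < j < l$ and $V_j = 0$, and split $V$ across the vertex $j$: set $V^{(1)}$ to agree with $V$ on the components of index $\le j-1$ and to be $0$ elsewhere, and $V^{(2)}$ to agree with $V$ on the components of index $\ge j+1$ and to be $0$ elsewhere, the arrow maps of each summand being obtained by restricting those of $V$. The point that makes this well defined is that $V_j = 0$ forces the four maps incident to $j$, namely $E^{V}_{j-1}$, $F^{V}_{j-1}$, $E^{V}_{j}$, $F^{V}_{j}$, to vanish, so each $E^{V}_i$ and $F^{V}_i$ is automatically block diagonal with respect to $V_i = V^{(1)}_i \oplus V^{(2)}_i$, and $V = V^{(1)}\oplus V^{(2)}$ as graded vector spaces equipped with these maps.

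Then I would check that $V^{(1)}$ and $V^{(2)}$ really are objects of ${\rm \bf rep}\Pi^{\overrightarrow{\lambda}}_\infty$, i.e. that they satisfy the defining relation $(\ref{relation-definition-of-the-cat-of-fd-reps-of-preprojective-algs-of-type-A})$: for an index $i$ with $V^{(t)}_{i+1} \neq 0$ the relation for $V^{(t)}$ is simply the restriction to that summand of the relation for $V$, while for an index $i$ with $V^{(t)}_{i+1} = 0$ both sides of $(\ref{relation-definition-of-the-cat-of-fd-reps-of-preprojective-algs-of-type-A})$ are the zero endomorphism of the zero space. This gives a decomposition $V \cong V^{(1)} \oplus V^{(2)}$ in ${\rm \bf rep}\Pi^{\overrightarrow{\lambda}}_\infty$ with $V^{(1)}_k = V_k \neq 0$ and $V^{(2)}_l = V_l \neq 0$, contradicting the indecomposability of $V$. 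Hence $S_V = \{i \mid k \le i \le l\}$, and $V$ is a chain representation in the sense of Definition $\ref{chain-rep}$.

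I do not expect a genuine obstacle: this is essentially the connectedness-of-support argument for quiver representations. The only place deserving a moment's attention is the verification that the defining relations survive the splitting at the boundary indices $j-1$, $j$, $j+1$, but this is immediate once one observes that $V_j = 0$ kills every map touching the vertex $j$.
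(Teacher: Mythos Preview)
Your argument is correct and is exactly the standard connectedness-of-support argument for quiver representations; the paper itself omits the proof, treating the proposition as immediate from Definition~\ref{chain-rep}. Your verification that the relation $(\ref{relation-definition-of-the-cat-of-fd-reps-of-preprojective-algs-of-type-A})$ survives the splitting at the vertex $j$ (because $V_j=0$ forces $E^V_{j-1}=F^V_{j-1}=E^V_j=F^V_j=0$, so the boundary instances of the relation coincide with those already holding for $V$) is precisely the point that makes this ``immediate,'' and it is handled correctly.
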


\subsection{Tensor-categorical realization of the principle block of $U_{q}(\mathfrak{sl}^{*}_2)\mbox{-}{\rm \bf mod}_{\rm wt}$}\label{section-5-2}

\begin{defn}\label{definition-of-primitive-rep-of-preprojective-algebra-of-type-A}
Let $V = \left(V_i, E^{V}_i, F^{V}_i\right)_{i \in \mathbb{Z}}$ be an object in the category ${\rm \bf rep}\Pi^{0}_\infty$.
If $V$ is indecomposable and there exists an integer $m \geq 1$ such that $V_i \neq 0$ if and only if $0 \leq i \leq m-1$, then we call $V$ a primitive object.
\end{defn}

For any $k \in \mathbb{Z}$, define a shift functor as follows
\begin{eqnarray*}\label{}
[k]: {\rm \bf rep}\Pi^{0}_\infty &\longrightarrow& {\rm \bf rep}\Pi^{0}_\infty, \\
V &\longmapsto& V[k],\\
V \overset{f}\longrightarrow W &\longmapsto& V[k] \overset{f[k]}\longrightarrow W[k],
\end{eqnarray*}
where $V[k] = \left(V[k]_i, E[k]_i, F[k]_i\right)_{i \in \mathbb{Z}} = \left(V_{i-k}, E^{V}_{i-k}, F^{V}_{i-k}\right)_{i \in \mathbb{Z}}$ and $f[k] = \left( f[k]_i \right)_{i \in \mathbb{Z}} = \left( f_{i-k} \right)_{i \in \mathbb{Z}}$.

The following proposition shows that each indecomposable object in the category ${\rm \bf rep}\Pi^{0}_\infty$ can
be obtained by applying some shift functor on a primitive one.

\begin{prop}
Let $V = \left(V_i, E^V_i, F^V_i\right)_{i \in \mathbb{Z}}$ be an indecomposable object in the category ${\rm \bf rep}\Pi^{0}_\infty$. Then
there exists a primitive object $W = \left(W_j, E^W_j, F^W_j\right)_{j \in \mathbb{Z}}$ in ${\rm \bf rep}\Pi^{0}_\infty$ such that $W[-i_0] = V$, where $i_0 = {\rm min}\left\{i\in \mathbb{Z}| V_i \neq 0\right\}$.
\end{prop}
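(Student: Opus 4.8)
The plan is to deduce the proposition from two things already established: that indecomposable objects of ${\rm \bf rep}\Pi^{0}_\infty$ are chain representations, and that the shift functors are invertible. First I would apply Proposition~\ref{indecom-reps-are-all-chain-representations}: since $V$ is indecomposable, it is a chain representation, so there are integers $k \le l$ with $V_i \neq 0$ if and only if $k \le i \le l$. By the definition of $i_0$ this forces $i_0 = k$, so the support of $V$ is exactly the interval $[i_0,l]$, and in particular $l - i_0 \ge 0$.

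Next I would take $W$ to be the shift of $V$ that moves its support down to begin at the vertex $0$; with the conventions of the shift functor fixed above this is $W := V[-i_0]$, so that $W_j = V_{j+i_0}$, $E^{W}_j = E^{V}_{j+i_0}$ and $F^{W}_j = F^{V}_{j+i_0}$ for all $j$. Since the weight is $\overrightarrow{\lambda} = 0$, the defining relations of ${\rm \bf rep}\Pi^{0}_\infty$ are unchanged by re-indexing, so $W$ is again an object of ${\rm \bf rep}\Pi^{0}_\infty$; one checks directly that $E^{W}_j F^{W}_j - F^{W}_{j+1} E^{W}_{j+1} = E^{V}_{j+i_0} F^{V}_{j+i_0} - F^{V}_{j+i_0+1} E^{V}_{j+i_0+1} = 0$. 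From $W_j = V_{j+i_0}$ one reads off that $W_j \neq 0$ exactly when $0 \le j \le l - i_0$, so setting $m := l - i_0 + 1 \ge 1$ the object $W$ has precisely the support demanded of a primitive object.

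It remains to see that $W$ is indecomposable. For this I would record that each shift functor $[k]$ is an isomorphism of categories: the identities $V[a][b] = V[a+b]$ and $f[a][b] = f[a+b]$ show that $[k]$ and $[-k]$ are mutually inverse on both objects and morphisms, so in particular $[-i_0]$ is an isomorphism of categories and therefore carries indecomposables to indecomposables. Hence $W = V[-i_0]$ is indecomposable, and together with the support computation this shows $W$ is primitive in the sense of Definition~\ref{definition-of-primitive-rep-of-preprojective-algebra-of-type-A}; applying the inverse shift $[i_0]$ returns $W$ to $V$, which is the assertion of the proposition. I do not expect a real obstacle: once Proposition~\ref{indecom-reps-are-all-chain-representations} is in hand the whole argument is bookkeeping with the shift functor, and the only point worth a line of care is that $[k]$ maps ${\rm \bf rep}\Pi^{0}_\infty$ to itself — which is exactly where the hypothesis $\overrightarrow{\lambda} = 0$ enters.
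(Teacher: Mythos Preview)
Your argument is correct and is precisely the paper's approach: invoke Proposition~\ref{indecom-reps-are-all-chain-representations} to see that $V$ is a chain representation, then shift so the support begins at vertex~$0$; your added care in checking that $[-i_0]$ is an isomorphism of categories (hence preserves indecomposability) and that the relations survive the re-indexing is welcome detail that the paper leaves implicit. One cosmetic point worth flagging: with the convention $V[k]_i = V_{i-k}$ fixed above, your $W = V[-i_0]$ is primitive and satisfies $W[i_0] = V$, whereas the proposition literally asks for $W[-i_0] = V$ and the paper's proof writes $W = V[i_0]$ --- there is a sign slip in the paper here (that choice of $W$ has support $[2i_0,\,l+i_0]$, not $[0,\,l-i_0]$), and your computation is the internally consistent one.
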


\begin{proof}
By Proposition $\ref{indecom-reps-are-all-chain-representations}$, $V$ is a chain representation. Take $W = V[i_0]$, then it is easy to check that
$W$ is a primitive object such that $W[-i_0] = V$.
\end{proof}

Define a functor $\Omega: {\rm \bf rep}\Pi^{0}_\infty \longrightarrow \mathcal{O}^0_1$ as follows
\begin{eqnarray*}\label{}
\Omega: {\rm \bf rep}\Pi^{0}_\infty &\longrightarrow& \mathcal{O}^0_1,\\
V &\longmapsto& \Omega(V),\\
V \overset{f}\longrightarrow W &\longmapsto& \Omega(V) \overset{\Omega (f)}\longrightarrow \Omega(W),
\end{eqnarray*}
where as a vector space $\Omega(V) = {\mathop\bigoplus\limits_{i \in \mathbb{Z}}} V_i$ and the action of $U_{q}(\mathfrak{sl}^{*}_2)$ on $\Omega(V)$ is given by
\begin{eqnarray*}\label{}
\left\{
 \begin{array}{ll}
K v = q^{2i} v,\\
E v = E^{V}_i(v),\\
F v = F^{V}_{i-1}(v)
\end{array}
\right.
\end{eqnarray*}
for any $v\in V_i$, while $\Omega (f) = {\mathop\bigoplus\limits_{i \in \mathbb{Z}}} f_i$.
Define a functor $\Omega^{-1}:  \mathcal{O}^0_1 \longrightarrow {\rm \bf rep}\Pi^{0}_\infty$ as follows
\begin{eqnarray*}\label{}
\Omega^{-1}: \mathcal{O}^0_1 &\longrightarrow& {\rm \bf rep}\Pi^0_\infty, \\
M &\longmapsto& \Omega^{-1}(M),\\
M \overset{f}\longrightarrow N &\longmapsto& \Omega^{-1}(M) \overset{\Omega^{-1} (f)}\longrightarrow \Omega^{-1}(N),
\end{eqnarray*}
where $\Omega^{-1}(M) = V = (V_i, E^{V}_i, F^{V}_i)_{i\in \mathbb{Z}}$ is given by
\begin{eqnarray*}\label{}
\left\{
 \begin{array}{ll}
V_i = M_{q^{2i}},\\
E^{V}_i := M_{q^{2i}} \overset{E}\longrightarrow M_{q^{2(i+1)}},\\
F^{V}_i := M_{q^{2(i+1)}} \overset{F}\longrightarrow M_{q^{2i}},
\end{array}
\right.
\end{eqnarray*}
 while $\Omega^{-1} (f) = (g_i)_{i\in \mathbb{Z}}$ with $g_i$ the restriction of $f$ on $M_{q^{2i}}$.

Now we can realize the principle block $\mathcal{O}^{0}_1$ of $U_{q}(\mathfrak{sl}^{*}_2)\mbox{-}{\rm \bf mod}_{\rm {wt}}$ on the level of category.

\begin{thm}\label{equivalence-between-categories-and-primitive-reps-of-U-and-preprojetive-algs-of-A}
The functor $\Omega: {\rm \bf rep}\Pi^{0}_\infty \longrightarrow \mathcal{O}^0_1$ is an isomorphism of categories which preserves the primitive objects.
\end{thm}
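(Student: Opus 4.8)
The plan is to prove the theorem by checking directly that $\Omega$ and $\Omega^{-1}$ are well-defined functors, that they are mutually inverse, and that the resulting bijection on objects matches the two notions of ``primitive''. First I would verify $\Omega$ is well defined on objects. For $V = (V_i, E^V_i, F^V_i)_{i\in\mathbb{Z}}$ in ${\rm \bf rep}\Pi^0_\infty$, the space $\Omega(V) = \bigoplus_i V_i$ is finite dimensional since only finitely many $V_i$ are nonzero, the relations $KE = q^2 EK$ and $KF = q^{-2}FK$ hold because under the prescribed action $E$ and $F$ raise and lower the weight grading by $q^{\pm 2}$, and the relation $EF = FE$ restricted to $V_i$ reads $E^V_{i-1}F^V_{i-1} = F^V_i E^V_i$, which is exactly the defining relation \eqref{relation-definition-of-the-cat-of-fd-reps-of-preprojective-algs-of-type-A} of ${\rm \bf rep}\Pi^0_\infty$ (with $\overrightarrow{\lambda}=0$) after the index shift $i \mapsto i-1$. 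Since all weights of $\Omega(V)$ lie in $\{q^{2i}\mid i\in\mathbb{Z}\} = [1]$, we get $\Omega(V)\in\mathcal{O}^0_1$. Dually, for $M\in\mathcal{O}^0_1$ the data $V_i = M_{q^{2i}}$ with $E^V_i, F^V_i$ the restrictions of $E,F$ give an object of ${\rm \bf rep}\Pi^0_\infty$: $M$ is a weight module with $\Lambda_M\subseteq[1]$, so $M = \bigoplus_i M_{q^{2i}}$ is a finite direct sum, and $E^V_i F^V_i - F^V_{i+1}E^V_{i+1}$ is the restriction of $EF - FE = 0$ to $M_{q^{2(i+1)}}$, hence equals $0 = \lambda_{i+1}I_{n_{i+1}}$. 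In short, objects of $\mathcal{O}^0_1$ are precisely the finite-dimensional weight modules graded by the subgroup $[1]$.

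Next I would check functoriality on morphisms and that the two constructions are inverse. For $f = (f_i)_{i\in\mathbb{Z}}$ in ${\rm Hom}_{{\rm \bf rep}\Pi^0_\infty}(V,W)$ the map $\Omega(f) = \bigoplus_i f_i$ is automatically $K$-equivariant, while $E$- and $F$-equivariance are exactly the two conditions $E^W_i f_i = f_{i+1}E^V_i$ and $F^W_i f_{i+1} = f_i F^V_i$ defining that Hom-set; conversely $\Omega^{-1}(f)$ is the tuple of restrictions of a module map to the weight spaces, and these restrictions inherit the module-morphism identities. Then $\Omega^{-1}\Omega$ sends $V$ to the representation with $i$-th space $\Omega(V)_{q^{2i}} = V_i$ and maps the corresponding restrictions, i.e.\ back to $V$ on the nose, and $\Omega\,\Omega^{-1}$ sends $M$ to $\bigoplus_i M_{q^{2i}} = M$ with its original action; the same identities hold on morphisms. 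Hence $\Omega\colon {\rm \bf rep}\Pi^0_\infty \to \mathcal{O}^0_1$ is an isomorphism of categories with inverse $\Omega^{-1}$.

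Finally I would translate ``primitive''. An isomorphism of categories preserves indecomposability, and from $\Omega(V)_{q^{2i}} = V_i$ one reads $\Lambda_{\Omega(V)} = \{q^{2i}\mid V_i\neq 0\}$. If $V$ is primitive in the sense of Definition \ref{definition-of-primitive-rep-of-preprojective-algebra-of-type-A}, then $V_i\neq 0$ iff $0\leq i\leq m-1$, so $\Omega(V)$ is indecomposable with weight set $\{1,q^2,\dots,q^{2(m-1)}\}$, i.e.\ a primitive module of $U_q(\mathfrak{sl}^{*}_2)$ in the sense of Definition \ref{primitive-rep-of-new-type-quantum-group}; conversely a primitive $U_q(\mathfrak{sl}^{*}_2)$-module $M$ has $\Lambda_M = \{1,q^2,\dots,q^{2l}\}$, so $\Omega^{-1}(M)_i = M_{q^{2i}}$ is nonzero exactly for $0\leq i\leq l$ and $\Omega^{-1}(M)$ is indecomposable, hence primitive with $m = l+1$. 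I do not expect a genuine obstacle in this argument: the only real content is the bookkeeping that identifies the $\overrightarrow{\lambda}=0$ preprojective relation with the commutation relation $EF = FE$ under an index shift, together with the observation that $\mathcal{O}^0_1$ consists exactly of the finite-dimensional weight modules graded by $[1]$; everything else is routine repackaging.
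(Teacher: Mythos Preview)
Your proposal is correct and follows exactly the same approach as the paper: verify that $\Omega$ and $\Omega^{-1}$ are well-defined mutually inverse functors, hence $\Omega$ is an isomorphism of categories, and then read off from the definitions that primitivity is preserved. The paper's own proof simply asserts that these checks are easy and cites Definitions~\ref{primitive-rep-of-new-type-quantum-group} and~\ref{definition-of-primitive-rep-of-preprojective-algebra-of-type-A}; you have essentially written out the details behind that assertion.
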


\begin{proof}
It is easy to check that $\Omega$ is additive, $\Omega \Omega^{-1} = {\rm Id}_{\mathcal{O}^0_1}$ and $\Omega^{-1} \Omega = {\rm Id}_{{\rm \bf rep}\Pi^0_\infty}$.
 Hence $\Omega: {\rm \bf rep}\Pi^{0}_\infty \longrightarrow \mathcal{O}^{0}_1$ is an isomorphism of categories.
 Moreover, it follows from Definition $\ref{primitive-rep-of-new-type-quantum-group}$, Definition $\ref{definition-of-primitive-rep-of-preprojective-algebra-of-type-A}$ and the definition of $\Omega$
  that $V$ is primitive if and only if $\Omega(V)$ is primitive.
\end{proof}

Theorem $\ref{equivalence-between-categories-and-primitive-reps-of-U-and-preprojetive-algs-of-A}$ tells us that the theory of preprojective algebras of Dynkin type $\A$ can afford many useful information for investigating the new type quantum group $U_{q}(\mathfrak{sl}^{*}_2)$, and vice versa.

Denote by $\Pi_m$ the preprojective algebra of Dynkin type $\A_m$ corresponding to the bound quiver $(Q_{[0,m-1]},I_{[0,m-1]})$,
 and by ${\rm \bf rep}\Pi_m$ the category of finite dimensional representations of $\Pi_m$.
For any object $V = \left(V_i, E_i, F_i\right)_{0\leq i\leq m-1}$ in ${\rm \bf rep}\Pi_m$,
denote by ${\rm \underline{dim}}V \in \mathbb{N}^m$ the dimension vector of $V$.
Let $\left(-, -\right)$ be the symmetric bilinear form on $\mathbb{Z}^m$ defined by
\begin{eqnarray*}\label{}
\left(\beta, \gamma\right) = {\mathop\sum\limits_{i=0}^{m-1}}2\beta_i \gamma_i - {\mathop\sum\limits_{i \rightarrow j \in Q_{[0,m-1]}}} \beta_i \gamma_j,
\end{eqnarray*}
where $\beta = \left(\beta_i\right)_{0 \leq i \leq m-1}$ and $\gamma = \left(\gamma_i\right)_{0 \leq i \leq m-1}$.
For any two objects $V$ and $W$ in ${\rm \bf rep}\Pi_m$, Crawley-Boevey proved the following homological formula for preprojective algebras in \cite{WC1027}
\begin{eqnarray}\label{Crawley-Boevey-homological-formula-for-preprojective algebras}
{\rm dim}{\rm Ext}_{\Pi_m}^1(V,W) = {\rm dim}{\rm Hom}_{\Pi_m}(V,W) + {\rm dim}{\rm Hom}_{\Pi_m}(W,V) - \left( {\rm \underline{dim}}V, {\rm \underline{dim}}W \right).
\end{eqnarray}
By Theorem $\ref{block-decomposition-of-weight-rep-cat-of-U}$, Theorem $\ref{equivalence-functors-translation-transitive-functors}$, Theorem $\ref{equivalence-between-categories-and-primitive-reps-of-U-and-preprojetive-algs-of-A}$ and $(\ref{Crawley-Boevey-homological-formula-for-preprojective algebras})$,
we can obtain the following formulas in the category $U_{q}(\mathfrak{sl}^{*}_2)\mbox{-}{\rm \bf mod}_{\rm wt}$, which are a little different from those in the category $U_{q}(\mathfrak{sl}^{*}_2)\mbox{-}{\rm \bf mod}$ obtained in Proposition $\ref{1st-ext-group-of-simple-modules}$.

\begin{prop}\label{1st-ext-group-of-simple-modules-in-the-category-of-weight-reps}
Assume that $q \in \mathbb{C}^\times$ is not a root of unity, and $\lambda, \mu \in \mathbb{C}^\times$.
Then one has
\begin{eqnarray*}\label{}
{\rm Ext}^1_{U_{q}(\mathfrak{sl}^{*}_2)\mbox{-}{\rm \bf mod}_{\rm wt}}(\mathbb{C}_\lambda,\mathbb{C}_\mu) \cong \left\{
 \begin{array}{ll}
\mathbb{C},~\quad~&{\rm if}~\lambda = q^2 \mu ~{\rm or}~ \lambda = q^{-2} \mu,\\
0,~\quad~&{\rm otherwise}.
\end{array}
\right.
\end{eqnarray*}
\end{prop}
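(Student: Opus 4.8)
The plan is to transport the computation into the category ${\rm \bf rep}\Pi^{0}_\infty$ of finite dimensional representations of preprojective algebras of Dynkin type $\A$, where it becomes a direct application of Crawley-Boevey's homological formula $(\ref{Crawley-Boevey-homological-formula-for-preprojective algebras})$. First I would dispose of the case in which $\lambda$ and $\mu$ are not $q^2$-linked: then $\mathbb{C}_\lambda$ and $\mathbb{C}_\mu$ lie in distinct blocks of $U_{q}(\mathfrak{sl}^{*}_2)\mbox{-}{\rm \bf mod}_{\rm wt}$ (Theorem $\ref{block-decomposition-of-weight-rep-cat-of-U}$), and in any extension $0 \to \mathbb{C}_\mu \to N \to \mathbb{C}_\lambda \to 0$ in the weight category every weight of $N$ lies in the disjoint union $[\lambda] \cup [\mu]$; since $E$ and $F$ shift weights within a single $q^2$-linkage class, $N = N_{[\lambda]} \oplus N_{[\mu]}$ with each summand a submodule, so the sequence splits and ${\rm Ext}^1 = 0$, matching the ``otherwise'' branch.

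Next, assume $\lambda \overset{q^2}\sim \mu$ and let $\lambda_0 \in [\Lambda_{\kappa} / \overset{q^2}\sim]$ be the representative of their common class, so $\lambda = q^{2b} \lambda_0$ and $\mu = q^{2a} \lambda_0$ for some $a, b \in \mathbb{Z}$. The transitive functor $\Theta_{\lambda_0, 1} : \mathcal{O}^{0}_{\lambda_0} \longrightarrow \mathcal{O}^{0}_1$ is an isomorphism of categories by Theorem $\ref{equivalence-functors-translation-transitive-functors}~{\rm (1)}$ and sends $\mathbb{C}_{q^{2i} \lambda_0}$ to $\mathbb{C}_{q^{2i}}$; composing with the inverse of the isomorphism $\Omega : {\rm \bf rep}\Pi^{0}_\infty \longrightarrow \mathcal{O}^{0}_1$ of Theorem $\ref{equivalence-between-categories-and-primitive-reps-of-U-and-preprojetive-algs-of-A}$, which carries the simple representation $S_i$ concentrated at the vertex $i$ to $\mathbb{C}_{q^{2i}}$, I obtain
\[
{\rm Ext}^1_{U_{q}(\mathfrak{sl}^{*}_2)\mbox{-}{\rm \bf mod}_{\rm wt}}(\mathbb{C}_\lambda, \mathbb{C}_\mu) \cong {\rm Ext}^1_{{\rm \bf rep}\Pi^{0}_\infty}(S_b, S_a).
\]
Because any extension of $S_b$ by $S_a$ has dimension vector supported on $\{a, b\}$, I may fix integers $k \leq l$ with $\{a, b\} \subseteq [k, l]$ and identify the right-hand side with ${\rm Ext}^1_{\Pi_{l - k + 1}}(S_b, S_a)$ for the finite dimensional preprojective algebra $\Pi_{l - k + 1}$ attached to $(Q_{[k,l]}, I_{[k,l]})$, so that $(\ref{Crawley-Boevey-homological-formula-for-preprojective algebras})$ now applies verbatim.

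Finally I would run the formula. By Schur's lemma ${\rm Hom}_{\Pi_{l-k+1}}(S_a, S_b) = {\rm Hom}_{\Pi_{l-k+1}}(S_b, S_a)$ is $\mathbb{C}$ if $a = b$ and $0$ otherwise, and for the standard basis vectors $\mathbf{e}_a$ and $\mathbf{e}_b$ of the dimension lattice $\mathbb{Z}^{l-k+1}$ the symmetric form $(-,-)$ is the Cartan form of type $\A$: it equals $2$ if $a = b$, $-1$ if $|a - b| = 1$, and $0$ if $|a - b| \geq 2$ (the doubled quiver has exactly one edge joining consecutive vertices and no loops). Thus $\dim {\rm Ext}^1(S_b, S_a) = 2 \delta_{a,b} - (\mathbf{e}_a, \mathbf{e}_b)$ is $0$ when $a = b$, is $1$ when $|a - b| = 1$, and is $0$ when $|a - b| \geq 2$. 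Since $\lambda = q^{2b} \lambda_0$ and $\mu = q^{2a} \lambda_0$, the condition $|a - b| = 1$ is precisely $\lambda = q^2 \mu$ or $\lambda = q^{-2} \mu$, the condition $a = b$ is $\lambda = \mu$, and the remaining linked case $|a - b| \geq 2$ joins the unlinked case in the ``otherwise'' branch; this gives the claimed dichotomy.

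The only real obstacle is bookkeeping rather than mathematics: one must carefully follow a simple weight module through the chain of isomorphisms $\Theta_{\lambda_0, 1}$ and $\Omega^{-1}$ and verify it lands on a vertex simple $S_i$, and one must legitimately cut ${\rm \bf rep}\Pi^{0}_\infty$ down to a finite Dynkin subquiver before quoting $(\ref{Crawley-Boevey-homological-formula-for-preprojective algebras})$. It is worth noting the contrast with Proposition $\ref{1st-ext-group-of-simple-modules}$, where ${\rm Ext}^1(\mathbb{C}_\lambda, \mathbb{C}_\lambda) \cong \mathbb{C}$: the nonsplit self-extension produced there has $K$ acting by a single nontrivial Jordan block, hence is not a weight module and disappears in $U_{q}(\mathfrak{sl}^{*}_2)\mbox{-}{\rm \bf mod}_{\rm wt}$, which is exactly why the ``$\lambda = \mu$'' value collapses to $0$.
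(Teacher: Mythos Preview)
Your proof is correct and follows exactly the approach the paper indicates: the paper states this proposition as an immediate consequence of Theorem~\ref{block-decomposition-of-weight-rep-cat-of-U}, Theorem~\ref{equivalence-functors-translation-transitive-functors}, Theorem~\ref{equivalence-between-categories-and-primitive-reps-of-U-and-preprojetive-algs-of-A} and the Crawley-Boevey formula~$(\ref{Crawley-Boevey-homological-formula-for-preprojective algebras})$, and you have faithfully unpacked that sketch, including the reduction to a finite Dynkin subquiver and the explicit evaluation of the Cartan form on the vertex simples. Your closing remark explaining why the self-extension from Proposition~\ref{1st-ext-group-of-simple-modules} disappears in the weight category is a nice addition that the paper leaves implicit.
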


In the following we aim to establish a tensor equivalence between the principle block $\mathcal{O}^0_1$ of $U_{q}(\mathfrak{sl}^{*}_2)\mbox{-}{\rm \bf mod}_{\rm wt}$ and the category ${\rm \bf rep}\Pi^{0}_\infty$ of finite dimensional representations of preprojective algebras of Dynkin type $\A$.

\begin{thm}\label{rep-cats-of-pre-quantum-group-and-preprojective-algs-are-tensor-cats}
${\rm (1)}$ The principle block $\mathcal{O}^0_1$ of $U_{q}(\mathfrak{sl}^{*}_2)\mbox{-}{\rm \bf mod}_{\rm wt}$ is a tensor category.\\
${\rm (2)}$ The category ${\rm \bf rep}\Pi^{0}_\infty$ of finite dimensional representations of preprojective algebras of Dynkin type $\A$ is a tensor category.
\end{thm}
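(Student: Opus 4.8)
The plan is to derive both parts from standard Hopf-algebraic facts together with the isomorphism of categories $\Omega$ from Theorem \ref{equivalence-between-categories-and-primitive-reps-of-U-and-preprojetive-algs-of-A}. For ${\rm (1)}$ I would first check that the whole category $U_{q}(\mathfrak{sl}^{*}_2)\mbox{-}{\rm \bf mod}$ is a tensor category and then that $\mathcal{O}^0_1$ is a tensor subcategory of it; for ${\rm (2)}$ I would transport the tensor structure from $\mathcal{O}^0_1$ along $\Omega$.

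For ${\rm (1)}$, the first step is to note that the antipode $S$ of $U_{q}(\mathfrak{sl}^{*}_2)$ is bijective: a direct computation from $(\ref{hopf-alg-structure-on-the-pre-quantum-gp})$ gives $S^2(K) = K$, $S^2(E) = q^2 E$ and $S^2(F) = q^{-2}F$, so $S^2$ is the algebra automorphism sending $K, E, F$ to $K, q^2 E, q^{-2} F$ and hence $S$ is invertible. Consequently $U_{q}(\mathfrak{sl}^{*}_2)\mbox{-}{\rm \bf mod}$ is a locally finite $\mathbb{C}$-linear abelian monoidal category that is rigid (left and right duals are built from $S$ and $S^{-1}$) with unit $\mathds{1} = \mathbb{C}_1$ and ${\rm End}(\mathds{1}) \cong \mathbb{C}$, its associativity and unit constraints being inherited from $({\rm \bf Vec}, \otimes, \mathbb{C}, \mathbf{a},\mathbf{l},\mathbf{r})$. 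The second step is to verify that $\mathcal{O}^0_1$ is closed under all these operations. It is a Serre subcategory of the abelian category $U_{q}(\mathfrak{sl}^{*}_2)\mbox{-}{\rm \bf mod}_{\rm wt}$: by Definition \ref{def-blocks-of-weight-rep-cat-of-U} and the behaviour of weight sets under subobjects, quotients and extensions (any extension of two objects with weights in $[1]$ again has weights in $[1]$) it is closed under these, hence abelian. For the monoidal part, the key observation is that $[1] = \langle q^2\rangle$ is a subgroup of $\mathbb{C}^\times$; hence, using the weight-set formula $\Lambda_{M\otimes N} = \Lambda_M \diamond \Lambda_N$ from the proof of Proposition \ref{properties-of-weight-rep-cat} and the identity $\Lambda_{M^\star} = \{\lambda^{-1} \mid \lambda \in \Lambda_M\}$ (immediate from $S(K) = K^{-1}$), both $M\otimes N$ and $M^\star$ lie in $\mathcal{O}^0_1$ whenever $M, N$ do, and $\mathds{1} = \mathbb{C}_1$ has weight $1 \in [1]$. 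Therefore $\mathcal{O}^0_1$ is a rigid monoidal abelian $\mathbb{C}$-linear subcategory, it is locally finite, and ${\rm End}_{\mathcal{O}^0_1}(\mathds{1}) = {\rm End}_{U_{q}(\mathfrak{sl}^{*}_2)}(\mathbb{C}_1) \cong \mathbb{C}$; this gives ${\rm (1)}$.

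For ${\rm (2)}$, by Theorem \ref{equivalence-between-categories-and-primitive-reps-of-U-and-preprojetive-algs-of-A} the functor $\Omega \colon {\rm \bf rep}\Pi^{0}_\infty \longrightarrow \mathcal{O}^0_1$ is an isomorphism of categories with inverse $\Omega^{-1}$. Since $\mathcal{O}^0_1$ is a tensor category by ${\rm (1)}$, I would simply transport its structure through $\Omega^{-1}$: set $V \otimes W := \Omega^{-1}(\Omega(V) \otimes \Omega(W))$ and $\mathds{1} := \Omega^{-1}(\mathbb{C}_1)$, and define the associativity and unit constraints and the left and right duals on ${\rm \bf rep}\Pi^{0}_\infty$ by applying $\Omega^{-1}$ to the corresponding data of $\mathcal{O}^0_1$. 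Concretely, $\Omega(V) \otimes \Omega(W)$ has weight decomposition $(\Omega(V) \otimes \Omega(W))_{q^{2n}} = \bigoplus_{i+j=n} V_i \otimes W_j$ with $E, F$ acting through $\Delta(E) = E \otimes K + 1 \otimes E$ and $\Delta(F) = F \otimes 1 + K^{-1} \otimes F$, so $V \otimes W$ is the representation with $(V\otimes W)_n = \bigoplus_{i+j=n} V_i \otimes W_j$ and the induced structure maps; the defining relation $EF = FE$ of $U_{q}(\mathfrak{sl}^{*}_2)$ translates into the preprojective relation with zero weight, so the result again lies in ${\rm \bf rep}\Pi^{0}_\infty$. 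Because $\Omega$ is an honest isomorphism of categories, every coherence axiom of a tensor category is inherited verbatim and ${\rm End}(\mathds{1}) = {\rm End}_{{\rm \bf rep}\Pi^{0}_\infty}(\Omega^{-1}(\mathbb{C}_1)) \cong {\rm End}_{\mathcal{O}^0_1}(\mathbb{C}_1) \cong \mathbb{C}$; combined with Proposition \ref{cat-f-d-reps-local-finite-linear-abel-cat}, which says ${\rm \bf rep}\Pi^{0}_\infty$ is already locally finite, $\mathbb{C}$-linear and abelian, this proves ${\rm (2)}$.

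The only genuinely substantive point is in ${\rm (1)}$: showing that the full subcategory $\mathcal{O}^0_1$ is closed under the ambient tensor product and under passing to left and right duals, i.e. that it is a rigid monoidal subcategory. Everything here reduces to the fact that the $q^2$-linkage class of the unit weight, $[1] = \langle q^2\rangle$, is a subgroup of $\mathbb{C}^\times$, together with the bijectivity of the antipode of $U_{q}(\mathfrak{sl}^{*}_2)$. Part ${\rm (2)}$ then presents no real obstacle once $\Omega$ is known to be an isomorphism of categories; the only thing worth recording there is the explicit form of the transported tensor product, which will be needed for the tensor equivalence in the next subsection.
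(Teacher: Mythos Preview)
Your argument is correct. Part ${\rm (1)}$ follows the paper exactly: invoke bijectivity of the antipode so that $U_{q}(\mathfrak{sl}^{*}_2)\mbox{-}{\rm \bf mod}$ is a tensor category, then verify that $\mathcal{O}^0_1$ is a tensor subcategory. You supply more detail than the paper does (the paper simply asserts that $\mathcal{O}^0_1$ is a tensor subcategory), and your reason---that $[1]=\langle q^2\rangle$ is a subgroup of $\mathbb{C}^\times$, so weight sets of tensor products and duals stay inside $[1]$---is the right one.

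For part ${\rm (2)}$ you take a genuinely different route. The paper builds the monoidal structure on ${\rm \bf rep}\Pi^{0}_\infty$ \emph{from scratch}: it writes down explicit formulas for $(V\otimes W)_k$, $E^{V\otimes W}_k$, $F^{V\otimes W}_k$, the unit object, the associativity and unit constraints, both duals $V^*$ and ${}^*V$, and the (co)evaluations, and then checks by hand that these satisfy the axioms of a rigid monoidal category, finishing with Proposition~\ref{cat-f-d-reps-local-finite-linear-abel-cat}. You instead transport the tensor structure of $\mathcal{O}^0_1$ along the category isomorphism $\Omega^{-1}$ of Theorem~\ref{equivalence-between-categories-and-primitive-reps-of-U-and-preprojetive-algs-of-A}, which is shorter and makes all the coherence axioms automatic. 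Your transported tensor product agrees with the paper's explicit one (as you note, $(V\otimes W)_n=\bigoplus_{i+j=n}V_i\otimes W_j$ with $E,F$ acting via $\Delta$), so nothing is lost. The trade-off is that the paper's direct construction records all the dual and (co)evaluation formulas explicitly, which are convenient to have on the page; your approach is cleaner but leaves those formulas implicit. Either way Theorem~\ref{tensor-equivalence-between-categories-and-primitive-reps-of-U-and-preprojetive-algs-of-A} follows, and in fact your transport makes $\Omega$ strictly monoidal, so that theorem becomes essentially tautological from your point of view.
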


\begin{proof}
${\rm (1)}$ Since $U_{q}(\mathfrak{sl}^{*}_2)$ is a Hopf algebra with bijective antipode,
then the category $U_{q}(\mathfrak{sl}^{*}_2)\mbox{-}{\rm \bf mod}$ of finite dimensional $U_{q}(\mathfrak{sl}^{*}_2)$-modules is a tensor category (see Section 1.1 in \cite{BK2000}, Corollary 5.3.7 in \cite{EGNO2015} or Proposition {\bf X1.3.1} in \cite{Kass95}).
The principle block $\mathcal{O}^0_1$ of $U_{q}(\mathfrak{sl}^{*}_2)\mbox{-}{\rm \bf mod}_{\rm wt}$ is just a tensor subcategory of $U_{q}(\mathfrak{sl}^{*}_2)\mbox{-}{\rm \bf mod}$, where the associativity and unit constraints
$\mathbf{a},\mathbf{l},\mathbf{r}$ are just those of the category ${\rm \bf Vec}$ of finite dimensional $\mathbb{C}$-vector spaces.

${\rm (2)}$
For any two objects $V = (V_i, E^{V}_i, F^{V}_i)_{i\in \mathbb{Z}}, W = (W_j, E^{W}_j, F^{W}_j)_{j\in \mathbb{Z}} \in {\rm \bf rep}\Pi^{0}_\infty$,
define the tensor product $V \otimes W = ((V \otimes W)_k, E^{V \otimes W}_k, F^{V \otimes W}_k)_{k\in \mathbb{Z}}$ by
\begin{eqnarray*}\label{}
\left\{
 \begin{array}{ll}
(V \otimes W)_k = {\mathop\bigoplus\limits_{i + j = k}} (V_i \otimes W_j),\\
E^{V \otimes W}_k = {\mathop\bigoplus\limits_{i + j = k}} (E^{V}_i \otimes q^{2j} 1_{W_j} + 1_{V_i} \otimes E^{W}_j),\\
F^{V \otimes W}_k = {\mathop\bigoplus\limits_{i + j = k +1}} (F^{V}_{i-1} \otimes 1_{W_j} + q^{-2i} 1_{V_i} \otimes F^{W}_{j-1}).
\end{array}
\right.
\end{eqnarray*}
For any two morphisms
\begin{eqnarray*}\label{}
&& V = (V_i, E^{V}_i, F^{V}_i)_{i\in \mathbb{Z}} \xrightarrow{f = (f_i)_{i\in \mathbb{Z}}} V^\prime= (V^\prime_i, E^{V^\prime}_i, F^{V^\prime}_i)_{i\in \mathbb{Z}}, \\
&& W = (W_j, E^{W}_j, F^{W}_j)_{j\in \mathbb{Z}} \xrightarrow{g = (g_j)_{j\in \mathbb{Z}}} W^\prime= (W^\prime_j, E^{W^\prime}_j, F^{W^\prime}_j)_{j\in \mathbb{Z}}
\end{eqnarray*}
in ${\rm \bf rep}\Pi^{0}_\infty$, define the tensor product $f \otimes g = ((f \otimes g)_k)_{k\in \mathbb{Z}}$ by
\begin{eqnarray*}\label{}
(f \otimes g)_k = {\mathop\bigoplus\limits_{i + j = k}} (f_i \otimes g_j).
\end{eqnarray*}
Take $\mathds{1} = (\mathds{1}_i, E^{\mathds{1}}_i, F^{\mathds{1}}_i)_{i\in \mathbb{Z}}  \in {\rm \bf rep}\Pi^{0}_\infty$ with $\mathds{1}_i = 0$ except $\mathds{1}_0 = \mathbb{C}$ and $E^{\mathds{1}}_i, F^{\mathds{1}}_i = 0$ for all $i \in \mathbb{Z}$.
Obviously ${\rm End}(\mathds{1}) \cong \mathbb{C}$.
For any objects $U, V, W \in {\rm \bf rep}\Pi^{0}_\infty$, define the associativity constraint $\mathfrak{a}_{U,V,W}: (U\otimes V)\otimes W \longrightarrow U\otimes (V\otimes W)$,
 the left unit constraint $\mathfrak{l}_V: \mathds{1} \otimes V \longrightarrow V$ and the right unit constraint $\mathfrak{r}_V: V \otimes \mathds{1} \longrightarrow V$ by
\begin{eqnarray*}
  (\mathfrak{a}_{U,V,W})_s = {\mathop\bigoplus\limits_{i + j + k = s}}\mathbf{a}_{U_i,V_j,W_k}, \ \ (\mathfrak{l}_V)_s = \mathbf{l}_{V_s}, \ \ (\mathfrak{r}_V)_s = \mathbf{r}_{V_s}
\end{eqnarray*}
where $\mathbf{a}_{U_i,V_j,W_k}, \mathbf{l}_{V_s}$ and $\mathbf{r}_{V_s}$ are respectively the associativity constraint, left unit constraint and right unit constraint in the category ${\rm \bf Vec}$.
For any object $V = (V_i, E^{V}_i, F^{V}_i)_{i\in \mathbb{Z}} \in {\rm \bf rep}\Pi^{0}_\infty$, define the left dual $V^* = (V^*_i, E^{V^*}_i, F^{V^*}_i)_{i\in \mathbb{Z}}$
 and right dual ${^*V} = ({^*V}_i, E^{^*V}_i, F^{^*V}_i)_{i\in \mathbb{Z}}$ of $V$ as follows
 \begin{eqnarray*}\label{}
\left\{
 \begin{array}{ll}
~V^*_i = (V_{-i})^\star,\ \ \ \ E^{V^*}_i = - q^{2(i+1)} (E^{V}_{-i-1})^\star,\ \ \ F^{V^*}_i = - q^{-2(i+1)} (F^{V}_{-i-1})^\star,\\
{^*V}_i = (V_{-i})^\star,\ \ \ \ E^{^*V}_i = - q^{2i} (E^{V}_{-i-1})^\star,\ \ \ \ \ \ \ \ F^{^*V}_i = - q^{-2i} (F^{V}_{-i-1})^\star.
\end{array}
\right.
\end{eqnarray*}
For the left dual $V^*$, the evaluation ${\rm \bf ev}_V = (({\rm \bf ev}_V)_i)_{i\in \mathbb{Z}}: V^* \otimes V \longrightarrow \mathds{1}$ is determined by
\begin{eqnarray*}\label{}
({\rm \bf ev}_V)_0: {\mathop\bigoplus\limits_{i\in \mathbb{Z}}} \left(V^*_i \otimes V_{-i}\right) &\longrightarrow& \mathbb{C},\\
f \otimes v &\longmapsto& f(v),
\end{eqnarray*}
for any $f \otimes v \in V^*_i \otimes V_{-i}$, while the coevaluation ${\rm \bf coev}_V: \mathds{1} \longrightarrow V \otimes V^*$ is determined by
\begin{eqnarray*}\label{}
({\rm \bf coev}_V)_0: \mathbb{C} &\longrightarrow& {\mathop\bigoplus\limits_{i\in \mathbb{Z}}} \left(V_{i} \otimes V^*_{-i}\right),\\
1 &\longmapsto& {\mathop\sum\limits_{i\in \mathbb{Z}}} {\mathop\sum\limits_{j = 1}^{\dim V_i}}\left(v_{ij} \otimes f_{ij}\right),
\end{eqnarray*}
where $\{v_{ij} | 1 \leq j \leq \dim V_i\}$ is a basis of $V_i$ and $\{f_{ij} | 1 \leq j \leq \dim V_i\}$ is the dual basis of $V^*_{-i}$.
The evaluation and coevaluation for the right dual ${^*V}$ can be defined similarly.
It can be directly checked that $\left({\rm \bf rep}\Pi^{0}_\infty, \otimes, \mathds{1}, \mathfrak{a}, \mathfrak{l}, \mathfrak{r}\right)$ is a rigid monoidal category.
Now the statement in ${\rm (2)}$ follows from Proposition \ref{cat-f-d-reps-local-finite-linear-abel-cat}.
\end{proof}

\begin{thm}\label{tensor-equivalence-between-categories-and-primitive-reps-of-U-and-preprojetive-algs-of-A}
The functor $\Omega: {\rm \bf rep}\Pi^{0}_\infty \longrightarrow \mathcal{O}^0_1$ is a tensor equivalence.
\end{thm}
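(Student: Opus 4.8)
By Theorem \ref{equivalence-between-categories-and-primitive-reps-of-U-and-preprojetive-algs-of-A}, the functor $\Omega$ is already an isomorphism (in particular an equivalence) of the underlying $\mathbb{C}$-linear categories. Hence, to conclude that it is a tensor equivalence, it suffices to upgrade $\Omega$ to a $\mathbb{C}$-linear monoidal functor, that is, to supply a natural isomorphism $J_{V,W}\colon \Omega(V)\otimes\Omega(W)\longrightarrow\Omega(V\otimes W)$ together with an isomorphism $\Omega(\mathds{1})\xrightarrow{\ \sim\ }\mathbb{C}$ satisfying the pentagon and triangle axioms; rigidity is then automatically preserved by a monoidal equivalence. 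So the plan is simply to produce this monoidal structure and check coherence.

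The first step is to construct $J_{V,W}$. For $V=(V_i,E^{V}_i,F^{V}_i)_{i\in\mathbb{Z}}$ and $W=(W_j,E^{W}_j,F^{W}_j)_{j\in\mathbb{Z}}$ in ${\rm \bf rep}\Pi^{0}_\infty$ one has, as $\mathbb{C}$-vector spaces, $\Omega(V)\otimes\Omega(W)=\bigoplus_{i,j\in\mathbb{Z}}V_i\otimes W_j$ and $\Omega(V\otimes W)=\bigoplus_{k\in\mathbb{Z}}\bigoplus_{i+j=k}V_i\otimes W_j$, so there is a tautological reindexing isomorphism between them; I take $J_{V,W}$ to be this map. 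Naturality of $J$ in each of $V$ and $W$ is immediate, since $\Omega$ on morphisms, the tensor product of morphisms in $\mathcal{O}^0_1$, and the tensor product of morphisms in ${\rm \bf rep}\Pi^{0}_\infty$ are all given by the direct sum of the component maps, reindexed in the same way.

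The one computation carrying real content is that $J_{V,W}$ is a morphism of $U_{q}(\mathfrak{sl}^{*}_2)$-modules, which I would check by comparing the action of the generators on a homogeneous tensor $v\otimes w$ with $v\in V_i$ and $w\in W_j$. Using $\Delta(K)=K\otimes K$ one gets $K(v\otimes w)=q^{2i}q^{2j}(v\otimes w)=q^{2(i+j)}(v\otimes w)$, matching the $K$-action on $(V\otimes W)_{i+j}$. Using $\Delta(E)=E\otimes K+1\otimes E$ one gets $E(v\otimes w)=E^{V}_i(v)\otimes q^{2j}w+v\otimes E^{W}_j(w)$, which is precisely the formula defining $E^{V\otimes W}_{i+j}$ in Theorem \ref{rep-cats-of-pre-quantum-group-and-preprojective-algs-are-tensor-cats}~${\rm(2)}$. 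Using $\Delta(F)=F\otimes 1+K^{-1}\otimes F$ one gets $F(v\otimes w)=F^{V}_{i-1}(v)\otimes w+q^{-2i}v\otimes F^{W}_{j-1}(w)$, which is precisely the formula for $F^{V\otimes W}_{i+j-1}$. Thus the twisting factors $q^{2j}$ and $q^{-2i}$ built into the definition of $\otimes$ on ${\rm \bf rep}\Pi^{0}_\infty$ are exactly the shadows under $\Omega$ of the coproduct of $U_{q}(\mathfrak{sl}^{*}_2)$, so $J_{V,W}$ is $U_{q}(\mathfrak{sl}^{*}_2)$-linear, and being a reindexing of the identity it is an isomorphism.

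Finally I treat the unit object and the coherence axioms. The object $\Omega(\mathds{1})$ is the one-dimensional space $\mathds{1}_0=\mathbb{C}$ on which $K$ acts by $q^{0}=1$ and $E,F$ act by $0$; this is the trivial module $\mathbb{C}$, the unit object of $U_{q}(\mathfrak{sl}^{*}_2)\mbox{-}{\rm \bf mod}$ lying in $\mathcal{O}^0_1$, and I take the unit constraint to be the identity. Since, by Theorem \ref{rep-cats-of-pre-quantum-group-and-preprojective-algs-are-tensor-cats}, the associativity and unit constraints of both $\mathcal{O}^0_1$ and ${\rm \bf rep}\Pi^{0}_\infty$ are inherited componentwise from those of ${\rm \bf Vec}$, and $J$ and the unit isomorphism are identities on underlying spaces, the pentagon and triangle diagrams for $(\Omega,J)$ reduce, after the canonical reindexing of direct summands, to the corresponding coherence diagrams in ${\rm \bf Vec}$, hence commute. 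Therefore $(\Omega,J,\mathrm{id})$ is a $\mathbb{C}$-linear monoidal functor, and together with Theorem \ref{equivalence-between-categories-and-primitive-reps-of-U-and-preprojetive-algs-of-A} this makes $\Omega$ a tensor equivalence. I anticipate no serious obstacle: the tensor structure on ${\rm \bf rep}\Pi^{0}_\infty$ was tailored in Theorem \ref{rep-cats-of-pre-quantum-group-and-preprojective-algs-are-tensor-cats} to make $\Omega$ monoidal, so the argument is essentially bookkeeping; the only place demanding care is the matching of the $q$-power twists in $E^{V\otimes W}$ and $F^{V\otimes W}$ with the coproducts of $E$ and $F$, carried out above. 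As a consistency remark, one may additionally note that $\Omega$ sends the duals $V^{*}$ and $^{*}V$ constructed in the proof of Theorem \ref{rep-cats-of-pre-quantum-group-and-preprojective-algs-are-tensor-cats} to the left and right duals of $\Omega(V)$ in $U_{q}(\mathfrak{sl}^{*}_2)\mbox{-}{\rm \bf mod}$, although this is not logically needed once $\Omega$ is known to be a monoidal equivalence.
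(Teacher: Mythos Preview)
Your proposal is correct and follows essentially the same approach as the paper: both define $J_{V,W}$ as the tautological reindexing isomorphism $\bigoplus_{i,j}V_i\otimes W_j\cong\bigoplus_k\bigoplus_{i+j=k}V_i\otimes W_j$, identify $\Omega(\mathds{1})$ with the unit object $\mathbb{C}_1$, and then appeal to Theorem \ref{equivalence-between-categories-and-primitive-reps-of-U-and-preprojetive-algs-of-A}. Your write-up is in fact more explicit than the paper's, which simply asserts that one ``can directly check that $(\Omega,\mathrm{J})$ is a tensor functor'' without spelling out the $U_q(\mathfrak{sl}^*_2)$-linearity of $J_{V,W}$ or the coherence axioms.
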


\begin{proof}
Obviously $\Omega(\mathds{1}) = \mathbb{C}_1$, where $\mathbb{C}_1$ defined by $(\ref{simple-module})$ is the unit object in $\mathcal{O}^0_1$.
Define a natural isomorphism $\mathrm{J}: \Omega(-) \otimes \Omega(-) \longrightarrow \Omega(- \otimes -)$ as follows
\begin{eqnarray}\label{natural isomorphism in tensor equivalence}
\mathrm{J}_{V,W}: \Omega(V) \otimes \Omega(W) = {\mathop\bigoplus\limits_{i\in \mathbb{Z}}} V_{i} \otimes {\mathop\bigoplus\limits_{j\in \mathbb{Z}}} W_{j}
 &\longrightarrow& \Omega(V \otimes W) = {\mathop\bigoplus\limits_{k\in \mathbb{Z}}}{\mathop\bigoplus\limits_{i + j = k}} \left(V_{i} \otimes W_j \right),\\
(v_i)_{i\in \mathbb{Z}} \otimes (w_j)_{j\in \mathbb{Z}} &\longmapsto& \left({\mathop\sum\limits_{i + j = k}}\left(v_{i} \otimes w_{j}\right)\right)_{k\in \mathbb{Z}},\nonumber
\end{eqnarray}
where $V,W \in {\rm \bf rep}\Pi^{0}_\infty$. Then we can directly check that $(\Omega, \mathrm{J})$ is a tensor functor from ${\rm \bf rep}\Pi^{0}_\infty$ to $\mathcal{O}^0_1$.
Therefore, it follows from Theorem \ref{equivalence-between-categories-and-primitive-reps-of-U-and-preprojetive-algs-of-A} that $\Omega$ is a tensor equivalence.
\end{proof}

\subsection{Tensor-categorical realization of the principle blocks of $U_{q}(\mathfrak{sl}^{*}_2,\kappa)\mbox{-}{\rm \bf mod}$ with $\kappa \neq 0$}\label{section-5-3}

\begin{defn}
Let $V = \left(V_i, E^V_i, F^V_i\right)_{k \leq i \leq l}$ be a finite dimensional simple representation of the deformed preprojective algebra $\Pi^{{\lambda}_{[k,l]}}$ of Dynkin type $\A_{l-k+1}$.
If $V_i \neq 0$ for all $k\leq i \leq l$, then we call $V$ a primitive representation of $\Pi^{{\lambda}_{[k,l]}}$.
\end{defn}

In this subsection, we always fix the following notations
\begin{eqnarray*}\label{}
&& \lambda_{\overline{0}}: = (\lambda_i)_{i \in \mathbb{Z}} = ([2i]_m)_{i \in \mathbb{Z}},\\
&& \lambda_{\overline{1}}: = (\lambda_i)_{i \in \mathbb{Z}} = ([2i+1]_m)_{i \in \mathbb{Z}}.
\end{eqnarray*}

\begin{prop}\label{charaterizations-of-primitive-representations-of-two-kinds-of-deformed-preprojective-algebras}
${\rm (1)}$
$V = \left(V_i, E_i^V, F_i^V\right)_{k \leq i \leq l}$ is a primitive representation of $\Pi^{{\lambda_{\overline{0}}}_{[k,l]}}$
 if and only if the dimension vector $\alpha_V$ of $V$ is equal to $(\underset{l-k+1}{\underbrace{{1,1,\cdots,1}}})$ and $k + l = 0$.\\
${\rm (2)}$
$V = \left(V_i, E_i^V, F_i^V\right)_{k \leq i \leq l}$ is a primitive representation of $\Pi^{{\lambda_{\overline{1}}}_{[k,l]}}$
 if and only if the dimension vector $\alpha_V$ of $V$ is equal to $(\underset{l-k+1}{\underbrace{{1,1,\cdots,1}}})$ and $k + l +1 = 0$.
\end{prop}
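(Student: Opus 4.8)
The plan is to transplant the analysis of Section~\ref{section-4} to the deformed preprojective setting, treating ${\rm (1)}$ and ${\rm (2)}$ in one stroke since they differ only in that the weight sequence $\lambda_{\overline{0}}=([2i]_m)_{i\in\mathbb{Z}}$ is replaced by $\lambda_{\overline{1}}=([2i+1]_m)_{i\in\mathbb{Z}}$. So let $V=(V_i,E^V_i,F^V_i)_{k\le i\le l}$ be a primitive representation of $\Pi^{{\lambda_{\overline{0}}}_{[k,l]}}$ (resp.\ $\Pi^{{\lambda_{\overline{1}}}_{[k,l]}}$), that is, a simple one with $V_i\neq 0$ for every $k\le i\le l$, and put $n_i=\dim V_i$. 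First I would, just as in the passage from $(\ref{q2-chain-rep-n-conditions})$ to $(\ref{the-matrix-corresponding-to-E-F-are-invert-eq})$--$(\ref{the-matrix-corresponding-to-E-F-are-invert-eq-coefficients})$, add up consecutive defining relations of $\Pi^{{\lambda}_{[k,l]}}$ to conclude that each composite $F^V_iE^V_i$ and $E^V_iF^V_i$ is a scalar matrix, the scalar being a partial sum $\sum\lambda_j$; by the same bracket identity that underlies $(\ref{the-matrix-corresponding-to-E-F-are-invert-eq-coefficients})$ this scalar is a product of two $[\,\cdot\,]_m$-brackets, and since $q$ is not a root of unity it is nonzero at every interior vertex.

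Next I would run the subrepresentation argument used to prove Theorem~\ref{classification-of-primitive-representations-of-Hopf-PBW-deformations}~${\rm (1)}$ together with Proposition~\ref{quasi-primitive-rep-before-classification-of-primitive-representations-of-Hopf-PBW-deformations}: each time some $E^V_i$ or $F^V_i$ fails to be an isomorphism, an explicit subspace assembled from kernels and images — for instance $\bigoplus_{j\le i}V_j$ or $\bigoplus_{j\ge i+1}{\rm Im}\big(E^V_{j-1}\cdots E^V_i\big)$ — is a proper nonzero subrepresentation, contradicting simplicity. Hence all $E^V_i,F^V_i$ are isomorphisms and $n_k=\cdots=n_l=:n$; transporting Proposition~\ref{endomorphism-alg-of-quasi-primitive-rep-of-Hopf-PBW-deformations} (every endomorphism of $V$ is determined, through these isomorphisms, by its component at vertex $k$) gives ${\rm End}_{\Pi^{{\lambda}_{[k,l]}}}(V)\cong{\rm \bf Mat}_{n}(\mathbb{C})$, which must be a division ring, so $n=1$ and $\alpha_V=(1,1,\dots,1)$.

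With $\alpha_V=(1,\dots,1)$ the now-scalar quantities $u_i:=E^V_iF^V_i=F^V_iE^V_i$ are pinned down by the defining relations — the interior ones give $u_{i-1}-u_i=\lambda_i$ and the two boundary ones fix $u_k$ and $u_{l-1}$ — so their mutual compatibility is a single numerical identity, which telescopes (via the bracket identity) to $\sum_{i=k}^{l}\lambda_i=0$. In case ${\rm (1)}$ this reads $[l-k+1]_m[l+k]_m=0$, in case ${\rm (2)}$ it reads $[l-k+1]_m[l+k+1]_m=0$; since $l-k+1\ge 1$ and $q$ is not a root of unity, $[l-k+1]_m\neq 0$, so the identity is equivalent to $k+l=0$, respectively $k+l+1=0$. (This necessity is also the Crawley--Boevey--Holland constraint $\overrightarrow{\lambda}\cdot\alpha_V=0$, cf.\ \cite{WCMH605}.) For the converse, if $\alpha_V=(1,\dots,1)$ and $k+l=0$ (resp.\ $k+l+1=0$), the same telescoping gives $u_i\neq 0$ for all $i$, hence $E^V_i\neq 0$ and $F^V_i\neq 0$ for all $i$; then any subrepresentation corresponds to a subset of $\{k,\dots,l\}$ that is closed under passing to a neighbour, so it is $\emptyset$ or all of $\{k,\dots,l\}$, and $V$ is simple, i.e.\ primitive.

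The step I expect to be the main obstacle is forcing $n_k=\cdots=n_l=1$: as in the proof of Theorem~\ref{classification-of-primitive-representations-of-Hopf-PBW-deformations}~${\rm (1)}$ one must, case by case, produce a genuine proper nonzero subrepresentation whenever a structure map degenerates, while tracking which partial sums of $[\,\cdot\,]_m$-brackets are permitted to vanish — only at an index $i$ with $i+k=0$ (resp.\ $i+k+1=0$) — since this is precisely what forces the support interval $[k,l]$ to be symmetric about the origin. The remaining steps amount to the bracket bookkeeping already implicit in Section~\ref{section-4}.
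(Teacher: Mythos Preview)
Your proposal is correct but takes a genuinely different route from the paper. The paper's proof is a one-line citation: it invokes Crawley-Boevey's classification of dimension vectors of simple representations of deformed preprojective algebras (Theorem~1.2 in \cite{WC257}), which in type $\A_{l-k+1}$ says that a simple with full support has dimension vector $(1,\dots,1)$ (the unique positive root with that support) and satisfies $\overrightarrow{\lambda}\cdot\alpha_V=0$; computing ${\lambda_{\overline{0}}}_{[k,l]}\cdot(1,\dots,1)=[l-k+1]_m[l+k]_m$ then finishes ${\rm (1)}$, and similarly for ${\rm (2)}$.

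You instead give a self-contained elementary argument, transplanting the linear-algebraic analysis of Section~\ref{section-4} (Proposition~\ref{quasi-primitive-rep-before-classification-of-primitive-representations-of-Hopf-PBW-deformations} and Theorem~\ref{classification-of-primitive-representations-of-Hopf-PBW-deformations}) directly to the preprojective setting: the telescoping of the defining relations forces $F^V_iE^V_i$ and $E^V_iF^V_i$ to be scalar matrices, the subrepresentation dichotomies force equal dimensions, and the ${\rm End}\cong{\rm \bf Mat}_{n_0}(\mathbb{C})$ computation forces $n_0=1$; then $a_i=b_i$ telescopes to $\sum_{i=k}^l\lambda_i=0$, giving the constraint on $k+l$. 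Your converse is also more explicit than what one extracts from \cite{WC257}. What the paper's route buys is brevity and a direct link to the existing literature on deformed preprojective algebras; what your route buys is that the proposition becomes independent of \cite{WC257} and relies only on the paper's own Section~\ref{section-4} machinery --- in effect making transparent that the category equivalences $\Omega_1,\Omega_q$ of Theorem~\ref{equivalence-between-categories-and-primitive-reps-of-U-and-deformed-preprojetive-algs-of-A} already carry the classification of primitives across, without needing an external classification on the preprojective side. The subtlety you flag in your last paragraph (tracking which partial bracket sums can vanish) is real but resolves exactly as in the proof of Proposition~\ref{quasi-primitive-rep-before-classification-of-primitive-representations-of-Hopf-PBW-deformations}: for $\lambda_{\overline{0}}$, the simultaneous vanishing $a_i=b_i=0$ would force $i=-k$ and $i=-l-1$, hence $k=l+1$, contradicting $k\le l$.
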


\begin{proof}
We will only prove ${\rm (1)}$, because the proof of ${\rm (2)}$ is similar. The classification result on the dimension vectors of simple representations of deformed preprojective algebras obtained by Crawley-Boevey (See Theorem 1.2 in \cite{WC257}) implies that $V$ is a primitive representation of $\Pi^{{\lambda_{\overline{0}}}_{[k,l]}}$ if and only if the dimension vector $\alpha_V = (\underset{l-k+1}{\underbrace{{1,1,\cdots,1}}})$
and ${{\lambda_{\overline{0}}}_{[k,l]}} \cdot \alpha_V = 0$.
Since
\begin{eqnarray*}\label{}
{{\lambda_{\overline{0}}}_{[k,l]}} \cdot \alpha_V = [l-k+1]_m [l+k]_m
\end{eqnarray*}
and $q \in \mathbb{C}^\times$ is not a root of unity, then ${\rm (1)}$ holds.
\end{proof}

\begin{defn}\label{definition-of-balanced-representations-of-deformed-preprojective-algebras}
${\rm (1)}$
If $V = \left(V_i, E_i^V, F_i^V\right)_{i \in \mathbb{Z}} \in {\rm \bf rep}\Pi^{{\lambda_{\overline{0}}}}_\infty$ satisfies
\begin{eqnarray}\label{}
\left\{
 \begin{array}{ll}
 {\rm dim}V_i = {\rm dim}V_{-i} & {\rm for}~\forall ~ i \in \mathbb{Z},\\
 {\rm dim}V_i \geq {\rm dim}V_{j} & {\rm for}~\forall ~ 0 \leq i \leq j,
 \end{array}
\right.
\end{eqnarray}
then $V = \left(V_i, E_i^V, F_i^V\right)_{i \in \mathbb{Z}}$ is said to be balanced in ${\rm \bf rep}\Pi^{{\lambda_{\overline{0}}}}_\infty$.\\
${\rm (2)}$
 If $V = \left(V_i, E_i^V, F_i^V\right)_{i \in \mathbb{Z}} \in {\rm \bf rep}\Pi^{{\lambda_{\overline{1}}}}_\infty$ satisfies
\begin{eqnarray}\label{}
\left\{
 \begin{array}{ll}
 {\rm dim}V_i = {\rm dim}V_{-(i+1)} & {\rm for}~\forall ~ i \in \mathbb{Z},\\
 {\rm dim}V_i \geq {\rm dim}V_{j} & {\rm for}~\forall ~ 0 \leq i \leq j,
 \end{array}
\right.
\end{eqnarray}
then $V = \left(V_i, E_i^V, F_i^V\right)_{i \in \mathbb{Z}}$ is said to be balanced in ${\rm \bf rep}\Pi^{{\lambda_{\overline{1}}}}_\infty$.
\end{defn}

\begin{prop}
${\rm (1)}$ Each simple object in the categories ${\rm \bf rep}\Pi^{{\lambda_{\overline{0}}}}_\infty$ and ${\rm \bf rep}\Pi^{{\lambda_{\overline{1}}}}_\infty$ is balanced.\\
${\rm (2)}$ Each object in the categories ${\rm \bf rep}\Pi^{{\lambda_{\overline{0}}}}_\infty$ and ${\rm \bf rep}\Pi^{{\lambda_{\overline{1}}}}_\infty$ is balanced.
\end{prop}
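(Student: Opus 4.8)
The plan is to obtain (1) directly from the classification of primitive representations, and then to deduce (2) from (1) by a soft argument, using that being ``balanced'' constrains only the dimension vector $(\dim V_i)_{i\in\mathbb{Z}}$ and that dimension vectors are additive along short exact sequences.

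For (1), let $V$ be a simple object of ${\rm \bf rep}\Pi^{\lambda_{\overline{0}}}_\infty$. It is indecomposable, hence a chain representation by Proposition \ref{indecom-reps-are-all-chain-representations}, say with $V_i\neq 0$ if and only if $k\leq i\leq l$. Objects supported inside $[k,l]$ are precisely the finite dimensional $\Pi^{{\lambda_{\overline{0}}}_{[k,l]}}$-modules (with ${\lambda_{\overline{0}}}_{[k,l]}=([2i]_m)_{k\leq i\leq l}$), and under this identification $V$ is a simple $\Pi^{{\lambda_{\overline{0}}}_{[k,l]}}$-module with $V_i\neq 0$ throughout $[k,l]$, i.e.\ a primitive representation. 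Then Proposition \ref{charaterizations-of-primitive-representations-of-two-kinds-of-deformed-preprojective-algebras}${\rm (1)}$ forces the dimension vector of $V$ to be all $1$'s together with $k+l=0$, so $\dim V_i=1$ for $k\leq i\leq l=-k$ and $0$ otherwise; this is symmetric about $0$ and non-increasing on $\{i\geq 0\}$, which is exactly Definition \ref{definition-of-balanced-representations-of-deformed-preprojective-algebras}${\rm (1)}$, hence $V$ is balanced. The case of ${\rm \bf rep}\Pi^{\lambda_{\overline{1}}}_\infty$ runs the same way using Proposition \ref{charaterizations-of-primitive-representations-of-two-kinds-of-deformed-preprojective-algebras}${\rm (2)}$: the dimension vector is again all $1$'s but now $k+l+1=0$, and since the involution $i\mapsto -(i+1)$ of $\mathbb{Z}$ then carries $[k,l]$ onto itself one gets $\dim V_i=\dim V_{-(i+1)}$ for all $i$, together with monotonicity on $\{i\geq 0\}$.

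For (2), I would use that a short exact sequence $0\to V'\to V\to V''\to 0$ in ${\rm \bf rep}\Pi^{\overrightarrow{\lambda}}_\infty$ satisfies $\dim V_i=\dim V'_i+\dim V''_i$ for every $i\in\mathbb{Z}$; consequently the class of balanced objects is closed under extensions, since both the symmetry equalities and the inequalities $\dim V_i\geq \dim V_j$ are stable under addition. By Proposition \ref{cat-f-d-reps-local-finite-linear-abel-cat} the category ${\rm \bf rep}\Pi^{\overrightarrow{\lambda}}_\infty$ is locally finite, so every object has a finite composition series whose factors are simple and hence balanced by (1); induction on the composition length then gives that every object is balanced.

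The argument has no real obstacle; the only thing to watch is the reduction step in (1), namely that sub- and quotient objects of a chain representation on $[k,l]$ remain supported inside $[k,l]$ (clear from the definitions), so that simplicity in ${\rm \bf rep}\Pi^{\lambda_{\overline{0}}}_\infty$ matches the notion of primitive (fully supported simple) $\Pi^{{\lambda_{\overline{0}}}_{[k,l]}}$-module, and that $\overrightarrow{\lambda}$ restricts to ${\lambda_{\overline{0}}}_{[k,l]}$ as stated, so that Proposition \ref{charaterizations-of-primitive-representations-of-two-kinds-of-deformed-preprojective-algebras} applies verbatim. Everything else, including (2) once (1) is established, is purely formal.
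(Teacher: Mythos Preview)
Your proof is correct and follows essentially the same approach as the paper: for (1) you reduce a simple object to a primitive representation of $\Pi^{{\lambda_{\overline{0}}}_{[k,l]}}$ (resp.\ $\Pi^{{\lambda_{\overline{1}}}_{[k,l]}}$) via Proposition~\ref{indecom-reps-are-all-chain-representations} and then apply Proposition~\ref{charaterizations-of-primitive-representations-of-two-kinds-of-deformed-preprojective-algebras}, exactly as the paper does; for (2) you pass to a composition series and use additivity of dimension vectors on short exact sequences, which is again the paper's argument, only phrased as ``balanced is closed under extensions plus induction on length'' rather than as a single sum ${\rm \underline{dim}}V=\sum_i {\rm \underline{dim}}(\mathcal{V}_{i+1}/\mathcal{V}_i)$.
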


\begin{proof}
${\rm (1)}$ Assume that $V = \left(V_i, E_i^V, F_i^V\right)_{i \in \mathbb{Z}}$ is a simple object in ${\rm \bf rep}\Pi^{{\lambda_{\overline{0}}}}_\infty$.
By Proposition $\ref{indecom-reps-are-all-chain-representations}$, $V$ is a chain representation, i.e., there exist two integers $k$ and $l$ with $k \leq l$
such that $V_i \neq 0$ if and only if $k \leq i \leq l$. Hence $V$ can be considered as a primitive representation of $\Pi^{{\lambda_{\overline{0}}}_{[k,l]}}$.
 By Proposition $\ref{charaterizations-of-primitive-representations-of-two-kinds-of-deformed-preprojective-algebras}~{\rm (1)}$ and Definition $\ref{definition-of-balanced-representations-of-deformed-preprojective-algebras}~{\rm (1)}$,
  $V$ is balanced. Similarly, each simple object in the category ${\rm \bf rep}\Pi^{{\lambda_{\overline{1}}}}_\infty$ is also balanced.

${\rm (2)}$ Every object $V = \left(V_i, E_i^V, F_i^V\right)_{i \in \mathbb{Z}} \in {\rm \bf rep}\Pi^{{\lambda_{\overline{0}}}}_\infty$ can be seen as a finite dimensional representation of the deformed preprojective algebra $\Pi^{{\lambda_{\overline{0}}}_{[k,l]}}$ for some $k \leq l$.
 Pick a composition series
 \begin{eqnarray}\label{composition-series-of-balanced-proof-of-rep-f-d}
0 = \mathcal{V}_0 \subset \mathcal{V}_1 \subset \mathcal{V}_2 \subset \cdots \subset \mathcal{V}_m = V
\end{eqnarray}
of $V$. Since the dimension vector ${\rm \underline{dim}}V = {\mathop\sum\limits_{i=0}^{m-1}} {\rm \underline{dim}} \left(\mathcal{V}_{i+1} / \mathcal{V}_{i}\right)$,
then by Proposition $\ref{charaterizations-of-primitive-representations-of-two-kinds-of-deformed-preprojective-algebras}~{\rm (1)}$ and Definition $\ref{definition-of-balanced-representations-of-deformed-preprojective-algebras}~{\rm (1)}$, $V$ is balanced. Similarly, each object in ${\rm \bf rep}\Pi^{{\lambda_{\overline{1}}}}_\infty$ is also balanced.
\end{proof}

We define a functor $\Omega_1: {\rm \bf rep}\Pi^{{\lambda_{\overline{0}}}}_{\infty} \longrightarrow \mathcal{O}^{\kappa}_1$ as follows
\begin{eqnarray*}\label{}
\Omega_1: {\rm \bf rep}\Pi^{{\lambda_{\overline{0}}}}_{\infty} &\longrightarrow& \mathcal{O}^{\kappa}_1,\\
V &\longmapsto& \Omega_1(V),\\
V \overset{f}\longrightarrow W &\longmapsto& \Omega_1(V) \overset{\Omega_1 (f)}\longrightarrow \Omega_1(W),
\end{eqnarray*}
where as a vector space $\Omega_1(V) = {\mathop\bigoplus\limits_{i \in \mathbb{Z}}} V_i$ and the action of $U_{q}(\mathfrak{sl}^{*}_2,\kappa)$ on $\Omega_1(V)$ is given by
\begin{eqnarray*}\label{}
\left\{
 \begin{array}{ll}
K v = q^{2i} v,\\
E v = E^{V}_i(v),\\
F v = F^{V}_{i-1}(v)
\end{array}
\right.
\end{eqnarray*}
for any $v\in V_i$, while $\Omega (f) = {\mathop\bigoplus\limits_{i \in \mathbb{Z}}} f_i$ (there are only finite nonzero $f_i$ in this expression).
Define a functor $\Omega_1^{-1}:  \mathcal{O}^\kappa_1 \longrightarrow {\rm \bf rep}\Pi^{{\lambda_{\overline{0}}}}_{\infty}$ as follows
\begin{eqnarray*}\label{}
\Omega_1^{-1}: \mathcal{O}^\kappa_1 &\longrightarrow& {\rm \bf rep}\Pi^{{\lambda_{\overline{0}}}}_{\infty},\\
M &\longmapsto& \Omega_1^{-1}(M),\\
M \overset{f}\longrightarrow N &\longmapsto& \Omega_1^{-1}(M) \overset{\Omega_1^{-1} (f)}\longrightarrow \Omega_1^{-1}(N),
\end{eqnarray*}
where $\Omega_1^{-1}(M) = V = (V_i, E^{V}_i, F^{V}_i)_{i\in \mathbb{Z}}$ is given by
\begin{eqnarray*}\label{}
\left\{
 \begin{array}{ll}
V_i = M_{q^{2i}},\\
E^{V}_i := M_{q^{2i}} \overset{E}\longrightarrow M_{q^{2(i+1)}},\\
F^{V}_i := M_{q^{2(i+1)}} \overset{F}\longrightarrow M_{q^{2i}}
\end{array}
\right.
\end{eqnarray*}
for $i\in \mathbb{Z}$, while $\Omega_1^{-1} (f) = (g_i)_{i\in \mathbb{Z}}$ with $g_i$ the restriction of $f$ on $M_{q^{2i}}$.

Similarly, we can define another functor $\Omega_q: {\rm \bf rep}\Pi^{{\lambda_{\overline{1}}}}_{\infty} \longrightarrow \mathcal{O}^{\kappa}_q$ as follows
\begin{eqnarray*}\label{}
\Omega_q: {\rm \bf rep}\Pi^{{\lambda_{\overline{1}}}}_{\infty} &\longrightarrow& \mathcal{O}^{\kappa}_q,\\
V &\longmapsto& \Omega_q(V),\\
V \overset{f}\longrightarrow W &\longmapsto& \Omega_q(V) \overset{\Omega_q (f)}\longrightarrow \Omega_q(W),
\end{eqnarray*}
where as a vector space $\Omega_q(V) = {\mathop\bigoplus\limits_{i \in \mathbb{Z}}} V_i$ and the action of $U_{q}(\mathfrak{sl}^{*}_2,\kappa)$ on $\Omega_q(V)$ is given by
\begin{eqnarray*}\label{}
\left\{
 \begin{array}{ll}
K v = q^{2i+1} v,\\
E v = E^{V}_i(v),\\
F v = F^{V}_{i-1}(v)
\end{array}
\right.
\end{eqnarray*}
for any $v\in V_i$, while $\Omega_q (f) = {\mathop\bigoplus\limits_{i \in \mathbb{Z}}} f_i$ (there are only finite nonzero $f_i$ in this expression).
Define a functor $\Omega_q^{-1}:  \mathcal{O}^\kappa_q \longrightarrow {\rm \bf rep}\Pi^{{\lambda_{\overline{1}}}}_{\infty}$ as follows
\begin{eqnarray*}\label{}
\Omega_q^{-1}: \mathcal{O}^\kappa_q &\longrightarrow& {\rm \bf rep}\Pi^{{\lambda_{\overline{1}}}}_{\infty},\\
M &\longmapsto& \Omega_q^{-1}(M),\\
M \overset{f}\longrightarrow N &\longmapsto& \Omega_q^{-1}(M) \overset{\Omega_q^{-1} (f)}\longrightarrow \Omega_q^{-1}(N),
\end{eqnarray*}
where $\Omega_q^{-1}(M) = V = (V_i, E^{V}_i, F^{V}_i)_{i\in \mathbb{Z}}$ is given by
\begin{eqnarray*}\label{}
\left\{
 \begin{array}{ll}
V_i = M_{q^{2i+1}},\\
E^{V}_i := M_{q^{2i+1}} \overset{E}\longrightarrow M_{q^{2i+3}},\\
F^{V}_i := M_{q^{2i+3}} \overset{F}\longrightarrow M_{q^{2i+1}}
\end{array}
\right.
\end{eqnarray*}
for $i\in \mathbb{Z}$, while $\Omega^{-1} (f) = (g_i)_{i\in \mathbb{Z}}$ with $g_i$ the restriction of $f$ on $M_{q^{2i+1}}$.

Now we can realize the principle blocks $\mathcal{O}^{\kappa}_1$ and $\mathcal{O}^{\kappa}_q$ of $U_{q}(\mathfrak{sl}^{*}_2,\kappa)\mbox{-}{\rm \bf mod}~(\kappa \neq 0)$ as categories.

\begin{thm}\label{equivalence-between-categories-and-primitive-reps-of-U-and-deformed-preprojetive-algs-of-A}
The functors $\Omega_1: {\rm \bf rep}\Pi^{{\lambda_{\overline{0}}}}_{\infty} \longrightarrow \mathcal{O}^{\kappa}_1$ and $\Omega_q: {\rm \bf rep}\Pi^{{\lambda_{\overline{1}}}}_{\infty} \longrightarrow \mathcal{O}^{\kappa}_q$ are isomorphisms of categories which preserve the simple objects.
\end{thm}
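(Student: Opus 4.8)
The plan is to prove the statement by verifying directly that the explicitly given functors $\Omega_1$ and $\Omega_1^{-1}$ (respectively $\Omega_q$ and $\Omega_q^{-1}$) are mutually inverse additive functors; preservation of simple objects is then automatic, since an isomorphism of categories preserves and reflects subobjects. First I would check that $\Omega_1$ is well defined on objects. Given $V = (V_i, E^V_i, F^V_i)_{i\in\mathbb{Z}}$ in ${\rm \bf rep}\Pi^{{\lambda_{\overline{0}}}}_{\infty}$, the space $\Omega_1(V) = \bigoplus_i V_i$ is finite dimensional because only finitely many $V_i$ are nonzero. The relations $KE = q^2 EK$ and $KF = q^{-2}FK$ hold since $E$ maps $V_i$ into $V_{i+1}$ and $F$ maps $V_i$ into $V_{i-1}$, while $K$ acts by $q^{2i}$ on $V_i$. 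The crucial point is the commutator relation of $U_{q}(\mathfrak{sl}^{*}_2,\kappa)$, namely $EF - FE = \frac{K^m - K^{-m}}{q^m - q^{-m}}$: restricted to $V_i$ its left side equals $E^V_{i-1}F^V_{i-1} - F^V_i E^V_i$, its right side equals $\frac{q^{2im} - q^{-2im}}{q^m - q^{-m}} = [2i]_m$, and the equality of these is exactly relation $(\ref{relation-definition-of-the-cat-of-fd-reps-of-preprojective-algs-of-type-A})$ defining ${\rm \bf rep}\Pi^{{\lambda_{\overline{0}}}}_{\infty}$ with $\lambda_i = [2i]_m$. Hence $\Omega_1(V)$ is a finite dimensional weight module with weights in $[1] = \{q^{2i} \mid i\in\mathbb{Z}\}$, i.e. an object of $\mathcal{O}^\kappa_1$.

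Conversely, for $M\in\mathcal{O}^\kappa_1$ the hypothesis $\Lambda_M\subseteq[1]$ gives $M = \bigoplus_i M_{q^{2i}}$, and $KE = q^2 EK$, $KF = q^{-2}FK$ force $E(M_{q^{2i}})\subseteq M_{q^{2(i+1)}}$ and $F(M_{q^{2(i+1)}})\subseteq M_{q^{2i}}$; restricting the above commutator relation to the weight space $M_{q^{2(i+1)}}$ produces $(\ref{relation-definition-of-the-cat-of-fd-reps-of-preprojective-algs-of-type-A})$ with $\lambda_{i+1} = [2(i+1)]_m$, so $\Omega_1^{-1}(M)$ lies in ${\rm \bf rep}\Pi^{{\lambda_{\overline{0}}}}_{\infty}$. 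On morphisms, a $U_{q}(\mathfrak{sl}^{*}_2,\kappa)$-linear map respects weight spaces because it commutes with $K$, and commuting with $E$ and $F$ is precisely the intertwining condition defining ${\rm Hom}_{{\rm \bf rep}\Pi^{{\lambda_{\overline{0}}}}_{\infty}}$; this yields functoriality in both directions. Next I would note that $\Omega_1$ and $\Omega_1^{-1}$ are additive and satisfy $\Omega_1\Omega_1^{-1} = {\rm Id}_{\mathcal{O}^\kappa_1}$ and $\Omega_1^{-1}\Omega_1 = {\rm Id}_{{\rm \bf rep}\Pi^{{\lambda_{\overline{0}}}}_{\infty}}$, since reassembling the weight spaces $M_{q^{2i}}$ (respectively the $V_i$) with the prescribed actions of $K,E,F$ recovers the original object on the nose, and likewise on morphisms. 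Thus $\Omega_1$ is an isomorphism of categories; being such, it carries simple objects to simple objects and reflects them, and one may add, in view of Theorem \ref{classification-of-primitive-representations-of-Hopf-PBW-deformations} and Corollary \ref{cor-classification-of-primitive-representations-of-Hopf-PBW-deformations}, that the simple objects of ${\rm \bf rep}\Pi^{{\lambda_{\overline{0}}}}_{\infty}$ correspond under $\Omega_1$ exactly to the primitive modules $L(n,1)$ with $n$ even.

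The argument for $\Omega_q$ is identical after replacing $q^{2i}$ by $q^{2i+1}$ throughout: the weights of objects of $\mathcal{O}^\kappa_q$ lie in $[q] = \{q^{2i+1}\mid i\in\mathbb{Z}\}$, and $\frac{q^{(2i+1)m} - q^{-(2i+1)m}}{q^m - q^{-m}} = [2i+1]_m$, so the commutator relation of $U_{q}(\mathfrak{sl}^{*}_2,\kappa)$ matches the defining relations of ${\rm \bf rep}\Pi^{{\lambda_{\overline{1}}}}_{\infty}$ with weight sequence $\lambda_{\overline{1}} = ([2i+1]_m)_{i\in\mathbb{Z}}$, and the simple objects correspond to the $L(n,1)$ with $n$ odd. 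The only point requiring genuine care — rather than a genuine obstacle — is the bookkeeping of the index shifts, so that the weight sequences $\lambda_{\overline{0}}$ and $\lambda_{\overline{1}}$ of the deformed preprojective algebras align vertex by vertex with the scalar by which $\frac{K^m - K^{-m}}{q^m - q^{-m}}$ acts on each weight space in $\mathcal{O}^\kappa_1$, respectively $\mathcal{O}^\kappa_q$; in particular one must track the off-by-one discrepancy in the labelling $V_i\leftrightarrow M_{q^{2i+1}}$ for the odd block. Once this is checked everything else is formal, the substantive inputs — that every finite dimensional $U_{q}(\mathfrak{sl}^{*}_2,\kappa)$-module with $\kappa\neq0$ is a weight module (Proposition \ref{reps-are-all-weight-reps-for-classical-quantum-groups}) and the block decomposition (Theorem \ref{block-decomposition-of-weight-rep-cat-of-U}) — having already been established.
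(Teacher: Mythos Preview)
Your proposal is correct and follows essentially the same approach as the paper's own proof: verify that $\Omega_1\Omega_1^{-1}$, $\Omega_1^{-1}\Omega_1$, $\Omega_q\Omega_q^{-1}$, and $\Omega_q^{-1}\Omega_q$ are identity functors, conclude the functors are isomorphisms of categories, and deduce preservation of simples from this. The paper's proof is terser, leaving the well-definedness checks (the matching of the commutator relation to the deformed preprojective relations) implicit under ``it is easy to check,'' whereas you spell these out explicitly --- but the strategy is identical.
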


\begin{proof}
It is easy to check that
\begin{eqnarray*}\label{}
\left\{
 \begin{array}{ll}
\Omega_1 \Omega_1^{-1} = {\rm Id}_{\mathcal{O}^\kappa_1},\ \ \
\Omega_1^{-1} \Omega_1 = {\rm Id}_{{\rm \bf rep}\Pi^{{\lambda_{\overline{0}}}}_{\infty}},\\
\Omega_q \Omega_q^{-1} = {\rm Id}_{\mathcal{O}^\kappa_q},\ \ \
\Omega_q^{-1} \Omega_q = {\rm Id}_{{\rm \bf rep}\Pi^{{\lambda_{\overline{1}}}}_{\infty}}.
\end{array}
\right.
\end{eqnarray*}
 Hence $\Omega_1$ and $\Omega_q$ are isomorphisms of categories. Moreover, it follows from the definitions of $\Omega_1$ and $\Omega_q$ that they both preserve the simple objects.
\end{proof}

The following corollary follows from the definitions of $\Omega_1$ and $\Omega_q$, Theorem \ref{new-proof-of-semisimplicity-of-fd-representations-of-Hopf-PBW-deformations} and \ref{equivalence-between-categories-and-primitive-reps-of-U-and-deformed-preprojetive-algs-of-A}.

\begin{cor}
The categories ${\rm \bf rep}\Pi^{{\lambda_{\overline{0}}}}_\infty$ and ${\rm \bf rep}\Pi^{{\lambda_{\overline{1}}}}_\infty$ are semisimple.
\end{cor}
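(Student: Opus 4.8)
The plan is to transport the semisimplicity statement already proved for the Hopf PBW-deformations through the category isomorphisms constructed in Theorem \ref{equivalence-between-categories-and-primitive-reps-of-U-and-deformed-preprojetive-algs-of-A}. First I would observe that since $\kappa \neq 0$, Remark \ref{remarks-about-rep-cat-and-weight-rep-cat} identifies $U_{q}(\mathfrak{sl}^{*}_2,\kappa)\mbox{-}{\rm \bf mod}$ with $U_{q}(\mathfrak{sl}^{*}_2,\kappa)\mbox{-}{\rm \bf mod}_{\rm wt}$, so by the block decomposition (Theorem \ref{block-decomposition-of-weight-rep-cat-of-U}) the principle blocks $\mathcal{O}^{\kappa}_1$ and $\mathcal{O}^{\kappa}_q$ are full subcategories of $U_{q}(\mathfrak{sl}^{*}_2,\kappa)\mbox{-}{\rm \bf mod}$ that are closed under subobjects, quotients and direct sums (Proposition \ref{properties-of-weight-rep-cat}). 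By Theorem \ref{new-proof-of-semisimplicity-of-fd-representations-of-Hopf-PBW-deformations} every finite dimensional $U_{q}(\mathfrak{sl}^{*}_2,\kappa)$-module is semisimple; hence every object of $\mathcal{O}^{\kappa}_1$ and of $\mathcal{O}^{\kappa}_q$ is a finite direct sum of simple objects, i.e. each of these two blocks is a semisimple abelian category.

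Next I would invoke Theorem \ref{equivalence-between-categories-and-primitive-reps-of-U-and-deformed-preprojetive-algs-of-A}, which asserts that $\Omega_1 \colon {\rm \bf rep}\Pi^{{\lambda_{\overline{0}}}}_{\infty} \longrightarrow \mathcal{O}^{\kappa}_1$ and $\Omega_q \colon {\rm \bf rep}\Pi^{{\lambda_{\overline{1}}}}_{\infty} \longrightarrow \mathcal{O}^{\kappa}_q$ are isomorphisms of categories preserving simple objects; in particular their inverses $\Omega_1^{-1}$ and $\Omega_q^{-1}$ are additive, exact, commute with finite direct sums, and carry simple objects to simple objects. Given any object $V$ in ${\rm \bf rep}\Pi^{{\lambda_{\overline{0}}}}_{\infty}$, its image $\Omega_1(V)$ decomposes as a finite direct sum of simple objects in $\mathcal{O}^{\kappa}_1$; applying $\Omega_1^{-1}$ and using $V \cong \Omega_1^{-1}\Omega_1(V)$, we get that $V$ itself is a finite direct sum of simple objects of ${\rm \bf rep}\Pi^{{\lambda_{\overline{0}}}}_{\infty}$. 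The identical argument with $\Omega_q$ settles ${\rm \bf rep}\Pi^{{\lambda_{\overline{1}}}}_{\infty}$. Combined with Proposition \ref{cat-f-d-reps-local-finite-linear-abel-cat}, which ensures these are locally finite $\mathbb{C}$-linear abelian categories, this yields their semisimplicity.

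There is essentially no hard step: the entire content is that semisimplicity of a locally finite abelian category is invariant under an exact, simple-preserving isomorphism of categories, and the required isomorphisms $\Omega_1,\Omega_q$ are already in hand. The only point worth a sentence of care is the reading of ``semisimple'' in the appropriate sense for a locally finite but not necessarily finite category — namely that every object is a direct sum of finitely many simple objects — which is manifestly transported by $\Omega_1^{-1}$ and $\Omega_q^{-1}$; equivalently, one may simply note that $\Omega_1$ and $\Omega_q$ realize each $V$ as a genuine finite dimensional $U_{q}(\mathfrak{sl}^{*}_2,\kappa)$-module and quote Theorem \ref{new-proof-of-semisimplicity-of-fd-representations-of-Hopf-PBW-deformations} verbatim.
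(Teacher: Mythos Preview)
Your proposal is correct and follows essentially the same approach as the paper: the paper simply records that the corollary follows from the definitions of $\Omega_1$ and $\Omega_q$ together with Theorem \ref{new-proof-of-semisimplicity-of-fd-representations-of-Hopf-PBW-deformations} and Theorem \ref{equivalence-between-categories-and-primitive-reps-of-U-and-deformed-preprojetive-algs-of-A}, which is exactly the transport-of-semisimplicity argument you spell out in detail.
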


Next we will establish a tensor equivalence between the categories $\mathcal{O}^{\kappa}_1 \oplus \mathcal{O}^{\kappa}_q$ and ${\rm \bf rep}\Pi^{{\lambda_{\overline{0}}}}_\infty \oplus {\rm \bf rep}\Pi^{{\lambda_{\overline{1}}}}_\infty$.

\begin{thm}
${\rm (1)}$ The direct sum $\mathcal{O}^{\kappa}_1 \oplus \mathcal{O}^{\kappa}_q$ of the principle blocks of $U_{q}(\mathfrak{sl}^{*}_2,\kappa)\mbox{-}{\rm \bf mod}_{\rm wt}$ is a tensor category.\\
${\rm (2)}$ The category ${\rm \bf rep}\Pi^{\lambda_{\overline{0}}}_\infty \oplus {\rm \bf rep}\Pi^{\lambda_{\overline{1}}}_\infty$ is a tensor category.
\end{thm}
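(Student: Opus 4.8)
The plan is to deduce both statements from tensor structures already available, avoiding any fresh construction. For ${\rm (1)}$: by Corollary \ref{Hopf-PBW-deformation}, $U_{q}(\mathfrak{sl}^{*}_2,\kappa)$ is a Hopf algebra whose antipode $S$ is bijective (from the formulas $(\ref{structure-maps-Hopf-PBW-deformation})$ one sees that $S^2$ is conjugation by the grouplike element $K^{m}$, so $S$ is injective and surjective), whence $U_{q}(\mathfrak{sl}^{*}_2,\kappa)\mbox{-}{\rm \bf mod}$ is a tensor category, with the associativity and unit constraints of ${\rm \bf Vec}$, exactly as in the proof of Theorem \ref{rep-cats-of-pre-quantum-group-and-preprojective-algs-are-tensor-cats}~${\rm (1)}$; and since $\kappa\neq 0$, Remark \ref{remarks-about-rep-cat-and-weight-rep-cat} identifies it with $U_{q}(\mathfrak{sl}^{*}_2,\kappa)\mbox{-}{\rm \bf mod}_{\rm wt}$. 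It then remains to check that $\mathcal{O}^{\kappa}_1 \oplus \mathcal{O}^{\kappa}_q$ is a full tensor subcategory. With the choice $[\Lambda_{\kappa}/\overset{q^2}\sim] = \{\omega, q\omega \mid \omega^{2m} = 1\}$ as in Definition \ref{defs-of-translation-and-transitive-functors}~${\rm (2)}$, the weights occurring in $\mathcal{O}^{\kappa}_1$ (resp.\ $\mathcal{O}^{\kappa}_q$) are precisely the even (resp.\ odd) integral powers of $q$, so by Definition \ref{def-blocks-of-weight-rep-cat-of-U} (cf.\ Theorem \ref{block-decomposition-of-weight-rep-cat-of-U}) the subcategory $\mathcal{O}^{\kappa}_1 \oplus \mathcal{O}^{\kappa}_q$ consists of all modules whose weights are integral powers of $q$: it contains the unit $\mathbb{C}_1$ (of weight $q^0$), it is closed under tensor products since $\Lambda_{M\otimes N} = \Lambda_M \diamond \Lambda_N$ by Proposition \ref{properties-of-weight-rep-cat} and exponents add, it is closed under left and right duals since passing to a dual inverts all weights and hence preserves parity, and it is closed under subobjects, quotients, direct sums and direct summands again by Proposition \ref{properties-of-weight-rep-cat}. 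A full subcategory of a tensor category that contains the unit and is closed under the operations just listed is itself a tensor category (note ${\rm End}(\mathbb{C}_1)\cong\mathbb{C}$), so ${\rm (1)}$ holds.

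For ${\rm (2)}$ I would transport the tensor structure of part ${\rm (1)}$ along the isomorphisms of Theorem \ref{equivalence-between-categories-and-primitive-reps-of-U-and-deformed-preprojetive-algs-of-A}. Since $\Omega_1$ and $\Omega_q$ are isomorphisms of categories, so is their coproduct
\[ \Omega := \Omega_1 \oplus \Omega_q : {\rm \bf rep}\Pi^{\lambda_{\overline{0}}}_\infty \oplus {\rm \bf rep}\Pi^{\lambda_{\overline{1}}}_\infty \longrightarrow \mathcal{O}^{\kappa}_1 \oplus \mathcal{O}^{\kappa}_q, \]
with inverse $\Omega_1^{-1}\oplus\Omega_q^{-1}$. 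Setting $V \otimes W := \Omega^{-1}(\Omega(V)\otimes\Omega(W))$, $\mathds{1} := \Omega^{-1}(\mathbb{C}_1)$, and pulling the associativity and unit constraints, the left and right duals, and the evaluation/coevaluation morphisms back through $\Omega$, every tensor-category axiom is inherited from the isomorphic category $\mathcal{O}^{\kappa}_1 \oplus \mathcal{O}^{\kappa}_q$; local finiteness and ${\rm End}(\mathds{1})\cong\mathbb{C}$ likewise pass through $\Omega$ (alternatively, local finiteness is Proposition \ref{cat-f-d-reps-local-finite-linear-abel-cat}). Unwinding the definitions, one recovers the expected explicit product: a $\mathbb{Z}/2$-homogeneous representation of degree $\overline{a}$ tensored with one of degree $\overline{b}$ is the degree-$\overline{a+b}$ representation with $k$-th space $\bigoplus_{i+j = k-\epsilon} V_i \otimes W_j$ for the appropriate parity shift $\epsilon \in \{0,1\}$, with structure maps $E^{V\otimes W}_k = \bigoplus(E^V_i \otimes q^{2jm} 1_{W_j} + 1_{V_i} \otimes E^W_j)$ and $F^{V\otimes W}_k = \bigoplus(F^V_{i-1}\otimes 1_{W_j} + q^{-2im} 1_{V_i} \otimes F^W_{j-1})$ read off from $\Delta(E) = E\otimes K^{m} + 1\otimes E$ and $\Delta(F) = F\otimes 1 + K^{-m}\otimes F$ on $U^{0,m}_{q}(\mathfrak{sl}^{*}_2,\kappa_m)$.

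The one point with genuine computational content — which the transport argument circumvents — is verifying in this explicit model that the product again satisfies the defining relations of ${\rm \bf rep}\Pi^{\lambda_{\overline{0}}}_\infty \oplus {\rm \bf rep}\Pi^{\lambda_{\overline{1}}}_\infty$, i.e.\ that $E^{V\otimes W}_k F^{V\otimes W}_k - F^{V\otimes W}_{k+1} E^{V\otimes W}_{k+1} = \lambda_{k+1} I$ for the parity-correct weight sequence ($\lambda_{\overline{0}}$ if $\overline{a+b} = \overline{0}$, and $\lambda_{\overline{1}}$ if $\overline{a+b} = \overline{1}$). This is precisely the statement that $\Delta$ carries $EF - FE - \frac{K^m - K^{-m}}{q^m - q^{-m}}$ into the ideal defining the deformation — equivalently, on weight spaces it amounts to the $q$-integer identity $[a]_m + q^{am}[b]_m = [a+b]_m$, immediate from $[n]_m = \sum_{l=0}^{n-1} q^{m(2l+1-n)}$. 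So I do not expect a real obstacle; I would present ${\rm (2)}$ via transport of structure and defer the explicit product and this verification to a remark.
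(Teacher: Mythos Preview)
Your argument for ${\rm (1)}$ is essentially the paper's: both show $\mathcal{O}^{\kappa}_1 \oplus \mathcal{O}^{\kappa}_q$ is a full tensor subcategory of $U_{q}(\mathfrak{sl}^{*}_2,\kappa)\mbox{-}{\rm \bf mod}$, and you supply a little more detail (the bijectivity of $S$ via $S^2 = {\rm Ad}_{K^m}$, and the explicit description of the weights as integral powers of $q$).

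For ${\rm (2)}$ your route is genuinely different. The paper builds the tensor structure on ${\rm \bf rep}\Pi^{\lambda_{\overline{0}}}_\infty \oplus {\rm \bf rep}\Pi^{\lambda_{\overline{1}}}_\infty$ \emph{intrinsically}: it writes down the graded tensor product, unit, associativity and unit constraints, left and right duals, evaluations and coevaluations, and appeals to Proposition~\ref{cat-f-d-reps-local-finite-linear-abel-cat}. You instead transport the structure from $\mathcal{O}^{\kappa}_1 \oplus \mathcal{O}^{\kappa}_q$ along the isomorphism $\Omega = \Omega_1 \oplus \Omega_q$ of Theorem~\ref{equivalence-between-categories-and-primitive-reps-of-U-and-deformed-preprojetive-algs-of-A}. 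This is perfectly valid and much shorter; the price is that the subsequent Theorem~\ref{tensor-equivalence-between-categories-and-primitive-reps-of-U-and-deformed-preprojetive-algs-of-A} (that $\Omega^\lambda$ is a tensor equivalence) becomes tautological rather than a statement relating two independently defined tensor categories. The paper's explicit formulas also make visible the $\mathbb{Z}/2$-graded shape of the product (e.g.\ that $V^{\overline{1}} \otimes W^{\overline{1}}$ lands in degree $\overline{0}$ with an index shift), which your sketch recovers correctly.

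One small correction in your deferred remark: the identity $[a]_m + q^{am}[b]_m = [a+b]_m$ is false in general (try $a=b=1$). The actual verification that $EF-FE$ acts by the correct scalar on $V_i \otimes W_j$ follows directly from $\Delta$ being an algebra map: $\Delta(EF-FE) = \frac{K^m\otimes K^m - K^{-m}\otimes K^{-m}}{q^m-q^{-m}}$ acts on a weight-$(q^a,q^b)$ tensor by $[a+b]_m$, no further identity needed. Since your main argument is transport and you explicitly flag this computation as a remark, this does not affect the correctness of your proof.
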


\begin{proof}
${\rm (1)}$ By similar arguments in Theorem \ref{rep-cats-of-pre-quantum-group-and-preprojective-algs-are-tensor-cats} ${\rm (1)}$,
the category $\mathcal{O}^{\kappa}_1 \oplus \mathcal{O}^{\kappa}_q$ is a tensor subcategory of $U_{q}(\mathfrak{sl}^{*}_2, \kappa)\mbox{-}{\rm \bf mod}$ with the same associativity and unit constraints
$\mathbf{a},\mathbf{l},\mathbf{r}$ as the category ${\rm \bf Vec}$ of finite dimensional $\mathbb{C}$-vector spaces.
 In this case, the unit object $\mathds{1}_\kappa$ in $\mathcal{O}^{\kappa}_1 \oplus \mathcal{O}^{\kappa}_q$ is the primitive representation $L_{\rm \bf p}(0,+)$ defined in Theorem \ref{classification-of-primitive-representations-of-Hopf-PBW-deformations} ${\rm (2)}$.

${\rm (2)}$
 Assume that $U = U^{\overline{0}} \oplus U^{\overline{1}}, V = V^{\overline{0}} \oplus V^{\overline{1}}, W = W^{\overline{0}} \oplus W^{\overline{1}},
  X = X^{\overline{0}} \oplus X^{\overline{1}}, Y = Y^{\overline{0}} \oplus Y^{\overline{1}} \in {\rm \bf rep}\Pi^{\lambda_{\overline{0}}}_\infty \oplus {\rm \bf rep}\Pi^{\lambda_{\overline{1}}}_\infty$.
Define the tensor product
 $$V \otimes W = (V^{\overline{0}} \oplus V^{\overline{1}}) \otimes (W^{\overline{0}} \oplus W^{\overline{1}}) = T^{\overline{0}} \oplus T^{\overline{1}} = (T^{\overline{0}}_k, E^{T^{\overline{0}}}_k, F^{T^{\overline{0}}}_k)_{k\in \mathbb{Z}}
  \oplus (T^{\overline{1}}_k, E^{T^{\overline{1}}}_k, F^{T^{\overline{1}}}_k)_{k\in \mathbb{Z}},$$ where
\begin{eqnarray*}\label{}
T^{\overline{0}}_k &=& {\mathop\bigoplus\limits_{i + j = k}} (V^{\overline{0}}_i \otimes W^{\overline{0}}_j) \oplus {\mathop\bigoplus\limits_{i + j = k - 1}} (V^{\overline{1}}_i \otimes W^{\overline{1}}_j),\ \ \ \ \ \
T^{\overline{1}}_k =  {\mathop\bigoplus\limits_{i + j = k}} (V^{\overline{0}}_i \otimes W^{\overline{1}}_j \oplus V^{\overline{1}}_i \otimes W^{\overline{0}}_j),\\
E^{T^{\overline{0}}}_k &=& {\mathop\bigoplus\limits_{i + j = k}} (E^{V^{\overline{0}}}_i \otimes q^{2jm} 1_{W^{\overline{0}}_j} + 1_{V^{\overline{0}}_i} \otimes E^{W^{\overline{0}}}_j)
 \oplus {\mathop\bigoplus\limits_{i + j = k-1}} (E^{V^{\overline{1}}}_i \otimes q^{(2j+1)m} 1_{W^{\overline{1}}_j} + 1_{V^{\overline{1}}_i} \otimes E^{W^{\overline{1}}}_j),\\
E^{T^{\overline{1}}}_k &=& {\mathop\bigoplus\limits_{i + j = k}}\left[(E^{V^{\overline{0}}}_i \otimes q^{(2j+1)m} 1_{W^{\overline{1}}_j} + 1_{V^{\overline{0}}_i}\otimes E^{W^{\overline{1}}}_j)
 \oplus (E^{V^{\overline{1}}}_i \otimes q^{2jm} 1_{W^{\overline{0}}_j} + 1_{V^{\overline{1}}_i} \otimes E^{W^{\overline{0}}}_j)\right],\\
F^{T^{\overline{0}}}_k &=& {\mathop\bigoplus\limits_{i + j = k}} (F^{V^{\overline{0}}}_{i-1} \otimes 1_{W^{\overline{0}}_j} + q^{-2im}1_{V^{\overline{0}}_i} \otimes F^{W^{\overline{0}}}_{j-1})
 \oplus {\mathop\bigoplus\limits_{i + j = k-1}} (F^{V^{\overline{1}}}_{i-1} \otimes 1_{W^{\overline{1}}_j} + q^{-(2i+1)m} 1_{V^{\overline{1}}_i} \otimes F^{W^{\overline{1}}}_{j-1}),\\
F^{T^{\overline{1}}}_k &=&  {\mathop\bigoplus\limits_{i + j = k}}\left[(F^{V^{\overline{0}}}_{i-1} \otimes  1_{W^{\overline{1}}_j} + q^{-2im}1_{V^{\overline{0}}_i} \otimes F^{W^{\overline{1}}}_{j-1})
 \oplus (F^{V^{\overline{1}}}_{i-1} \otimes  1_{W^{\overline{0}}_j} + q^{-(2i+1)m}1_{V^{\overline{1}}_i} \otimes F^{W^{\overline{0}}}_{j-1})\right].
\end{eqnarray*}
For any two morphisms
\begin{eqnarray*}\label{}
&& V = V^{\overline{0}} \oplus V^{\overline{1}} \xrightarrow{f = f^{\overline{0}} \oplus f^{\overline{1}} = (f^{\overline{0}}_i)_{i\in \mathbb{Z}} \oplus (f^{\overline{1}}_i)_{i\in \mathbb{Z}}} X = X^{\overline{0}} \oplus X^{\overline{1}}, \\
&& W = W^{\overline{0}} \oplus W^{\overline{1}} \xrightarrow{g = g^{\overline{0}} \oplus g^{\overline{1}} = (g^{\overline{0}}_i)_{i\in \mathbb{Z}} \oplus (g^{\overline{1}}_i)_{i\in \mathbb{Z}}} Y = Y^{\overline{0}} \oplus Y^{\overline{1}}
\end{eqnarray*}
in ${\rm \bf rep}\Pi^{\lambda(1)}_\infty \oplus {\rm \bf rep}\Pi^{\lambda(q)}_\infty$, define the tensor product
 $$f \otimes g = (f \otimes g)^{\overline{0}} \oplus (f \otimes g)^{\overline{1}} = ((f \otimes g)^{\overline{0}}_k)_{k\in \mathbb{Z}} \oplus ((f \otimes g)^{\overline{1}}_k)_{k\in \mathbb{Z}},$$
 where $(f \otimes g)^{\overline{0}}_k = {\mathop\bigoplus\limits_{i + j = k}} (f^{\overline{0}}_i \otimes g^{\overline{0}}_j) \oplus {\mathop\bigoplus\limits_{i + j = k - 1}} (f^{\overline{1}}_i \otimes g^{\overline{1}}_j)$ and
$(f \otimes g)^{\overline{1}}_k = {\mathop\bigoplus\limits_{i + j = k}} (f^{\overline{0}}_i \otimes g^{\overline{1}}_j \oplus f^{\overline{1}}_i \otimes g^{\overline{0}}_j).$
Take $\mathds{1} = (\mathds{1}_i, E^{\mathds{1}}_i, F^{\mathds{1}}_i)_{i\in \mathbb{Z}}  \in {\rm \bf rep}\Pi^{\lambda_{\overline{1}}}_\infty$ with $\mathds{1}_i = 0$ except $\mathds{1}_0 = \mathbb{C}$ and $E^{\mathds{1}}_i, F^{\mathds{1}}_i = 0$ for all $i \in \mathbb{Z}$.
Obviously ${\rm End}(\mathds{1}) \cong \mathbb{C}$.
Define the associativity constraint $\mathfrak{a}_{U,V,W}: (U\otimes V)\otimes W \longrightarrow U\otimes (V\otimes W)$,
 the left unit constraint $\mathfrak{l}_{V}: \mathds{1} \otimes V \longrightarrow V$ and the right unit constraint $\mathfrak{r}_{V}: V \otimes \mathds{1} \longrightarrow V$ by
\begin{eqnarray*}
  && (\mathfrak{a}_{U,V,W})^{\overline{0}}_s = {\mathop\bigoplus\limits_{{i + j + k = s}}} \mathbf{a}_{U^{\overline{0}}_i,V^{\overline{0}}_j,W^{\overline{0}}_k} \oplus {\mathop\bigoplus\limits_{\substack{(d_1,d_2,d_3) \in \mathbb{Z}^3_2 \setminus \{(\overline{0},\overline{0},\overline{0})\}\\ d_1 + d_2 + d_3 = {\overline{0}} \\ {i + j + k = s-1}}}} \mathbf{a}_{U^{d_1}_i,V^{d_2}_j,W^{d_3}_k},\\
  && (\mathfrak{a}_{U,V,W})^{\overline{1}}_s = {\mathop\bigoplus\limits_{\substack{(d_1,d_2,d_3) \in \mathbb{Z}^3_2 \setminus \{(\overline{1},\overline{1},\overline{1})\}\\ d_1 + d_2 + d_3 = {\overline{1}} \\ {i + j + k = s}}}} \mathbf{a}_{U^{d_1}_i,V^{d_2}_j,W^{d_3}_k} \oplus {\mathop\bigoplus\limits_{{i + j + k = s-1}}} \mathbf{a}_{U^{\overline{1}}_i,V^{\overline{1}}_j,W^{\overline{1}}_k}, \\
  && (\mathfrak{l}_V)^{\overline{0}}_s = \mathbf{l}_{V^{\overline{0}}_s},\ \ \ \ (\mathfrak{l}_V)^{\overline{1}}_s = \mathbf{l}_{V^{\overline{1}}_s},\ \ \ \
   (\mathfrak{r}_V)^{\overline{0}}_s = \mathbf{r}_{V^{\overline{0}}_s}, \ \ \ \ (\mathfrak{r}_V)^{\overline{1}}_s = \mathbf{r}_{V^{\overline{1}}_s},
\end{eqnarray*}
where $\mathbf{a}, \mathbf{l}$ and $\mathbf{r}$ are respectively the associativity constraint, left unit constraint and right unit constraint in the category ${\rm \bf Vec}$.
For any object $V = V^{\overline{0}} \oplus V^{\overline{1}} \in {\rm \bf rep}\Pi^{\lambda_{\overline{0}}}_\infty \oplus {\rm \bf rep}\Pi^{\lambda_{\overline{1}}}_\infty$,
 define the left dual $V^* = V^{*\overline{0}} \oplus V^{*\overline{1}} = (V^{*\overline{0}}_i, E^{V^{*\overline{0}}}_i, F^{V^{*\overline{0}}}_i)_{i\in \mathbb{Z}} \oplus (V^{*\overline{1}}_i, E^{V^{*\overline{1}}}_i, F^{V^{*\overline{1}}}_i)_{i\in \mathbb{Z}}$
 and right dual ${^*V} = {^*V}^{\overline{0}} \oplus {^*V}^{\overline{1}} = ({^*V}^{\overline{0}}_i, E^{^*V^{\overline{0}}}_i, F^{^*V^{\overline{0}}}_i)_{i\in \mathbb{Z}} \oplus ({^*V}^{\overline{1}}_i, E^{^*V^{\overline{1}}}_i, F^{^*V^{\overline{1}}}_i)_{i\in \mathbb{Z}}$ of $V$ by
\begin{eqnarray*}\label{}
\left\{
 \begin{array}{ll}
 V^{*\overline{0}}_i = (V^{\overline{0}}_{-i})^\star, \ \ \ \ \ \ \
 E^{V^{*\overline{0}}}_i = - q^{2(i+1)m} (E^{V^{\overline{0}}}_{-i-1})^\star,\ \ \ \
 F^{V^{*\overline{0}}}_i = - q^{-2(i+1)m} (F^{V^{\overline{0}}}_{-i-1})^\star,\\
V^{*\overline{1}}_i = (V^{\overline{1}}_{-i-1})^\star, \ \ \ \  E^{V^{*\overline{1}}}_i = - q^{(2i+3)m} (E^{V^{\overline{1}}}_{-i-2})^\star,\ \ \ \  F^{V^{*\overline{1}}}_i = - q^{-(2i+3)m} (F^{V^{\overline{0}}}_{-i-2})^\star,
\end{array}
\right.
\end{eqnarray*}
 and
\begin{eqnarray*}\label{}
\left\{
 \begin{array}{ll}
 {^*V}^{\overline{0}}_i = (V^{\overline{0}}_{-i})^\star,  \ \ \ \ \ \ \
 E^{{^*V}^{\overline{0}}}_i = - q^{2im} (E^{V^{\overline{0}}}_{-i-1})^\star,\ \ \ \ \ \ \ \ \
 F^{{^*V}^{\overline{0}}}_i = - q^{-2im} (F^{V^{\overline{0}}}_{-i-1})^\star, \\
  {^*V}^{\overline{1}}_i = (V^{\overline{1}}_{-i-1})^\star, \ \ \ \
  E^{{^*V}^{\overline{1}}}_i = - q^{(2i+1)m} (E^{V^{\overline{1}}}_{-i-2})^\star, \ \ \ \
  F^{{^*V}^{\overline{1}}}_i = - q^{-(2i+1)m} (F^{V^{\overline{1}}}_{-i-2})^\star.
\end{array}
\right.
\end{eqnarray*}
For the left dual $V^*$, the evaluation $V^* \otimes V \xrightarrow{{\rm \bf ev}_V = ({\rm \bf ev}_V)^{\overline{0}} \oplus ({\rm \bf ev}_V)^{\overline{1}}} \mathds{1}$ is determined by
\begin{eqnarray*}\label{}
({\rm \bf ev}_V)^{\overline{0}}_0: {\mathop\bigoplus\limits_{i \in \mathbb{Z}}} \left[(V^{*\overline{0}}_i \otimes V^{\overline{0}}_{-i}) \oplus (V^{*\overline{1}}_i \otimes V^{\overline{1}}_{-i-1})\right] &\longrightarrow& \mathbb{C},\\
f \otimes v + g \otimes w &\longmapsto& f(v) + g(w),
\end{eqnarray*}
while the coevaluation
$\mathds{1} \xrightarrow{{\rm \bf coev}_V = ({\rm \bf coev}_V)^{\overline{0}} \oplus ({\rm \bf coev}_V)^{\overline{1}}} V \otimes V^*$ is determined by
\begin{eqnarray*}\label{}
({\rm \bf coev}_V)^{\overline{0}}_0: \mathbb{C} &\longrightarrow& {\mathop\bigoplus\limits_{i \in \mathbb{Z}}} \left[(V^{\overline{0}}_i \otimes V^{*\overline{0}}_{-i}) \oplus (V^{\overline{1}}_i \otimes V^{*\overline{1}}_{-i-1})\right],\\
1 &\longmapsto& {\mathop\sum\limits_{i\in \mathbb{Z}}} \left[{\mathop\sum\limits_{j = 1}^{\dim V^{\overline{0}}_i}}\left(v_{ij} \otimes f_{ij}\right) + {\mathop\sum\limits_{j = 1}^{\dim V^{\overline{1}}_i}}\left(w_{ij} \otimes g_{ij}\right)\right],
\end{eqnarray*}
where $\{v_{ij} | 1 \leq j \leq \dim V^{\overline{0}}_i\}$ (resp. $\{w_{ij} | 1 \leq j \leq \dim V^{\overline{1}}_i\}$) is a basis of $V^{\overline{0}}_i$ (resp. $V^{\overline{1}}_i$)
 and $\{f_{ij} | 1 \leq j \leq \dim V^{\overline{0}}_i\}$ (resp. $\{g_{ij} | 1 \leq j \leq \dim V^{\overline{1}}_i\}$) is the dual basis of $V^{*\overline{0}}_{-i}$ (resp. $V^{*\overline{1}}_{-i-1}$).
The evaluation and coevaluation for the right dual ${^*V}$ can be defined similarly.
It can be directly checked that $\left({\rm \bf rep}\Pi^{\lambda_{\overline{0}}}_\infty \oplus {\rm \bf rep}\Pi^{\lambda_{\overline{1}}}_\infty, \otimes, \mathds{1}, \mathfrak{a}, \mathfrak{l}, \mathfrak{r}\right)$ is a rigid monoidal category.
Now the statement in ${\rm (2)}$ follows from Proposition \ref{cat-f-d-reps-local-finite-linear-abel-cat}.
\end{proof}

Define a new functor $\Omega^\lambda: {\rm \bf rep}\Pi^{\lambda_{\overline{0}}}_\infty \oplus {\rm \bf rep}\Pi^{\lambda_{\overline{1}}}_\infty \longrightarrow \mathcal{O}^{\kappa}_1 \oplus \mathcal{O}^{\kappa}_q$ as follows
\begin{eqnarray*}\label{}
\Omega^\lambda: {\rm \bf rep}\Pi^{\lambda_{\overline{0}}}_\infty \oplus {\rm \bf rep}\Pi^{\lambda_{\overline{1}}}_\infty &\longrightarrow& \mathcal{O}^{\kappa}_1 \oplus \mathcal{O}^{\kappa}_q,\\
V = V^{\overline{0}} \oplus V^{\overline{1}} &\longmapsto& \Omega^\lambda(V) = \Omega_1(V^{\overline{0}}) \oplus \Omega_q(V^{\overline{1}}),\\
V \xrightarrow{f = f^{\overline{0}} \oplus f^{\overline{1}}} W &\longmapsto& \Omega^\lambda(V) \xrightarrow{\Omega^\lambda(f) = \Omega_{1}(f^{\overline{0}}) \oplus \Omega_{q}(f^{\overline{1}})} \Omega^\lambda(W).
\end{eqnarray*}

\begin{thm}\label{tensor-equivalence-between-categories-and-primitive-reps-of-U-and-deformed-preprojetive-algs-of-A}
The functor $\Omega^\lambda: {\rm \bf rep}\Pi^{\lambda_{\overline{0}}}_\infty \oplus {\rm \bf rep}\Pi^{\lambda_{\overline{1}}}_\infty \longrightarrow \mathcal{O}^{\kappa}_1 \oplus \mathcal{O}^{\kappa}_q$ is a tensor equivalence.
\end{thm}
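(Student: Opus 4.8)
The plan is to promote the category isomorphism $\Omega^\lambda$ of Theorem~\ref{equivalence-between-categories-and-primitive-reps-of-U-and-deformed-preprojetive-algs-of-A} to a monoidal functor with respect to the tensor structures already fixed on both sides. Since a $\mathbb{C}$-linear monoidal functor which is an equivalence of the underlying abelian categories is automatically a tensor equivalence, this is all that is needed. First I would record that $\Omega^\lambda$ carries the unit object $\mathds{1}$ of ${\rm \bf rep}\Pi^{\lambda_{\overline{0}}}_\infty \oplus {\rm \bf rep}\Pi^{\lambda_{\overline{1}}}_\infty$ (the representation supported at the vertex $0$ with all structure maps zero) to the one-dimensional $U_q(\mathfrak{sl}^{*}_2,\kappa)$-module on which $K$ acts by $1$ and $E,F$ by $0$, which is precisely $L_{\rm \bf p}(0,+) = \mathds{1}_\kappa$, the unit object of $\mathcal{O}^{\kappa}_1 \oplus \mathcal{O}^{\kappa}_q$ identified in the preceding theorem; this is immediate from the formulas defining $\Omega_1$ and $\Omega_q$.

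Next I would define the structure isomorphism $\mathrm{J}_{V,W}\colon \Omega^\lambda(V)\otimes\Omega^\lambda(W)\longrightarrow \Omega^\lambda(V\otimes W)$ by ``regrouping summands'', as in $(\ref{natural isomorphism in tensor equivalence})$ but now keeping track of the $\mathbb{Z}_2$-grading. For $V = V^{\overline 0}\oplus V^{\overline 1}$ and $W = W^{\overline 0}\oplus W^{\overline 1}$, the underlying vector space of $\Omega^\lambda(V)\otimes\Omega^\lambda(W)$ is $\bigoplus_{d_1,d_2,i,j}\bigl(V^{d_1}_i\otimes W^{d_2}_j\bigr)$, while that of $\Omega^\lambda(V\otimes W)$ is the same space reorganised by total parity $d_1+d_2$ and total weight $q^{\,2(i+j)+\epsilon(d_1)+\epsilon(d_2)}$, where $\epsilon(\overline 0)=0$ and $\epsilon(\overline 1)=1$; I take $\mathrm{J}_{V,W}$ to be the canonical identification of each summand $V^{d_1}_i\otimes W^{d_2}_j$ of the source with the corresponding summand of the target. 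Naturality in $V$ and $W$, and $K$-equivariance, are then automatic since $\mathrm{J}_{V,W}$ is built from identity maps on matching weight summands. The one point requiring genuine verification is that $\mathrm{J}_{V,W}$ intertwines $E$ and $F$: on the weight-$\mu$ space with $\mu = q^{\,2j+\epsilon(d_2)}$, the factor $K^{m}$ appearing in $\Delta(E)=E\otimes K^m+1\otimes E$ (resp. $K^{-m}$ in $\Delta(F)=F\otimes 1+K^{-m}\otimes F$) for $U^{0,m}_q(\mathfrak{sl}^{*}_2,\kappa_m)$ acts by $\mu^{\pm m}=q^{\pm(2j+\epsilon(d_2))m}$, and one checks these are exactly the scalars $q^{2jm}$, $q^{(2j+1)m}$, $q^{-2im}$, $q^{-(2i+1)m}$ occurring block-by-block in the formulas for $E^{T^{\overline 0}}_k$, $E^{T^{\overline 1}}_k$, $F^{T^{\overline 0}}_k$, $F^{T^{\overline 1}}_k$ in the definition of the tensor product on ${\rm \bf rep}\Pi^{\lambda_{\overline 0}}_\infty \oplus {\rm \bf rep}\Pi^{\lambda_{\overline 1}}_\infty$; hence $E\circ\mathrm{J}_{V,W}=\mathrm{J}_{V,W}\circ E$ and $F\circ\mathrm{J}_{V,W}=\mathrm{J}_{V,W}\circ F$.

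Finally I would check the hexagon (associativity) and unit axioms for $(\Omega^\lambda,\mathrm{J})$. Since the associativity and unit constraints on both sides are inherited summand-wise from those of ${\rm \bf Vec}$, and $\mathrm{J}$ together with these constraints are all the canonical identifications among direct sums of tensor products of finite-dimensional vector spaces, every square in the coherence diagrams collapses to a commuting diagram of such canonical identifications in ${\rm \bf Vec}$ — routine bookkeeping with the parity and weight indices. Combined with the previous two paragraphs, this shows $(\Omega^\lambda,\mathrm{J})$ is a $\mathbb{C}$-linear monoidal functor; being an isomorphism of the underlying abelian categories by Theorem~\ref{equivalence-between-categories-and-primitive-reps-of-U-and-deformed-preprojetive-algs-of-A}, it is a tensor equivalence. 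The main obstacle is the verification in the middle paragraph that $\mathrm{J}_{V,W}$ is a module homomorphism: it is the only place where the specific $q$-twists built into the two distinct tensor-product constructions must be reconciled, through the coproduct of $U^{0,m}_q(\mathfrak{sl}^{*}_2,\kappa_m)$ and the action of $K^{\pm m}$ on weight spaces. Everything else is formal.
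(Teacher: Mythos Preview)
Your proof is correct and follows essentially the same approach as the paper: identify $\Omega^\lambda(\mathds{1})=\mathds{1}_\kappa$, define $\mathrm{J}^\lambda_{V,W}$ as the canonical regrouping of summands, verify it is a tensor functor, and invoke Theorem~\ref{equivalence-between-categories-and-primitive-reps-of-U-and-deformed-preprojetive-algs-of-A}. The paper's proof is terser---it simply asserts ``we can check that $(\Omega^\lambda,\mathrm{J}^\lambda)$ is a tensor functor''---whereas you spell out the key point that the $q$-twists in the preprojective tensor product match the action of $K^{\pm m}$ through the coproduct of $U^{0,m}_q(\mathfrak{sl}^{*}_2,\kappa_m)$, but the underlying argument is the same.
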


\begin{proof}
Obviously $\Omega^\lambda(\mathds{1}) = \mathds{1}_\kappa$,  where $\mathds{1}_\kappa \in \mathcal{O}^{\kappa}_1 \oplus \mathcal{O}^{\kappa}_q$ is the primitive representation $L_{\rm \bf p}(0,+)$ defined in Theorem \ref{classification-of-primitive-representations-of-Hopf-PBW-deformations} ${\rm (2)}$.
Define a natural isomorphism $\mathrm{J}^\lambda: \Omega^\lambda(-) \otimes \Omega^\lambda(-) \longrightarrow \Omega^\lambda(- \otimes -)$ as follows
{\footnotesize
\begin{eqnarray*}\label{}
\mathrm{J}^\lambda_{V,W}: \Omega^\lambda(V) \otimes \Omega^\lambda(W) = {\mathop\bigoplus\limits_{(d_1,d_2)\in \mathbb{Z}_2^2}} \left({\mathop\bigoplus\limits_{i\in \mathbb{Z}}} V^{d_1}_{i} \otimes {\mathop\bigoplus\limits_{j\in \mathbb{Z}}} W^{d_2}_{j}\right)
 &\longrightarrow& \Omega^\lambda(V \otimes W) = {\mathop\bigoplus\limits_{(d_1,d_2)\in \mathbb{Z}_2^2}}{\mathop\bigoplus\limits_{k\in \mathbb{Z},\atop i + j = k}} \left(V^{d_1}_{i} \otimes W^{d_2}_j \right),\\
(v_i)_{i\in \mathbb{Z}} \otimes (w_j)_{j\in \mathbb{Z}} &\longmapsto& \left({\mathop\sum\limits_{i + j = k}}\left(v_{i} \otimes w_{j}\right)\right)_{k\in \mathbb{Z}},
\end{eqnarray*}}
where $V,W \in {\rm \bf rep}\Pi^{\lambda_{\overline{0}}}_\infty \oplus {\rm \bf rep}\Pi^{\lambda_{\overline{1}}}_\infty$, $(v_i)_{i\in \mathbb{Z}} \in {\mathop\bigoplus\limits_{i\in \mathbb{Z}}} V^{d_1}_{i}$ and $(w_j)_{j\in \mathbb{Z}} \in {\mathop\bigoplus\limits_{j\in \mathbb{Z}}} W^{d_2}_{j}$. Then we can check that $(\Omega^\lambda, \mathrm{J}^\lambda)$ is a tensor functor.
It follows from Theorem \ref{equivalence-between-categories-and-primitive-reps-of-U-and-deformed-preprojetive-algs-of-A} that $\Omega^\lambda$ is a tensor equivalence.
\end{proof}

\section*{Acknowledgements}

Y.J. Xu was partially supported by the National Natural Science Foundation of China (Nos. 11501317,
12271292), the China Postdoctoral Science Foundation (No.2016M600530), and the Natural
Science Foundation of Qufu Normal University (No.BSQD20130142).
J.L. Chen was partially supported by the National Natural Science Foundation of China (No.11701019)
 and the Science and Technology Project of Beijing Municipal Education Commission (No.KM202110005012).

Both authors are grateful to Huixiang Chen, Libin Li, Zhankui Xiao for some useful comments during the 2021 National Conference on Hopf Algebra hosted by Southwest University in China.
Both authors would like to thank Dingguo Wang and Shilin Yang for some useful suggestions.
Y.J. Xu would like to thank Xiaowu Chen and Zerui Zhang for some helpful conversations.
Y.J. Xu would like to thank Bangming Deng and the department of Mathematical Sciences of Tsinghua University for the hospitality during his visit.

\section*{Declarations}
\textbf{Conflict of Interest.} The authors declare that they have no conflict of interest.

\end{document}